\long\def\red#1{{\color{red}#1}}
\long\def\red#1{#1}
\newcommand{\grassman}{\mathbf G}
\newcommand\carrousel{carrousel decomposition}
\newcommand\carrousels{carrousel decompositions}
\newcommand\denom{{\operatorname{denom}}}
\let\cal\mathcal
\renewcommand{\setminus}{\smallsetminus}
\newcommand\Q{{\mathbb Q}}
\newcommand\R{{\mathbb R}}
\newcommand\C{{\mathbb C}}
\newcommand\Z{{\mathbb Z}}
\newcommand\N{{\mathbb N}}
\renewcommand\L{{$\cal L$}}
\renewcommand\P{{$\cal P$}}
\newcommand\Nn{{\mathcal N}}
\newtheorem{theorem}{Theorem}[section]
\newtheorem{proposition}[theorem]{Proposition}
\newtheorem*{theorem*}{Theorem}
\newtheorem{corollary}[theorem]{Corollary}
\newtheorem{lemma}[theorem]{Lemma}
\theoremstyle{definition}
\newtheorem{amalgamation}[theorem]{Amalgamation}
\newtheorem*{amalgamation*}{Amalgamation}
\newtheorem{example}[theorem]{Example}
\newtheorem{remark}[theorem]{Remark}
\newtheorem*{remark*}{Remark}
\newtheorem{definition}[theorem]{Definition}
\renewcommand{\int}{\operatorname{int}}
\newcommand{\lcm}{\operatorname{lcm}}
\begin{document}
\title[Lipschitz geometry, analytic invariants and equisingularity]{Lipschitz geometry of
  complex surfaces: \\analytic invariants and equisingularity.}
\author{Walter D Neumann}\address{Department of Mathematics, Barnard
  College, Columbia University, 2009 Broadway MC4429, New York, NY
  10027, USA} \email{neumann@math.columbia.edu} 
  \author{Anne Pichon}
\address{Aix Marseille Universit\'e, CNRS, Centrale Marseille, I2M, UMR 7373, 13453 Marseille, FRANCE} \email{anne.pichon@univ-amu.fr}
\subjclass[2010]{14B05, 32S25, 32S05, 57M99} \keywords{bilipschitz,
  Lipschitz geometry, normal surface singularity, Zariski
  equisingularity, Lipschitz equisingularity}
\begin{abstract} We prove that the outer Lipschitz geometry of the germ
   of a normal complex surface singularity determines a large
  amount of its analytic structure.
  In particular, it follows that any analytic
  family of normal surface singularities with constant  Lipschitz
  geometry is Zariski equisingular. We also prove a strong converse
  for families of normal complex hypersurface singularities in $\C^3$:
  Zariski equisingularity implies  Lipschitz triviality. So for  such
  a family Lipschitz triviality, constant Lipschitz geometry and
  Zariski equisingularity are equivalent to each other.
 \end{abstract}
 \maketitle
 
\section{Introduction}

This paper has two aims. One is to prove the equivalence of
Zariski and bilipschitz equisingularity for families of normal complex
surface singularities. The other, on which the first partly depends,
is to describe which analytic invariants are determined by the
Lipschitz geometry of a normal complex surface singularity.

In \cite{BNP} the richness of the Lipschitz geometry of a normal
surface singularity was demonstrated in a classification in terms of
discrete invariants associated to a refined JSJ decomposition of the
singularity link. That paper addressed the inner metric. The present
paper concerns the outer metric, and we show it is even richer.

\subsection*{Equisingularity}
The question of defining a good notion of equisingularity of a reduced
hypersurface $\mathfrak{X} \subset \C^n$ along a non-singular complex
subspace $Y \subset \mathfrak{X}$ in a neighbourhood of a point $0 \in
\mathfrak{X}$ started in 1965 with two papers of Zariski (\cite{Z1,
  Z2}). This problem has been extensively studied with different
approaches and by many authors such as Zariski himself, Abhyankar,
Brian\c{c}on, Gaffney, Hironaka, L\^e, Lejeune-Jalabert, Lipman,
Mostowski, Parusi\'nski, Pham, Speder, Teissier, Thom, Trotman,
Varchenko, Wahl, Whitney and many others.

One of the central concepts introduced by Zariski is the
algebro-geometric equisingularity, called nowadays Zariski
equisingularity. The idea is that the equisingularity of
$\mathfrak{X}$ along $Y$ is defined inductively on the codimension of
$Y$ in $\mathfrak{X}$ by requiring that the reduced discriminant locus
of a suitably general projection $p \colon \mathfrak{X} \to \C^{n-1}$
be itself equisingular along
$p(Y)$. The induction starts by requiring that in codimension one the
discriminant locus is nonsingular. (This is essentially Speder's
  definition in \cite{Sp}; Zariski later gave a variant in \cite{Z3},
  but  for surface singularities the variants
  are equivalent to each other.)

When $Y$ has codimension one in $\mathfrak{X}$, it is well known that
Zariski equisingularity is equivalent to all main notions of
equisingularity, such as Whitney conditions for the pair
$(\mathfrak{X} \setminus Y,Y)$ and topological triviality.  However
these properties fail to be equivalent in higher codimension: Zariski
equisingularity still implies topological triviality (\cite{Va1,Va2})
and Whitney conditions (\cite{Sp}), but the converse statements are
false (\cite{BS1,BS2, T}) and a global theory of equisingularity is
still far from being established. For good surveys of equisingularity
questions, see \cite{LT, T1}.

The first main result of this paper states the equivalence between
Zariski equisingularity, constancy of Lipschitz geometry and
triviality of Lipschitz geometry in the case $Y$ is the singular locus
of $\mathfrak{X}$ and has codimension $2$ in $\mathfrak{X}$.  We must
say what we mean by ``Lipschitz geometry".  If $(X,0)$ is a germ of a
complex variety, then any embedding $\phi\colon(X,0)\hookrightarrow
(\C^n,0)$ determines two metrics on $(X,0)$: the outer metric
$$d_{out}(x_1,x_2):=|\phi(x_1)-\phi(x_2)|\quad(\text{
    i.e., distance in }\C^n)$$ and the inner metric
$$d_{inn}(x_1,x_2):=\inf\{\operatorname{length}(\phi\circ\gamma):
\gamma\text{ is a rectifyable path in }X\text{ from }x_1\text{ to
}x_2\}\,.$$ The outer metric determines the inner metric, and up to
bilipschitz equivalence both these metrics are independent of the
choice of complex embedding.  We speak of the (inner or outer)
\emph{Lipschitz geometry} of $(X,0)$ when considering these metrics up
to bilipschitz equivalence. If we work up to semi-algebraic or semi-analytic
bilipschitz equivalence, we speak of \emph{semi-algebraic} or
\emph{semi-analytic
  Lipschitz geometry}.

As already mentioned, the present paper is devoted to outer Lipschitz
geometry. We consider here the case of normal complex surface
singularities.

Let $(\mathfrak{X},0)\subset (\C^{n},0)$ be a germ of hypersurface at
the origin of $\C^{n}$ with smooth codimension $2$ singular set $(Y,0)
\subset (\mathfrak{X},0)$. 

The germ $(\mathfrak{X},0)$ \emph{has constant
  Lipschitz geometry} along $Y$ if there exists a smooth
retraction $r \colon (\mathfrak{X},0) \to (Y,0)$
whose fibers are transverse to $Y$ and there is a neighbourhood $U$ of $0$
in $Y$ such that for all $y \in U$ there exists a
bilipschitz homeomorphism $h_y \colon (r^{-1}(y),y) \to (r^{-1}(0)
\cap \mathfrak{X},0)$.

 The germ $(\mathfrak{X},0)$ is
  \emph{Lipschitz trivial} along $Y$ if there exists a germ at 0 of a
bilipschitz homeomorphism $\Phi \colon(\mathfrak
  X,Y)\to (X,0)\times Y$ with $\Phi|_Y=id_Y$, where $(X,0)$ is a
  normal complex surface germ.

  We say $(\mathfrak{X},0)$ has constant \emph{semi-analytic}
    Lipschitz geometry or is \emph{semi-analytic} Lipschitz trivial
  if the maps $r$ and $h_y$, resp.\ $\Phi$ above are semi-analytic.

\begin{theorem} \label{th:zariski vs bilipschitz} The following are
  equivalent:
  \begin{enumerate}
  \item \label{it:zariski} $(\mathfrak{X},0)$ is Zariski equisingular
    along $Y$;
  \item \label{it:constancy} $(\mathfrak{X},0)$ has constant Lipschitz
    geometry along $Y$;
  \item \label{it:constancySA} $(\mathfrak{X},0)$ has constant
    semi-analytic Lipschitz geometry along $Y$;
  \item \label{it:lipschitz} $(\mathfrak{X},0)$ is semi-analytic
    Lipschitz trivial along $Y$ \end{enumerate}
\end{theorem}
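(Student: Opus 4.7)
The plan is to close the cycle $(4)\Rightarrow(3)\Rightarrow(2)\Rightarrow(1)\Rightarrow(4)$. The implications $(4)\Rightarrow(3)\Rightarrow(2)$ follow directly from the definitions: a semi-algebraic bilipschitz trivialization $\Phi\colon(\mathfrak X,Y)\to(X,0)\times Y$ over $Y$ supplies, together with any smooth transverse retraction onto $Y$, the family of fibrewise semi-algebraic bilipschitz homeomorphisms demanded by (3), and dropping the word ``semi-algebraic'' gives (2).

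For the rigidity step $(2)\Rightarrow(1)$, I would invoke the main recognition theorem of the paper, according to which the outer Lipschitz geometry of a normal complex surface germ $(X,0)$ determines the embedded topological type of the discriminant curve of a generic linear projection $\ell\colon(X,0)\to(\C^2,0)$. Pick a smoothly varying family of generic projections $\ell_y\colon(r^{-1}(y),y)\to(\C^2,0)$ on the fibres of the retraction, with discriminants $\Delta_y\subset(\C^2,0)$. Constant outer Lipschitz geometry of the fibres then forces the embedded topological type of $\Delta_y$ to be locally constant in $y$. Since equisingularity of a family of reduced plane curves is purely topological, the family $\{\Delta_y\}$ is equisingular; because $Y$ has codimension two in $\mathfrak X$, this is the only inductive step required, and $(\mathfrak X,0)$ is Zariski equisingular along $Y$.

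For the construction step $(1)\Rightarrow(4)$, I would adapt a Varchenko--Parusi\'nski style trivialization argument. Zariski equisingularity supplies a generic projection $p\colon(\mathfrak X,0)\to(\C^{n-1},0)$ whose reduced discriminant $\Delta$ is Zariski equisingular along $p(Y)$; in codimension one this means that, after a semi-algebraic bilipschitz homeomorphism of $(\C^{n-1},0)$ fixing $p(Y)$, the hypersurface $\Delta$ becomes a product along $p(Y)$. I would then lift this trivialization across the finite projection $p$ by continuously labelling the sheets of $p^{-1}$ over $\C^{n-1}\setminus\Delta$ and extending across the ramification, producing a semi-algebraic homeomorphism $\Phi\colon(\mathfrak X,Y)\to(X,0)\times Y$ that is the identity on $Y$.

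The main obstacle is to show that this $\Phi$ is bilipschitz. Near the polar curve of $\ell=p\vert_{\mathfrak X}$ the outer metric on $\mathfrak X$ is highly distorted, and a naive lift destroys the Lipschitz bounds. I expect the decisive tool to be a semi-algebraic carrousel-style decomposition of $(\C^{n-1},0)$ adapted to $\Delta$, together with its pullback to $\mathfrak X$, allowing one to trivialize piece by piece with uniform bilipschitz constants and to glue along the polar locus using Mostowski/Parusi\'nski-type Lipschitz stratification techniques. This step, which depends on the detailed analysis of the outer geometry carried out earlier in the paper, is the technical heart of the implication $(1)\Rightarrow(4)$.
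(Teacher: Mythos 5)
Your cycle $(4)\Rightarrow(3)\Rightarrow(2)\Rightarrow(1)\Rightarrow(4)$ matches the paper's overall architecture, and your treatment of the first three arrows is correct: $(4)\Rightarrow(3)\Rightarrow(2)$ is immediate from the definitions, and $(2)\Rightarrow(1)$ is, as you say, a direct consequence of the recognition result that outer Lipschitz geometry of a normal surface germ determines the embedded topology of the discriminant of a generic plane projection (part~\eqref{it:discriminant} of Theorem~\ref{th:invariants from geometry}), applied fibrewise to the retraction. This is exactly the paper's argument.

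The gap is in $(1)\Rightarrow(4)$. Your outline correctly identifies the shape of the construction (trivialize a carrousel decomposition of the base, lift through the branched cover $\mathscr{L}|_{\mathfrak X}$, then repair near the polar locus), but at the decisive point you hand the problem off to ``Mostowski/Parusi\'nski-type Lipschitz stratification techniques.'' The paper explicitly declines that route --- the introduction notes Mostowski's conjecture and his suggested stratification-theoretic approach, and then states that the paper's approach is different. What actually makes the lifted map $\Phi$ bilipschitz across the polar region is a sequence of concrete analytic inputs that your sketch does not supply: (i) the carrousel must be the \emph{complete} carrousel, truncated at the polar rates rather than at characteristic exponents, so that each branch of $\Delta_t$ sits inside a $\Delta$-wedge that pulls back to a $D(s)$-piece (Proposition~\ref{prop:polar wedge}); (ii) the constancy of these polar rates across the family (Lemma~\ref{prop:polar constancy}), which is the key non-trivial lemma and is \emph{not} a formal consequence of topological equisingularity of $\Delta_t$; and (iii) that constancy is proved by introducing spreadings and projected liftings of test curves and applying Teissier's restriction formula (Proposition~\ref{magicTeissier}), reducing to constancy of the Milnor number $\mu(F_{\gamma_t})$ and invoking L\^e--Ramanujam. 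Without polar-rate constancy, the fibrewise polar wedges $\mathcal B_{K_0,t}$ would not have matching conical geometry for different $t$, and the adjustment of $\Phi$ inside $C_{K_0}$ would fail; the final outer-metric estimate also uses vertical alignment via a second generic projection and the same test-curve machinery (Corollary~\ref{Z-test-curves}). So the proposal is structurally on the right track but leaves out precisely the analytic heart that the paper had to build.
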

The equivalence between \eqref{it:zariski} and \eqref{it:lipschitz}
has been conjectured by Mostowski in a talk for which written notes
are available (\cite{M}).  He also gave there brief hints to prove the
result using his theory of Lipschitz stratifications. Our approach is
different and we construct a decomposition of the pair
$(\mathfrak{X},\mathfrak{X} \setminus Y)$ using the theory of
\carrousels, introduced by L\^e in \cite{Le}, on the family of
discriminant curves. In both approaches, polar curves play a central
role.

The implications \eqref{it:lipschitz}$\Rightarrow$\eqref{it:constancySA}$\Rightarrow$\eqref{it:constancy}
are trivial and the implication
\eqref{it:constancy}$\Rightarrow$\eqref{it:zariski} will be a
consequence 
of one of our other main results, part \eqref{it:discriminant} of Theorem
\ref{th:invariants from geometry} below. The implication 
\eqref{it:zariski}$\Rightarrow$\eqref{it:lipschitz} will be proved in the final sections of the paper.
 
It was known already that the inner Lipschitz geometry is
not sufficient to understand Zariski equisingularity. Indeed, the
family of hypersurfaces $(X_t,0)\subset (\C^3,0)$ with equation $z^3 +
tx^4z+x^6+y^6=0 $ is not Zariski equisingular (see \cite{BS3}; at
$t=0$, the discriminant curve has $6$ branches, while it has $12$
branches when $t \neq 0$). But it follows from \cite{BNP} that it
has constant inner geometry (in fact $(X_t,0)$ is metrically conical
for the inner metric for all $t$).

\subsection*{Invariants from Lipschitz geometry} 
The other main results
of this paper are stated in the next theorem.

\begin{theorem}\label{th:invariants from geometry}
  If $(X,0)$ is a normal complex surface singularity then the outer
  Lipschitz geometry on $X$ determines:
  \begin{enumerate}
  \item\label{it:resolution of pencils} the decorated resolution graph
    of the minimal good resolution of $(X,0)$ which resolves the
    basepoints of a general linear system of hyperplane
    sections\footnote{ This is the minimal good resolution
      which factors through the  blow-up of the maximal ideal.};
  \item\label{it:multiplicity and Zmax} the multiplicity of $(X,0)$
    and the maximal ideal cycle in its resolution;
  \item\label{it:hyperplane section} for a generic hyperplane $H$, the
    outer  Lipschitz geometry of the curve $(X\cap H,0)$;
   \item\label{it:resolution of pencils1} the decorated resolution
     graph of the minimal good resolution of $(X,0)$ which resolves
     the basepoints of the family of polar curves of  plane
     projections\footnote{ This is the minimal good resolution
      which factors through the Nash modification.};
   \item\label{it:discriminant} the topology of the discriminant curve
     of a generic plane projection;
  \item\label{it:curves}  the outer  Lipschitz 
    geometry of the polar curve of a generic plane projection.
      \end{enumerate}
\end{theorem}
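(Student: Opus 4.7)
The plan is to fix a generic linear plane projection $p\colon (X,0)\to(\C^2,0)$, apply L\^e's \carrousel\ to its discriminant curve $\Delta$, lift the resulting pieces through $p$ to a semi-algebraic decomposition of $(X,0)$ refining the thick/thin JSJ decomposition of \cite{BNP}, and then argue that both this lifted decomposition and the Puiseux data it records are recoverable from the outer Lipschitz geometry alone. Every item \eqref{it:resolution of pencils}--\eqref{it:curves} will then follow by reading off combinatorial or geometric data from the decomposition.

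First I would set up the lifted carrousel in $X$. L\^e's carrousel cuts a small ball around $0\in\C^2$ into $D$-pieces and annular $A$-pieces around each branch of $\Delta$, and the shrinking rates of these pieces are governed by the Puiseux exponents of $\Delta$. Pulling back through $p$, which is a ramified cover branched exactly on the polar curve $\Pi$, produces semi-algebraic pieces in $X$ whose outer diameters shrink at the same rates as their images. Crucially, on a piece that contains part of $\Pi$, the inner geometry is essentially conical while the outer geometry is strictly pinched: distinct sheets of $p$ over a small disk near a point of $\Delta$ lie close together in $\C^n$, even though the intrinsic geodesic between them must detour around $\Pi$. This mismatch between inner and outer Lipschitz scales is exactly what encodes the exponents of $\Delta$ and the position of $\Pi$.

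The central step, and the main obstacle, is to recognise these carrousel pieces of $X$ using only the outer metric. The plan is to characterise each piece by its \emph{inner-versus-outer Lipschitz profile}: one considers pairs of points whose ratio of outer to inner distance degenerates along prescribed rates, and shows that the loci where a given rate is realised are precisely the polar and non-polar pieces of the lifted carrousel. Combined with the inner classification of \cite{BNP}, which is already determined by the outer geometry, these Lipschitz rates intrinsically cut out the pieces and recover the Puiseux data of $\Delta$. A careful genericity discussion is then needed to show that different generic choices of projection $p$ produce the same combinatorial data, so that what we extract is a genuine invariant of $(X,0)$.

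Once the decomposition and its Puiseux data are recovered, the conclusions follow. The combinatorial data of the lifted carrousel gives the embedded topology of $\Delta\subset\C^2$, proving \eqref{it:discriminant}, while $\Pi$ is read off as the preimage in $X$ of the singular part of $\Delta$ and inherits its outer Lipschitz structure from $X$, giving \eqref{it:curves}. The decorated resolution graph in \eqref{it:resolution of pencils1} is then the minimal good resolution which resolves the family of curves $\Pi$, by definition of the Nash modification. For \eqref{it:resolution of pencils} and \eqref{it:multiplicity and Zmax} one specialises the whole discussion to a generic pencil of hyperplane sections: the carrousel associated to a generic linear projection $(X,0)\to(\C,0)$ determines the decorated graph of the minimal good resolution factoring through the blow-up of the maximal ideal, and with it the multiplicity and the maximal ideal cycle. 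The same specialisation yields \eqref{it:hyperplane section}, since a generic hyperplane section is one fibre of this pencil and its outer geometry is an immediate by-product of the carrousel restricted to it.
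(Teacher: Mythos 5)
Your global strategy---lift a carrousel of $\Delta$ through the generic projection $p$, recognise the resulting pieces of $X$ from the discrepancy between inner and outer Lipschitz rates, then read off the invariants---is in the spirit of the paper, and the ``inner-versus-outer Lipschitz profile'' you propose is essentially the paper's notion of \emph{abnormality}. But there are several genuine gaps.

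First, the carrousel you lift is the wrong one. You say the shrink rates of the pieces are ``governed by the Puiseux exponents of $\Delta$,'' i.e.\ the plain carrousel. The decomposition of $(X,0)$ that is actually recoverable from the outer geometry is governed by the \emph{polar rates}: for each branch $\Delta_0$ of $\Delta$, the exponent at which a polar wedge around the corresponding branch of $\Pi$ pinches. These can be strictly larger than every characteristic Puiseux exponent of $\Delta_0$ (in the $D_5$ example the discriminant has exponents $1,3/2$ but the polar rates are $2,5/2$), so the lifted plain carrousel you describe is coarser than the geometric decomposition and misses honest outer-Lipschitz invariants. One needs the \emph{intermediate/complete} carrousel, truncated at polar rates, and one needs to prove those rates are themselves visible in the outer geometry---which is exactly what the abnormality-and-ball-exploration argument does.

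Second, your derivation of parts \eqref{it:resolution of pencils}--\eqref{it:hyperplane section} does not work as stated. There is no carrousel attached to a projection $(X,0)\to(\C,0)$: a carrousel decomposes $\C^2$ around a plane curve, and the fibres of a pencil of hyperplane sections have no discriminant curve to carrousel. The paper's proof of \eqref{it:resolution of pencils} is an independent argument: one identifies the thick--thin decomposition (already an inner-Lipschitz invariant), determines the Milnor fibration of a generic linear form up to isotopy on each rescaled thin zone, uses the Bernig--Lytchak map to the tangent cone to handle the solid-torus thick pieces, and finally invokes the uniqueness of negative-definite plumbing to pin down the decorated graph. None of this is subsumed by ``specialising'' the carrousel.

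Third, two of your deductions at the end are too quick. For \eqref{it:resolution of pencils1}, knowing $\Pi$ as an abstract curve is not enough; one must know where the link of $\Pi$ sits as a collection of Seifert fibres inside the graph decomposition of $X^{(\epsilon)}$, and that the Seifert fibration of each piece is determined by the outer geometry (which requires a separate argument, with a special case for lens spaces and cusp singularities). For \eqref{it:curves}, $\Pi$ is not simply ``the preimage of the singular part of $\Delta$'' and its outer Lipschitz geometry is not ``inherited from $X$'' in any immediate sense; the correct route is \eqref{it:discriminant}$\Rightarrow$\eqref{it:curves} via the theorem that the outer Lipschitz geometry of a space curve is determined by the topology of a generic plane projection, together with Teissier's result that a generic projection of $X$ is generic for its own polar curve.
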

By ``decorated resolution graph'' we mean the resolution graph
decorated with arrows corresponding to the components of the strict
transform of the resolved curve.
As a consequence of this theorem,
the outer Lipschitz geometry of a hypersurface
$(X,0)\subset (\C^3,0)$ determines other invariants such as its
extended Milnor number $\mu^*(X,0)$ and the invariants $k(X,0)$ and
$\phi(X,0)$ (number of vanishing double folds resp.\ cusp-folds, see
\cite{BH}).

The analytic invariants determined by the Lipschitz geometry in
Theorem \ref{th:invariants from geometry} are of two natures:
\eqref{it:resolution of pencils}--\eqref{it:hyperplane section} are
related to the general hyperplane sections and the maximal ideal of
$\mathcal O_{X,0}$ while \eqref{it:resolution of
  pencils1}--\eqref{it:curves} are related to polar curves and the
jacobian ideal.  The techniques we use for these two sets of
invariants differ. We prove \eqref{it:resolution of
  pencils}--\eqref{it:hyperplane section} in section \ref{sec:resolve
  hyperplane pencil}, building in part from the classification theorem for inner bilipschitz geometry of \cite{BNP}. The more difficult part is
\eqref{it:resolution of pencils1}--\eqref{it:curves}. These
polar invariants are determined in Sections \ref{sec:detecting1} and
\ref{sec:finding remaining polars} from a geometric decomposition of
$(X,0)$ defined in Sections \ref{sec:carrousel1}  and \ref{sec:decomposition vs resolution} which refines the 
decomposition used in \cite{BNP}.

This paper has four parts, of which Parts 1 and 2 (Sections \ref{sec:2} to \ref{sec:decomposition vs resolution}) introduce
concepts and techniques that are needed later, Part 3  (Sections \ref{sec:resolve hyperplane
  pencil} to \ref{sec:finding remaining polars}) proves Theorem
\ref{th:invariants from geometry} and Part 4  (Sections \ref{sec:notations}
to \ref{sec:proofZtoL}) proves
Theorem \ref{th:zariski vs bilipschitz}.

\smallskip\noindent{\bf Acknowledgments.}  We are especially grateful
to Lev Birbrair for many conversations which have contributed to this
work.  We are also grateful to Terry Gaffney and Bernard Teissier for
useful conversations and to Joseph Lipman for helpful comments.
Neumann was supported by NSF grants DMS-0905770 and
DMS-1206760. Pichon was supported by ANR-12-JS01-0002-01 SUSI and
FP7-Irses 230844 DynEurBraz. We are also grateful for the
hospitality and support of the following institutions: Columbia University
(P), Institut de Math\'ematiques de Marseille and Aix Marseille
Universit\'e (N), Universidad Federal de Ceara (N,P), CIRM recherche
en bin\^ome (N,P) and IAS Princeton (N,P).

\section*{\bf Part 1: Carrousel and geometry of curves}\label{Part 1}

\section{Preliminary: geometric pieces and rates}\label{sec:2}

  In \cite{BNP} we defined some metric spaces (with inner metric) called $A$-, $B$- and
  $D$-pieces, which will be used extensively in the present paper. We
  give here the definition and basic facts about these pieces (see
  \cite[Sections 11 and 13]{BNP} for more details). First examples
  will appear in the next section. The pieces are topologically
  conical, but usually with metrics that make them shrink non-linearly
  towards the cone point.  We will consider these pieces as germs at
  their cone-points, but for the moment, to simplify notation, we
  suppress this.
  
  $D^2$ denotes the standard unit disc in $\C$, $S^1$ is its boundary, and $I$ denotes the interval $[0,1]$. 
  
\begin{definition}[\bf$\boldsymbol{A(q,q')}$-pieces]\label{def:p2}
  Let $q,q'$ be rational numbers such that $1\le q  < q'$. Let $A$ be
  the euclidean annulus $\{(\rho,\psi):1\le \rho\le 2,\, 0\le \psi\le
  2\pi\}$ in polar coordinates and for $0<r\le 1$ let $g^{(r)}_{q,q'}$
  be the metric on $A$:
$$g^{(r)}_{q,q'}:=(r^q-r^{q'})^2d\rho^2+((\rho-1)r^q+(2-\rho)r^{q'})^2d\psi^2\,.
$$ 
So $A$ with this metric is isometric to the euclidean annulus with
inner and outer radii $r^{q'}$ and $r^q$. The metric completion of
$(0,1]\times S^1\times A$ with the metric
$$dr^2+r^2d\theta^2+g^{(r)}_{q,q'}$$ compactifies it by adding a single point at
$r=0$.  We call a metric space which is bilipschitz homeomorphic to
this completion an \emph{$A(q,q')$-piece} or simply \emph{$A$-piece}.
 \end{definition}
 
 \begin{definition}[\bf$\boldsymbol{B(q)}$-pieces]\label{def:p1}  
   Let $F$ be a compact oriented $2$-manifold, $\phi\colon F\to F$ an
   orientation preserving diffeomorphism, and $M_\phi$ the mapping
   torus of $\phi$, defined as:
$$M_\phi:=([0,2\pi]\times F)/((2\pi,x)\sim(0,\phi(x)))\,.$$
Given a rational number $q>1$, we will define a metric space
$B(F,\phi,q)$ which is topologically the cone on the mapping torus
$M_\phi$.
 
For each $0\le \theta\le 2\pi$ choose a Riemannian metric $g_\theta$
on $F$, varying smoothly with $\theta$, such that for some small
$\delta>0$:
$$
g_\theta=
\begin{cases}
g_0&\text{ for } \theta\in[0,\delta]\,,\\
\phi^*g_{0}&\text{ for }\theta\in[2\pi-\delta,2\pi]\,.
\end{cases}
$$
Then for any $r\in(0,1]$ the metric $r^2d\theta^2+r^{2q}g_\theta$ on
$[0,2\pi]\times F$ induces a smooth metric on $M_\phi$. Thus
$$dr^2+r^2d\theta^2+r^{2q}g_\theta$$ defines a smooth metric on
$(0,1]\times M_\phi$. The metric completion of $(0,1]\times M_\phi$
adds a single point at $r=0$.  Denote this completion by $B(F,\phi,q)$. We call a metric space which is bilipschitz homeomorphic to $B(F,\phi,q)$ a  
     \emph{$B(q)$-piece} or simply  \emph{$B$-piece}. 

\red{If $F$ is an annulus we may 
speak of a \emph{$B$-piece of type $A(q,q)$}.} 
A $B(q)$-piece such that $F$ is a disc and
  $q\ge 1$ is called a \emph{$D(q)$-piece} or simply
  \emph{$D$-piece}.
\end{definition}

\begin{definition}[\bf Conical pieces]\label{def:p3}
 Given a compact smooth $3$-manifold $M$,
  choose a Riemannian metric $g$ on $M$ and consider the metric
  $dr^2+r^2g$ on $(0,1]\times M$. The completion of this adds a point
  at $r=0$, giving a \emph{metric cone on $M$}.  We call a metric space which is bilipschitz homeomorphic to a metric cone a \emph{conical piece}.  Notice that $B(1)$- and $A(1,1)$-pieces are
  conical. For short we will call  any conical piece a \emph{$B(1)$-piece}, even if its  $3$-dimensional link is not a mapping-torus. 
  (Conical-pieces were called $CM$-pieces in \cite{BNP}.)
\end{definition}

The diameter of the image in $B(F,\phi,q)$ of a fiber $F_{r,\theta} =
\{r\}\times \{\theta\}\times F$ is $O(r^q)$. Therefore $q$ describes
a rate of shrink of the surfaces $F_{r,\theta}$ in $B(F,\phi,q)$ with
respect to the distance $r$ to the point at $r=0$. Similarly, the
inner and outer boundary components of any $\{r\} \times \{ t \} \times A$ in an $A(q,q')$ have rate of shrink
respectively $q'$ and $q$ with respect to $r$.

\begin{definition}[\bf Rate]
  The rational number $q$ is called the \emph{rate} of $B(q)$ or
  $D(q)$. The rationals $q$ and $q'$ are the two \emph{rates} of $A(q,q')$.
\end{definition}

In \cite[Section 11]{BNP} we explain how to glue two  $A$- or $B$-pieces by an isometry along cone-boundary components having the
same rate. Some of these gluings
give again $A$- or $B$-pieces and then could simplify by some rules as
described in \cite[Section 13]{BNP}.  We recall here this result,
which will be used later in amalgamation processes.
 
 \begin{lemma}\label{le:rules} 
In this lemma $\cong$ means bilipschitz equivalence
and $\cup$ represents gluing along appropriate boundary components
by an isometry. 
\begin{enumerate}
\item\label{rule:D2} $B(D^2,\phi,q)\cong B(D^2,id,q)$; $B(S^1\times
  I,\phi,q)\cong B(S^1\times I,id,q)$.
\item\label{rule:AA} $A(q,q')\cup A(q',q'')\cong A(q,q'')$.
\item\label{rule:FF} If $F$ is the result of gluing a surface
  $F'$ to a disk $D^2$ along boundary components then
  $B(F',\phi|_{F'},q)\cup B(D^2,\phi|_{D^2},q)\cong B(F,\phi,q)$.
\item\label{rule:AD} $A(q,q')\cup B(D^2,id,q')\cong B(D^2,id,q)$.
\item\label{rule:CM} A union of conical 
    pieces glued along boundary components is a conical piece.\qed
\end{enumerate}
\end{lemma}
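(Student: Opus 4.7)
My plan is to prove each rule by exhibiting an explicit bilipschitz homeomorphism between the metric completions described in Definitions \ref{def:p2} and \ref{def:p1}. The angular factors ($S^1$, $F$, $A$) are compact, so constants on those factors are automatic; the real content in every case is uniformity as $r\to 0$.

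For (1), I would first use that any orientation-preserving self-diffeomorphism of $D^2$ (by the Alexander trick) and of $S^1\times I$ (since any such diffeomorphism is isotopic to the identity once one drops the requirement of fixing the boundary) is smoothly isotopic to $\mathrm{id}$. Choose an explicit isotopy $\phi_s$, $s\in[0,1]$, with $\phi_0=\phi$ and $\phi_1=\mathrm{id}$, and with first derivatives in $(s,x)$ uniformly bounded: for instance, a Dehn twist on $S^1\times I$ can be unwound by $\phi_s(\theta,t)=(\theta+2\pi(1-s)t,t)$. The assignment $(r,\theta,x)\mapsto(r,\theta,\phi_{\theta/2\pi}^{-1}(x))$ then descends to a diffeomorphism $B(F,\phi,q)\to B(F,\mathrm{id},q)$, under which the defining metric $dr^2+r^2d\theta^2+r^{2q}g_\theta$ pulls back to a metric differing from the standard one by a bounded multiplicative factor, giving the claimed equivalence.

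For (2), I would simply glue the two pieces along the common cone-boundary $\rho=1$, which in both pieces has Euclidean radius $r^{q'}$, and verify directly from the radial densities in Definition \ref{def:p2} that the resulting metric on the combined $\rho$-annulus is bilipschitz to the defining metric of $A(q,q'')$; the content reduces to the elementary fact that three radial scales $r^q\ge r^{q'}\ge r^{q''}$ compare, up to constants, to the two scales $r^q\ge r^{q''}$. Rule (3) is immediate from the definition of the mapping torus, since $M_\phi$ is the gluing of $M_{\phi|_{F'}}$ and $M_{\phi|_{D^2}}$ along their common boundary with compatible metrics. Rule (4) combines (3) with (1) and (2): split $B(D^2,\mathrm{id},q)$ into an inner concentric $B(D^2,\mathrm{id},q')$ and an annular collar, and recognize the collar as an $A(q,q')$-piece after a change of coordinates. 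Rule (5) is formal: any finite gluing of metric cones along their cone-boundaries is the metric cone on the glued $3$-manifold.

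The principal difficulty will be the uniformity step in rule (1). When $\phi$ is a large Dehn twist or a large rotation, any isotopy to the identity must unwind a correspondingly large amount of twist, so the derivatives of a naive isotopy do not a priori remain bounded. Bounded-derivative interpolations such as the one above handle this, and once the estimate is secured, rules (2)--(5) reduce to direct calculations using the gluing calculus developed in \cite[Section 11]{BNP} with no further subtlety near $r=0$.
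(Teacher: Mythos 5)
The paper does not actually prove Lemma~\ref{le:rules}; it is recalled from \cite[Sections 11 and 13]{BNP} and stated with a \qed, so there is no in-paper argument to compare against. Judged on its own terms, your sketch has the right mechanism for each rule: isotope $\phi$ to the identity and absorb the resulting cross terms into $r^2d\theta^2$ using $q\ge1$ for (1); match the piecewise-linear radial interpolation of $A(q,q')\cup A(q',q'')$ to the linear one of $A(q,q'')$ for (2); use that $M_\phi$ splits along the sub-mapping-torus of $\partial F'$ for (3); shave a concentric sub-disk of $g$-radius $\sim r^{q'-q}$ off $B(D^2,\mathrm{id},q)$ for (4).

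Two points need repair. First, in rule (1) the claim that ``any orientation-preserving self-diffeomorphism of $S^1\times I$ is isotopic to the identity once one drops the boundary condition'' is false as stated: an orientation-preserving $\phi$ may interchange the two boundary circles (e.g.\ $(\theta,t)\mapsto(-\theta,1-t)$), and then $M_\phi$ is the orientable twisted $I$-bundle over the Klein bottle, with a single boundary torus, whereas $M_{\mathrm{id}}=T^2\times I$ has two — so the cones are not even homeomorphic. The lemma tacitly restricts to $\phi$ preserving each boundary component (as all the monodromies arising in the paper do), and your argument should say so, since for the component-preserving mapping class group of the annulus the statement you want (triviality) is correct. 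Second, rule (5) is not quite ``formal'': a conical piece is only \emph{bilipschitz} to a metric cone, so after transporting the isometric gluing through the bilipschitz identifications $\psi_i\colon X_i\to C(M_i)$ one obtains $C(M_1)\cup_h C(M_2)$ where $h$ is a bilipschitz, not conical, self-map of a cone on a torus; one still has to replace $h$ (up to bilipschitz) by the radially-constant map $h_1$ coning off $h|_{r=1}$, using the bound $K^{-1}r\le|h(r,x)|\le Kr$, before the glued space is literally a cone. Both gaps are fillable with standard arguments, but neither is immediate.
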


\section{The plain \carrousel\ for a plane curve germ} \label{sec:carrousel}

A \emph{\carrousel} for a reduced curve germ $(C,0)\subset
(\C^2,0)$ is constructed in two steps. The first one consists in
truncating the Puiseux series expansions of the branches of $C$ at
suitable exponents and then constructing a decomposition of $(\C^2,0)$
into $A$-, $B$- and $D$-pieces with respect to the truncated Puiseux series. We call this
first decomposition the \emph{unamalgamated \carrousel}.  It is the
\carrousel\ used in \cite{BNP}, based on ideas of L\^e in \cite{Le}.

The second step consists in amalgamating certain pieces of the
unamalgamated \carrousel\ using the techniques of Lemma
\ref{le:rules}. 

There are two choices in the construction: where we truncate the
Puiseux series expansions to form a \carrousel, and how
we amalgamate pieces of the \carrousel\ to form the
amalgamated \carrousel. Amalgamated \carrousels\
will be a key tool in the present paper and we will in
fact use several different ones based on different choices of
{truncation} and {amalgamation}.

\medskip In this section, we will construct an amalgamated \carrousel\
which we call the \emph{plain \carrousel} of $C$. We start by
constructing the appropriate unamalgamated \carrousel.

The tangent cone of $C$ at $0$ is a union $\bigcup_{j=1}^mL^{(j)}$ of
lines. For each $j$ we denote the union of components of $C$ which are
tangent to $L^{(j)}$ by $C^{(j)}$.  We can assume our coordinates
$(x,y)$ in $\C^2$ are chosen so that the $y$-axis is transverse to
each $L^{(j)}$.

We choose $\epsilon_0>0$ sufficiently small that the set $\{(x,y):
|x|=\epsilon\}$ is transverse to $C$ for all $\epsilon\le \epsilon_0$.
We define conical sets $V^{(j)}$ of the form
$$V^{(j)}:=\{(x,y):|y-a_1^{(j)}x|\le \eta |x|, |x|\le
\epsilon_0\}\subset \C^2\,,$$ where the equation of the line $L^{(j)}$
is $y=a_1^{(j)}x$ and $\eta>0$ is small enough that the cones are
disjoint except at $0$. If $\epsilon_0$ is small enough
$C^{(j)}\cap \{|x|\le\epsilon_0\}$ will lie completely in
$V^{(j)}$.  

There is then an $R>0$ such that for any $\epsilon\le \epsilon_0$ the
sets $V^{(j)}$ meet the boundary of the ``square ball''
$$B_\epsilon:=\{(x,y)\in \C^2: |x|\le \epsilon, |y|\le R\epsilon\}$$
only in the part $|x|=\epsilon$ of the boundary. We will use these
balls as a system of Milnor balls.

We first define the \carrousel\ inside each $V^{(j)}$ with respect to
the branches of $C^{(j)}$.  It will consist of closures of regions
between successively smaller neighbourhoods of the
successive 
Puiseux approximations of the branches of $C^{(j)}$. As such, it is
finer than the one of \cite{Le}, which only needed the first Puiseux
exponents of the branches of $C^{(j)}$.

We will fix $j$ for the moment and therefore drop the superscripts, so
our tangent line $L$ has equation $y=a_1 x$. The collection of
coefficients and exponents appearing in the following description
depends, of course, on $j=1,\dots,m$.

\subsection*{Truncation} We first truncate the Puiseux
series for each component of $C$ at a point where truncation does not
affect the topology of $C$. Then for each pair $\kappa=(f, p_k)$
consisting of a Puiseux polynomial $f=\sum_{i=1}^{k-1}a_ix^{p_i}$ and
an exponent $p_k$ for which there is a Puiseux series
$y=\sum_{i=1}^{k}a_ix^{p_i}+\dots$ describing some component of $C$,
we consider all components of $C$ which fit this data. If
$a_{k1},\dots,a_{k{m_\kappa}}$ are the coefficients of $x^{p_k}$ which
occur in these Puiseux polynomials we define
\begin{align*}
  B_\kappa:=\Bigl\{(x,y):~&\alpha_\kappa|x^{p_k}|\le
  |y-\sum_{i=1}^{k-1}a_ix^{p_i}|\le
  \beta_\kappa|x^{p_k}|\\
  & |y-(\sum_{i=1}^{k-1}a_ix^{p_i}+a_{kj}x^{p_k})|\ge
  \gamma_\kappa|x^{p_k}|\text{ for }j=1,\dots,{m_\kappa}\Bigr\}\,.
\end{align*}
Here $\alpha_\kappa,\beta_\kappa,\gamma_\kappa$ are chosen so that
$\alpha_\kappa<|a_{kj}|-\gamma_\kappa<|a_{kj}|+\gamma_\kappa<\beta_\kappa$
for each $j=1,\dots,{m_\kappa}$.  If $\epsilon$ is
small enough, the sets $ B_\kappa$ will be disjoint for different
$\kappa$. 

It is easy to see that $ B_\kappa$ is a $B(p_k)$-piece:
  the intersection $B_{\kappa}\cap \{x=t\}$ is a finite collection of
  disks with some smaller disks removed. The diameter of each of them
  is $O(t^{p_k})$. The closure of the complement in $V=V^{(j)}$ of the
  union of the $B_\kappa$'s is a union of $A$- and
  $D$-pieces. Finally, $\overline{B_\epsilon \setminus \bigcup
    V^{(j)}}$ is a $B(1)$-piece.
  We have then decomposed each cone $V^{(j)}$ and the whole of
  $B_{\epsilon}$ as a union of $B$-, $A$- and $D$-pieces.

\begin{definition}[\bf Carrousel sections]\label{def:carrousel}
  A \emph{carrousel section} is the picture of the intersection of a
  carrousel decomposition with a line $x=t$.
\end{definition}

\begin{amalgamation*}\label{amalg:plain}
  For the study of plane curves a much simpler decomposition suffices,
  which we obtain by amalgamating any $D$-piece that does not contain
  part of the curve $C$ \red{and any $A$-piece} with the piece just
  outside it.
\end{amalgamation*}
\begin{definition}
  We call this amalgamated \carrousel\ the \emph{plain \carrousel} of
  the curve $C$. 
  \end{definition}
   
\begin{example}\label{ex:3.2} 
We give examples for the carrousel section and plain carrousel section in Figure \ref{fig:carrousel} below.
On the left the figure shows a carrousel section
    for a curve $C$ having two branches with Puiseux expansions
    respectively $y=ax^{4/3}+bx^{13/6}+\ldots$ and $y=cx^{7/4} +
    \ldots$ truncated after the terms $bx^{13/6}$ and $cx^{7/4}$
    respectively.  The $D$-pieces are gray. Note that the intersection
    of a piece of the decomposition of $V$ with the disk $V\cap
    \{x=\epsilon\}$ will usually have several components. Note also
    that the rates in $A$- and $D$-pieces are determined by the rates
    in the neighbouring $B$-pieces.

\red{On the right is the corresponding plain carrousel section.}

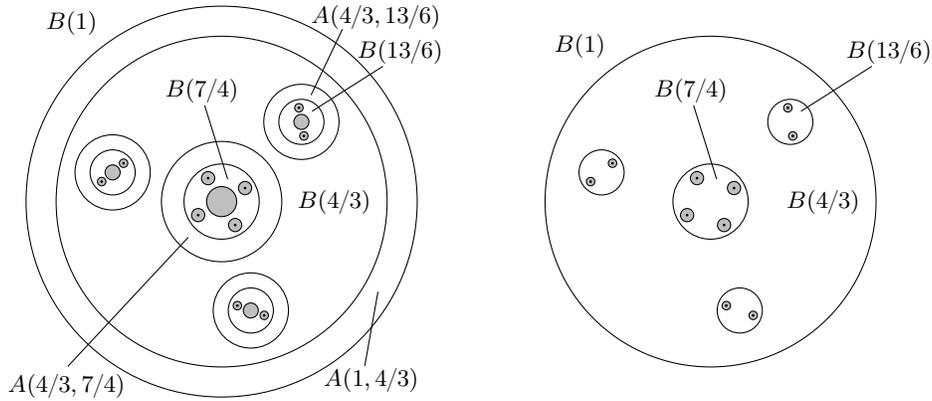
\begin{figure}[ht]
  \centering
\begin{tikzpicture}

\draw  (0,0)circle(2.6cm);
\draw  (0,0)circle(2.2cm);
\draw  (0,0)circle(.8cm);   
\draw  (0,0)circle(0.5cm);

\draw[fill=lightgray](0,0)circle(0.2cm);

\draw[fill=lightgray] (0:0)+(30:0.36)circle(2.5pt);
\draw[fill] (0:0)+(30:0.36)circle(0.3pt);
\draw[fill=lightgray] (0:0)+(120:0.36)circle(2.5pt);
\draw[fill] (0:0)+(120:0.36)circle(0.3pt);
\draw[fill=lightgray] (0:0)+(210:0.36)circle(2.5pt);
\draw[fill] (0:0)+(210:0.36)circle(0.3pt);
\draw[fill=lightgray] (0:0)+(300:0.36)circle(2.5pt);
\draw[fill] (0:0)+(300:0.36)circle(0.3pt);

\draw  (45:1.5)circle(0.5);
\draw  (45:1.5)circle(0.3);
\draw[fill=lightgray] (45:1.5)+(100:0.19)circle(1.6pt);
\draw[fill=lightgray] (45:1.5)+(280:0.19)circle(1.6pt);
\draw[fill] (45:1.5)+(100:0.19)circle(0.3pt);
\draw[fill] (45:1.5)+(280:0.19)circle(0.3pt);
\draw[fill=lightgray]   (45:1.5)circle(0.1);
\draw  (165:1.5)circle(0.5);
\draw  (165:1.5)circle(0.3);
\draw[fill=lightgray] (165:1.5)circle(0.1);
\draw[fill=lightgray] (165:1.5)+(40:0.19)circle(1.6pt);
\draw[fill] (165:1.5)+(40:0.19)circle(0.3pt);
\draw[fill=lightgray] (165:1.5)+(220:0.19)circle(1.6pt);
\draw[fill] (165:1.5)+(220:0.19)circle(0.3pt);
          
\draw  (285:1.5)circle(0.5);
\draw  (285:1.5)circle(0.3);
\draw[fill=lightgray]   (285:1.5)circle(0.1);
\draw[fill=lightgray] (285:1.5)+(-20:0.19)circle(1.6pt);
\draw[fill] (285:1.5)+(-20:0.19)circle(0.3pt);
\draw[fill=lightgray] (285:1.5)+(160:0.19)circle(1.6pt);
\draw[fill] (285:1.5)+(160:0.19)circle(0.3pt);

\draw[thin](100:1.3) --(75:0.31);
\draw[thin](55:2.8) --(50:1.85);
\draw[thin](40:2.8) --(45:1.7);
\draw[thin](-30:2.4)--(-49:2.9);
\draw[thin](-133:.65)--(-133:3);

\node(a)at(130:3.1){\small$B{(1)}$};
\node(a)at(230:3.2){\small$A(4/3,7/4)$};
\node(a)at(100:1.5){\small$B{(7/4)}$};
\node(a)at(0:1.5){\small $B{(4/3)}$};
\node(a)at(40:3.1){\small$B{(13/6)}$};
\node(a)at(50:3.2){\small$A(4/3,13/6)$};
\node(a)at(-50:3.1){\small$A(1,4/3)$};

\begin{scope}[xshift=6.5cm]
\draw  (0,0)circle(2.2cm);
\draw  (0,0)circle(0.5cm);


\draw[fill=lightgray] (0:0)+(30:0.36)circle(2.5pt);
\draw[fill] (0:0)+(30:0.36)circle(0.3pt);
\draw[fill=lightgray] (0:0)+(120:0.36)circle(2.5pt);
\draw[fill] (0:0)+(120:0.36)circle(0.3pt);
\draw[fill=lightgray] (0:0)+(210:0.36)circle(2.5pt);
\draw[fill] (0:0)+(210:0.36)circle(0.3pt);
\draw[fill=lightgray] (0:0)+(300:0.36)circle(2.5pt);
\draw[fill] (0:0)+(300:0.36)circle(0.3pt);

\draw  (45:1.5)circle(0.3);
\draw[fill=lightgray] (45:1.5)+(100:0.19)circle(1.6pt);
\draw[fill=lightgray] (45:1.5)+(280:0.19)circle(1.6pt);
\draw[fill] (45:1.5)+(100:0.19)circle(0.3pt);
\draw[fill] (45:1.5)+(280:0.19)circle(0.3pt);

\draw  (165:1.5)circle(0.3);
\draw[fill=lightgray] (165:1.5)+(40:0.19)circle(1.6pt);
\draw[fill] (165:1.5)+(40:0.19)circle(0.3pt);
\draw[fill=lightgray] (165:1.5)+(220:0.19)circle(1.6pt);
\draw[fill] (165:1.5)+(220:0.19)circle(0.3pt);
          
\draw  (285:1.5)circle(0.3);
\draw[fill=lightgray] (285:1.5)+(-20:0.19)circle(1.6pt);
\draw[fill] (285:1.5)+(-20:0.19)circle(0.3pt);
\draw[fill=lightgray] (285:1.5)+(160:0.19)circle(1.6pt);
\draw[fill] (285:1.5)+(160:0.19)circle(0.3pt);

\draw[thin](100:1.3) --(75:0.31);
\draw[thin](40:2.8) --(45:1.7);

\node(a)at(130:2.7){\small$B(1)$};
\node(a)at(100:1.5){\small$B{(7/4)}$};
\node(a)at(0:1.5){\small $B{(4/3)}$};
\node(a)at(40:3.1){\small$B{(13/6)}$};
\end{scope}
  \end{tikzpicture} 
  \caption{Unamalgamated and plain carrousel sections for
    $C=\{y=ax^{4/3}+bx^{13/6}+\ldots\}\cup\{y=cx^{7/4} +
    \ldots\}$.}
  \label{fig:carrousel}
\end{figure}
\end{example}

The combinatorics of the plain carrousel section can be encoded by a
rooted tree, with vertices corresponding to pieces, edges
corresponding to pieces which intersect along a circle, and with
rational weights (the rates of the pieces) associated to
the nodes of the tree.  It is easy to recover a picture of
  the carrousel section, and hence the embedded topology of the plane
curve, from this weighted tree. For a careful description of how
to do this in terms of either the Eggers tree or the Eisenbud-Neumann
splice diagram of the curve see 
\cite{NP}. 
\begin{proposition}\label{prop:curve from carrousel}
  The combinatorics of the plain carrousel section for a plane curve germ
  $C$ determines the embedded topology of $C$.\qed
\end{proposition}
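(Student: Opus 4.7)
The plan is to extract from the combinatorics of the plain carrousel tree the classical complete invariants of the embedded topology of $(C,0)\subset(\C^2,0)$, namely the characteristic Puiseux exponents of each branch together with the pairwise contact orders. Once these are recovered, the embedded topology follows from the classical theorem of Brauner--Zariski--Burau that they form a complete topological invariant (equivalently, they determine the Eisenbud--Neumann splice diagram).

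First I would read off the branches from the tree. The leaves of the rooted carrousel tree that carry a component of $C$ correspond precisely to the $D$-pieces meeting $C$ (the amalgamation in \ref{amalg:plain} kills exactly the empty $D$-pieces), and each such $D$-piece meets a single branch of $C$ transversally in its cone boundary. Thus the leaves are in bijection with the branches of $C$. Next I would recover the essential Puiseux exponents of a fixed branch: tracing the unique path in the tree from the root to a leaf gives a strictly increasing sequence of $B$-piece rates $1=q_0<q_1<\dots<q_r$, and I would argue that this sequence equals the sequence of characteristic Puiseux exponents of that branch. Indeed, by the construction in Section \ref{sec:carrousel}, a new $B_\kappa$-piece of rate $p_k$ is created along the path exactly when the truncation data forces a new Puiseux exponent to be recorded, and the truncation was chosen so as not to alter the topological type, hence to retain precisely the characteristic exponents.

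Second I would recover the pairwise contact exponents. Given two branches $C_i,C_j$, the paths from the root to the corresponding leaves share an initial common segment and then diverge at a uniquely determined node $v_{ij}$ of the carrousel tree. I claim the rate $q(v_{ij})$ attached to $v_{ij}$ equals the coincidence exponent of the Puiseux series of $C_i$ and $C_j$: the two branches remain inside a common $B_\kappa$-piece precisely as long as their truncated Puiseux polynomials agree, and they separate into distinct sub-pieces exactly at the first exponent at which their coefficients differ. This is the standard reading, described in \cite{NP}, of the Eggers tree from the iterated Puiseux data.

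Combining the two items, the carrousel tree determines the characteristic exponents of every branch and all pairwise contact orders, which is the standard complete set of invariants of the embedded topological type of a plane curve germ; equivalently, one can translate the carrousel tree into the Eisenbud--Neumann splice diagram following the recipe of \cite{NP}, and conclude. The main obstacle, or rather the only nontrivial point, is justifying the identification of the rates along a root-to-leaf path with characteristic exponents; this requires checking that amalgamation of empty $D$-pieces in Amalgamation \ref{amalg:plain} does not collapse any $B$-piece whose rate is a genuine characteristic exponent, and that conversely the $B$-pieces that survive correspond exactly to characteristic, as opposed to arbitrary, Puiseux exponents. This follows directly from the choice of truncation: truncations are performed at points that do not affect the embedded topology, so only the characteristic exponents produce a genuine bifurcation in the tree.
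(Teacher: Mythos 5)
Your overall strategy — recover the Eggers-type data (characteristic exponents of each branch plus all pairwise contact exponents) from the tree and then invoke the classical Brauner--Zariski--Burau theorem — is sound and is indeed what the paper alludes to via the reference to \cite{NP}. But there is a genuine error in your first step, and it is not a small one.

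You claim that the sequence of $B$-piece rates $1=q_0<q_1<\dots<q_r$ read along the path from the root to a branch's leaf ``equals the sequence of characteristic Puiseux exponents of that branch,'' and you justify this by saying that ``only the characteristic exponents produce a genuine bifurcation in the tree.'' This is false. The $B$-pieces along a root-to-leaf path record \emph{every} exponent at which the truncated Puiseux series must branch to separate the components of $C$, and these include coincidence exponents that need not be characteristic for the branch in question. The paper's own Example~\ref{ex:3.2} already gives a counterexample: with $C=\{y=ax^{4/3}+bx^{13/6}+\cdots\}\cup\{y=cx^{7/4}+\cdots\}$, the path from the root to the leaf of the second branch passes through the $B(4/3)$-piece (since both branches lie inside the cone around $y=0$ and the $B(4/3)$-piece is the level at which the first branch's nonzero $x^{4/3}$-coefficient is resolved), yet $4/3$ does not even occur as an exponent in the Puiseux expansion of the second branch, whose sole characteristic exponent is $7/4$. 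Another way to see the problem: the truncation criterion is that truncating further would change the embedded topology of the \emph{whole curve} $C$, which forces you to keep coincidence exponents between distinct branches even when they are non-characteristic; these then produce $B$-pieces.

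Because of this, the data you extract in your first paragraph is not the set of characteristic exponents of each branch, so the final appeal to the classical invariant theorem does not go through as written. To actually distinguish the characteristic rates along a path from the merely coincidental ones, you need to use finer information in the ``combinatorics of the carrousel section'' than just the weighted tree of pieces-and-adjacencies: you need the ramification data visible in the section, for instance the number of connected components of the section of each $B_\kappa$-piece, which detects whether the denominator of $p_k$ increases the lcm of the preceding denominators. That is exactly the kind of bookkeeping the paper defers to \cite{NP}. Your second paragraph (reading off coincidence exponents from bifurcation nodes) is fine, but it does not rescue the argument, since you still need to correctly separate out the characteristic exponents before you can invoke the classical completeness theorem.
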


\section{Lipschitz geometry and topology of a plane curve}\label{sec:curve}

In \cite{NP} we proved the following strong version of a result of
Pham and Teissier \cite{PT} and Fernandes \cite{F} about plane curve
germs. 
\begin{proposition} \label{prop:plane curve} The outer Lipschitz
  geometry of a plane curve germ
  $(C,0)\subset (\C^2,0)$ determines its embedded topology.

  More generally, if $(C,0)\subset (\C^n,0)$ is a curve germ and
  $\ell\colon\C^n\to \C^2$ is a generic plane projection then the
  outer Lipschitz geometry of $(C,0)$ determines the embedded topology
  of the plane projection $(\ell(C),0)\subset (\C^2,0)$.
  \end{proposition}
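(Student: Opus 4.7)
The plan is to reduce the claim to Proposition~\ref{prop:curve from carrousel}, which says that the embedded topology of a plane curve germ is determined by the combinatorics of its plain carrousel section. It therefore suffices to recover that combinatorics from the outer Lipschitz geometry of $(C,0)\subset(\C^2,0)$. The number and identity of the branches of $C$ are already a bilipschitz invariant of the germ: two points of $C$ lie in the same branch iff, arbitrarily close to $0$, they can be joined by arcs in $C$ whose outer length tends to $0$. So the branch decomposition is available from the outer metric alone.

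Within a single branch, the Puiseux characteristic exponents are the classical Pham--Teissier invariants, recovered as rates of self-coincidence: for two points $p,p'$ on different sheets of a generic linear projection of the branch with $|p|\asymp|p'|\asymp r$, the outer distance $|p-p'|$ is comparable to $r^{q}$ for some $q$ in the finite list of characteristic exponents, and a scale-by-scale analysis extracts the full list. For pairs of distinct branches $C_i,C_j$ I would introduce the contact rate
\[
q_{ij}:=\liminf_{p\in C_i,\, p\to 0}\frac{\log d_{out}(p,C_j)}{\log d_{out}(p,0)},
\]
which is manifestly a bilipschitz invariant, and a direct Puiseux computation shows it equals the coincidence exponent between $C_i$ and $C_j$. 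Grouping branches according to their mutual contact rate and iterating inside each cluster (after rescaling) yields the full coincidence tree. Together with the Puiseux pairs of each branch, this tree is precisely the combinatorial data carried by the plain carrousel section, so by Proposition~\ref{prop:curve from carrousel} it determines the embedded topology of $C$.

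For a curve $(C,0)\subset(\C^n,0)$ with $n>2$, I would exhibit a generic linear projection $\ell\colon\C^n\to\C^2$ whose restriction $(C,0)\to(\ell(C),0)$ is an outer-bilipschitz homeomorphism; this is the content of \cite{PT,F,NP} and follows from the fact that a generic projection contracts no pair of tangent directions along $C$. The outer Lipschitz geometry of $C$ in $\C^n$ is then identified with that of $\ell(C)$ in $\C^2$, and the first part applies to recover the embedded topology of $\ell(C)$.

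The main obstacle I anticipate is the rigorous identification of the metric rates $q_{ij}$, and of the higher-order rates that emerge after rescaling inside each cluster, with the exact exponents appearing in the Puiseux data of $C$: one must carefully compute Euclidean distance between two points lying on branches whose parametrizations agree up to a prescribed Puiseux order and diverge beyond it, and then check that no unforeseen collapse of distances can occur under a general bilipschitz equivalence. This is where bilipschitz invariance is used to extract exact numerical invariants rather than purely topological ones, and it is also the step that forces the $n>2$ case to go through a genuinely generic projection.
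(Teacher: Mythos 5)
Your strategy—reduce to Proposition~\ref{prop:curve from carrousel} and recover the carrousel data from the metric—is exactly the paper's, and you correctly identify the key obstacle at the end. But you do not resolve it, and it is precisely where the real work of the proof lies.

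The inter-branch part of your argument is on solid ground: the $\liminf$ defining $q_{ij}$ is manifestly a bilipschitz invariant, and it does compute the coincidence exponent between $C_i$ and $C_j$. The trouble starts with the intra-branch characteristic exponents. You propose to read them off as ``rates of self-coincidence between sheets of a generic linear projection,'' but a bilipschitz equivalence of metric germs does not see the complex structure, so it does not know which pairs of points lie over a common base point in a line projection. You have traded an intrinsic metric quantity for one that explicitly uses the auxiliary analytic projection, and then you need to show the output survives an arbitrary bilipschitz change of metric. That is not automatic: a $K$-bilipschitz homeomorphism applied to the subset $C\cap B(p,\delta t)$ can scatter its $\mu$ well-separated disk components into many fragments, and a naive ``distance between pieces'' reading then breaks. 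The paper's proof handles this concretely. After discarding the complex structure by replacing the analytic arc $t\mapsto p_1(t)$ with an arbitrary continuous arc $p'_1(t)\subset C$ with $d(0,p'_1(t))=O(t)$, it records both $B'_C(p'_1(t),\delta t)$ and $B'_C(p'_1(t),\tfrac{\delta}{K^4} t)$ in the modified metric and keeps only the $\mu$ components of the larger set that meet the smaller one; those are the correct $\mu$ disks, and their pairwise distances still give the $q_{jk}$. That two-scale device is the step your proposal names as the ``main obstacle'' but does not supply, and without it the argument is incomplete.

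Two smaller points. First, distinguishing branches is already a topological fact (bilipschitz implies homeomorphism, and the branches are the local connected components of $C\setminus\{0\}$), so the metric characterization you give, while correct, is not where any difficulty resides. Second, the paper treats intra-branch Puiseux exponents and inter-branch coincidence exponents uniformly via the single family of rates $q_{jk}$ attached to the $\mu$ points of a transverse fiber, and then reconstructs the carrousel section combinatorially from this single matrix of rates (Lemma~\ref{le:curve geometry}); your proposal splits these into two separate mechanisms and would still need an argument that the resulting tree-plus-exponents package actually reconstructs the carrousel section rather than just being consistent with it.
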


  What was new in this proposition is that the germ is considered just
  as a metric space germ up to bilipschitz equivalence, without the
  analytic restrictions of \cite{PT} and \cite{F}. The converse
  result, that the embedded topology of a plane curve determines the
  outer Lipschitz geometry, is easier and is proved in \cite{PT}.

 In this section we briefly recall the proof of Proposition
  \ref{prop:plane curve}, since we use the last part of it in Section
  \ref{sec:resolve hyperplane pencil} (proof of Theorem \ref{th:invariants from
    geometry}\eqref{it:hyperplane section}) and we also mildly adapt its proof (as given in \cite{NP}) to illustrate a technique we will use 
  in Section \ref{sec:finding remaining polars}.

  \begin{proof}
  The case $n>2$ of Proposition \ref{prop:plane curve} follows
  immediately from the case $n=2$ since Pham and
  Teissier prove in \cite{PT} that for a generic plane projection
  $\ell$ the restriction $\ell|_C\colon (C,0)\to(\ell(C),0)$ is
  bilipschitz for the outer geometry. So we assume from now on
  that $n=2$, so $(C,0)\subset (\C^2,0)$ is a plane curve.

  We first describe how to recover the combinatorics of the
  plain carrousel section of $C$ using the analytic structure and the
  outer geometry. 

We assume, as in the previous section, that the tangent cone of $C$
at $0$ is a union of lines 
transverse to the $y$-axis. We  use again the family of Milnor balls 
$B_\epsilon$, $\epsilon\le
\epsilon_0$, of the previous
section.  We put $S_\epsilon=\partial B_\epsilon$. 
Let $\mu$ be the multiplicity of $C$. The lines $x=t$ for $t\in
(0,\epsilon_0]$ intersect $C$ in a finite set of points
$p_1(t),\dots,p_{\mu}(t)$ which depends continuously on $t$. For each
$0<j<k\le \mu$ the distance $d(p_j(t),p_k(t))$ has the form
$O(t^{q_{jk}})$, where $q_{jk}$ is either an essential Puiseux
exponent for a branch of the plane curve $C$ or a coincidence exponent
between two branches of $C$.

\begin{lemma} \label{le:curve geometry}The map $ \{(j,k)~|~1\le j<k\le
  \mu\}\mapsto q_{jk}$ determines the embedded topology of $C$.
\end{lemma}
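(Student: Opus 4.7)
My plan is to reconstruct the plain carrousel tree of $C$ from the matrix $(q_{jk})$ and then invoke Proposition \ref{prop:curve from carrousel}. The key identification is this: walking from the leaf of the plain carrousel tree corresponding to $p_j(t)$ up to the root, one traverses the unique nested chain of pieces containing $p_j(t)$, starting from the small $D$-piece around the branch of $C$ carrying it and ending at the root $B(1)$-piece. For any pair $p_j,p_k$, the deepest piece containing both is a $B(q)$-piece whose rate $q$ is the label of the lowest common ancestor of the two leaves; by the explicit Puiseux description of the sets $B_\kappa$ in Section \ref{sec:carrousel}, the two points are separated inside this $B(q)$-piece at distance $\Theta(t^q)$. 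Hence $q_{jk}$ equals precisely the rate attached to the lowest common ancestor of the leaves $p_j$ and $p_k$ in the plain carrousel tree.

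Once this identification is made, the reconstruction is automatic. The matrix $(q_{jk})$ is an ultrametric, i.e.\ for any triple the minimum of $q_{jk}$, $q_{jl}$, $q_{kl}$ is attained at least twice, because lowest-common-ancestor depths in a rooted tree enjoy this property. From an ultrametric on a finite set one recovers a unique rooted weighted tree with leaves labeled by the set: for each rational $q\ge 1$ define $p_j \sim_q p_k$ iff $q_{jk}\ge q$; as $q$ increases past each distinct value appearing in $(q_{jk})$ the equivalence classes refine, and one places an internal vertex of weight $q$ each time some class strictly splits. By the first paragraph, this tree is exactly the plain carrousel tree of $C$, so the data $(j,k)\mapsto q_{jk}$ determines it.

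The main obstacle, and the only substantive point, is the verification in the first paragraph that $d(p_j(t),p_k(t))=\Theta(t^q)$ with $q$ the rate of the deepest shared piece; this is a direct calculation from the defining inequalities of $B_\kappa$, together with the observation that by the plain-carrousel amalgamation rule every internal node of the tree separates at least two branches of $C$ and is therefore witnessed by some pair $(j,k)$ with $q_{jk}$ equal to its rate. Once this is in place, Proposition \ref{prop:curve from carrousel} identifies the combinatorial plain carrousel tree with the embedded topology of $C$, completing the proof.
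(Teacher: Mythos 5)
Your proposal is correct and takes essentially the same approach as the paper: both exploit the ultrametric property $q_{jl}=\min(q_{jk},q_{kl})$ and reconstruct the plain carrousel combinatorics by a rate-by-rate refinement, then invoke Proposition~\ref{prop:curve from carrousel}. The paper phrases this as a bottom-up assembly of carrousel-section pieces (starting from $\mu$ innermost disks and gluing holed disks at each successive rate), while you phrase it as recovering the rooted weighted tree from the ultrametric via refining equivalence classes; these are the same construction.
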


\begin{proof} By Proposition \ref{prop:curve from carrousel} it
  suffices to prove that the map $\{(j,k)~|~1\le j<k\le \mu\}\mapsto
  q_{jk}$ determines the combinatorics of the plain carrousel section
  of the curve $C$.

  Let $q_1>q_2>\ldots > q_s$ be the images of the map $(j,k)
  \mapsto q_{jk}$.
  The proof consists in reconstructing a topological version of the
  carrousel section of $C$ from the innermost pieces to the
  outermost ones by an inductive process starting with $q_1$ and
  ending with $q_s$. 

  We start with $\mu$ discs $D^{(0)}_1,\ldots,D^{(0)}_{\mu}$, which
  will be the innermost pieces of the \carrousel.  We consider the
  graph $G^{(1)}$ whose vertices are in bijection with these $\mu$
  disks and with an edge between vertices $v_j$ and $v_k$ if and only
  if $q_{jk}=q_1$. Let $G_1^{(1)},\ldots,G_{\nu_1}^{(1)}$ be the
  connected components of $G^{(1)}$ and denote by $\alpha_{m}^{(1)}$
  the number of vertices of $G_m^{(1)}$.  For each $G_m^{(1)}$ with
  $\alpha_m^{(1)}>1$ we consider a disc $B^{(1)}_m$ with
  $\alpha_m^{(1)}$ holes, and we glue the discs $D_j^{(0)}$,
  $v_j\in\operatorname{vert}G_m^{(1)}$, into the inner boundary
  components of $B^{(1)}_m$. We call the resulting disc
  $D^{(1)}_m$. For a $G_m^{(1)}$ with just one vertex, $v_{j_m}$ say,
  we rename $D_{j_m}^{(0)}$ as $D_m^{(1)}$.

 The numbers $q_{jk}$ have the property that $q_{jl} \ge min
 (q_{jk},q_{kl})$ for any triple $j,k,l$. So for each distinct $m,n$
 the number $q_{j_m k_n}$ does not depend on the choice of vertices
 $v_{j_m}$ in $G_m^{(1)}$ and $v_{k_n}$ in $G_n^{(1)}$.

 We iterate the above process as follows: we consider the graph
 $G^{(2)}$ whose vertices are in bijection with the connected
 components $G_1^{(1)},\ldots,G_{\nu_1}^{(1)}$ and with an edge
 between the vertices $(G_m^{(1)})$ and $(G_n^{(1)})$ if and only if
 $q_{j_m k_n}$ equals $q_2$ (with vertices $v_{j_m}$ and $v_{j_n}$ in
 $G_m^{(1)}$ and $G_n^{(1)}$ respectively). Let
 $G_1^{(2)},\ldots,G_{\nu_2}^{(2)}$ be the connected components of
 $G_2$. For each $G_m^{(2)}$ let $\alpha_m^{(2)}$ be the number of its
 vertices. If $\alpha_m^{(2)}>1$ we take a disc $B^{(2)}_m$ with
 $\alpha_m^{(2)}$ holes and glue the corresponding disks $D^{(1)}_l$
 into these holes. We call the resulting piece $D^{(2)}_m$. If
 $\alpha_m^{(2)}=1$ we rename the corresponding $D^{(1)}_m$ disk to
$D^{(2)}_m$.

 By construction, repeating this process for $s$ steps gives a
 topological version of the carrousel section of the curve $C$, and
 hence, by Proposition \ref{prop:curve from carrousel}, its embedded topology.
\end{proof}

To complete the proof of Proposition \ref{prop:plane curve} we must show that we can find
the numbers $q_{jk}$ without using the complex structure, and after
a bilipschitz change to the outer metric.

The tangent cone of $C$ at $0$ is a union of lines $L^{(i)}$,
$i=1,\dots,m$, transverse to the $y$-axis.  Denote by $C^{(i)}$ the
part of $C$ tangent to the line $L^{(i)}$.  It suffices to discover
the $q_{jk}$'s belonging to each $C^{(i)}$ independently, since the
$C^{(i)}$'s are distinguished by the fact that the distance between
any two of them outside a ball of radius $\epsilon$ around $0$ is
$O(\epsilon)$, even after bilipschitz change to the
metric. We will therefore assume from now on that the tangent cone of $C$
is a single complex line.

Our points $p_1(t),\dots,p_\mu(t)$ that we used to find the numbers
$q_{jk}$ were obtained by intersecting $C$ with the line $x=t$. The arc
$p_1(t)$, $t\in [0,\epsilon_0]$ satisfies $d(0,p_1(t))=O(t)$. Moreover,
for any small $\delta>0$ we can choose $\epsilon_0$ sufficiently small
that the other points $p_2(t),\dots,p_\mu(t)$ are always in the
transverse disk of radius $\delta t$ centered at $p_1(t)$ in the plane
$x=t$.

Instead of a transverse disk of radius $\delta t$, we now use a ball
$B(p_1(t),\delta t)$ of radius $\delta t$ centered at $p_1(t)$. This
$B(p_1(t),\delta t)$ intersects $C$ in $\mu$ disks
$D_1(t),\dots,D_\mu(t)$, and we have $d(D_j(t),D_k(t))=O(t^{q_{jk}})$,
so we still recover the numbers $q_{jk}$.  

We now replace the arc $p_1(t)$ by any continuous arc $p'_1(t)$ on $C$
with the property that $d(0,p'_1(t))=O(t)$. If $\delta $ is
sufficiently small, the intersection  $B_C(p'_1(t),\delta t):=C\cap B(p'_1(t),\delta t)$ still consists of $\mu$
disks $D'_1(t),\dots,D'_\mu(t)$ with
$d(D'_j(t),D'_k(t))=O(t^{q_{jk}})$.  So we have gotten rid of the
dependence on analytic structure in discovering the topology. But we
must consider what a $K$-bilipschitz change to the metric does. 

Such a
change may make the components of $B_C(p'_1(t),\delta t)$ disintegrate
into many pieces, so we can no longer simply use distance between
pieces. To resolve this, we consider both $B_C'(p'_1(t),\delta t)$ and
$B_C'(p'_1(t),\frac \delta {K^4}t)$ where $B'$ means we are using the
modified metric. Then only $\mu$ components of $B_C'(p_1(t),\delta t)$
will intersect $B_C'(p_1(t),\frac \delta {K^4}t)$.  Naming these
components $D'_1(t),\dots,D'_\mu(t)$ again, we still have
$d(D'_j(t),D'_k(t))=O(t^{q_{jk}})$ so the $q_{jk}$ are determined as
before.
\end{proof}

We end this section with a remark which introduces a key concept for
the proof of Proposition \ref{prop:find carrousel}.
\begin{remark}\label{rk:beyond}
  Assume first that $C$ is irreducible.  Fix $q \in \Bbb Q$ and
  replace in the arguments above the balls $B_C(p_1(t), \delta t)$ of
  radius $\delta t$ by balls of radius $\delta t^q$. If $\delta$
  is big enough then $B_C(p_1(t), \delta t^q)$ consists of $\eta$ discs
  $D''_1(t),\ldots,D''_{\eta}(t)$ for some $\eta \leq \mu$ and the
  rates $q''_{j,k}$ given by $d(D''_j(t),D''_k(t)) = O(t^{q''_{j,k}})$
  coincide with the above rates $q_{j,k}$ such that $q_{j,k}\geq q$.
  These rates determine the carrousel sections of $C$ inside pieces of
  the carrousel with rates $\ge q$.

\begin{definition} \label{def:beyond} We say that the rates
    $q''_{j,k}$ with $1\le j<k\le \eta$ determine the outer Lipschitz geometry
    (resp.\ the carrousel section) of $C$ \emph{beyond rate $q$.}
\end{definition} 
If $C$ is not irreducible, take $p_1$ inside a component $C_0$ of
$C$. Then the rates $q''_{j,k}$ recover the outer Lipschitz geometry beyond $q$
of the union $C'$ of components of $C$ having exponent of coincidence
with $C_0$ greater than or equal to $q$. If $C$ contains a component
which is not in $C'$, we iterate the process by choosing an arc in
it. After iterating this process to catch all the components of $C$,
we say we have determined the outer Lipschitz geometry of the whole $C$ (resp.\
its carrousel section) \emph{beyond rate $q$}.

If $C$ is the generic projection of a curve $C' \subset \C^n$, we
speak of \emph{the outer Lipschitz geometry of $C'$ beyond rate $q$}. 
\end{remark}

\section*{\bf Part 2: Geometric decompositions of a normal complex
  surface singularity}\label{Part 2}

\section{Introduction to geometric decomposition}
Birbrair and the authors proved in \cite{BNP}  the existence and
unicity of a decomposition $(X,0) = (Y,0) \cup (Z,0)$ of a normal
complex surface singularity $(X,0)$ into a thick part $(Y,0)$ and a thin part
$(Z,0)$. The thick part is essentially the metrically conical part of
$(X,0)$ with respect to the inner metric while the thin part shrinks
faster than linearly in size as it approaches the origin. The
thick-thin decomposition was then refined further by dividing
the thin part into $A$- and $B$-pieces, giving a
classification of the inner Lipschitz geometry in terms of discrete data
recording rates and directions of shrink of the refined pieces (see
\cite[Theorem 1.9]{BNP}).

This ``classifying decomposition'' decomposes the 3-manifold link
$X^{(\epsilon)}:=X\cap S_\epsilon$ of $X$ into Seifert fibered pieces
glued along torus boundary components. In general this decomposition
is a refinement of the JSJ decomposition of the $3$-manifold
$X^{(\epsilon)}$ (minimal decomposition into Seifert fibered pieces).

In this Part 2 of the paper we refine the decomposition further, in a
way that implicitly sees the influence of the outer Lipschitz geometry, by
taking the position of polar curves of generic plane projections into
account. We call this decomposition the \emph{geometric decomposition} (see Definition \ref{def:geometric decomposition})
and write it as 
$$(X,0) = \bigcup_{i=1}^\nu (X_{q_i},0)\cup\bigcup_{i>j} (A_{q_i,q_j},0)\,, \quad
q_1>\dots>q_\nu= 1\,.$$ Each $X_{q_i}$ is a union of pieces of type
$B(q_i)$ and each $A_{q_i,q_j}$ is a (possibly empty) union of $A(q_i,q_j)$-pieces glued to $X_{q_i}$ and  $X_{q_j}$ along boundary components. This refinement may also
decompose the thick part by excising from it some $D$-pieces (which
may themselves be further decomposed) which are ``thin for the outer
metric''.

The importance of the geometric decomposition is that it can be
recovered using the  outer Lipschitz geometry of $(X,0)$. We will
show this in Part 3 of the paper.
We will need two different constructions of the geometric
decomposition, one in terms of a carrousel, and one in terms of
resolution. The first is done in Sections
\ref{sec:polarwedges} to \ref{sec:carrousel1} and the second in Section
\ref{sec:decomposition vs resolution}.

It is worth remarking that the geometric decomposition has a
topological description in terms of a relative JSJ decomposition of
the link $X^{(\epsilon)}$. Consider the link $K\subset X^{(\epsilon)}$
consisting of the intersection with $X^{(\epsilon)}$ of the polar
curves of a generic pair of plane projections $X\to\C^2$ union a
generic pair of hyperplane sections of $C$. The decomposition of
$X^{(\epsilon)}$ as the union of the components of the
$X^{(\epsilon)}_{q_i}$'s and the intermediate $A(q_i,q_j)$-pieces is
topologically the relative JSJ decomposition for the pair
$(X^{(\epsilon)},K)$, i.e., the minimal decomposition of
$X^{(\epsilon)}$ into Seifert fibered pieces separated by annular
pieces such that the components of $K$ are Seifert fibers in the
decomposition.  
\red{Actually, the geometric decomposition that we will use is
  slightly stronger, in that it may refine an annular piece by cutting
  it into a sequence of annular pieces. We come back to this in
  Definition \ref{def:gdp}.}

\section{Polar wedges}\label{sec:polarwedges}

We denote by $\grassman(k,n)$ the grassmannian of $k$-dimensional
subspaces of $\C^n$.

\begin{definition}[\bf Generic linear projection]\label{def:generic linear
    projection} 
  Let $(X,0)\subset (\C^n,0)$ be a normal surface germ.  For
  $\cal D\in \grassman(n-2,n)$ let $\ell_{\cal D} \colon \C^n \to
  \C^2$ be the linear projection $\C^n \to \C^2$ with kernel $\cal D$.
  Let $\Pi_{\cal D}\subset X$ be the polar of this projection,
  i.e., the closure in $(X,0)$ of the singular locus of the
  restriction of $\ell_{\cal D}$
to $X \setminus    \{0\}$, 
and let
  $\Delta_{\cal D}=\ell_{\cal D}(\Pi_{\cal D})$ be the discriminant
  curve. There exists an open dense subset $\Omega \subset
  \grassman(n-2,n)$ such that $\{(\Pi_{\cal D},0): \cal D \in
  \Omega\}$ forms an equisingular family of curve germs in terms of
  strong simultaneous resolution and such that the discriminant curves
  $\Delta_{\cal D}$ are reduced and no tangent line to $\Pi_{\cal D}$
  at $0$ is contained in $\cal D$ (\cite[(2.2.2)]{LT0} and
  \cite[V. (1.2.2)]{T3}). The projection $\ell_{\cal D} \colon \C^n
  \to \C^2$ is \emph{generic} for $(X,0)$ if $\cal D \in \Omega$.
\end{definition}

The condition $\Delta_{\cal D}$ reduced means that any
$p\in\Delta_{\cal D}\setminus\{0\}$ has a neighbourhood $U$ in $\C^2$
such that one component of $(\ell_{\cal D}|_X)^{-1}(U)$ maps by a
two-fold branched cover to $U$ and the other components map
bijectively.

\medskip Let $B_\epsilon$ be a Milnor ball for $(X,0)$ (in Section
\ref{sec:carrousel1} we will specify a family of Milnor balls). Fix a
$\cal D \in \Omega$ and a component $\Pi_0$ of the polar curve of
$\ell = \ell_{\cal D}$.  We now recall the main result of
\cite[Section 3]{BNP}, which defines a suitable region $A_0$
containing $\Pi_0$ in $X\cap B_\epsilon$, outside of which $\ell$ is a
local bilipschitz homeomorphism.
  
Let us consider the branch $\Delta_0 = \ell(\Pi_0)$ of the
discriminant curve of $\ell$. Let $V$ be a small
  neighbourhood of $\cal D$ in $\Omega$.  For each $\cal D'$ in $V$ let
  $\Pi_{\cal D',0}$ be the component of $\Pi_{\cal D'}$ close to
  $\Pi_0$. Then  the curve $\ell(\Pi_{\cal
    D',0})$ has Puiseux expansion
$$y=\sum_{j\ge 1} a_j (\cal D') x^{{p_j}}\in \C\{x^{\frac1N}\},\quad \text{ with
  }p_j\in \Q,\quad
1\le p_1<p_2<\cdots\,,$$ 
where $a_j (\cal D') \in \C$.  Here $N=\lcm_{j\ge 1}\denom(p_j)$, where ``denom'' means denominator. (In fact, if $V$ is sufficiently small, the family of curves
$\ell(\Pi_{\cal D',0})$,  for $\cal D' \in V$ is equisingular, see \cite[p. 462]{T3}.)

\begin{definition}[\bf Wedges, contact and polar rates] \label{def:wedges} Let $s$ be the first exponent $p_j$ for
  which the coefficient $a_j(\cal D')$ is non-constant, i.e.,
  it depends on $\cal D'\in V$.
For $\alpha>0$, define
 $$B_0 := \bigl\{(x,y):\Bigl|y-\sum_{j\ge1}a_jx^{{p_j}}
 \Bigr|\le\alpha|x|^{s}\bigr\}\,.$$ We call $B_0$ a
 \emph{$\Delta$-wedge} (about $\Delta_0$). Let $A_0$ be the germ of
 the closure of the connected component of $\ell^{-1}(B_0) \setminus
 \{0\}$ which contains $\Pi_0$.  We call $A_0$ a \emph{polar wedge}
 (about $\Pi_0$).  We call $s$ the \emph{polar rate}  of
   $A_0$ (resp.\ $B_0$).
As described in \cite{BNP}, instead of $B_0$ one can use $B'_0 := \bigl\{(x,y):\Bigl|y-\sum_{j\ge1,p_j\le s}a_jx^{{p_j}}
 \Bigr|\le\alpha|x|^{s}\bigr\}$, since truncating higher-order terms does not change the bilipschitz geometry. 

\red{We say two irreducible complex curves $C_1$ and $C_2$ through the origin in $\C^n$ \emph{have contact $q$} if $d(C_1\cap S_\epsilon, C_2\cap S_\epsilon)=O(\epsilon^q)$.  The contact between $\Pi_{\cal D',0}$ and $\Pi_0$ is the polar rate of $A_0$ and was called ``contact exponent'' in \cite{BNP}.}
\end{definition} 
Clearly $B_0$ is a $D(s)$-piece. Moreover:
\begin{proposition}[Proposition 3.4(2) of \cite{BNP}] \label{prop:polar wedge} A polar wedge $A_0$ with rate $s$ is a $D(s)$-piece.\qed
\end{proposition}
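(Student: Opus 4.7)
The plan is to exhibit $A_0$ as bilipschitz equivalent to a piece of the form $B(D^2,\phi,s)$, combining a topological analysis of the branched cover $\ell|_{A_0}$ with a direct metric computation. The two essential points are that $A_0$ is topologically a cone on a solid torus fibered by $2$-disks, and that the diameters of these $2$-disks shrink at rate exactly $s$.

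For the topology, I would analyze $\ell|_{A_0}\colon A_0\to B_0$. The genericity of $\ell$ together with the reducedness of $\Delta_0$ guarantees that at a generic point of $\Pi_0$ the projection is a simple fold; hence near $\Pi_0$ exactly two sheets of $\ell^{-1}(B_0)$ come together. Since $A_0$ is defined to be the connected component of $\ell^{-1}(B_0)$ containing $\Pi_0$, the restriction $\ell|_{A_0}$ is a proper degree-$2$ branched cover with branch locus $\Delta_0\cap B_0$. The target $B_0$ is topologically a $4$-disk (it is a $D(s)$-piece), and the branch locus is a smooth arc transverse to its radial fibration, so $A_0$ is also topologically a $4$-disk, with a fibration by $2$-disks over an arc inherited from that on $B_0$.

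For the metric rate, I would exploit that outside any fixed conical tubular neighbourhood $T$ of $\Pi_0$ the restriction of $\ell$ to $A_0\setminus T$ is a bilipschitz homeomorphism onto its image, since the Jacobian of $\ell$ is bounded away from zero on $X\setminus\Pi_0$ in a controlled way (a consequence of the equisingularity of the family $\{\Pi_{\cal D'}\}_{\cal D'\in V}$ and of the genericity of $\ell$). On this complement, the transverse disk at radius $r$ has diameter $O(r^s)$ because this is the rate of $B_0$ by the definition of a $\Delta$-wedge. Inside $T$, I would use local fold coordinates in which $\ell$ has the form $(u,v)\mapsto(u^2,v)$, and observe that a transverse disk of radius $\alpha|x|^s$ in $B_0$ pulls back to a region of transverse size $O(|x|^s)$ in $A_0$, because $s$ is, by Definition \ref{def:wedges}, the first Puiseux exponent at which two nearby polar branches $\ell(\Pi_{\cal D',0})$ separate, which is precisely the rate at which $A_0$ spreads out transverse to $\Pi_0$.

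The principal difficulty lies in the metric analysis near $\Pi_0$, where $\ell$ is singular and standard bilipschitz estimates for plane projections break down. The resolution is that the very definition of the polar rate $s$ encodes the transverse spread of $A_0$, so the interior and exterior estimates match at their common boundary. Combining the two regimes via Lemma \ref{le:rules}\eqref{rule:D2} (which lets us absorb the ambient monodromy diffeomorphism on each disk fiber), one obtains $A_0\cong B(D^2,\mathrm{id},s)$, i.e., a $D(s)$-piece.
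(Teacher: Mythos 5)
Your proposal takes a genuinely different route from the one the paper invokes. The proof of \cite{BNP} Proposition~3.4(2), revisited in the paragraphs immediately following the statement here, shows that $A_0$ agrees to high order with the swept-out set $A_0'=\bigcup_{|t|\le\beta}\Pi_{\mathcal D_t,0}$ and parametrizes $A_0'$ by $(u,v)$ via $z_1=u^N$, $z_2=u^Nf_{2,0}(u)+v^2u^{Ns}h_2(u,v)$, $z_j=u^Nf_{j,0}(u)+vu^{Ns}h_j(u,v)$ for $j\ge3$, with $h_2$ a unit; the $D(s)$ structure is then read directly off this explicit parametrization. You instead try to transport the $D(s)$ structure of $B_0$ across the degree-$2$ branched cover $\ell|_{A_0}\colon A_0\to B_0$. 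Your topological half (degree $2$ from reducedness of $\Delta$, branch locus $\Delta_0\cap B_0$, cone on a solid torus fibered by $2$-disks) is fine.

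The metric half has a gap. If the fold is written as $(w_1,w_2)\mapsto(w_1,w_2^2)$ with a \emph{bounded} coefficient and with $w_2$ a Euclidean transverse coordinate on $X$, then the wedge condition $|z_2-\sum_ja_jz_1^{p_j}|\le\alpha|z_1|^s$ pulls back to $|w_2|=O(|z_1|^{s/2})$, not $O(|z_1|^s)$; the naive fold model therefore gives rate $s/2$ rather than $s$. The correct rate emerges only because the fold coefficient blows up like $|z_1|^{-s}$: in the \cite{BNP} parametrization the Euclidean transverse direction is $z_3-u^Nf_{3,0}(u)\approx vu^{Ns}h_3(u,v)$, so that $z_2-\sum_ja_jz_1^{p_j}\approx \bigl(z_3-u^Nf_{3,0}(u)\bigr)^2\,h_2/\bigl(u^{Ns}h_3^2\bigr)$. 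Detecting this $r$-dependent rescaling of the fold is exactly what the parametrization, together with the nonvanishing of $h_3$ (cf.\ Lemma~\ref{le:h3}), accomplishes. Your proposed resolution — that Definition~\ref{def:wedges} already ``encodes the transverse spread of $A_0$'' — does not close this: the definition fixes $s$ as the separation rate of the \emph{projected} polars $\ell(\Pi_{\mathcal D',0})$ in $\C^2$, and since $\ell$ is a contracting linear map this only bounds the $\C^n$-contact of the polars $\Pi_{\mathcal D',0}$ from above by $s$. To conclude $A_0$ shrinks as fast as $r^s$ you need the matching lower bound — that the polars do not separate faster than $r^s$ in the remaining coordinates $z_3,\dots,z_n$ — and establishing that is the actual content of the cited proof.
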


The proof in \cite{BNP} shows that $A_0$ can be approximated to high
order by a set $A_0'=\bigcup_{|t|\le \beta}\Pi_{\cal D_t,0}$ for some
$\beta>0$, where $t\in \C$, $|t|\le \beta$, parametrizes a piece of a
suitable line through $\cal D$ in $\grassman(n-2,n)$. We
will need some of the details later, so we describe this here.

We can choose coordinates $(z_1,\dots,z_n)$ in $\C^n$ and $(x,y)$ in
$\C^2$ so that $\ell$ is the projection $(x,y)=(z_1,z_2)$ and that the
family of plane projections $\ell_{\cal D_t}$ used in \cite{BNP} is
$\ell_{\cal D_t}\colon (z_1,\dots,z_n)\to (z_1, z_2-tz_3)$.

In \cite{BNP} it is shown that $A'_0$ can be parametrized as a union
of the curves $\Pi_{\cal D_t,0}$ in terms of parameters $(u,v)$ as follows:
\begin{align*}
  z_1&=u^N\\
z_2&=u^Nf_{2,0}(u)+v^2u^{Ns}h_2(u,v)\\
z_j &=
 u^Nf_{j,0}(u)+vu^{Ns}h_j(u,v)\,,\quad j=3,\dots,n\\
\Pi_{\cal D_t,0}&=\{(z_1,\dots,z_n):v=t\}\,,
\end{align*}
where $h_2(u,v)$ is a unit. 

We then have $A_0=\{(z_1(u,v),\dots,z_n(u,v)):|v^2h_2(u,v)|\le \alpha\}$,
which agrees up to order $>s$ with $A'_0=\{(z_1(u,v),\dots,z_n(u,v)):|v|\le \beta\}$ with
$\beta=\sqrt{\alpha/{|h_2(0,0)|}}$.

In \cite{BNP} it was also pointed out that at 
least one $h_j(u,v)$ with $j\ge 3$ is a unit, by a modification of
the argument of Teissier \cite[p.~464, lines 7--11]{T3}.  We
will show this using our explicit choice of coordinates above.
\begin{lemma}\label{le:h3}
  $h_3(u,v)$ is a unit.  More specifically, writing
  $h_2(u,v)=\sum_{i\ge0}v^{i}f_{i+2}(u)$, we have
  $h_3(u,v)=\sum_{i\ge0}v^i\,\frac{i+2}{i+1}f_{i+2}(u)$.
\end{lemma}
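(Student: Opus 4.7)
The plan is to derive the formula for $h_3$ directly from the defining condition of the parametrization, namely that for each $t$ the curve $\{v=t\}$ is the component $\Pi_{\mathcal{D}_t,0}$ of the polar of $\ell_{\mathcal{D}_t}$, and then read off that $h_3(0,0)\ne 0$ from the fact that $h_2(0,0)\ne 0$.

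Concretely, I compute the Jacobian of $\ell_{\mathcal{D}_t}\circ\varphi\colon (u,v)\mapsto (z_1,\,z_2-tz_3)$ where $\varphi$ is the parametrization displayed just before the lemma. Since $z_1=u^N$ depends only on $u$, the Jacobian determinant with respect to $(u,v)$ is
$$N u^{N-1}\,\partial_v(z_2-tz_3)\,.$$
Hence the polar condition ``the curve $\{v=t\}$ lies in the critical locus'' is equivalent to the vanishing of $\partial_v(z_2-tz_3)$ on $\{v=t\}$, for every $t$. Using the formulas for $z_2$ and $z_3$ and dividing by $t\,u^{Ns}$, this says
$$2h_2(u,t)+t\,\partial_v h_2(u,t)\;=\;h_3(u,t)+t\,\partial_v h_3(u,t)\,,$$
which, being an identity in $(u,t)$, may be rewritten with $t$ replaced by $v$ as
$$\frac{1}{v}\,\partial_v\bigl(v^2 h_2(u,v)\bigr)=\partial_v\bigl(v\,h_3(u,v)\bigr)\,.$$

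Now I would simply substitute the series $h_2(u,v)=\sum_{i\ge 0}v^i f_{i+2}(u)$ and $h_3(u,v)=\sum_{i\ge 0}v^i c_i(u)$ into this identity. On the left one gets $\sum_{i\ge 0}(i+2)v^i f_{i+2}(u)$ and on the right $\sum_{i\ge 0}(i+1)v^i c_i(u)$, so comparing coefficients yields $c_i=\frac{i+2}{i+1}f_{i+2}$, which is exactly the claimed formula. Finally, since $h_2$ is a unit, its constant (in $v$) coefficient $f_2(u)$ satisfies $f_2(0)\ne 0$; the constant coefficient of $h_3$ is $c_0=2f_2$, so $h_3(0,0)\ne 0$ and $h_3$ is also a unit.

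The only step requiring any genuine thought is the identification of the polar condition with the vanishing of $\partial_v(z_2-tz_3)|_{v=t}$; once this is set up the rest is a one-line power-series comparison. In particular I do not expect any serious obstacle, provided one is careful that the identity must hold for all $t$ (not just for a single $t$), which is what legitimizes replacing $t$ by the formal variable $v$.
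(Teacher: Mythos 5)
Your proof is correct and follows essentially the same computation as the paper. The only difference is in how you justify the vanishing of the linear-in-$(v-t)$ coefficient of $z_2 - tz_3$: you derive it from the Jacobian condition $\partial_v(z_2 - t z_3)\big|_{v=t}=0$ characterizing the critical locus of $\ell_{\mathcal D_t}$ composed with the parametrization $(u,v)\mapsto(z_1,\dots,z_n)$, while the paper instead quotes the normal form $z'_2 = u^N f'_{2,0}(u) + w^2 u^{Ns}h'_2(u,w)$ for $A'_0$ in the $\ell_{\mathcal D_t}$-frame; the subsequent power-series identification $c_i=\frac{i+2}{i+1}f_{i+2}$ is identical.
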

\begin{proof}
  Make the change of coordinates $z'_2:=z_2-tz_3$, so $\ell_{\cal
    D_t}$ is the projection to the $(z_1,z_2')$-plane.  Since
  $\Pi_{\cal D_t,0}$ is given by $v=t$ in our $(u,v)$ coordinates, we
  change coordinates to $(u,w)$ with $w:=v-t$, so that $\Pi_{\cal
    D_t,0}$ has equation $w=0$.  We then know from above that $z'_2$
  must have the form
$$z'_2=u^Nf'_{2,0}(u)+w^2u^{Ns}h'_2(u,w)$$
with $h'_2(u,w)$ a unit.
On the other hand, we rewrite the expressions for $z_2,z_3$ explicitly as
\begin{align*}
z_2&=u^Nf_{2,0}(u)+u^{Ns}\sum_{i\ge 2}v^if_i(u)\\
z_3&= u^Nf_{3,0}(u)+u^{Ns}\sum_{i\ge 1}v^ig_i(u)\,.
\end{align*}
Then direct calculation from
\begin{align*}
z_2&=u^Nf_{2,0}(u)+u^{Ns}\sum_{i\ge 2}(w+t)^if_i(u)\\
z_3&= u^Nf_{3,0}(u)+u^{Ns}\sum_{i\ge 1}(w+t)^ig_i(u)
\end{align*}
gives:
\begin{align*}
  z'_2=&z_2-tz_3\\= &u^N(f_{2,0}(u)-tf_{3,0}(u))+u^{Ns}\big[\sum_{i\ge 1}t^{i+1}(f_{i+1}(u)-g_i(u))  \\&+w\sum_{i\ge1}t^i((i+1)f_{i+1}(u)-ig_i(u)) \\
&+w^2(f_2(u)+t(3f_3(u)-g_2(u))+t^2(6f_4(u)-3g_3(u)))+\dots\big]\,.
\end{align*}
The part of this expression of degree
$1$ in $w$ must be zero, so
$g_i(u)=\frac{i+1}if_{i+1}(u)$ for each $i$. Thus
\begin{align*}
  z_3&=u^Nf_{3,0}(u)+u^{Ns}\sum_{i\ge 1}v^i\,\text{\Small$\frac{i+1}i$}f_{i+1}(u)\\
&=u^Nf_{3,0}(u)+vu^{Ns}\sum_{i\ge 0}v^i\,\text{\Small$\frac{i+2}{i+1}$}f_{i+2}(u)\,,
\end{align*}
completing the proof.
\end{proof}

For $x \in X$, we define the \emph{local bilipschitz constant} $K(x)$
of the projection $\ell_{\cal D} \colon X \rightarrow \C^2$ as
follows: $K(x)$ is infinite if $x$ belongs to the polar curve
$\Pi_{\cal D}$, and at a point $x \in X \setminus \Pi_{\cal D}$ it is
the reciprocal of the shortest length among images of unit vectors in
$T_{x} X$ under the projection $d\ell_{\cal D} \colon T_{x} X\to
\C^2$.

For $K_0 \geq 1$, set ${\mathcal B}_{K_0}:=\bigl\{ p\in X\cap
(B_\epsilon\setminus \{0\}):K(p)\ge K_0\bigr\}\,,$ and let ${\mathcal
  B}_{K_0}(\Pi_0)$ denote the closure of the connected component of $
{\mathcal B}_{K_0}\setminus \{0\}$ which contains $\Pi_0
\setminus\{0\}$.  As a consequence of \cite[3.3, 3.4(1)]{BNP}, we obtain
that for $K_0>1$ sufficiently large, the set ${{\mathcal B}}_{K_0}$
can be approximated by a polar wedge about $\Pi_{0}$. Precisely:
  
\begin{proposition}\label{prop:thinzones} 
There exist $K_0,K_1\in\R$ with $1<K_1<K_0$ such that
\[
\pushQED{\qed}
{\mathcal B}_{K_0}(\Pi_0) \subset A_0\cap B_\epsilon \subset
  {\mathcal B}_{K_1}(\Pi_0)
\qedhere\popQED
\]
\end{proposition}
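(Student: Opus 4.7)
The plan is to derive both inclusions from the explicit $(u,v)$-parametrization of a neighbourhood of $\Pi_0$ set up above, combined with the estimates in \cite[3.3, 3.4(1)]{BNP}. The key observation is that in these coordinates $\Pi_0=\{v=0\}$, and the local bilipschitz constant $K(x)$ of $\ell$ behaves to leading order like $1/|v|$; hence the sublevel sets $\{K\ge K_0\}$ are essentially tubular neighbourhoods of $\Pi_0$ of ``$v$-width'' of order $1/K_0$, which are directly comparable to the polar wedge $A_0\approx\{|v|\le\beta\}$.

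To justify $K(x)\asymp 1/|v|$ I would compute the differential of $\ell$ in an orthonormal frame for the induced metric on $X$ at a point near $\Pi_0$. The induced metric (pulled back from $\C^n$) satisfies $\|\partial/\partial u\|\asymp |u|^{N-1}$ and $\|\partial/\partial v\|\asymp |u|^{Ns}$; the second estimate is where Lemma \ref{le:h3} enters essentially, since it guarantees that the $v$-derivative of the parametrization has non-vanishing leading order in the $z_3$-component. From $dz_1=Nu^{N-1}\,du$ and $dz_2=(\partial z_2/\partial u)\,du+2vu^{Ns}h_2(u,v)(1+O(v))\,dv$, one sees that, when expressed in an orthonormal frame for the induced metric, the smallest singular value of $d\ell$ is of order $|v|$, which yields the asymptotic claim.

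Both inclusions then follow by comparing with the defining inequality $A_0=\{(u,v):|v^2 h_2(u,v)|\le\alpha\}$, which to leading order reads $\{|v|\le\beta\}$ with $\beta=\sqrt{\alpha/|h_2(0,0)|}$. Taking $K_0$ large enough that $1/K_0<\beta$ forces $\mathcal B_{K_0}(\Pi_0)\subset A_0\cap B_\epsilon$, while every point of $A_0$ satisfies $K(x)\ge 1/(C\beta)$ for an absolute constant $C$, giving $A_0\cap B_\epsilon\subset \mathcal B_{K_1}(\Pi_0)$ for any $K_1\in(1,1/(C\beta))$; shrinking $\alpha$ if necessary ensures this interval is non-empty and that $K_1>1$. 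The component restrictions on both sides are automatic from the definitions, since $\mathcal B_{K_i}(\Pi_0)$ and $A_0$ are by construction the connected components containing $\Pi_0\setminus\{0\}$. The main technical obstacle is the uniform control of lower-order corrections in the parametrization, ensuring that $K(x)\asymp 1/|v|$ holds with quantitative constants valid throughout a full neighbourhood of $\Pi_0$; this is exactly what \cite[3.3, 3.4(1)]{BNP} provide, so the proposition follows as stated.
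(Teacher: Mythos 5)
Your strategy is sound and gives a genuinely self-contained route that the paper itself does not spell out: in the paper the proposition carries an immediate \verb|\qed| because it is presented as a direct consequence of Proposition~3.3 and Proposition~3.4(1) of \cite{BNP}, whereas you re-derive the statement from the explicit $(u,v)$-parametrization of $A'_0$ set up just above. Your key claim $K(x)\asymp 1/|v|$ is correct and is exactly what makes the two comparisons go through: the determinant of $d\ell$ in the basis $(\partial_u,\partial_v)$ is $Nu^{N-1}\cdot\partial z_2/\partial v\asymp |v|\,|u|^{N-1+Ns}$, so it vanishes to first order in $v$ along $\Pi_0$, while the largest singular value of $d\ell|_{T_xX}$ is $\asymp 1$ because $\partial_u$ maps to a vector of comparable length. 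Lemma~\ref{le:h3} is correctly identified as the point guaranteeing $\|\partial_v\|\asymp|u|^{Ns}$.

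Two small points deserve to be made explicit. First, to pass from the Jacobian estimates in the coordinate basis to the statement about singular values in an \emph{orthonormal} frame you need the angle between $\partial_u$ and $\partial_v$ in the induced metric to stay uniformly bounded away from $0$ and $\pi$; equivalently, the area $\sqrt{\|\partial_u\|^2\|\partial_v\|^2-|\langle\partial_u,\partial_v\rangle|^2}$ must be $\asymp|u|^{N-1+Ns}$ and not of lower order. This holds because the limit tangent plane of $X$ along $\Pi_0$ is transverse to $\mathcal D$ for a generic projection (cf.\ Lemma~\ref{le:parallel}), but it is not automatic from the formulas and should be stated. Second, the component-containment argument for $\mathcal B_{K_0}(\Pi_0)\subset A_0$ needs that $K<K_0$ holds on (a neighbourhood of) $\partial A'_0$ so that the connected component cannot escape the region where the $(u,v)$-chart is valid; your asymptotic gives $K\asymp 1/\beta$ there, so taking $K_0\gg 1/\beta$ does the job, but since the chart is only defined on $A'_0$ this boundary estimate must be applied before invoking connectedness. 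You correctly flag that the uniform control of error terms (making $\asymp$ quantitative) is exactly what \cite[3.3, 3.4(1)]{BNP} supplies, so with those two clarifications the argument is complete and gives a more transparent proof than the paper's citation.
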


Let us consider again a component $\Pi_0$ of $\Pi$ and a polar wedge
$A_0$ around it, the corresponding component $\Delta_0$ of $\Delta$
and $\Delta$-wedge $B_0$, and let $s$ be their polar rate.  Let
$\gamma$ be an irreducible curve having contact $r\ge s$ with
$\Delta_0$. The following Lemma establishes a relation between $r$ and
the geometry of the components of the lifting $\ell^{-1}(\gamma)$ in
the polar wedge $A_0$ about $\Pi_0$.  This will be a key argument in
Part 4 of the paper (see proof of Lemma \ref{prop:polar
  constancy}). It will also be used in section \ref{sec:explicit
  computation} to explicitly compute some polar rates.

We choose coordinates in $\Bbb C^n$ as before, with $\ell=(z_1,z_2)$
our generic projection. 
Recall the Puiseux expansion of a component
$\Delta_0$ of the discriminant:
$$z_2= \sum_{i\geq N} a_i z_1^{i/N} \in \Bbb C\{z_1^{1/N}\}\,.$$
\begin{lemma} \label{lemma:constant} Let $(\gamma,0)$ be an
  irreducible germ of curve in $(\C^2,0)$ with Puiseux expansion:
$$  z_2=\sum_{i\geq N} a_i z_1^{i/N} + \lambda z_1^{r}\,,$$
with $r \geq s$, $rN \in \Bbb Z$ and $\lambda \in \C$, $0<|\lambda|
<1$. Let $L'_{\gamma}$ be the intersection of the lifting
$L_{\gamma}=\ell^{-1}(\gamma)$ with a polar wedge $A_0$ about $\Pi_0$
and let $\ell'$ another generic plane projection for $X$ which is also
generic for the curve $\ell^{-1}(\gamma)$. Then the rational number
$q(r) := \frac{s+r}{2}$ is determined by the topological type of the
curve $\ell'(L'_\gamma)$.
\end{lemma}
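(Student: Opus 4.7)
The plan is to use the explicit parametrization of the polar wedge $A_0$ given just before Lemma \ref{le:h3} to describe the two branches of the lift $L'_\gamma$, show that their outer coincidence exponent equals $(s+r)/2$, and then invoke Pham--Teissier together with Proposition \ref{prop:plane curve} (really Lemma \ref{le:curve geometry}) to conclude that this number is read off from the topological type of $\ell'(L'_\gamma)$.

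First I would compute the lift. In the $(u,v)$-parametrization of $A_0$ we have $z_1=u^N$, $z_2=u^Nf_{2,0}(u)+v^2u^{Ns}h_2(u,v)$ with $h_2$ a unit, while $\Pi_0=\{v=0\}$ and $\Delta_0$ is parametrized by $z_2=u^Nf_{2,0}(u)$, so after substitution the Puiseux expansion of $\gamma$ forces
$$v^2\,h_2(u,v)=\lambda\,u^{N(r-s)}\,.$$
Since $h_2(0,0)\ne 0$ and $r\ge s$, for $|\lambda|$ small enough (comparable to the wedge parameter $\alpha$) the implicit function theorem gives exactly two solutions
$$v_\pm(u)=\pm\sqrt{\lambda/h_2(0,0)}\;u^{N(r-s)/2}+\text{higher order}\,,$$
each contained in $A_0$; these parametrize the two components $L^{\pm}$ of $L'_\gamma$.

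Next I would measure the outer distance between $L^+$ and $L^-$. Here Lemma \ref{le:h3} is essential: it guarantees that the coordinate $z_3=u^Nf_{3,0}(u)+vu^{Ns}h_3(u,v)$ has $h_3$ a unit. Substituting $v=v_\pm(u)$, the coordinates $z_1$ and $z_2$ agree on the two branches (both lie over $\gamma$), while
$$z_3^+-z_3^-=2\sqrt{\lambda/h_2(0,0)}\;h_3(0,0)\;u^{N(s+r)/2}+\text{higher order}\,,$$
and similarly for any $j$ with $h_j$ nonzero at the origin. Hence the outer distance between a point of $L^+$ and the corresponding point of $L^-$ over $|z_1|=|u|^N$ is of order $|z_1|^{(s+r)/2}$, so the outer coincidence exponent of $L^+$ and $L^-$ is exactly $q(r)=(s+r)/2$.

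Finally, since $\ell'$ is chosen generic for the curve $L_\gamma=\ell^{-1}(\gamma)$, Pham--Teissier \cite{PT} implies $\ell'|_{L_\gamma}$ is bilipschitz for the outer metric, so the two branches of the plane curve $\ell'(L'_\gamma)$ have coincidence exponent $(s+r)/2$ as well. By Proposition \ref{prop:plane curve} (more precisely the proof of Lemma \ref{le:curve geometry}), the full family of pairwise coincidence exponents is determined by the embedded topology of $\ell'(L'_\gamma)$, so $q(r)$ is determined by its topological type. The main thing to watch is that $|\lambda|$ must be small enough for both solutions $v_\pm$ to land inside the wedge $|v|\le\beta$, and that the correction terms hidden in ``higher order'' above do not spoil the leading exponent $N(s+r)/2$ in $z_3^+-z_3^-$, which is where the fact that $h_3$ is a unit (Lemma \ref{le:h3}) is used in an essential way.
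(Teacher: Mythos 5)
Your proposal uses the same core computation as the paper: substituting the Puiseux expansion of $\gamma$ into the $(u,v)$-parametrization of the polar wedge, solving $v^2h_2(u,v)=\lambda u^{N(r-s)}$ for $v$, and then invoking Lemma~\ref{le:h3} (that $h_3$ is a unit) to read off that $z_3$ separates the two sheets at the exponent $u^{N(r+s)/2}$. This is exactly the key step in the paper's proof. The end of the argument is packaged differently: the paper explicitly takes $\ell'=\ell_{\cal D_t}$ and writes out $\ell'(L'_\gamma)$ to exhibit $q(r)$ as either an essential Puiseux exponent or a coincidence exponent; you instead compute the outer coincidence exponent of the two sheets directly, transfer it to $\ell'(L'_\gamma)$ via Pham--Teissier, and then invoke the fact that the embedded topology of a plane curve germ determines its coincidence and characteristic exponents. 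Both conclusions are valid; the paper's explicit computation is a little more self-contained, while yours buys a slightly cleaner structural argument at the cost of invoking the Pham--Teissier bilipschitz statement.

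Two small inaccuracies are worth flagging. First, the ``implicit function theorem'' does not literally apply since $\partial_v(v^2h_2-\lambda u^{N(r-s)})$ vanishes at the origin; what you need is Weierstrass preparation/Newton--Puiseux, and the parity of $N(r-s)$ then determines whether $v(u)$ gives one irreducible branch (odd case, $v\in\C\{u^{1/2}\}$) or two (even case). Your phrase ``these parametrize the two components $L^\pm$'' is therefore only literally correct in the even case; in the odd case $L'_\gamma$ is irreducible and $(s+r)/2$ shows up as a characteristic exponent of $\ell'(L'_\gamma)$ rather than a coincidence exponent between two branches. The arithmetic is consistent (since $Ns\in\Z$, the parities of $N(r-s)$ and $N(r+s)$ agree), so the conclusion survives, but the distinction should be stated. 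Second, the paper treats $r=s$ separately --- there $v_\pm$ are approximately constants, so $L'_\gamma$ consists to high order of two polar-wedge fibers with contact $s$, matching $q(s)=s$. Your computation in fact covers this case formally, but since you emphasize ``$|\lambda|$ small enough so that $v_\pm$ land in $|v|\le\beta$,'' it is precisely here that this remark is load-bearing, and it deserves a sentence.
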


\begin{proof} We first consider the case $r>s$. Using again the
  parametrization of $A'_0$ given after the statement of Proposition
  \ref{prop:polar wedge}, the curve $L'_\gamma$ has $z_2$ coordinate
  satisfying
  $z_2=u^Nf_{2,0}(u)+v^2u^{Ns}h_2(u,v)=u^Nf_{2,0}(u)+\lambda u^{Nr}$,
  so $v^2u^{Ns}h_2(u,v)=\lambda u^{Nr}$. Write
  $g(u,v)=v^2h_2(u,v)-\lambda u^{N(r-s)}$. Since $h_2(u,v)$ is a unit,
  we have $g(0,v)\ne 0$, so we can write $v$ as a Puiseux expansion in
  terms of $u$ as follows:
  \begin{enumerate}
   \item If $N(r-s)$ is odd then we have one branch
$$v=\sqrt{\text{\Small$\frac\lambda{h_2(0,0)}$}} u^{\frac{N(r-s)}2}+\sum_{i>N(r-s)}a_iu^{i/2}\in\C\{u^{1/2}\}\,.$$
\item If $N(r-s)$ is even we have two branches
$$v=\pm\sqrt{\text{\Small$\frac\lambda{h_2(0,0)}$}} u^{\frac{N(r-s)}2}+\sum_{i>\frac{N(r-s)}2}b^{\pm}_iu^i\in\C\{u\}\,.$$
\end{enumerate}
Inserting into the parametrization of $A'_0$ we get that $L'_\gamma$ is given by:
\begin{align*}
  z_1&=u^N\\
z_2&=u^Nf_{2,0}(u)+\lambda u^{Nr}\\
z_j &=u^Nf_{j,0}(u)+\sqrt{\text{\Small$\frac\lambda{h_2(0,0)}$}}h_j(0,0)u^{\frac{N(r+s)}2}+h.o.\,,\quad j=3,\dots,n\,,
\end{align*}
where ``h.o.'' means higher order terms in $u$. Notice that $L'_\gamma$ has 
one or two branches depending on the parity of $N(r+s)$.

Taking  $\ell'=\ell_{\cal D_t}$ for some $t$, the curve $\ell'(L'_\gamma)$ is given by (since $r>\frac{r+s}2$):
\begin{align*}
  z_1&=u^N\\
z_2'&=u^N(f_{2,0}(u)-tf_{3,0}(u)) - t\sqrt{\text{\Small$\frac\lambda{h_2(0,0)}$}}h_j(0,0)u^{\frac{N(r+s)}2}+h.o.\,.
\end{align*}
If $N(r+s)$ is odd, then $\ell'(L'_\gamma)$ has one
component and $q(r) = \frac{s+r}{2}$ is an essential Puiseux exponent
of its Puiseux expansion. Otherwise, $\ell'(L'_\gamma)$ has
two components and $q(r)$ is the coincidence exponent between their
Puiseux expansions. In both cases, $q(r)$ is determined by the
topological type of $\ell'(L'_\gamma)$. 

Finally if $r=s$ then  $L'_\gamma$ consists to high order of two fibers of the saturation of $A_0$ by polars, so they have contact $q(r)=s$ with each other.
 \end{proof}

\section{Intermediate and complete \carrousels\ of the discriminant
  curve}\label{sec:intermediate}

In this section we define two \carrousels\ of $(\C^2,0)$
with respect to the discriminant curve $\Delta$ of a generic plane
projection $\ell \colon (X,0) \to (\C^2,0)$.

\subsection*{Intermediate \carrousel} This is obtained by truncating
the Puiseux series expansions of the branches of $\Delta$ as follows:
if $\Delta_0$ is a branch of $\Delta$ with Puiseux expansion $y=
\sum_{i\geq1} a_ix^{p_i}$ and if $s=p_k$ is the rate of a
$\Delta$-wedge about $\Delta_0$, then we consider the truncated
Puiseux series $y= \sum_{i=1}^k a_ix^{p_i}$ and form the carrousel
decomposition for these truncations (see the construction in Section
\ref{sec:carrousel}).  We call this the \emph{unamalgamated
  intermediate \carrousel}.  Notice that a piece of this carrousel
decomposition which contains a branch of $\Delta$ is in fact a
$\Delta$-wedge and is also a $D(s)$-piece, where $s$ is the rate of
this $\Delta$-wedge.

Using Lemma \ref{le:rules}, we then amalgamate according to the following rules:

\begin{amalgamation}[Intermediate \carrousel]
\label{amalg:intermediate}~
\begin{enumerate}
\item We amalgamate any $\Delta$-wedge piece with the piece outside
  it. We call the resulting pieces \emph{$\Delta$-pieces},
\item We then amalgamate any $D$-piece which is not a $\Delta$-piece with the piece outside it. This may create new $D$-pieces and we repeat this amalgamation iteratively until no further amalgamation is possible. 
\end{enumerate}
We call the result the \emph{intermediate \carrousel} of
$\Delta$.
\end{amalgamation}

It may happen that the rate $s$ of the  $\Delta$-wedge piece about a branch $\Delta_0$ of $\Delta$ is strictly
less that the last characteristic exponent of $\Delta_0$. We give an
example in \cite{BNP}. This is why we call this \carrousel\
``intermediate".

\subsection*{Complete \carrousel} This is obtained by truncating each
Puiseux series expansion at the first term which has exponent greater
than or equal to the polar rate and where truncation does not affect
the topology of $\Delta$. By definition, \red{$\Delta$-wedge pieces are not amalgamated}, so this \carrousel\ refines the
$\Delta$-pieces of the intermediate \carrousel. We then amalgamate iteratively any $D$-pieces which do not
contain components of $\Delta$.
  
We call the result the \emph{complete \carrousel} of
$\Delta$.
  
The complete \carrousel\ is a refinement of both the plain and the
intermediate \carrousels.  In particular, according to Proposition \ref{prop:curve from carrousel}, its combinatorics determine
the topology of the curve $\Delta$.  In Section \ref{sec:finding
  remaining polars}, we complete the proof of part \eqref{it:discriminant} of Theorem
\ref{th:invariants from geometry} by proving that the outer
Lipschitz geometry of $(X,0)$ determines the combinatorics of a
complete carrousel section of $\Delta$.

\smallskip We close this section with two examples illustrating carrousels.

\begin{example}\label{ex:D5 carrousel}
  Let $(X,0)$ be the $D_5$ singularity with equation $x^2y + y^4
  +z^2=0$. The discriminant curve of the generic projection
  $\ell=(x,y)$ has two branches $y=0$ and $x^2+y^3=0$, giving us plain
  carrousel rates of $1$ and $3/2$.  In Example \ref{ex:VTrates}, we
  will compute the corresponding polar rates, which equal $2$ and
  $5/2$ respectively, giving the additional rates which show up in the
  intermediate and complete carrousels. See also Example
  \ref{ex:D5-resolution}.

  Figure \ref{fig:carrousel D5} shows the sections of three different
  carrousels for the discriminant of the generic plane projection of
  the singularity $D_5 $: the plain carrousel and the \red{complete and intermediate} \carrousels.   The
  $\Delta$-pieces of the intermediate carrousel section
  are in gray, and the pieces they contain  in the complete carrousel are $D$-pieces which are in
  fact $\Delta$-wedge pieces since for both branches of $\Delta$ the
  polar rate is greater than the last characteristic exponent.
\end{example}

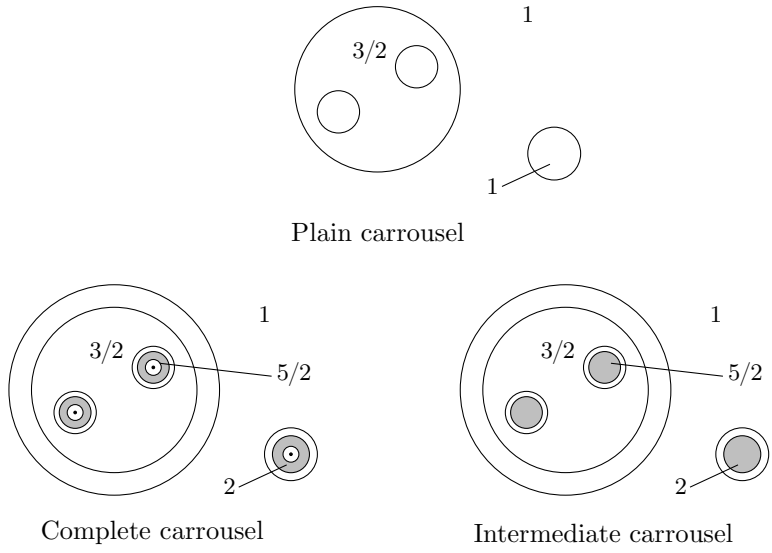
\begin{figure}
\centering
\begin{tikzpicture}
   
\begin{scope}[xshift=3.5cm]
  \draw[ fill=white] (0:0)circle(1.1);

    \node(a)at(-0.1,0.5){\small$3/2$};
      \node(a)at(2,1){\small$1$};
    
    \draw[  ] ( 0:0)+(210:0.6)circle(8pt);
     
     

     \draw[  ] ( 0:0)+(30:0.6)circle(8pt);
  
  
   \draw[ ] ( -20:2.5) circle(10pt);

  \draw[thin](-23.5:2.5)--(-38:2.1);
 \node(a)at(-40:2){\small$1$};
\node(a)at(0,-1.9){Plain carrousel};
\end{scope}

 \begin{scope}[yshift=-4cm]
 
    \draw[  ] (0:0)circle(1.4);
   \draw[fill=white] (0:0)circle(1.1);

    \node(a)at(-0.1,0.5){\small$3/2$};
      \node(a)at(2,1){\small$1$};
    

         \draw[ ] ( 0:0)+(210:0.6)circle(8pt);
          \draw[  fill=lightgray] ( 0:0)+(210:0.6)circle(6pt);
           \draw[ fill=white] ( 0:0)+(210:0.6)circle(3pt);
     \draw[fill=black] ( 0:0)+(210:0.6)circle(0.5pt);


     
        \draw[ fill=lightgray ] ( 0:0)+(30:0.6)circle(6pt);
        \draw[ fill=white] ( 0:0)+(30:0.6)circle(3pt);
         \draw[  ] ( 0:0)+(30:0.6)circle(8pt);

  \draw[fill=black] (0:0)+(30:0.6)circle(0.5pt);
  
  
   \draw[ ] ( -20:2.5) circle(10pt);
     \draw[  fill=lightgray] ( -20:2.5) circle(7pt);
      \draw[ fill=white] ( -20:2.5) circle(3pt);
  \draw[fill=black] ( -20:2.5) circle(0.5pt);
 
    \draw[thin](30.5:0.7)--(5:2.1);
 \node(a)at(5:2.4){\small   $ 5/2$};

 \draw[thin](-23.5:2.5)--(-38:2.1);
 \node(a)at(-40:2){\small$2$};
 \node(a)at(0.5,-1.9){Complete carrousel};
 
  \begin{scope}[xshift=6cm]
   \draw[  ] ( 0:0)circle(1.4);
   \draw[ fill=white] (0:0)circle(1.1);

    \node(a)at(-0.1,0.5){\small$3/2$};
      \node(a)at(2,1){\small$1$};
    

         \draw[ ] ( 0:0)+(210:0.6)circle(8pt);
          \draw[  fill=lightgray] ( 0:0)+(210:0.6)circle(6pt);


     
        \draw[ fill=lightgray ] ( 0:0)+(30:0.6)circle(6pt);
         \draw[  ] ( 0:0)+(30:0.6)circle(8pt);

  
  
   \draw[ ] ( -20:2.5) circle(10pt);
     \draw[  fill=lightgray] ( -20:2.5) circle(7pt);
 
 \draw[thin](30.5:0.7)--(5:2.1);
 \node(a)at(5:2.4){\small   $ 5/2$};

 \draw[thin](-23.5:2.5)--(-38:2.1);
 \node(a)at(-40:2){\small$2$};
 \node(a)at(0.5,-1.9){Intermediate carrousel};
\end{scope}
\end{scope}
\end{tikzpicture} 
\caption{Carrousel sections for $D_5$}
\label{fig:carrousel D5}
\end{figure}

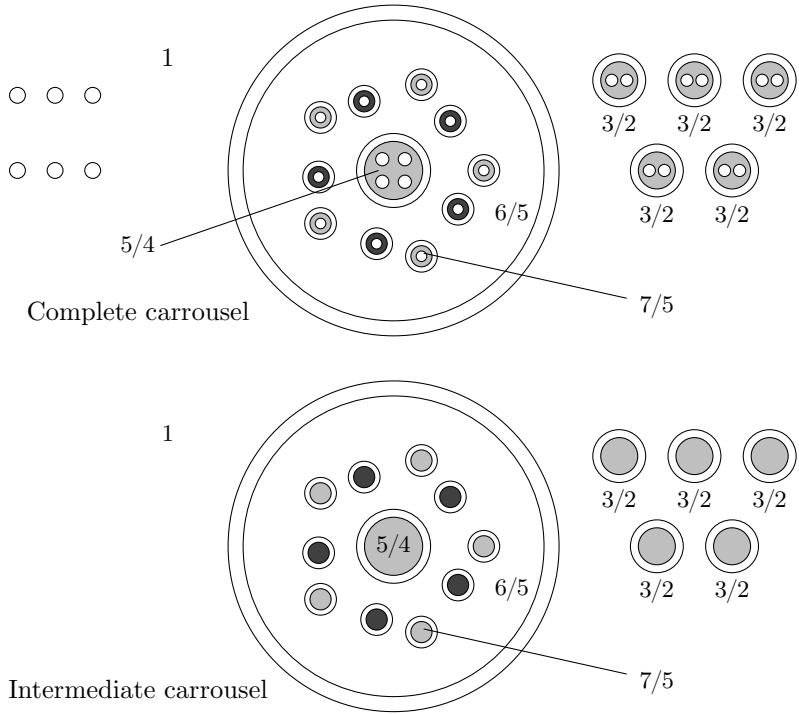
\begin{figure}
\begin{center}

\vbox to 12pt{}

\begin{tikzpicture} 
\begin{scope}[xshift=6cm, yshift=-1.5cm]
\draw[  ] (2,0)circle(10pt);
\draw[fill=lightgray] (2,0)circle(7pt);
\draw[fill=white] ( 2.1,0)circle(2.3pt);
\draw[fill=white] ( 1.9,0)circle(2.3pt);
\node(a)at(2,-0.6){\small$3/2$};
  
 \begin{scope} [xshift=1cm] 
 \draw[  ] (2,0)circle(10pt);
 \draw[ fill=lightgray] (2,0)circle(7pt);
 \draw[fill=white] ( 2.1,0)circle(2.3pt);
 \draw[fill=white] ( 1.9,0)circle(2.3pt);
 \node(a)at(2,-0.6){\small$3/2$};
 \end{scope}
 
 \begin{scope} [xshift=2cm] 
 \draw[ ] (2,0)circle(10pt);
 \draw[ fill=lightgray] (2,0)circle(7pt);
 \draw[fill=white] ( 2.1,0)circle(2.3pt);
 \draw[fill=white] ( 1.9,0)circle(2.3pt);
 \node(a)at(2,-0.6){\small$3/2$};
 \end{scope}
 
 \begin{scope} [xshift=0.5cm, yshift=-1.2cm] 
 \draw[  ] (2,0)circle(10pt);
 \draw[ fill=lightgray] (2,0)circle(7pt);
 \draw[fill=white] ( 2.1,0)circle(2.3pt);
 \draw[fill=white] ( 1.9,0)circle(2.3pt);
 \node(a)at(2,-0.6){\small$3/2$};
  
   \begin{scope} [xshift=1cm] 
   \draw[  ] (2,0)circle(10pt);
   \draw[ fill=lightgray] (2,0)circle(7pt);
   \draw[fill=white] ( 2.1,0)circle(2.3pt);
   \draw[fill=white] ( 1.9,0)circle(2.3pt);
   \node(a)at(2,-0.6){\small$3/2$};
   \end{scope}
 \end{scope}
\end{scope}

\begin{scope}[yshift=-2.7cm]
\draw[ ] (0:5) circle(2cm);
\draw[ ] (0:5) circle(2.2cm);
  
\draw[  ] (5,0)circle(14pt);
\draw[ fill=lightgray] (5,0)circle(11pt);
     
\draw[fill=white] ( 4.85,-0.15)circle(2.5pt);
\draw[fill=white] ( 4.85,0.15)circle(2.5pt);
\draw[fill=white] ( 5.15,-0.15)circle(2.5pt);
\draw[fill=white] ( 5.15,0.15)circle(2.5pt);
   
\draw[fill=lightgray] (0:5)+(0:1.2)circle(4pt);
\draw[fill=white] (0:5)+(0:1.2)circle(2pt);
\draw[  ] (0:5)+(0:1.2)circle(6pt);

\draw[fill=lightgray ] (0:5)+(72:1.2)circle(4pt);
\draw[fill=white] (0:5)+(72:1.2)circle(2pt);
\draw[ ] (0:5)+(72:1.2)circle(6pt);
     
\draw[fill=lightgray ] (0:5)+(144:1.2)circle(4pt);
\draw[fill=white] (0:5)+(144:1.2)circle(2pt);
\draw[ ] (0:5)+(144:1.2)circle(6pt);

\draw[fill=lightgray ] (0:5)+(216:1.2)circle(4pt);
\draw[fill=white] (0:5)+(216:1.2)circle(2pt);
\draw[] (0:5)+(216:1.2)circle(6pt);

\draw[fill=lightgray ] (0:5)+(-72:1.2)circle(4pt);
\draw[fill=white] (0:5)+(-72:1.2)circle(2pt);
\draw[] (0:5)+(-72:1.2)circle(6pt);
  
\draw[fill=darkgray] (0:5)+(185:1.0)circle(4pt);
\draw[fill=white] (0:5)+(185:1.0)circle(2pt);
\draw[ ] (0:5)+(185:1.0)circle(6pt);

\draw[fill=darkgray] (0:5)+(257:1.0)circle(4pt);
\draw[fill=white] (0:5)+(257:1.0)circle(2pt);
\draw[] (0:5)+(257:1.0)circle(6pt);

\draw[fill=darkgray ] (0:5)+(-31:1.0)circle(4pt);
\draw[fill=white] (0:5)+(-31:1.0)circle(2pt);
\draw[ ] (0:5)+(-31:1.0)circle(6pt);
         
\draw[fill=darkgray ] (0:5)+(41:1.0)circle(4pt);
\draw[fill=white] (0:5)+(41:1.0)circle(2pt);
\draw[  ] (0:5)+(41:1.0)circle(6pt);
             
\draw[fill=darkgray ] (0:5)+(-247:1.0)circle(4pt);
\draw[fill=white] (0:5)+(-247:1.0)circle(2pt);
\draw[ ] (0:5)+(-247:1.0)circle(6pt);
      
\node(a)at(-5:6.6){\small$6/5$};
\node(a)at(1.6,-1){\small ${5/4}$};
\draw(1.9,-1)--(4.8,0);
\node(a)at(8.5,-1.8){\small ${7/5}$};
\draw(8.1,-1.7)--(5.4,-1.1);
 
\draw[ ] (1,1)circle(3pt);
\draw[ ] (0.5,1)circle(3pt);
\draw[ ] (0,1)circle(3pt);
\draw[ ] (1,0)circle(3pt);
\draw[ ] (0.5,0)circle(3pt);
\draw[ ] (0,0)circle(3pt);

\node(a)at(2,1.5){${\small 1}$};       

\node(a)at(1.6,-1.9){Complete carrousel};
\end{scope} 

\begin{scope}[yshift=-5cm]
 \begin{scope}[xshift=6cm, yshift=-1.5cm]

 \draw[ ] (2,0)circle(10pt);
 \draw[fill=lightgray] (2,0)circle(7pt);
 \node(a)at(2,-0.6){\small$3/2$};
   
  \begin{scope} [xshift=1cm] 
  \draw[  ] (2,0)circle(10pt);
  \draw[ fill=lightgray] (2,0)circle(7pt);
  \node(a)at(2,-0.6){\small$3/2$};
  \end{scope}
  
  \begin{scope} [xshift=2cm] 
  \draw[ ] (2,0)circle(10pt);
  \draw[ fill=lightgray] (2,0)circle(7pt);
  \node(a)at(2,-0.6){\small$3/2$};
  \end{scope}

  \begin{scope} [xshift=0.5cm, yshift=-1.2cm] 
  \draw[ ] (2,0)circle(10pt);
  \draw[fill=lightgray] (2,0)circle(7pt);
  \node(a)at(2,-0.6){\small$3/2$};
  
   \begin{scope} [xshift=1cm] 
   \draw[  ] (2,0)circle(10pt);
   \draw[ fill=lightgray] (2,0)circle(7pt);
   \node(a)at(2,-0.6){\small$3/2$};
   \end{scope}
  \end{scope}
 \end{scope}

 \begin{scope}[yshift=-2.7cm]
 \draw[ ] (0:5) circle(2cm);
 \draw[ ] (0:5) circle(2.2cm);

 \draw[ ] (5,0)circle(14pt);
 \draw[fill=lightgray] (5,0)circle(11pt);
 \node(a)at(5,0){\small$5/4$};
     
 \draw[fill=lightgray] (0:5)+(0:1.2)circle(4pt);
 \draw[ ] (0:5)+(0:1.2)circle(6pt);

 \draw[fill=lightgray ] (0:5)+(72:1.2)circle(4pt);
 \draw[ ] (0:5)+(72:1.2)circle(6pt);
     
 \draw[fill=lightgray ] (0:5)+(144:1.2)circle(4pt);
 \draw[ ] (0:5)+(144:1.2)circle(6pt);

 \draw[fill=lightgray ] (0:5)+(216:1.2)circle(4pt);
 \draw[] (0:5)+(216:1.2)circle(6pt);

 \draw[fill=lightgray ] (0:5)+(-72:1.2)circle(4pt);
 \draw[] (0:5)+(-72:1.2)circle(6pt);
  
 \draw[ ] (0:5)+(185:1.0)circle(6pt);
 \draw[fill=darkgray] (0:5)+(185:1.0)circle(4pt);
    
 \draw[] (0:5)+(257:1.0)circle(6pt);
 \draw[fill=darkgray] (0:5)+(257:1.0)circle(4pt);
       
 \draw[fill=darkgray ] (0:5)+(-31:1.0)circle(4pt);
 \draw[ ] (0:5)+(-31:1.0)circle(6pt);
         
 \draw[fill=darkgray ] (0:5)+(41:1.0)circle(4pt);
 \draw[  ] (0:5)+(41:1.0)circle(6pt);
             
 \draw[fill=darkgray ] (0:5)+(-247:1.0)circle(4pt);
\draw[ ] (0:5)+(-247:1.0)circle(6pt);
      
\node(a)at(-5:6.6){\small$6/5$};
\node(a)at(2,1.5){\small ${1}$};
\node(a)at(8.5,-1.8){\small ${7/5}$};
\draw(8.1,-1.7)--(5.4,-1.1);
\node(a)at(1.6,-1.9){Intermediate carrousel};
\end{scope}    
\end{scope}  
\end{tikzpicture}  
\end{center}\caption{Carrousel sections for $ (zx^2+y^3)(x^3+zy^2)+z^7= 0 $}\label{fig:carrousel sections}
\end{figure}

\begin{example}\label{ex:very big0}
Our next example was
already partially studied in \cite{BNP}: the surface singularity
$(X,0)$ with equation $ (zx^2+y^3)(x^3+zy^2)+z^7= 0$.

In this case  $\Delta$  has $14$ branches with $12$ distinct tangent
  lines $L_1,\ldots,L_{12}$. $\Delta$ decomposes as follows:
  \begin{enumerate}
  \item Six branches, each lifting to a component of the polar in one
    of the two thick parts of $X$. Their tangent lines $L_1,\dots,L_6$
    move as the linear projection is changed, so their polar rates are $1$.
  \item Five branches, each tangent to one of $L_7,\ldots,L_{11}$, and
    with $3/2$ as single characteristic Puiseux exponent. So their
    Puiseux expansions have the form $ u=a_iv + b_i v^{3/2} + \cdots
    $. Their polar rates are $3/2$.
\item Three branches $\delta, \delta', \delta''$ tangent to
  the same line $L_{12}$, each with single characteristic Puiseux exponent,
  respectively $6/5, 6/5, 5/4$. Their Puiseux expansions have the form: 
        $\delta:
u = av +  b v^{6/5} +  \cdots$,
$\delta': u = av +b' v^{6/5}  +\cdots$ and 
$ \delta'': u =  av +b'' v^{5/4} +   \cdots$. 
Their polar rates are respectively $7/5, 7/5, 5/4$.
  \end{enumerate}

  The polar rates were given in \cite[Example 15.2]{BNP} except for
  the two with value $7/5$, whose computation is explained in Section
  \ref{sec:explicit computation}.  Figure \ref{fig:carrousel sections}
  shows the complete and intermediate carrousel sections. The gray
  regions in the intermediate carrousel section represent
  $\Delta$-pieces with rates $>1$ (the $B(1)$-piece is also a
  $\Delta$-piece).
\end{example}

\section{Geometric decomposition of $(X,0)$}
\label{sec:carrousel1}

In this section we describe the geometric decomposition of $(X,0)$ as a union of
semi-algebraic subgerms by starting with the
unamalgamated intermediate \carrousel\ of the discriminant curve
of a generic plane projection, lifting it to $(X,0)$, and then
performing an amalgamation of some pieces (see Lemma \ref{le:rules}).

\subsection*{Setup} 
From now on we assume $(X,0)\subset
(\C^n,0)$ and our coordinates $(z_1\dots,z_n)$ in $\C^n$ are chosen so
that $z_1$ and $z_2$ are generic linear forms and
$\ell:=(z_1,z_2)\colon X\to \C^2$ is a generic linear projection for
$X$.  
 We denote by $\Pi$ the polar curve of $\ell$ and by $\Delta = \ell
 (\Pi)$ its discriminant curve.

 Instead of considering a standard $\epsilon$-ball ${\Bbb B}_\epsilon$
 as Milnor ball for $(X,0)$, we will use, as in \cite{BNP}, a standard
 ``Milnor tube'' associated with the Milnor-L\^e fibration for the map
 $h:=z_1|_X\colon X\to \C$. Namely, for some sufficiently small
 $\epsilon_0$ and some $R>1$ we define for $\epsilon\le\epsilon_0$:
$$B_\epsilon:=\{(z_1,\dots,z_n):|z_1|\le \epsilon, |(z_1,\dots,z_n)|\le
R\epsilon\}\quad\text{and}\quad S_\epsilon=\partial B_\epsilon\,,$$
where $\epsilon_0$ and $R$ are chosen so that for $\epsilon\le
\epsilon_0$:
\begin{enumerate}
\item\label{it:mb1} $h^{-1}(t)$ intersects
the standard sphere ${\Bbb S}_{R\epsilon}$ transversely for $|t|\le \epsilon$;
\item\label{it:mb2} the polar curve $\Pi$ and its tangent cone meet $S_\epsilon$ only in the part
  $|z_1|=\epsilon$.
\end{enumerate}
The existence of such $\epsilon_0$ and $R$ is proved in \cite[Section 4]{BNP}.

We will now always work inside the Milnor balls
  $B_\epsilon$ just defined.
\begin{definition}\label{def:component}
    A \emph{component} of a semi-algebraic germ $(A,0)$ means the closure of a connected component of $(A\cap
  B_{\epsilon})\setminus \{0\}$ for sufficiently small $\epsilon$
  (i.e., the family of $B_{\epsilon'}$ with $\epsilon'\le \epsilon$
  should be a family of Milnor balls for $A$).
\end{definition}

  We lift the unamalgamated intermediate \carrousel\ of $\Delta$ to
  $(X,0)$ by $\ell$.  Any component of the inverse image of a piece of any one of the
  types $B(q)$, $A(q,q')$ or $D(q)$ is a piece of
  the same type from the point of view of its inner geometry (see
  \cite{BNP} for details). By a ``piece'' of the decomposition of
  $(X,0)$ we will always mean a component of the inverse image of a piece of $(\C^2,0)$.   
  
\begin{amalgamation}[Amalgamation in $X$] \label{amalg:X} We now simplify this
  decomposition of $(X,0)$ by amalgamating pieces in the two
  following steps.
\begin{enumerate} 
\item\label{it:amalg:X1} \emph{Amalgamating polar wedge pieces}. We
  amalgamate any polar wedge piece with the piece outside it. We call the resulting pieces \emph{polar pieces}.
\item\label{it:amalg:X2}\emph{Amalgamating empty $D$-pieces}.  Whenever a piece of $X$ is
    a $D(q)$-piece containing no part of the polar curve (we speak of
    an \emph{empty} piece) we amalgamate it with the piece which has a
    common boundary with it. These
    amalgamations may form new empty $D$-pieces. We continue this amalgamation iteratively until the only remaining
  $D$-pieces contain components of the polar curve.
\end{enumerate}
Polar pieces are then all the $D$-pieces and some of the $A(q,q)$- and  $B$-pieces.
\end{amalgamation}


\begin{remark} The inverse image of a $\Delta$-wedge $B_0$ may consist
  of several pieces. All of them are $D$-pieces and at least one of
  them is a polar wedge in the sense of Definition \ref{def:wedges}.
  In fact a polar piece contains at most one polar wedge over
  $B_0$. The argument is as follows. Assume $B_0$ is a
  \red{$\Delta$}-wedge about the component $\Delta_0$ of $\Delta$. Let
  $\Pi_0$ be the component of $\Pi$ such that $\Delta_0 = \ell(\Pi_0)$
  and let $N$ be the polar piece containing it.  Assume $\Pi'_0$ is
  another component of $\Pi$ inside $N$. According to Section
 \ref{sec:polarwedges}, $\Pi \cap N$ consists of equisingular components having
  pairwise contact $s$.  Since the projection $\ell_{\Pi}\colon \Pi
  \to \Delta$ is generic (\cite[Lemme 1.2.2 ii)]{T3}), we also have
  $d(\ell(\Pi_0 \cap S_{\epsilon}),\ell(\Pi'_0 \cap S_{\epsilon})) =
  O(\epsilon^s)$. Therefore, if $B_0$ is small enough, we get $B_0
  \cap \ell(\Pi'_0) = \{0\}$.
\end{remark}

Except for the $B(1)$-piece, each piece of the
decomposition of $(\C^2,0)$ has one ``outer boundary'' and some number
(possibly zero) of ``inner boundaries.''  When we lift pieces to
$(X,0)$ we use the same terminology \emph{outer boundary} or
\emph{inner boundary} for the boundary components of components of
lifted pieces.

After performing Amalgamation \ref{amalg:X},
if $q$ is the rate of some piece of $(X,0)$ we denote by $X_q$ the
union of all pieces of $X$ with this rate $q$. There is a finite
collection of such rates $q_1> q_2>\dots >q_{\nu}$. In fact,
$q_{\nu}=1$ since $B(1)$-pieces always exist, and $X_1$ is the union
of the $B(1)$-pieces.
\begin{definition}[\bf Geometric decomposition of $(X,0)$] \label{def:geometric decomposition}
We say 
  \emph{geometric decomposition of $(X,0)$}
for the union
$$
  (X,0)=\bigcup_{i=1}^\nu (X_{q_i},0)\cup\bigcup_{i>j}(A_{q_i,q_j},0)\,,
$$
where $A_{q_i,q_j}$ is a union of intermediate $A(q_i,q_j)$-pieces
between $X_{q_i}$ and $X_{q_j}$ and the semi-algebraic sets
$X_{q_i}$ and $A_{q_i,q_j}$ are pasted along their boundary
components.
\end{definition}

Note that the construction of pieces via  a carrousel
  involves choices which imply that, even after fixing a generic plane
  projection, the pieces are only well defined up to adding or
  removing collars of type $A(q,q)$ at their boundaries.
\begin{definition}[\bf Equivalence of pieces]\label{def:equivalent pieces}
We  say that two  pieces with same rate $q$ are \emph{equivalent}
if they can be made equal by attaching collars of type $A(q,q)$ at their boundaries. Similarly, two $A(q,q')$-pieces are equivalent if they can be made equal by removing $A(q,q)$ collars at their outer boundaries and $A(q',q')$ collars at their inner boundaries.
\end{definition}
The following is immediate from Propositions \ref{prop:resolution} \red{and \ref{prop:Anne doesn't like this}} below:
\begin{proposition}\label{prop:unicity}
  The geometric decomposition is unique up to equivalence of the pieces. \qed
\end{proposition}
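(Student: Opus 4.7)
The plan is to derive uniqueness by producing a resolution-theoretic description of each piece that does not depend on the choices (generic projection $\ell$, Milnor ball family, truncation points, and the constants $\alpha_\kappa,\beta_\kappa,\gamma_\kappa$ from Section~\ref{sec:carrousel}) made in the construction. Since the proposition is advertised as immediate from Proposition~\ref{prop:resolution}, the task is to show that once we identify each rate $q_i$ and each piece $X_{q_i}$ with intrinsic data in a fixed resolution of $(X,0)$, the only ambiguity left is the exact placement of the boundary tori between adjacent pieces, which is precisely the $A(q,q)$-collar ambiguity built into Definition~\ref{def:equivalent pieces}.

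First I would verify that the finite set of rates $\{q_1>\dots>q_\nu\}$ is intrinsic. The rates coming from the plain carrousel are topological invariants of $\Delta$, and the topology of $\Delta$ is itself to be shown in Theorem~\ref{th:invariants from geometry}\eqref{it:discriminant} to be intrinsic. The polar rates, in turn, can be read off the resolution: by the forthcoming Proposition~\ref{prop:resolution}, each rate $q_i$ corresponds to a specific set of exceptional curves in the minimal good resolution that factors through both the Nash modification and the blow-up of the maximal ideal, with $q_i$ computed from multiplicities and self-intersection numbers attached to those curves. Hence the set of rates, and the partition of exceptional curves according to rates, is canonical.

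Next, I would identify each semi-algebraic piece $X_{q_i}$ with a regular neighbourhood in $X$ of the union of exceptional components carrying the rate $q_i$, intersected with $B_\epsilon$; and each $A(q_i,q_j)$-piece with a regular neighbourhood of a chain of exceptional curves joining the two rate-groups. Two such regular neighbourhoods differ only by a product collar along the boundary tori that separate them, and such a collar on a boundary torus between an $X_{q_i}$ piece and an $A(q_i,q_j)$ piece is, by Lemma~\ref{le:rules}\eqref{rule:AA}, exactly an $A(q_i,q_i)$ slab. Shifting the boundary thus corresponds to removing or attaching an $A(q_i,q_i)$ collar, which is the equivalence relation of Definition~\ref{def:equivalent pieces}.

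The main obstacle is controlling the dependence on the generic projection $\ell$. Different generic $\ell$ produce different polar curves $\Pi$ and different polar wedges, so \emph{a priori} different polar $D$-pieces. The uniqueness here is handled in two stages: Definition~\ref{def:generic linear projection} gives an equisingular family $\{\Pi_{\cal D}\}_{\cal D\in\Omega}$, so the topological embedding in a resolution of the link is constant on $\Omega$; and Proposition~\ref{prop:thinzones} sandwiches each polar wedge between two sets $\mathcal{B}_{K_0}(\Pi_0)\subset A_0\cap B_\epsilon\subset \mathcal{B}_{K_1}(\Pi_0)$ defined by the local bilipschitz constant of $\ell$, which for varying generic $\ell$ differ only by bilipschitz homeomorphisms controlled over the equisingular family. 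Consequently the polar pieces produced from two generic projections differ only by collars of type $A(q,q)$ along their boundary tori, and once the polar pieces coincide up to equivalence, so do the remaining $B$-, $A$- and $D$-pieces in their complements, completing the argument.
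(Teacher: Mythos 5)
Your argument follows the paper's route: the uniqueness is extracted from Proposition~\ref{prop:resolution}, which identifies each piece up to equivalence with a neighbourhood of a collection of exceptional curves in the canonical minimal good resolution factoring through the Nash modification and the blow-up of the maximal ideal, and the only residual ambiguity is the $A(q,q)$-collar ambiguity that Definition~\ref{def:equivalent pieces} quotients out. The paper itself treats this as immediate, with the one-line observation after Proposition~\ref{prop:resolution} that the graph $\Gamma$ weighted by rates determines the decomposition up to equivalence.

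One caution: you cite Theorem~\ref{th:invariants from geometry}\eqref{it:discriminant} as the source of the fact that the topology of $\Delta$ is intrinsic, but that theorem is proved only in Part~3 and its proof uses the geometric decomposition, so invoking it at this stage would be circular. What you actually need is the much weaker statement that the discriminants $\Delta_{\cal D}$, ${\cal D}\in\Omega$, form an equisingular family, and that is already built into Definition~\ref{def:generic linear projection}; together with Lemma~\ref{prop:polar constancy}-style constancy arguments (or the observation that the resolution $\pi$ resolves the whole family of polar curves simultaneously), this gives the intrinsic-ness of the rates without any forward reference. Similarly, your last paragraph on the dependence on $\ell$ is correct but is already absorbed by Proposition~\ref{prop:resolution}: once each piece is shown equivalent to a resolution-theoretic neighbourhood $\pi({\cal N}(\Gamma_j))$ in the canonical resolution, any two generic projections produce decompositions equivalent to that same canonical one, so the sandwiching of polar wedges from Proposition~\ref{prop:thinzones} is not separately needed here.
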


 \section{Geometric decomposition through
  resolution}\label{sec:decomposition vs resolution}

In this section, we describe  the geometric decomposition  using  a
suitable  resolution of $(X,0)$. \red{We will do this in two steps, first considering the following coarser decomposition: 

\begin{definition}[\bf Inner geometric decomposition]\label{def:gdp} The \emph{inner geometric decomposition} of $(X,0)$  is obtained from the geometric decomposition \ref{def:geometric decomposition}  by  Amalgamations \ref{amalg:X} and adding:
  \begin{enumerate}
  \item[(3)]we amalgamate $B$-pieces which are $A(q,q)$-pieces (but not polar pieces) with the $A$-pieces adjacent to them.
  \end{enumerate}
We remark that, other than that polar pieces are never amalgamated, the inner geometric decomposition is the decomposition used in \cite{BNP} to classify inner bilipschitz geometry of $X$ (see Theorem 1.7 and Sec.\ 15 of \cite{BNP}).
\end{definition}}

We first need to describe the Nash modification of $(X,0)$.

\begin{definition}[\bf Nash modification]\label{def:Nash modification}
  Let $\lambda \colon X\setminus\{0\} \to \grassman(2,n)$ be the Gauss
  map which maps $x \in X\setminus\{0\}$ to the tangent plane $T_xX$.
  The closure $\widecheck X$ of the graph of $\lambda$ in $X \times
  \grassman(2,n)$ is a reduced analytic surface. The \emph{Nash
    modification of $(X,0)$} is the induced morphism $\nu\colon
  \widecheck X \to X$.
\end{definition}
  
According to \cite[Part III, Theorem 1.2]{spivakovsky}, a resolution of
$(X,0)$ factors through the Nash modification if and only if it has no
basepoints for the family of polar curves $\Pi_{\cal D}$ parametrized
by $\cal D \in \Omega$.

In this section we consider the  minimal good
resolution $\pi\colon (\widetilde X,E) \to (X,0)$ with the following three properties:
\begin{enumerate}
\item it resolves the basepoints of a general linear system of
  hyperplane sections of $(X,0)$ (i.e., it factors through the
  normalized blow-up of the maximal ideal of $X$); 
\item it resolves the basepoints of the family of polar curves of
  generic plane projections (i.e., it factors through the
  Nash modification of $X$);
  \item there are no adjacent nodes in the resolution graph (nodes are defined in Definition \ref{def:LP-curve} below).
\end{enumerate}
This resolution is obtained from the minimal good resolution of $(X,0)$ by
blowing up further until the basepoints of the two kinds are
resolved and then by  blowing up intersection points of exceptional curves corresponding to nodes. 
 
We denote by $\Gamma$ the dual resolution
graph and by $E_1,\ldots,E_r$  the exceptional curves in $E$. We
denote by $v_k$ the vertex of $\Gamma$ corresponding to $E_k$.

\begin{definition}\label{def:LP-curve} An \emph{\L-curve} is an 
   exceptional curve in $\pi^{-1}(0)$ which intersects the strict
   transform of a generic hyperplane section. The vertex of $\Gamma$
   representing an \L-curve is an \emph{\L-node}.

   A \emph{\P-curve} (\P \ for ``polar") is an exceptional curve
   in $\pi^{-1}(0)$ which intersects the strict transform of the polar
   curve of any generic linear projection. The vertex of $\Gamma$
   representing this curve is a \emph{\P-node}.
   
    A vertex of  $\Gamma$ is called a \emph{node} if it is
an \L- or \P-node or has valency $\ge 3$ or represents an exceptional curve of
genus $>0$. 

A \emph{string} of a resolution graph is a connected
  subgraph whose vertices have valency $2$ and are not nodes, and a
  \emph{bamboo} is a string attached to a non-node vertex of valency
  $1$. 
\end{definition}

For each $k=1,\ldots,r$, let $N(E_k)$ be a
small closed tubular neighbourhood of $E_k$ and let $${\cal N}(E_{k})
= \overline{N(E_k) \setminus \bigcup_{k' \neq k} N(E_{k'})}.$$ For any
subgraph $\Gamma'$ of $\Gamma$ define:
$$N(\Gamma'):= \bigcup_{v_k \in\Gamma'}N(E_k)\quad\text{and}\quad
\Nn(\Gamma'):= 
\overline{N(\Gamma)\setminus \bigcup_{v_k \notin \Gamma'}N(E_k)}\,.$$

\begin{proposition}\label{prop:resolution} The pieces of the \red{inner} geometric
  decomposition of $(X,0)$
   (Definition \ref{def:gdp}) can be described as follows:

  For each node $v_i$ of\/ $\Gamma$, let\/ $\Gamma_{i}$ be $v_i$ union
  any attached bamboos. Then, with ``equivalence'' defined as in
  Definition \ref{def:equivalent pieces},
  \begin{enumerate}
  \item the $B(1)$-pieces are equivalent to the sets $\pi({\cal  N}(\Gamma_{j}))$ where $v_j$ is an \L-node;
  \item the polar pieces are equivalent to the sets $\pi({\cal N}(\Gamma_{j}))$ where
    $v_j$ is a \P-node;
  \item the remaining $B(q)$ are equivalent to the sets $\pi({\cal N}(\Gamma_{j}))$ where
    $v_j$ is a node which is neither an \L-node nor a \P-node;
  \item the $A(q,q')$-pieces with $q \neq q'$ are equivalent to the $\pi({
      N}(\sigma))$ where $\sigma$ is a maximal string between two
    nodes.
  \end{enumerate}
\end{proposition}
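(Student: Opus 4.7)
The plan is to lift the unamalgamated intermediate carrousel of $\Delta$ back to $(X,0)$ via $\ell$, perform Amalgamation \ref{amalg:X}, and then identify each resulting piece with $\pi(\mathcal{N}(\Gamma_j))$ for an appropriate subgraph of $\Gamma$. I would begin by locating the key curves on $\widetilde X$. Since $\pi$ factors through the blow-up of the maximal ideal, the strict transform of a generic hyperplane section meets $\pi^{-1}(0)$ transversely in exactly one point on each \L-curve, and in no other exceptional component. Since $\pi$ factors through the Nash modification, the strict transform of the polar curve $\Pi$ of a generic $\ell$ meets $\pi^{-1}(0)$ only along \P-curves (via the characterization of \cite{spivakovsky}). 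Thus \L-nodes record the hyperplane-section direction and \P-nodes record exactly where the polar curve concentrates on $\widetilde X$.

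I would then match boundary tori on both sides. Lifting the unamalgamated intermediate carrousel of $\Delta$ via $\ell$ yields a decomposition of $(X,0)$ whose boundary tori in the link $X^{(\epsilon)}$ separate pieces of different rates. On the other hand, each $\mathcal{N}(\Gamma_j)$ has boundary tori in $X^{(\epsilon)}$, one for each edge of $\Gamma$ leaving $\Gamma_j$, and each $N(\sigma)$ has two such tori. The key observation is that under $\pi$ these two families of tori coincide up to isotopy, since maximal strings of $\Gamma$ between nodes correspond precisely to bilipschitz transitions between carrousel pieces of different rates. Once the boundary tori are matched, identification of piece types proceeds case by case. For an \L-node, $\pi(\mathcal{N}(\Gamma_j))$ contains a thick conical neighbourhood of a generic hyperplane-section component, so is a $B(1)$-piece. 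For a \P-node, the explicit parametrization following Proposition \ref{prop:polar wedge} identifies $\pi(\mathcal{N}(\Gamma_j))$ with the amalgamation of the polar wedge(s) about $E_j$ with the surrounding $B$-piece, yielding a polar piece. For any other node one obtains a $B(q_j)$-piece with no polar component, and for a maximal string $\sigma$ between nodes of rates $q>q'$, $\pi(N(\sigma))$ is a $T^2\times I$ that shrinks at two different rates, hence an $A(q,q')$-piece. The amalgamation of empty $D$-pieces in Amalgamation \ref{amalg:X} corresponds precisely to absorbing bamboos at non-\P-nodes via Lemma \ref{le:rules}\eqref{rule:AD}, while the polar-wedge amalgamation corresponds to absorbing bamboos at \P-nodes.

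The main obstacle will be the precise matching of rates: one must verify that the rate $q_j$ attached to a node $v_j$ by the geometric decomposition really coincides with the one produced by the resolution. This requires a careful analysis of how the sequence of blow-ups realized by $\pi$ corresponds to the sequence of Puiseux approximations of the branches of $\Delta$ in $(\mathbb{C}^2,0)$, combined with Lemma \ref{lemma:constant} to identify rates at \P-nodes and the formula for the multiplicity of the maximal ideal cycle for rates at \L-nodes. Once rates are matched, Proposition \ref{prop:unicity}, applied up to the $A(q,q)$-collar equivalence of Definition \ref{def:equivalent pieces}, gives the stated description.
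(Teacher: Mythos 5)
Your overall strategy --- lift the intermediate carrousel of $\Delta$ via $\ell$, perform Amalgamation \ref{amalg:X}, and match the results with the $\pi({\cal N}(\Gamma_j))$ --- is the same one the paper takes, and your opening observations (strict transform of a generic hyperplane section meets only \L-curves, strict transform of $\Pi$ meets only \P-curves) are correct. But the argument stalls exactly where the work needs to happen. Your ``key observation'' that the boundary tori of the lifted carrousel decomposition and those of the plumbing decomposition coincide up to isotopy, ``since maximal strings of $\Gamma$ between nodes correspond precisely to bilipschitz transitions between carrousel pieces of different rates,'' is circular: that maximal strings are exactly the $A(q,q')$-pieces is item (4) of the very proposition you are trying to prove, so it cannot be invoked as the reason for the torus matching. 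You are asserting the conclusion and calling it an observation.

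What is missing is a mechanism tying the Puiseux truncations that define the carrousel in the base $(\C^2,0)$ to the blow-up combinatorics of $\Gamma$. The paper supplies this through an intermediate two-dimensional object that your proposal skips: the minimal composition of blow-ups $\rho\colon Y\to\C^2$ resolving the family $\ell(\Pi_{\cal D})$, with dual graph $R$. Lemma \ref{lem:resolution} identifies each carrousel piece with $\rho({\cal N}(C_k))$, $\rho(N(\beta))$, or $\rho(N(\sigma))$ by exhibiting each $B$-piece as foliated by an equisingular family of curves $C_\lambda$ whose strict transforms under $\rho$ all meet one fixed exceptional component, and the factorization $\ell\circ\pi=\rho\circ\widetilde\ell$ then carries this identification up to $\widetilde X$. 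Your torus matching and your self-identified ``main obstacle'' about rates both collapse to this one unproven bridge; saying it ``requires a careful analysis'' is precisely the step a proof needs to supply. Separately, the remark that ``the polar-wedge amalgamation corresponds to absorbing bamboos at \P-nodes'' is mistaken: in the base, a $\Delta$-wedge already lies inside $\rho({\cal N}(C_k))$ for its $\Delta$-node $C_k$ and is not the image of a bamboo; bamboos correspond to empty $D$-pieces (Lemma \ref{lem:resolution}, item (4)), which are absorbed by the second, not the first, step of Amalgamation \ref{amalg:X}, whether or not the node they are attached to is a \P-node.
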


\red{We describe later how the geometric decomposition results from the inner one by modifying $\Gamma$ if necessary to a graph $\widehat\Gamma$ given by labeling some vertices on strings as nodes (after possibly extending the strings by blowing up).}
A straightforward consequence will be that to each node $v_j$ of
\red{$\widehat\Gamma$} one can associate the rate $q_{j}$ of the corresponding
piece of the decomposition, and the graph \red{$\widehat\Gamma$} with nodes weighted
by their rates determines the geometric decomposition
$ (X,0)=\bigcup_{i=1}^\nu (X_{q_i},0)\cup\bigcup_{i>j}(A_{q_i,q_j},0)\,$ up to equivalence, proving  the unicity  stated in Proposition \ref{prop:unicity}. 

Before proving Proposition \ref{prop:resolution} we give two examples. \red{They each have geometric decomposition the same as inner geometric decomposition; an example where the decompositions differ is Example \ref{ex:vvb}.}
\begin{example} \label{ex:D5-resolution} Let $(X,0)$ be the $D_5$
  singularity with equation $x^2y + y^4 +z^2=0$. Carrousel
  decompositions for its discriminant $\Delta$ are described in
  Example \ref{ex:D5 carrousel}.

Let $v_1,\ldots,v_5$
  be the vertices of its minimal resolution graph indexed as follows
  (all Euler weights are $-2$):
       \begin{center}
\begin{tikzpicture}
  \draw[thin ](0,0)--(3,0); 
  \draw[thin ](1,0)--(1,1);
      

  \draw[fill=white   ] (1,0)circle(2pt);
  \draw[fill=white  ] (3,0)circle(2pt);
  \draw[fill =white ] (1,1)circle(2pt);      
  \draw[fill=white   ] (0,0)circle(2pt);
  \draw[fill=white   ] (2,0)circle(2pt);
  \draw[fill=white ] (1,0)circle(2pt); 
      
 \node(a)at(0,-0.3){   $v_1$};
 \node(a)at(1,-0.3){   $v_2$};
 \node(a)at(2,-0.3){   $v_3$};
 \node(a)at(3,-0.3){   $v_4$};
 \node(a)at(0.65,1){   $v_5$};
 \end{tikzpicture} 
\end{center}

The multiplicities of \red{the restriction $h$ to $X$ of a  generic linear form $\C^3 \to \C$} are given
  by the minimum of the compact part of the three divisors $(x)$,
  $(y)$ and $(z)$: $(h \circ \pi) = E_1 +2E_2+2E_3+E_4+E_5 + h^*$, where ${}^*$ means strict transform. 
 The minimal resolution resolves a general linear system of
  hyperplane sections and the strict transform of $h$ is one curve
  intersecting $E_3$.  So $v_3$ is the single \L-node. 
But as we shall see, this is not yet the resolution which resolves the
family of polars.

 The total transform by $\pi$ of the coordinate functions
$x, y$ and $z$ are:
\begin{align*}
  (x \circ \pi) &= 2E_1 +3E_2+2E_3+E_4+2E_5  +  x^* \\
  (y \circ \pi) &= E_1 +2E_2+2E_3+2E_4+E_5+ y^*\\
  (z \circ \pi) &= 2E_1 +4E_2+3E_3+2E_4+2E_5+   z^* \,.
\end{align*}

Set $f(x,y,z) = x^2y + y^4 +z^2$. The polar curve $\Pi$ of a generic
linear projection $\ell\colon (X,0) \to (\C^2,0)$ has equation $g=0$
where $g$ is a generic linear combination of the partial derivatives
$f_x = 2xy$, $f_y=x^2+4y^3$ and $f_z=2z$. The multiplicities of $g$
are given by the minimum of the compact part of the three divisors
\begin{align*}
  (f_x \circ \pi) &= 3E_1 +5E_2+4E_3+3E_4+3E_5+  f_x^* \\
  (f_y \circ \pi) &= 3E_1 +6E_2+4E_3+2E_4+3E_5+ f_y^*\\
  (f_z \circ \pi) &=  2E_1 +4E_2+3E_3+2E_4+2E_5+  f_z^*  \,.
\end{align*}
  So the total transform of $g$ is equal to:
$$(g \circ \pi) = 2E_1 +4E_2+3E_3+2E_4+{2}E_5+ \Pi^*\,.$$
In particular, $\Pi$ is resolved by $\pi$ and its strict transform
$\Pi^*$ has two components $\Pi_1^*$ and $\Pi_2^*$, which intersect
respectively $E_2$ and $E_4$.  

Since the multiplicities $m_2(f_x)=5$, $m_2(f_y)=4$ and $m_2(z)=6$
along $E_2$ are distinct, the family of polar curves of generic plane
projections has a basepoint on $E_2$. One must blow up once to
resolve the basepoint, creating a new exceptional curve $E_6$ and a
new vertex $v_6$ in the graph.  So we obtain two \P-nodes $v_4$ and $v_6$ as in the resolution graph below (omitted Euler weights
are $-2$):

   \begin{center}
\begin{tikzpicture}
 
   \draw[thin ](0,0)--(3,0);
\draw[thin ](1,0)--(1,1);
\draw[thin ](1,0)--(2,1);
      
        
   \draw[thin,>-stealth,->](2,1)--+(1,0.5);
    \draw[thin,>-stealth,->](3,0)--+(1,0.5);

\node(a)at(.7,.25){$-3$};
\node(a)at(2.2,.85){$-1$};
\node(a)at(2,1.3){   $v_6$};
 \node(a)at(3.4,1.5){   $\Pi_1^*$};
  \node(a)at(4.4,0.5){   $\Pi_2^*$};
  \node(a)at(3,-0.3){   $v_4$};

 \draw[fill=white   ] (0,0)circle(2pt);
  \draw[fill=white   ] (1,0)circle(2pt);
     \draw[fill=white   ] (2,0)circle(2pt);
  \draw[fill=white  ] (3,0)circle(2pt);
    \draw[fill=white  ] (2,1)circle(2pt);
     \draw[fill =white ] (1,1)circle(2pt);      
 \end{tikzpicture} 
\end{center}
 
We will compute in Example \ref{ex:VTrates} that the two polar rates
are $5/2$ and $2$.  It follows that the geometric decomposition
of $X$ consists of the pieces   
 $$X_{5/2}=\pi({\cal N}(E_6)), \ X_2=\pi({\cal
    N}(E_4)), \ X_{3/2}=\pi(\cal N(E_1 \cup
E_2 \cup E_5)) \hbox{ and } X_1=\pi(\cal N(E_3))\,,$$
plus intermediate A-pieces,  and that  $X_{5/2}$
and $X_2$ are the two polar pieces. 
The geometric decomposition for $D_5$ is described by
$\Gamma$ with vertices $v_i$ weighted with the corresponding
  rates $q_i$:
\begin{center}
\begin{tikzpicture}
 
   \draw[thin ](0,0)--(3,0);
\draw[thin ](1,0)--(1,1);
\draw[thin ](1,0)--(2,1);
      
        
   \draw[thin,>-stealth,->](2,1)--+(1,0.5);
    \draw[thin,>-stealth,->](3,0)--+(1,0.5);

  \draw[fill=white   ] (1,0)circle(2pt);
  \draw[fill=white  ] (3,0)circle(2pt);
     \draw[fill =white ] (1,1)circle(2pt);      
  \draw[fill=white   ] (0,0)circle(2pt);
   \draw[fill=white   ] (2,0)circle(2pt);
   \draw[fill=white ] (1,0)circle(2pt); 
     \draw[fill=white  ] (2,1)circle(2pt);

\node(a)at(.7,.25){$-3$};
\node(a)at(2.2,.85){$-1$};
      
\node(a)at(0,-0.3){   $3/2$};
\node(a)at(1,-0.3){   $3/2$};
\node(a)at(2,-0.3){   $1$};
 \node(a)at(3,-0.3){   $2$};
 \node(a)at(1,1.3){   $3/2$};
  \node(a)at(2.0,1.3){   $5/2$};
 \end{tikzpicture} 
  \end{center}

 \end{example}

\begin{example}\label{ex:very big1} We return to example \ref{ex:very big0}
   with equation $ (zx^2+y^3)(x^3+zy^2)+z^7= 0$, to clarify how the carrousels described there arise.

We first compute some invariants from the equation.  The
following picture shows the resolution graph of a general linear
system of hyperplane sections.  The negative numbers are
self-intersection of the exceptional curves while numbers in
parentheses are the multiplicities of a generic linear form. 
 
\begin{center}
\begin{tikzpicture}
  \draw[] (-2,0)circle(2.5pt);
  \draw[thin ](-2,0)--(-1,1);

  \draw[thin ](0:0)--(-1,1);
 
     \draw[thin ](0:0)--(1,1);
     
        \draw[thin ](1,1)--(2,0);
            \draw[] (2,0)circle(2.5pt);
   \draw[thin ](1,1)--(1.5,2.5);
    \draw[thin ](-1,1)--(-1.5,2.5);
    \draw[ fill=white ] (1.5,2.5)circle(2.5pt);
     \draw[ fill=white] (-1.5,2.5)circle(2.5pt);
     \draw[thin ](-1.5,2.5)--(1.5,2.5);
     
\draw[fill=white ] (-1,1)circle(2.5pt);
\draw[fill =white ] (1,1)circle(2.5pt);
\draw[fill=white] (0,0)circle(2.5pt);
\draw[fill=white] (2,0)circle(2.5pt);
\draw[fill=white] (-2,0)circle(2.5pt);

\draw[thin] (-1.5,2.5)..controls (-0.5,3) and (0.5,3)..(1.5,2.5);
\draw[thin] (-1.5,2.5)..controls (-0.5,2) and (0.5,2)..(1.5,2.5);
\draw[thin] (-1.5,2.5)..controls (-0.5,3.5) and (0.5,3.5)..(1.5,2.5);
\draw[thin] (-1.5,2.5)..controls (-0.5,1.5) and (0.5,1.5)..(1.5,2.5);

\node(a)at(-2,-0.35){$-2$};
\node(a)at(-2.4,0){$(5)$};

\node(a)at(0,-0.35){$-5$};
\node(a)at(0,0.4){$(4)$};

\node(a)at(2,-0.35){$-2$};
\node(a)at(2.4,0){$(5)$};

\node(a)at(-1,0.65){$-1$};
\node(a)at(-1.5,1){$(10)$};

\node(a)at(1,0.65){$-1$};
\node(a)at(1.5,1){$(10)$};

\node(a)at(-0.4,3.5){$-1$};
\node(a)at(0.4,3.5){$(2)$};
\node(a)at(-0.4,1.6){$-1$};
\node(a)at(0.4,1.5){$(2)$};
\node(a)at(-1.6,2.9){$(1)$};
\node(a)at(-1.7,2.2){$-23$};

\node(a)at(1.7,2.2){$-23$};
\node(a)at(1.6,2.9){$(1)$};
 
  \draw[thin,>-stealth,->](1.5,2.5)--+(1.2,0.4);
       \draw[thin,>-stealth,->](1.5,2.5)--+(1.3,0);
         \draw[thin,>-stealth,->](1.5,2.5)--+(1.2,-0.4);
         
            \draw[thin,>-stealth,->](-1.5,2.5)--+(-1.2,0.4);
       \draw[thin,>-stealth,->](-1.5,2.5)--+(-1.3,0);
         \draw[thin,>-stealth,->](-1.5,2.5)--+(-1.2,-0.4);

   \draw[ fill=white] (0,2.5)circle(2.5pt);
           \draw[ fill=white] (0,2.86)circle(2.5pt);
             \draw[ fill=white] (0,2.12)circle(2.5pt);
               \draw[ fill=white] (0,1.75)circle(2.5pt);
                 \draw[ fill=white] (0,3.25)circle(2.5pt);
  \draw[ fill=white ] (1.5,2.5)circle(2.5pt);
     \draw[ fill=white] (-1.5,2.5)circle(2.5pt);
  \end{tikzpicture} 
  \end{center}

Set $f(x,y,z) =   (zx^2+y^3)(x^3+zy^2)+z^7$. The multiplicities of the partial derivatives $(m(f_x),m(f_y),m(f_z))$ are given by: 
\begin{center}
\begin{tikzpicture}

              

 \draw[] (-2,0)circle(2.5pt);
  \draw[thin ](-2,0)--(-1,1);

  \draw[thin ](0:0)--(-1,1);
 
     \draw[thin ](0:0)--(1,1);

        \draw[thin ](1,1)--(2,0);
            \draw[] (2,0)circle(2.5pt);
   \draw[thin ](1,1)--(1.5,2.5);
    \draw[thin ](-1,1)--(-1.5,2.5);
 
     \draw[thin ](-1.5,2.5)--(1.5,2.5);

\draw[fill=white ] (-1,1)circle(2.5pt);
\draw[fill =white ] (1,1)circle(2.5pt);
\draw[fill=white] (0,0)circle(2.5pt);
\draw[fill=white] (2,0)circle(2.5pt);
\draw[fill=white] (-2,0)circle(2.5pt);

\draw[thin] (-1.5,2.5)..controls (-0.5,3) and (0.5,3)..(1.5,2.5);
\draw[thin] (-1.5,2.5)..controls (-0.5,2) and (0.5,2)..(1.5,2.5);
\draw[thin] (-1.5,2.5)..controls (-0.5,3.5) and (0.5,3.5)..(1.5,2.5);
\draw[thin] (-1.5,2.5)..controls (-0.5,1.5) and (0.5,1.5)..(1.5,2.5);
\node(a)at(-2.5,-0.35){$(29,29,\geq 30)$};
\node(a)at(-2.5,1){$(57,\geq 58,\geq 60)$};
\node(a)at(0,-0.35){$(23,23,\geq25)$};
\node(a)at(2.5,1){$(\geq 58,57,\geq 60)$};
\node(a)at(2.5,-0.35){$(29,29,\geq 30)$};


\node(a)at(0.,3.6){$(\geq 10,\geq10,\geq10)$};
\node(a)at(-2.8,2.5){$(\geq 5,\geq5,\geq5)$};
\node(a)at(2.8,2.5){$(\geq 5,\geq5,\geq5)$}; 

\draw[ fill=white] (0,2.5)circle(2.5pt);
           \draw[ fill=white] (0,2.86)circle(2.5pt);
             \draw[ fill=white] (0,2.12)circle(2.5pt);
               \draw[ fill=white] (0,1.75)circle(2.5pt);
                 \draw[ fill=white] (0,3.25)circle(2.5pt);
     \draw[ fill=white] (1.5,2.5)circle(2.5pt);
     \draw[ fill=white] (-1.5,2.5)circle(2.5pt);
 
  \end{tikzpicture} 
  \end{center}
  
  The basepoints of the family of polar curves of generic plane
  projections are resolved by this resolution and the strict transform
  of the generic polar curve $\Pi_{\ell}$ is given by the following
  graph. The multiplicities are those of a generic linear combination
  $g=af_x+bf_y+cf_z$.

 \begin{center}
\begin{tikzpicture}  \draw[thin,>-stealth,->](-1.5,2.5)--+(-1.2,0.4);
   \draw[thin,>-stealth,->](-1.5,2.5)--+(-1.2,0);
    \draw[thin,>-stealth,->](-1.5,2.5)--+(-1.2,-0.4);
    
     \draw[thin,>-stealth,->](1.5,2.5)--+(1.2,0.4);
   \draw[thin,>-stealth,->](1.5,2.5)--+(1.2,0);
    \draw[thin,>-stealth,->](1.5,2.5)--+(1.2,-0.4);
    
     \draw[thin,>-stealth,->](1.5,2.5)--+(1.2,-0.4);
              
 \draw[thin,>-stealth,->](-2,0)--+(-1.2,0.4);
  \draw[thin,>-stealth,->](2,0)--+(1.2,0.4);
    \draw[thin,>-stealth,->](0,0)--+(0,1.4);

    \draw[thin,>-stealth,->](0,3.25)--+(0.2,1.2);
     \draw[thin,>-stealth,->](0,2.86)--+(0.5,1.1);
                  \draw[thin,>-stealth,->](0,2.5)--+(0.6,1);
                   \draw[thin,>-stealth,->](0,2.12)--+(0.7,0.9);

              \draw[thin,>-stealth,->](0,1.75)--+(0.8,0.7);

 \draw[] (-2,0)circle(2.5pt);
  \draw[thin ](-2,0)--(-1,1);

  \draw[thin ](0:0)--(-1,1);
 
     \draw[thin ](0:0)--(1,1);

        \draw[thin ](1,1)--(2,0);
            \draw[] (2,0)circle(2.5pt);
   \draw[thin ](1,1)--(1.5,2.5);
    \draw[thin ](-1,1)--(-1.5,2.5);
   
     \draw[thin ](-1.5,2.5)--(1.5,2.5);

\draw[fill=white ] (-1,1)circle(2.5pt);
\draw[fill =white ] (1,1)circle(2.5pt);
\draw[fill=white] (0,0)circle(2.5pt);
\draw[fill=white] (2,0)circle(2.5pt);
\draw[fill=white] (-2,0)circle(2.5pt);

\draw[thin] (-1.5,2.5)..controls (-0.5,3) and (0.5,3)..(1.5,2.5);
\draw[thin] (-1.5,2.5)..controls (-0.5,2) and (0.5,2)..(1.5,2.5);
\draw[thin] (-1.5,2.5)..controls (-0.5,3.5) and (0.5,3.5)..(1.5,2.5);
\draw[thin] (-1.5,2.5)..controls (-0.5,1.5) and (0.5,1.5)..(1.5,2.5);
\node(a)at(2,-0.35){$(29)$};
\node(a)at(1,0.57){$(57)$};
\node(a)at(0,-0.35){$(23)$};
\node(a)at(-1,0.57){$(57)$};
\node(a)at(-2,-0.35){$(29)$};

\node(a)at(-0.4,3.5){$(11)$};
\node(a)at(-0.4,1.5){$(11)$};
\node(a)at(-1.65,2.15){$(5)$};
\node(a)at(1.65,2.15){$(5)$};
 
     \draw[ fill=white] (0,2.5)circle(2.5pt);
           \draw[ fill=white] (0,2.86)circle(2.5pt);
             \draw[ fill=white] (0,2.12)circle(2.5pt);
               \draw[ fill=white] (0,1.75)circle(2.5pt);
                 \draw[ fill=white] (0,3.25)circle(2.5pt);
                  \draw[ fill=white] (1.5,2.5)circle(2.5pt);
     \draw[ fill=white] (-1.5,2.5)circle(2.5pt);
  \end{tikzpicture} 
  \end{center}

 We now indicate on the resolution graph the rate corresponding to each vertex. 
 
  \begin{center}
\begin{tikzpicture}  \draw[thin,>-stealth,->](-1.5,2.5)--+(-1.2,0.4);
   \draw[thin,>-stealth,->](-1.5,2.5)--+(-1.2,0);
    \draw[thin,>-stealth,->](-1.5,2.5)--+(-1.2,-0.4);
    
     \draw[thin,>-stealth,->](1.5,2.5)--+(1.2,0.4);
   \draw[thin,>-stealth,->](1.5,2.5)--+(1.2,0);
    \draw[thin,>-stealth,->](1.5,2.5)--+(1.2,-0.4);
    
     \draw[thin,>-stealth,->](1.5,2.5)--+(1.2,-0.4);
              
 \draw[thin,>-stealth,->](-2,0)--+(-1.2,0.4);
  \draw[thin,>-stealth,->](2,0)--+(1.2,0.4);
    \draw[thin,>-stealth,->](0,0)--+(0.2,1.2);

    \draw[thin,>-stealth,->](0,3.25)--+(0.2,1.2);
     \draw[thin,>-stealth,->](0,2.86)--+(0.5,1.1);
                  \draw[thin,>-stealth,->](0,2.5)--+(0.6,1);
                   \draw[thin,>-stealth,->](0,2.12)--+(0.7,0.9);

              \draw[thin,>-stealth,->](0,1.75)--+(0.8,0.7);

 \draw[] (-2,0)circle(2.5pt);
  \draw[thin ](-2,0)--(-1,1);

  \draw[thin ](0:0)--(-1,1);
 
     \draw[thin ](0:0)--(1,1);

        \draw[thin ](1,1)--(2,0);
            \draw[] (2,0)circle(2.5pt);
   \draw[thin ](1,1)--(1.5,2.5);
    \draw[thin ](-1,1)--(-1.5,2.5);
 
     \draw[thin ](-1.5,2.5)--(1.5,2.5);

\draw[thin] (-1.5,2.5)..controls (-0.5,3) and (0.5,3)..(1.5,2.5);
\draw[thin] (-1.5,2.5)..controls (-0.5,2) and (0.5,2)..(1.5,2.5);
\draw[thin] (-1.5,2.5)..controls (-0.5,3.5) and (0.5,3.5)..(1.5,2.5);
\draw[thin] (-1.5,2.5)..controls (-0.5,1.5) and (0.5,1.5)..(1.5,2.5);
\node(a)at(2,-0.35){$7/5$};
\node(a)at(1,0.5){$6/5$};
\node(a)at(0,-0.35){$5/4$};
\node(a)at(-1,0.5){$6/5$};
\node(a)at(-2,-0.35){$7/5$};

\node(a)at(-0.4,3.5){$3/2$};
\node(a)at(-0.4,1.5){$3/2$};
\node(a)at(-1.7,2.2){$1$};
\node(a)at(1.7,2.2){$1$};
 
     \draw[ fill=white] (0,2.5)circle(2.5pt);
           \draw[ fill=white] (0,2.86)circle(2.5pt);
             \draw[ fill=white] (0,2.12)circle(2.5pt);
               \draw[ fill=white] (0,1.75)circle(2.5pt);
                 \draw[ fill=white] (0,3.25)circle(2.5pt);
                 
                  \draw[ fill=white] (1.5,2.5)circle(2.5pt);
     \draw[ fill=white] (-1.5,2.5)circle(2.5pt);    
\draw[fill=white ] (-1,1)circle(2.5pt);
\draw[fill =white ] (1,1)circle(2.5pt);
\draw[fill=white] (0,0)circle(2.5pt);
\draw[fill=white] (2,0)circle(2.5pt);
\draw[fill=white] (-2,0)circle(2.5pt);
\end{tikzpicture} 
\end{center}                
These rates were computed in 
  \cite{BNP} except for the rates $7/5$ which are explained in Section \ref{sec:explicit computation}
\end{example}

\begin{proof}[Proof of Proposition \ref{prop:resolution}]
  Let $\ell\colon(X,0)\to(\C^2,0)$ be again a generic plane projection. Let $\rho \colon Y \to \C^2$ be the minimal sequence of blow-ups
  starting with the blow-up of $0 \in \C^2$ which resolves the
  basepoints of the family of images $\ell(\Pi_{\cal D})$ by $\ell$ of
  the polar curves of generic plane projections and let $\Delta$ be 
\red{the discriminant curve of $\ell$}.  We set $\rho^{-1}(0) = \bigcup_{k=1}^m
  C_k$, where $C_1$ is the first curve blown up.
  
  Denote by $R$ the dual graph of $\rho$, so $v_{1}$ is its root
  vertex.  We say \emph{$\Delta$-curve} for an exceptional curve in
  $\rho^{-1}(0)$ intersecting the strict transform of $\Delta$, and  
  \emph{$\Delta$-node} for a vertex of $R$ which represents a
  $\Delta$-curve. We call any vertex of $R$ which is either $v_{1}$
  or a $\Delta$-node or a vertex with valency $\geq 3$ a \emph{node of
    $R$}.

  If two nodes are adjacent, we blow up the intersection points of the
  two corresponding curves in order to create a string between them.

  Consider the intermediate \carrousel\ of $\Delta$ after amalgamation
  of the $\Delta$-wedge pieces (so we don't amalgamate any other piece
  here).  This carrousel decomposition can be described
  as follows:
\begin{lemma}\label{lem:resolution} 
 \begin{enumerate} 
 \item \label{B(1)} The $B(1)$-piece is  equivalent to the set $\rho({\cal N}(C_{1}))$;
 \item \label{Delta} the  $\Delta$-pieces are  equivalent to the sets  $\rho({\cal N}(C_{k}))$ where
   $v_k$ is a $\Delta$-node;
 \item \label{B(q)} the remaining $B$-pieces are   equivalent to  the sets  $\rho({\cal N}(C_{k}))$ where
   $v_k$ is a node which is neither $v_{1}$ nor a $\Delta$-node;
    \item \label{D}  the sets  $\rho( N(\beta))$ where $\beta$ is a bamboo of $R$ \red{are equivalent to empty $D$-pieces};
    \item \label{A} the $A$-pieces are equivalent to the sets
      $\rho(N(\sigma))$ where $\sigma$ is a maximal string between two
      nodes.
\end{enumerate}
\end{lemma}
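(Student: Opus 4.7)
The plan is to prove the lemma by establishing a direct dictionary between the iterated blow-ups composing $\rho$ and the sequence of Puiseux truncations that build the intermediate carrousel of $\Delta$. Both the graph $R$ and the carrousel encode the same combinatorial data, namely the characteristic exponents and pairwise coincidence exponents of the branches of $\Delta$, so the proof amounts to showing that the two bookkeeping schemes are isomorphic and then tracking the rate at each piece. A convenient alternative viewpoint is that both decompositions induce the same Waldhausen/JSJ-type decomposition of $S_\epsilon^3\setminus \Delta^{(\epsilon)}$, and the uniqueness of the minimal such decomposition (once one requires that the $\Delta$-curves be Seifert fibers) pins down the correspondence up to isotopy, which on each piece is an ambient bilipschitz equivalence up to $A(q,q)$-collars.

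I would proceed by induction on the number of blow-ups in $\rho$. For the base case, the curve $C_1$ is the single exceptional curve of the blow-up of $0\in\C^2$, and $\rho(\cal N(C_1))$ is the complement in a Milnor ball of small conical neighbourhoods of the tangent lines of $\Delta$; this is precisely the $B(1)$-piece, which gives (\ref{B(1)}). Each subsequent blow-up is performed either to separate branches of $\Delta$ that agree up to some Puiseux truncation $(f,p_k)$ or to resolve a remaining tangency of the strict transform with an exceptional curve. Following the chain of blow-ups that produce a node $v_k$ of $R$ reconstructs a unique pair $\kappa=(f,p_k)$ appearing in the carrousel of Section \ref{sec:carrousel}, and a local computation in the charts of the iterated blow-up shows that $\rho(\cal N(C_k))$ fills out the region $B_\kappa$ up to $A(p_k,p_k)$-collars. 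The rate is read off from the multiplicities $m^{(x)}_k$ and $m^{(y)}_k$ of the coordinate functions along $C_k$ as $p_k=m^{(y)}_k/m^{(x)}_k$, which is exactly the Puiseux exponent in the truncation data $\kappa$. This identifies the pieces of types (\ref{Delta}), (\ref{B(q)}), and (\ref{D}) according to whether $v_k$ is a $\Delta$-node, a non-$\Delta$-node of valency $\geq3$, or the endpoint of a bamboo containing no branch of $\Delta$; maximal strings between nodes correspond to intermediate zones between two $B$-pieces with distinct rates and therefore give the $A(q,q')$-pieces of (\ref{A}), by Lemma \ref{le:rules}(\ref{rule:AA}).

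The main obstacle is making the rate identification rigorous. Abstractly, $\cal N(C_k)$ is a disc bundle over $C_k$ minus small discs around its intersections with neighbouring exceptional curves; the image $\rho(\cal N(C_k))$ is a region in $\C^2$ whose metric scaling behaviour with respect to $|x|=\epsilon$ must be shown to match the $B(p_k)$ definition. Concretely, a point of $\cal N(C_k)$ lying near a specific chart of the composed blow-up maps to a point $(x,y)\in\C^2$ with $|x|\sim\epsilon$ and $|y-f(x)|\sim\epsilon^{p_k}$, where $f$ is the truncated Puiseux polynomial associated with $\kappa$; this is exactly the defining estimate for $B_\kappa$. Checking this in a single chart is a direct computation with the Newton-polygon data at the corresponding blow-up, and the equivalence up to $A(q,q)$-collars absorbs the choice of chart and the radii of the small discs removed from $\cal N(C_k)$. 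Once this metric-combinatorial match is established at every node, the amalgamation of $\Delta$-wedge pieces in the carrousel corresponds precisely to treating the $\Delta$-node together with its attached bamboo-tails, and the five statements of the lemma follow.
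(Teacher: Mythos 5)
Your proposal is correct in outline but takes a genuinely different and more computational route than the paper. You propose an induction on the blow-ups in $\rho$, matching each node $v_k$ to a Puiseux truncation $\kappa=(f,p_k)$ by tracking coordinate charts and reading off the rate as $p_k=m^{(y)}_k/m^{(x)}_k$ from the multiplicities along $C_k$. The paper instead avoids chart bookkeeping entirely through a single structural observation: each carrousel piece $B_\kappa$ is \emph{foliated} by an equisingular family of irreducible curves $C_\lambda$ (with Puiseux expansion $y=\sum_{i<k}a_ix^{p_i}+\lambda x^{p_k}$, $\lambda$ in the annular parameter region), these curves are resolved simultaneously in $\rho$, and their strict transforms all meet a single exceptional component $C$, giving $B_\kappa=\rho(\cal N(C))$ directly. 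The $B(1)$-piece and $\Delta$-wedge cases are handled by exactly the same mechanism (foliation by lines $y=ax$, resp.\ by curves $y=\sum_{p_j\le s}a_jx^{p_j}+ax^s$), and (\ref{D}), (\ref{A}) follow because $A$- and $D$-pieces are the closures of the complement. The paper's foliation argument buys you the identification of $\rho(\cal N(C_k))$ with $B_\kappa$ in one step and sidesteps both the induction and the multi-chart bookkeeping; your approach, if carried out, requires verifying the $|y-f(x)|\sim\epsilon^{p_k}$ estimate across transition regions between charts, which you gesture at but do not establish. Your JSJ-uniqueness remark is a plausible third route but isn't the one the paper takes and would need care to set up, since the $A(q,q)$-collar ambiguity is a bilipschitz rather than purely topological statement.
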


\begin{proof}[Proof of Lemma \ref{lem:resolution}] 
 Let $B_{\kappa}$ be a $B$-piece as defined in section \ref{sec:carrousel}:
\begin{align*}
  B_\kappa:=\Bigl\{(x,y):~&\alpha_\kappa|x^{p_k}|\le
  |y-\sum_{i=1}^{k-1}a_ix^{p_i}|\le
  \beta_\kappa|x^{p_k}|\\
  & |y-(\sum_{i=1}^{k-1}a_ix^{p_i}+a_{kj}x^{p_k})|\ge
  \gamma_\kappa|x^{p_k}|\text{ for }j=1,\dots,{m_\kappa}\Bigr\}
\end{align*}
with $\alpha_\kappa,\beta_\kappa,\gamma_\kappa$ satisfying
$\alpha_\kappa<|a_{kj}|-\gamma_\kappa<|a_{kj}|+\gamma_\kappa<\beta_\kappa$
for each $j=1,\dots,{m_\kappa}$.

Then $B_{\kappa}$ is foliated by the complex curves $C_{\lambda},
\lambda \in \Lambda$ with Puiseux expansions:
$$y=\sum_{i=1}^{k-1}a_ix^{p_i} + \lambda x^{p_k}\,,$$ 
where $\Lambda=\{\lambda \in \C ; \alpha_\kappa \leq |\lambda| \leq
\beta_\kappa \hbox{ and } |\lambda - a_{kj}| \geq \gamma_\kappa$,
$j=1,\dots,{m_\kappa}\}$.   The curves $C_{\lambda}$ are irreducible and equisingular in
terms of strong simultaneous resolution, and $B_{\kappa} =\rho({\cal
  N}(C))$ where $C$ is the exceptional component of $\rho^{-1}(0)$
which intersects the strict transforms $C_{\lambda}^*$.

We use again the notations of Section \ref{sec:carrousel}.  The
$B(1)$-piece is the closure of the cones $V^{(j)}, j=1,\ldots,m$, so it
is foliated by the complex lines with equations $y=ax$, where $a\in \C$
is such that $|a_1^{(j)}-a|\geq \eta$ for all $j=1,\ldots,m$. This
implies \eqref{B(1)} since $C_{1}$ is the exceptional curve which
intersects the strict transform of those lines.

Recall that a $\Delta$-piece is a $B$-piece obtained by amalgamating
$\Delta$-wedges with a $B$-piece outside them. By Definition
\ref{def:wedges}, a $\Delta$-wedge is foliated by curves $y=\sum_{p_j
  \leq s} a_jx^{p_j} + a x^s$, which are resolved in family by the
resolution $\rho$, their strict transforms intersecting a
$\Delta$-node. This proves \eqref{Delta} and then \eqref{B(q)}.

 \eqref{D} and  \eqref{A} immediately follow from the fact that the $D$- and $A$-pieces are the closures of the complement of the $B$-pieces.
\end{proof}
      
We now complete the proof of Proposition \ref{prop:resolution}. Let
$\pi_{h\!j} \colon X_{h\!j} \to X$ be the Hirzebruch-Jung resolution of $(X,0)$
obtained by pulling back the morphism $\rho$ by the cover $\ell$ and
then normalizing and resolving the remaining quasi-ordinary
singularities. We denote its dual graph by $\Gamma_{h\!j}$. Denote by $\ell'
\colon X_{h\!j} \to Y$ the morphism defined by $\ell \circ \pi_{h\!j} = \rho \circ
\ell'$.

\red{Now $\pi_{h\!j}$ factors through $\pi$ by a morphism $\mu
  \colon X_{h\!j} \to \widetilde{X}$. Let us choose a bundle neighborhood
  $N(C_k)$ for each component $C_k$ of $\rho^{-1}(0)$. Then $\ell'^{-1}(N(C_k))$ is a union of bundle neighbourhoods $N(E'_j)$ of curves $E'_j$ of $(\rho\circ\ell')^{-1}(0)$. 
 For each component $E_i$ of
  $\pi^{-1}(0)$ we can define the bundle neighbourhood $N(E_i)$ by
$$N(E_i) = \mu\bigl(\bigcup_{E_j' \subset \mu^{-1}(E_i) }N(E'_j)\bigr).$$

By construction, we have the following two correspondences between the
resolution graphs $R$ associated with $\rho^{-1}(0)$ and $\Gamma$
associated with $\pi^{-1}(0) \subset \widetilde{X}$.
\begin{enumerate}
\item If $R_0$ is a subgraph of $R$, let $\Gamma_{R_0}$ be the
  subgraph of $\Gamma$ which represents the irreducible components of
  $\mu( {\ell'}^{-1}( \bigcup_{v_k \in \R_0} C_k))$; if $R_0 \subset
  R$ is a maximal string between two nodes of $R$, then $\Gamma_{R_0}$
  is a union of strings in $\Gamma$;

\item For each node (resp.\ \P-node, resp.\ \L-node) $v_i$ of
  $\Gamma$, let $E'_j$ be the irreducible component of $\mu^{-1}(E_i)$
  which maps surjectively to $E_i$ and set $C_k = \ell'(E'_j)$. Then
  $v_k$ is a node (resp.\ $\Delta$-node, resp.\ the root $v_{1}$) of
  $R$.
\end{enumerate}
We then obtain that for each node (resp.\ \P-node, resp.\ \L-node)
$v_i$ of $\Gamma$, the set $\pi({{\cal N}(E_i)})$ is a connected
component of $(\ell\circ\pi)^{-1}({\cal N}(C_k))$ where $v_k$ is a node (resp.\
a \P-node, resp.\ the root $v_{1}$) of $R$, and Proposition
\ref{prop:resolution} is now a straightforward consequence of Lemma
\ref{lem:resolution}.}
\end{proof}

\red{Finally, we return to the geometric decomposition (Definition \ref{def:geometric decomposition}). Note that it differs from the ``inner geometric decomposition'' (Definition \ref{def:gdp}) only in that some annular pieces may have been decomposed by $A(q,q)$-pieces into sequences of annular pieces.

    \begin{proposition}\label{prop:resolution1}    There exists a resolution $\widehat \pi$ obtained from $\pi$ by blowing-up further, if necessary, intersection points of exceptional curves (so creating only new valency $2$-vertices)  such that  the $A(q,q)$-pieces of the geometric decomposition of $(X,0)$ (Definition \ref{def:geometric decomposition}) are equivalent to some pieces $\pi({\cal N}(v))$ where $v$ are valency $2$ nodes of the resolution graph $\widehat\Gamma$ of $\widehat \pi$ which are not $\cal P$- or $\cal L$-nodes. 
\end{proposition}
We give a more detailed description of the location of $A(q,q)$-pieces in the next section, so we omit the (easy) proof of this proposition.
}
\red{\section{The $A(q,q)$-pieces of the geometric decomposition.} \label{sec:A(q,q)}
One goal of this section is to provide tools which will be used in
Sections \ref{sec:detecting1} and \ref{sec:explicit computation} for
detecting the geometric decomposition and the carrousel using only the
outer geometry of $(X,0)$.

The following proposition, which is a consequence of Lemmas
\ref{lem:rates} and \ref{lem:X"_{q_i}}, describes which $B$-pieces of
type $A(q_j,q_j)$ of the geometric decomposition appear in annular
$A(q_i,q_k)$-pieces (with $q_i<q_j<q_k$) of the \emph{inner} geometric
decomposition. Note that for any such piece $A$ of the geometric
decomposition the map $\ell|_A\colon A\to\ell(A)$ is an unbranched
covering.
\begin{proposition}\label{prop:Anne doesn't like this}
An $A(q_j,q_j)$-piece occurs inside an  $A(q_i,q_k)$-piece $A$ of the  inner geometric decomposition  (Definition \ref{def:gdp}) with $q_i<q_j<q_k$ if and only if either:
\begin{enumerate}
\item\label{it:Aqq1} there is a non-annular $B(q_j)$-piece of the geometric
  decomposition with $\ell( B(q_j))\subset \ell(A)$ and the outer
  boundary of $\ell( B(q_j))$ is isotopic to the inner and outer
  boundaries of $\ell(A)$, or:
\item\label{it:Aqq2} there is an $A(q_{i'},q_{k'})$-piece $A'$ of the inner geometric decomposition with $q_{i'}<q_j<q_{k'}$ and
  \begin{enumerate}
  \item\label{one} the inner boundaries of $\ell(A)$ and $\ell(A')$ are inside
    $\ell(A')$ resp.\ $\ell(A)$;
  \item\label{two} $d(\partial_i(\ell(A))\cap
    S^3_\epsilon,\partial_i(\ell(A'))\cap
    S^3_\epsilon)=O(\epsilon^{q_j})$, where $d$ is outer distance,
    $\partial_i$ means inner boundary and $S^3_\epsilon$ is the sphere
    of radius $\epsilon$ about $0$ in $\C^2$.
  \end{enumerate}
\end{enumerate}
\end{proposition}
\noindent{\bf Remarks.} Note that the
  $A(q_j,q_j)$-piece inside an $A(q_i,q_k)$-piece with $q_i<q_j<q_k$
  is unique up to equivalence (Definition \ref{def:equivalent pieces}).
  In case \eqref{it:Aqq2}  by symmetry there is an
  $A(q_j,q_j)$-piece inside each of $A$ and $A'$; we have no example of this, although we expect that it can occur. 

\subsection*{Closeness and proximity classes} 
  For $(Y,0)$ a subgerm of $(X,0)$ or $(\C^2,0)$, we use the notation
  $N_{q,a}(Y)$ to mean the union of components of $\{x\in X:d(x,Y)\le
  a|x|^q\}$ which intersect $Y \setminus\{0\}$. Here again, $d$ refers to
  outer distance.

  For $i=2,\ldots,\nu$ let $X^{(i-1)}$ be the union of all $X_{q_j}$
  with $j\le i-1$ and all $A(q_j,q_k)$-pieces with $k<j\le i-1$
  connecting them.  
  \begin{remark}\label{rk:equivalent N_q} The sets 
 $N_{p,\beta}(X^{(i-1)})$  can be used to describe the components of $X_{q_i}$ with non-empty inner boundaries in the following way.

 First, for $1<p\le q_{i-1}$ we can describe $N_{p,\beta}(X^{(i-1)})$
 up to equivalence in terms of the carrousel as follows. Let
 $B^{(i-1)}$ be the union of all carrousel pieces which are
 $B(q_j)$-pieces with $j\le i-1$ and $A(q_j,q_k)$-pieces with $k<j\le
 i-1$. For $\beta >0$, let $\cal N^{(i-1)}_{p, \beta}$ be the union of
 the components of $\ell^{-1}(N_{p,\beta}(B^{(i-1)}))$ which contain
 components of $X^{(i-1)}$. Then for $p \not\in
 \{q_{i},\ldots,q_{\nu-1} \}$ and any $\beta, \beta'>0$,
 $N_{p,\beta}(X^{(i-1)})$ is equivalent to $\cal
 N^{(i-1)}_{p,\beta'}$.  Moreover, the topology of
 $N_{p,\beta}(X^{(i-1)})$ does not depend on $p$ if $q_j<p<q_{j-1}$,
 and it has the same topology also if $p=q_j$ and $\beta$ is sufficiently
 large or $p=q_{j-1}$ and $\beta$ is sufficiently small, and then again  $N_{p,\beta}(X^{(i-1)})$ is equivalent to $\cal
 N^{(i-1)}_{p,\beta'}$ if $\beta'$ is also sufficiently large resp.\ small.

  As a consequence, the components of $X_{q_i}$ with non-empty boundary are, up to equivalence,
    included among the components
    of $$\overline{N_{q_i,\beta}(X^{(i-1)})\setminus
      N_{q_i,\alpha}(X^{(i-1)})}$$
  with $\alpha$ sufficiently small and $\beta$ sufficiently large. 
    We will use this in subsection \ref{subbsecsec:Xq2 and Aq2q1}
\end{remark}

\begin{definition} Let $Y_1$ and $Y_2$ be two components of
  $N_{q,\beta}(X^{(i-1)})$ and let $q'$ be the rational number
  such that $d(Y_1 \cap S_{\epsilon}, Y_2\cap S_{\epsilon}) =
  O(\epsilon^{q'})$.  We say that $Y_1$ and $Y_2$ are \emph{close} if
  $q'\geq q$.  Closeness defines an equivalence relation on the
  components of $N_{q,\beta}(X^{(i-1)})$. We call the equivalence
  classes the \emph{proximity classes}.

  Note that the notions ``closeness'' and ``proximity classes'' are
  invariant under replacing pieces by equivalent pieces (Definition
  \ref{def:equivalent pieces}). Moreover, closeness can be measured
  just on outer boundaries, i.e., $d(Y_1 \cap S_{\epsilon}, Y_2\cap
  S_{\epsilon}) = O(\epsilon^{q'})$ if and only if $d(\partial_oY_1
  \cap S_{\epsilon}, \partial_oY_2\cap S_{\epsilon}) =
  O(\epsilon^{q'})$, where $\partial_o$ means outer boundary.
  Therefore the definition of closeness 
  extends to the components of $\overline{N_{q,\beta}(X^{(i-1)})\setminus
  N_{q,\alpha}(X^{(i-1)})}$ for $\alpha<\beta$, and depends on $q$ but not
    on $\alpha$ and $\beta$. 
\end{definition}

\begin{lemma} Let $q_i'$ be the infimum of $q<q_{i-1}$ for which the
  topology of $N_{q,1}(X^{i-1})$ has not changed. So we have
  $q'_i \leq q_{i}$ As $q$ with $q'_i <q \le q_{i-1}$ decreases, the
  number of proximity classes of $N_{q,1}(X^{(i-1)})$ is
  non-increasing, so the number of elements in each proximity class is
  non-decreasing.
\end{lemma}
\begin{proof}
 Let $q<q' \le q_{i-1}$. Let $Y_1$ and $Y_2$ be two components of
  $N_{q,1}(X^{(i-1)})$ and $Y'_j:=Y_j \cap N_{q',1}(X^{(i-1)})$, $j=1,2$,
the components of
  $N_{q',1}(X^{(i-1)})$ contained respectively in $Y_1$ and $Y_2$. If
  $Y'_1$ and $Y'_2$ are close, then $Y_1$ and $Y_2$ are close.
\end{proof}

\begin{definition}
Let $Y$ be a component of $X^{(i-1)}$ and $q<q_{i-1}$. We say that \emph{the cardinality of the proximity class of $Y$ grows at $q$} if for any $p$ with $q<p\leq q_{i-1}$,  the proximity class of the component of 
$N_{p,1}(X^{(i-1)})$ containing $Y$ has constant cardinality while for any $p<q$, it has cardinality strictly larger. 
\end{definition}

\begin{lemma} \label{le:outer boundary} For $1<p \leq q_{i-1}$,
  let $N_1$ and $N_2$ be two components of $\cal
    N^{(i-1)}_{p,\beta}$ (see Remark \ref{rk:equivalent N_q}) and
  $N'_1$ and $N'_2$ pieces equivalent to them (e.g., components
    of $\ell^{-1}(N_{p,\beta}(B^{(i-1)}))$). Then $\ell(N_1)$ and
  $\ell(N_2)$ have the same outer boundaries if and only if $N'_1$ and
  $N'_2$ are close.
\end{lemma}

\begin{proof} Let $q$ be such that the outer distance $d(N'_1 \cap
  S_{\epsilon},N'_2\cap S_{\epsilon})$ is $O(\epsilon^q)$. 
  
  Assume $N'_1$ and $N'_2$ are close, i.e., $q\geq p$.  Depending
    on $p$, choose $j$ and $q'$ with $q_j<q'<p\le q_{j-1}$
    and let $N''_1$ and $N''_2$
  be $N'_1$ and $N'_2$ union $A(q',p)$-pieces added on their outer
  boundaries. Since $\ell$ is a linear projection, $d(\ell(x),\ell(y))
  \leq d(x,y)$ for any $x,y \in \C^n$. Therefore $d(\ell(N'_1 \cap
  S_{\epsilon}),\ell(N'_2\cap S_{\epsilon}))$ is $O(\epsilon^{q''})$
  with $q\leq q''$. Assume $\ell(N_1)$ and $ \ell(N_2)$ do not have
  equal outer boundaries. Then $\ell(N''_1)$ and $\ell(N''_2)$ cannot
  have equal outer boundaries, and since $\ell(N''_1)$ and
  $\ell(N''_2)$ are subgerms in $\C^2$ with $A(q',p)$-pieces at their
  outer boundaries, we obtain that $d(\ell(N''_1 \cap
  S_{\epsilon}),\ell(N''_2\cap S_{\epsilon}))$ is $O(\epsilon^{q_0})$
  with $q_0\leq q'$.  Finally, $q''\leq q_0$, as $d(\ell(N''_1 \cap
  S_{\epsilon}),\ell(N''_2\cap S_{\epsilon})) \leq d(\ell(N'_1 \cap
  S_{\epsilon}),\ell(N'_2\cap S_{\epsilon}))$. Summarizing:
 $$q\leq q'' \leq q_0 \leq q' < p \le q\,,$$
which is a contradiction.
 
Conversely, suppose  $p > q$. Since the outer distance $d(N'_1 \cap
  S_{\epsilon},N'_2\cap S_{\epsilon})$ is $O(\epsilon^q)>\!\!>O(\epsilon^p)$, if 
  $\ell_{\cal D} \neq \ell$ is a generic plane projection for
$X$  then  $\ell_{\cal D}(N'_1) \cap
 \ell_{\cal D}(N'_2)$ is empty. Since  $N_j'$ is equivalent to  $N_j$ for $j=1,2$ the same holds for $\ell_{\cal D} (N_1)$ and $\ell_{\cal D} (N_2)$, so  $\ell(N_1)$ and $\ell(N_2)$ could not have equal outer boundaries.
\end{proof}

Let $\ell \colon (X,0) \to (\C^2,0)$ be a generic projection and let
$\Delta$ be its discriminant curve.  Recall that for all
$i=1,\ldots,\nu-1$, $q_i$ is one or more of: a polar rate, an essential Puiseux
exponent of $\Delta$ or a coincidence exponent between two branches of
$\Delta$.   Let $\Delta_0$ be a component of
$\Delta$ and $s_0$ its polar rate  and let $q_i\le s_0$ be a
carrousel rate which is
either equal to $s_0$ or is an essential Puiseux exponent of $\Delta_0$
or a coincidence exponent of $\Delta_0$ with another component of
$\Delta$. Let $B$ be the $B(q_i)$-piece of the carrousel such that the
$D(q_i)$-piece $D$ formed of a union of carrousel pieces and 
with common outer boundary with $B$, contains $\Delta_0$.  Now let $D'$
be the component of $\ell^{-1}(D)$ which contains the component of the
polar curve $\Pi_0$ over $\Delta_0$ and let $\widetilde B$ be the union of components of $X_{q_i}$ contained in $D'$.
\begin{definition}\label{def:associated component}
  $\widetilde B$ is called the \emph{bunch of components associated with $(\Delta_0,q_i)$}. Note that each outer boundary of $D'$ is an outer boundary of a component of $\widetilde B$ and by Lemma \ref{le:outer boundary} the components of $\widetilde B$ are close to each other.
\end{definition}
\begin{lemma} \label{lem:rates} Let $i \in \{1, \ldots,\nu-1 \}$.  Let $(\Delta_0,q_i)$ be a pair as above with $q_i\ne s_0$. Then the associated components of $\widetilde B$ have non-empty inner boundary. Moreover, 
\begin{enumerate}
\item\label{it:Bbar2} if $q_i$ is an essential Puiseux exponent of  $\Delta_0$, then some $B(q_i)$-piece in $\widetilde B$  is not an $A(q_i,q_i)$-piece;
\item\label{it:Bbar3} if $q_i$ is a coincidence exponent between two branches $\Delta_0$ and $\Delta_1$ of $\Delta$ which is strictly less than their polar rates $s_0$ and $s_1$, then the corresponding two components  $B_0$ and $B_1$ of $  X_{s_0}$ and $X_{s_1}$ satisfy $d(B_0 \cap S_{\epsilon}, B_1 \cap S_{\epsilon}) = O(\epsilon^{q_i})$; 
\end{enumerate}
Conversely, for all $j,k\in \{ 1 \ldots,\nu-1\}$, if  there exist  two components  $B_0$ and $B_1$  of $  X_{q_j}$ and $  X_{q_k}$ respectively such that $d(B_0 \cap S_{\epsilon}, B_1 \cap S_{\epsilon}) = O(\epsilon^{q})$ with $q < q_j, q_k$, then $q=q_i$ for some $i >j,k$, and $q_i$ is a coincidence coefficient between two components of $\Delta$ corresponding to the components $B_0$ and $B_1$.  
\end{lemma}

\begin{proof} The first part of the Lemma is obvious since any piece of the geometric decomposition with empty inner boundary is a polar piece.

Now assume $q_i$ is an essential Puiseux exponent of $\Delta_0$ and is
  not its polar rate.  
Let $D$, $B$ and $\widetilde B$ be as in the paragraph before Definition \ref{def:associated component} and let $D_0$ be the component of  $\overline{D \setminus B}$ which contains
  $\Delta_0$. 
 Denote by $F_t$ the set
  $\{z_1=t\} \cap X$ where $\ell=(z_1,z_2)$ and $z_1$ is a generic
  linear function for $X$. Since $q_i$ is an essential exponent, the
  section $\{z_1=t\} \cap D_0$ consists of several discs whose
  boundary components are inner boundaries of the section $B \cap
  \{z_1=t\}$. These boundaries lift by $\ell$ into the inner
  boundaries of some components of $\widetilde B$. Let $B'$ be such a component. Since $q_i$ is not a polar rate, no component of
  $\ell^{-1}(D_0)$ amalgamates with $B'$. So the section $F_t \cap B'$
  has several inner boundary components, and $B'$ cannot be an annular
  piece. This proves \eqref{it:Bbar2}.

  Assume now that $q_i$ is a coincidence exponent between two branches
  $\Delta_0$ and $\Delta_1$ of $\Delta$ which is strictly less than
  their polar rates $s_0$ and $s_1$. Let $\Pi_0$ and $\Pi_1$ be the
  two branches of the polar curve such that
  $\ell(\Pi_i)=\Delta_i$. According to \cite[page 462 Lemme 1.2.2
  ii)]{T3}, the restriction $\ell|_{\Pi} \colon \Pi \to \Delta$ is
  a generic plane projection of the curve $\Pi$, so $d(\Delta_0 \cap
  S_{\epsilon}, \Delta_1 \cap S_{\epsilon} ) = d(\Pi_0 \cap
  S_{\epsilon}, \Pi_1 \cap S_{\epsilon} ) = O(\epsilon^{q_i})$.  
Let
  $B_0$ (resp.\ $B_1$) be the $B(s_0)$-piece of $X_{s_0}$ (resp.\
  $B(s_1)$-piece of $X_{s_1}$) which contains the component $\Pi_0$
  (resp.\ $\Pi_1$) of the polar curve over $\Delta_0$ (resp.\
  $\Delta_1$). Since $q_i<s_0, s_1$, we have $d(B_0 \cap
  S_{\epsilon},B_1 \cap S_{\epsilon}) = d(\Pi_0 \cap S_{\epsilon},
  \Pi_1 \cap S_{\epsilon} ) = O(\epsilon^{q_i})$. This proves \eqref{it:Bbar3}.

  Conversely, assume there exist two components $B_0$ and $B_1$ of $
  X_{q_j}$ and $ X_{q_k}$ respectively such that $d(B_0 \cap
  S_{\epsilon}, B_1 \cap S_{\epsilon}) = O(\epsilon^{q})$ with $q <
  q_j, q_k$. Let $B'_1$ and $B'_2$ be the components of $X^{(j)}$
  resp.\ $X^{(k)}$ with empty inner boundaries and which have common
  outer boundaries with $B_0$ resp.\
    $B_1$. Let $\Pi_0$ and $\Pi_1$ be components of the polar curve
  $\Pi$ inside $B'_1$ resp.\ $B'_2$. Since $B'_1$ and $B'_2$ are
  unions of B and A-pieces with rates $\ge q$, then
  $d(\Pi_0 \cap S_{\epsilon}, \Pi_1 \cap S_{\epsilon} ) =
  O(\epsilon^q)$. Again by \cite[page 462 Lemme 1.2.2 ii)]{T3}, we
  have $d(\Delta_0 \cap S_{\epsilon}, \Delta_1 \cap S_{\epsilon} ) =
  O(\epsilon^q)$ where $\ell(\Pi_0)=\Delta_0$ and
  $\ell(\Pi_1)=\Delta_1$. Therefore $q$ is the coincidence coefficient
  between $\Delta_0$ and $\Delta_1$.
\end{proof}

\begin{lemma} \label{lem:X"_{q_i}} Let $i \in \{2,\ldots, \nu-1\}$.  Let $X''_{q_i}$
  consist of the components of $X_{q_i}$ with nonempty inner boundary.
  Then $X''_{q_i}$ is a union of proximity classes, each of which has
  at least one component of $X_{q_i}$ corresponding to a pair
  $(\Delta_0,q_i)$ as above.  More precisely, if $B$ is a
  $B(q_i)$-piece of the carrousel, the components of $X''_{q_i}$ whose
  images by $\ell$ have same outer boundary as $B$ are the components
  of a proximity class, and the components of a proximity class
  $\widehat{B}$ are in $X''_{q_i}$ if and only if either
  $\widehat{B}$ contains a $B(q_i)$-piece which is not an
  $A(q_i,q_i)$-piece or the cardinality of the proximity class of
  $X^{(i-1)} \cap \widehat{B}$ grows at  $q_i$.
\end{lemma}

\begin{proof}   $X''_{q_i}$ is non-empty if and only if there exists a component $\Delta_0 $ as above such that $q_i \neq s_0$. Let $B$ be a $B(q_i)$-piece of the carrousel  corresponding to such a pair $(\Delta_0,q_i)$.  By Lemma \ref{le:outer boundary}, any two components of $X_{q_i}$ projecting onto $B$ are close, so they belong to the same proximity class $\widehat{B}$, and any component of $\widehat{B}$ with non-empty inner boundary belongs to $X''_{q_i}$. Conversely, it is clear that any component of $X''_{q_i}$ is inside such a proximity class $\widehat{B}$.  

Moreover, according to Lemma \ref{lem:rates},  any such  $\widehat{B}$ contains a component which is not an
  $A(q_i,q_i)$-piece or the cardinality of the proximity class of
  $X^{(i-1)} \cap \widehat{B}$ grows at $q_i$ (this gives cases  \eqref{one} and \eqref{two} of Proposition \ref{prop:Anne doesn't like this}). 
\end{proof}
}

\section*{\bf Part 3: Analytic invariants from Lipschitz
  geometry}\label{Part 3}

\section{General hyperplane sections and maximal ideal  cycle}\label{sec:resolve hyperplane pencil}

In this section we prove parts \eqref{it:resolution of pencils} to
\eqref{it:hyperplane section} of Theorem \ref{th:invariants from
  geometry} in the Introduction.
We need some preliminary results about the relationship between
thick-thin decomposition and resolution graph and about the usual and
metric tangent cones. 
\red{The geometric decompositions  (see, e.g., Propositions \ref{prop:resolution} and \ref{prop:resolution1}), which are strong refinements of the thick-thin decomposition, are not needed in this section.}

\subsection*{Thick-thin decomposition and resolution graph}
According to \cite[Theorem 1.6]{BNP}, the thick-thin decomposition of
  (X,0) is determined up to homeomorphism close to the
  identity by the
  inner Lipschitz geometry, and hence by the outer Lipschitz geometry (specifically,  the
    homeomorphism $\phi\colon(X,0)\to(X,0)$ satisfies $d(p,\phi(p))\le
    |p|^q$ for some $q>1$). It is described
  through resolution as follows. The thick part $(Y,0)$ is the union
  of semi-algebraic sets which are in bijection with the \L-nodes of
  any resolution which resolves the basepoints of a general linear
  system of hyper\-plane sections. More precisely, let us consider the
  minimal good resolution $\pi' \colon (X',E) \to (X,0)$
  of this type and let $\Gamma$ be its
  resolution graph. Let $v_1,\ldots,v_r$ be the \L-nodes of
  $\Gamma$. For each $i=1,\ldots,r$ consider the subgraph $\Gamma_i$
  of $\Gamma$ consisting of the \L-node $v_i$ plus any attached
  bamboos (ignoring \P-nodes). Then the
  thick part is given by: $$(Y,0)=\bigcup_{i =1}^r (Y_i,0)\quad\text{where}\quad  Y_i=\pi'(N(\Gamma_i))\,.$$ Let
  $\Gamma'_1,\ldots,\Gamma'_s$ be the connected components of $\Gamma
  \setminus \bigcup_{i=1}^r \Gamma_i$. Then the thin part is:
 $$(Z,0) = \bigcup_{j=1}^s (Z_j,0)\quad\text{where}\quad  Z_j=\pi'(\cal N(\Gamma'_j))\,.$$
Recall that we denote by
$A^{(\epsilon)} := A \cap S_{\epsilon}$ (with $S_\epsilon=\partial
B_\epsilon$) the link of any semi-algebraic subgerm $(A,0) \subset
(X,0)$ for $\epsilon$ sufficiently small.
We call the links $Y_i^{(\epsilon)}$ and $Z_j^{(\epsilon)}$   \emph{thick and thin zones} respectively.

\subsection*{Inner geometry and \L-nodes} 
Since the graph $\Gamma_i$ is star-shaped, the thick zone $Y_i^{(\epsilon)}$
is a Seifert piece in a graph decomposition of the link
$X^{(\epsilon)}$. Therefore the inner Lipschitz geometry already tells us a lot
about the location of \L-nodes: for a thick zone 
$Y_i^{(\epsilon)}$ with unique Seifert fibration (i.e., not an
$S^1$-bundle over a disk or annulus) the corresponding \L-node is
determined in any negative definite plumbing graph for the pair
$(X^{(\epsilon)}, Y_i^{(\epsilon)})$.  However, a thick zone may be 
a solid torus ${D}^2\times {S}^1$ or toral annulus
$I\times {S}^1\times {S}^1$. Such a zone corresponds to a
vertex on a chain  in the resolution graph (i.e., a subgraph whose vertices have valency $1$ or $2$ and represent curves of genus $0$) and different vertices along
the chain correspond to topologically equivalent solid tori or
toral annuli in the link $X^{(\epsilon)}$. Thus, in general inner
Lipschitz geometry  is insufficient to determine the \L-nodes and we need to appeal
to the outer metric.
 
\subsection*{Tangent cones} We will use the Bernig-Lytchak
  map $\phi\colon \mathcal T_0(X)\to T_0(X)$ between the metric
  tangent cone $\mathcal T_0(X)$ and the usual tangent cone $T_0(X)$
  (\cite{BL}). We will need its description as given in \cite[Section
  9]{BNP}. 

  Consider the restriction $h=z_1 |_X\colon X\to \C$ of a generic linear 
  form. The map $h$ restricts to a fibration $\zeta_j\colon
  Z_j^{(\epsilon)}\to S^1_\epsilon$, and, as described in
  \cite[Theorem 1.7]{BNP}, the components of the fibers have inner
  diameter $o(\epsilon^q)$ for some $q>1$. If one scales the inner
  metric on $X^{(\epsilon)}$ by $\frac1\epsilon$ then in the
  Gromov-Hausdorff limit as $\epsilon\to 0$ the components of the
  fibers of each thin zone collapse to points, so each thin zone
  collapses to a circle. On the other hand, the rescaled thick zones
  are basically stable, except that their boundaries collapse to the
  circle limits of the rescaled thin zones. The result is the link
  $\mathcal T^{(1)}X$ of the so-called metric tangent cone $\mathcal
  T_0X$ (see \cite[Section 9]{BNP}), decomposed as $$ \mathcal
  T^{(1)}X={\lim_{\epsilon\to0}}^{GH}\frac1\epsilon X^{(\epsilon)}
  =\bigcup \mathcal T^{(1)}Y_i\,, $$ where $\mathcal
  T^{(1)}Y_i=\lim_{\epsilon\to0}^{GH}\frac1\epsilon Y_i^{(\epsilon)}$,
  and these are glued along circles.

One can also consider $\frac
  1\epsilon X^{(\epsilon)}$ as a subset of the unit  $(2n-1)$-sphere 
 $S_1=\partial B_1 \subset
  \C^n$ and form the Hausdorff limit in $S_1$ to get the link
  $T^{(1)}X$ of the usual tangent cone $T_0X$ (this is the same as
  taking the Gromov-Hausdorff limit for the outer metric).  One thus
  sees a natural branched covering map $\mathcal T^{(1)}X \to
  T^{(1)}X$ which extends to a map of cones $\phi\colon
  \mathcal T_0(X) \to T_0(X)$ (first described in \cite {BL}).  

We denote by $T^{(1)}Y_i$ the piece of $T^{(1)}X$ corresponding to
$Y_i$ (but note that two different $Y_i$'s can have the same
  $T^{(1)}Y_i$). 

\begin{proof}[Proof of \eqref{it:resolution of pencils} of Theorem
  \ref{th:invariants from geometry}] 
Let  $L_j$ be the tangent line to $Z_j$ at $0$ and $h_j$ the map $h_j:=z_1|_{\partial
  (L_j\cap B_\epsilon)}\colon\partial (L_j\cap
B_\epsilon)\buildrel\cong\over\to  S^1_\epsilon$.  We
  can rescale the fibration $h_j^{-1}\circ \zeta_j$ to a fibration $\zeta'_j\colon
  \frac1\epsilon Z_j^{(\epsilon)}\to \partial (L_j\cap B_1)$, and
  written in this form $\zeta'_j$ moves points distance
  $o(\epsilon^{q-1})$, so the fibers of 
  $\zeta'_j$ are shrinking at this rate.  In particular, once
  $\epsilon$ is sufficiently small  the outer Lipschitz geometry of
  $\frac1\epsilon Z_j^{(\epsilon)}$ determines this fibration up to
  homotopy, and hence also 
  up to isotopy, since homotopic fibrations of a $3$-manifold to $S^1$
  are isotopic (see e.g., \cite[p.~34]{EN}).

  Consider now a rescaled thick piece $M_i=\frac 1\epsilon
  Y_i^{(\epsilon)}$.  The intersection  $K_i\subset M_i$ of $M_i$
  with the rescaled link of the curve $\{h=0\} \subset (X,0)$ 
  is a union of fibers of the Seifert fibration of $M_i$. The
  intersection of a Milnor fiber of $h$ with $M_i$ gives a homology
  between $K_i$ and the union of the  curves along which a Milnor
  fiber meets $\partial M_i$, and by the previous paragraph these
  curves are discernible from the outer Lipschitz geometry, so the homology
  class of $K_i$ in $M_i$ is known. It follows that the number of
  components of $K_i$ is known and $K_i$ is therefore known up to
  isotopy, at least in the case that $M_i$ has unique Seifert
  fibration. If $M_i$ is a toral annulus the argument still works, but
  if $M_i$ is a solid torus we need a little more care.

  If $M_i$ is a solid torus it corresponds to an \L-node which is a
  vertex of a bamboo. If it is the extremity of this bamboo then the map
$\mathcal T^{(1)}Y_i\to T^{(1)}Y_i$ is a covering. Otherwise
  it is a covering branched along its central circle. Both
  the branching degree $p_i$ and the degree $d_i$ of the map
$\mathcal T^{(1)}Y_i\to T^{(1)}Y_i$ are determined by the
  Lipschitz geometry, so we can compute $d_i/p_i$, which is the number
  of times the Milnor fiber meets the central curve of the solid torus
  $M_i$. A tubular neighbourhood of this curve meets the Milnor fiber
  in $d_i/p_i$ disks, and removing it gives us a toral annulus for
  which we know the intersection of the Milnor fibers with its
  boundary, so we find the topology of $K_i\subset M_i$ as before.

  We have thus shown that the Lipschitz geometry determines the
  topology of the link $\bigcup K_i$ of the strict transform of $h$ in the
  link $X^{(\epsilon)}$.  Denote $K'_i=K_i$ unless $(M_i,K_i)$ is a
  knot in a solid torus, i.e., $K_i$ is connected and $M_i$ a
  solid torus, in which case put $K'_i=2K_i$ (two parallel copies of
  $K_i$). The resolution graph we are seeking represents a minimal
  negative definite plumbing graph for the pair $(X^{(\epsilon)},
  \bigcup K'_i)$, for $(X,0)$. By \cite{neumann81} such a plumbing
  graph is uniquely determined by the topology. When decorated with
  arrows for the $K_i$ only, rather than the $K'_i$, it gives the
  desired decorated resolution graph $\Gamma$. So $\Gamma$ is
  determined by $(X,0)$ and its Lipschitz geometry.
\end{proof}

\begin{proof}[Proof of    \eqref{it:multiplicity and Zmax} of Theorem
  \ref{th:invariants from geometry}]
  Recall that $\pi'\colon  X'\to X$ denotes the minimal good
  resolution of $(X,0)$ which resolves a general linear system of
  hyperplane sections.  Denote by $h^*$ the strict transform by $\pi'$
  of the generic linear form $h$ and let $\bigcup_{k=1}^d E_k$ the
  decomposition of the exceptional divisor $\pi'^{-1}(0)$ into its
  irreducible components. By point \eqref{it:resolution of pencils} of
  the theorem, the Lipschitz geometry of $(X,0)$ determines the
  resolution graph of $\pi'$ and also determines $h^*\cdot
  E_k$ for each $k$. We therefore recover the total transform
  $(h):=\sum_{k=1}^d m_k(h)E_k+h^*$ of $h$ (since $E_l\cdot (h)=0$ for
  all $l=1,\ldots,d$ and the intersection matrix $(E_k\cdot E_l)$ is
  negative definite).

  In particular, the maximal ideal cycle $\sum_{k=1}^d m_k(h)E_k$ is
  determined by the geometry, and the multiplicity of $(X,0)$ also, since it is
  given by the intersection number $\sum_{k=1}^d m_k(h)E_k\cdot h^*$.
\end{proof}

\begin{proof}[Proof of \eqref{it:hyperplane section} of Theorem
  \ref{th:invariants from geometry}] Let $H$ be the  kernel
  of a  linear form $\C^n \to \C$ which is generic for $X$ and $K=X\cap H$ the hyperplane
  section. So $$K\subset \bigcup_{i=1}^r\{\pi'(\cal N(E_i)):E_i \text{
    an \L-curve}\}\,.$$ For $i=1,\dots,r$ the tangent cone of
  $K\cap \pi'(\cal N(E_{i}))$ at $0$ is a union of lines, say
  $L_{i1}\cup\dots\cup L_{i\alpha_i}$. For $\nu=1,\dots,\alpha_i$
  denote by $K_{i\nu}$ the union of components of $K$ which are
  tangent to $L_{i\nu}$. Note the curves $E_i$, $i=1,\dots,r$ are the
  result of blowing up the maximal ideal $\mathfrak m_{X,0}$ and
  normalizing. If two of the curves $E_i$ and $E_j$ coincided before
  normalization, then we will have equalities of the form
  $L_{i\nu}=L_{j\nu'}$ with $i\ne j$ (whence also
  $K_{i\nu}=K_{j\nu'}$).  

For fixed $i$ and varying $\nu$ the curves
  $K_{i\nu}$ are in an equisingular family and hence have the same
  outer Lipschitz geometry.
  We must show that we can recover the outer Lipschitz geometry of
  $K_{i\nu}$ from the outer Lipschitz geometry of $X$.  The argument
  is similar to that of the proof of Proposition \ref {prop:plane
    curve}.
  We consider a continuous arc $\gamma_i$ inside
$\pi'(\cal N(E_i))$ 
with the property
  that $d(0,\gamma(t))=O(t)$.  Then for all $k$ sufficiently small, the
  intersection $X \cap B(\gamma_i(t),kt)$ consists of some number $\mu_i$
  of $4$-balls $D^4_1(t),\ldots,D^4_{\mu_i}(t)$ with
  $d(D^4_j(t),D^4_k(t)) = O(t^{q_{jk}})$ before we change the metric
  by a bilipschitz homeomorphism.  The $q_{jk}$ are determined by the
  outer Lipschitz geometry of $(X,0)$ and are still determined after a
  bilipschitz change of the metric by the same argument as the last
  part of the proof of \ref {prop:plane curve}. Moreover, they
  determine the outer Lipschitz geometry of $K_{i\nu}$, as in Section
  \ref{sec:curve}. They  also determine the number $m_i$ of components of
  $K_{i\nu}$ which are in $\pi'(\mathcal N(E_i))$. 

By the above proof
  of \eqref{it:multiplicity and Zmax} of Theorem \ref{th:invariants
    from geometry}, the number $\alpha_i$ of $K_{i\nu}$'s for fixed $i$ is
  $(E_i\cdot h^*)/m_i$, and hence determined by the outer Lipschitz
  geometry of $X$. Since different $K_{i\nu}$'s have
  different tangent lines, the outer geometry of their union is
  determined, completing the proof.
  \end{proof}

\section{Detecting the decomposition } \label{sec:detecting1}

The aim of this section is to prove that the geometric decomposition
$(X,0)=\bigcup_{i=1}^\nu (X_{q_i},0)\cup\bigcup_{i>j}(A_{q_i,q_j},0)$
introduced in Section \ref{sec:carrousel1} can be recovered up to
equivalence (Definition \ref{def:equivalent pieces}) using the outer
Lipschitz geometry of $X$. Specifically, we will prove:
\begin{proposition}\label{prop:recover Xq}
  The outer Lipschitz geometry of $(X,0)$ determines a decomposition
  $(X,0)=\bigcup_{i=1}^\nu(X'_{q_i},0)\cup\bigcup_{i>j}(A'_{q_i,q_j},0)$
  into semi-algebraic subgerms $(X'_{q_i},0)$ and $(A'_{q_i,q_j},0)$ glued
  along their boundaries, where each $\overline{X'_{q_i}\setminus
    X_{q_i}}$  and  $\overline {A'_{q_i,q_j}\setminus A_{q_i,q_j}}$  is a union of collars of type $A(q_i,q_i)$.

  So $X'_{q_i}$ is obtained from $X_{q_i}$ by adding an $A(q_i,q_i)$
  collar on each outer boundary component of $X_{q_i}$ and removing one at each inner boundary component, while
  $A'_{q_i,q_j}$ is obtained from $A_{q_i,q_j}$ by adding an
  $A(q_i,q_i)$ collar on each outer boundary component  of
  $A_{q_i,q_j}$ and removing an
  $A(q_j,q_j)$ collar at each inner boundary component.
\end{proposition}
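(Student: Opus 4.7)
Since the outer Lipschitz geometry determines the inner, the thick-thin decomposition of \cite{BNP} and its subsequent refinement into $A(q,q')$- and $B(q)$-pieces are available from the outer metric, with the usual $A(q,q)$-collar ambiguity at boundaries. This already produces a decomposition of $(X,0)$ with well-defined rates, but in general strictly coarser than the geometric decomposition: distinct pieces of the geometric decomposition sharing a common rate $q$ may be fused into a single BNP piece, and polar $D(s)$-pieces embedded in a larger piece (for instance inside the thick part) are invisible to the inner metric alone. The core task is therefore to detect, within each BNP piece, those sub-regions that constitute polar pieces and to extract them at the correct polar rate.

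The engine for detection is Proposition \ref{prop:thinzones}: a polar wedge $A_0$ with polar rate $s$ is sandwiched between $\mathcal B_{K_0}(\Pi_0)$ and $\mathcal B_{K_1}(\Pi_0)$, defined via the local bilipschitz constant of a generic plane projection $\ell$. To promote this to a purely outer criterion, I would characterize $\mathcal B_{K}(\Pi_0)$ intrinsically via the existence of pairs of distinct points of $(X,0)$ at common distance $r$ from $0$ whose mutual outer distance is $o(r)$ and around which small outer balls of radius proportional to $r$ separate $X$ into several sheets that come close at a controlled rate. This mimics the two-sheeted folding behavior of $\ell$ over a polar wedge and is invariant under bilipschitz changes of the outer metric by the same robustness argument used in the last paragraphs of the proof of Proposition \ref{prop:plane curve}; here one probes with two nested scales to survive $K$-bilipschitz distortion just as in that proof.

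Once the polar wedges are detected and their rates $s$ read off as the shrinking exponents of their fibers, each is absorbed into the surrounding piece following Amalgamation \ref{amalg:X}; empty $D$-pieces that arise are amalgamated iteratively with their neighbours via Lemma \ref{le:rules}; and intermediate $A(q_i,q_j)$-pieces appear automatically between pieces of distinct rates. The resulting $X'_{q_i}$ and $A'_{q_i,q_j}$ coincide with the true $X_{q_i}$ and $A_{q_i,q_j}$ up to collars of the allowed types, since all choices in the carrousel construction (truncation exponents, the constants $\alpha_\kappa,\beta_\kappa,\gamma_\kappa$) affect only such boundary collars. The principal obstacle is the first step: outer-metric detection of $\mathcal B_K(\Pi_0)$ without a priori access to $\ell$, and verification that the rates detected by the probing coincide with the analytically-defined polar rates $s$. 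This demands showing that the two-sheeted folding structure of $\ell$ over a polar wedge produces an unambiguous outer-metric signature robust under bilipschitz perturbation; once this signature is established, the remainder is combinatorial bookkeeping.
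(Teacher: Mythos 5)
Your high-level strategy matches the paper's: probe $(X,0)$ with outer-metric ``searchlight'' balls $\mathscr B(x,a|x|^q)$, detect the polar pieces by the failure of the local bilipschitz constant of $\ell$ to stay bounded (via Proposition~\ref{prop:thinzones}), absorb them by Amalgamation~\ref{amalg:X}, and note that all construction ambiguities are only $A(q,q)$-collars. You also correctly identify the heart of the matter as purely outer detection of $\mathcal B_{K}(\Pi_0)$ without a priori access to $\ell$. But you leave precisely that step as an acknowledged ``principal obstacle,'' and your sketch of how it might go has a substantive flaw. You propose to characterize $\mathcal B_{K}(\Pi_0)$ by the existence of nearby pairs of points at mutual outer distance $o(r)$; that condition alone is not discriminating --- different sheets of the cover $\ell|_X$ in a non-polar $D(q)$-piece with $q>1$ also come outer-distance-close at rate $O(r^q)$, and yet such pieces must not be flagged as polar. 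What distinguishes the polar pieces is not that sheets come close in the outer metric, but that they come close in the outer metric while remaining far in the inner metric (one must travel around the fold). This is exactly what the paper's \emph{abnormality} invariant $\alpha(\mathscr B)$ (Definition~\ref{sec:detecting1}.2, the supremum of inner-to-outer distance ratios over pairs of points in $\mathscr B$) captures, and proving that $\alpha$ diverges inside a polar piece requires the limiting-tangent-plane Lemma~\ref{le:parallel}: away from the polar the sheets are asymptotically parallel, so the outer distance between lifts decreases faster than the inner distance, forcing the ratio to infinity (Lemma~\ref{le:non-normal}).

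A second missing ingredient: abnormality alone does not suffice when the polar piece is a $B(q)$-piece rather than a $D(q)$-piece. For those, the paper uses a second, topological criterion: the searchlight ball $\mathscr B(x,a|x|^q)$ with $q<q_1$ acquires \emph{essential topology}, because for $B$-pieces the fiber $F$ carries curves that are not null-homotopic in $X\setminus\{0\}$ (Lemma~\ref{le:bubbles}~(3), item~(1) in the proof of Lemma~\ref{le:Xq1}). You do not mention a topological criterion at all, so your detection scheme would miss those pieces. Finally, the paper's construction is genuinely inductive: after extracting $X'_{q_1}$ by smoothing $Z_{q_1,a,L}$, one must rerun the ball probe inside $X\setminus N_{q'_1,\alpha}(X'_{q_1})^+$ to discover $q_2$, distinguish the cases $q_2>q'_1$ and $q_2=q'_1$, build $A'_{q_2,q_1}$, and iterate --- followed by a separate verification (Proposition~\ref{prop:invariance}) that each step is preserved under a bilipschitz change of metric. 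Your two-scale stability remark points in the right direction for that last step, but the inductive discovery of the intermediate rates and the annular pieces $A'_{q_i,q_j}$ is not ``combinatorial bookkeeping''; it is a substantial part of the proof that your proposal does not carry out.
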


An important part of the proof of Proposition \ref{prop:recover Xq} consists in discovering the
polar pieces in the germ $(X,0)$ by exploring them with small balls. Let us first introduce notation and  sketch this method.  

We will use the coordinates and the family of Milnor balls
$B_\epsilon$, $0<\epsilon\le \epsilon_0$ introduced in Section
\ref{sec:carrousel1}.  We always work inside the Milnor ball
$B_{\epsilon_0}$ and we reduce $\epsilon_0$ when necessary.

Let $q_1>q_2>\dots>q_\nu=1$ be the series of
rates for the geometric decomposition of $X$. In this section we will
assume that $\nu>1$ so $q_1>1$.

For $x\in X$ define $\mathscr B(x,r)$ to be the component containing $x$
of the intersection $B_r(x)\cap X$, where $B_r(x)$ is the ball of
radius $r$ about $x$ in $\C^n$.

\begin{definition}
  For a  subset $\cal B$ of $X$, define the \emph{abnormality
    $\alpha(\cal B)$ of $\cal B$} to be maximum ratio of inner to
  outer distance in $X$ for pairs of distinct points in
  $\cal B$.
\end{definition}

The idea of abnormality was already used by Henry and Parusi\'nski to
prove the existence of moduli in bilipschitz equivalence of map germs
(see \cite[Section 2]{HP}).

\begin{definition}
  We say a subset of $X\setminus\{0\}$ has {\it trivial topology} if
  it is contained in a topological ball which is a subset of
  $X\setminus\{0\}$. Otherwise, we say the subset has {\it essential
    topology}.
\end{definition}

\subsection{Sketch} Our method is based on the fact that a non-polar
piece is ``asymptotically flat" so abnormality of small balls inside
it is close to $1$, while a polar piece is ``asymptotically
curved'' (Lemmas \ref{le:bubbles} and \ref{le:parallel}). It consists
in detecting the polar pieces using sets $\mathscr B(x,a|x|^q)$ as
``searchlights".  Figure \ref{fig:polarzone} represents schematically
a polar piece (in gray).
\begin{figure}[ht]
\centering
\begin{tikzpicture} 
\node at(5.1,0)[right]{$0$};
 \path[ fill=lightgray] (3.27,2.95) .. controls (3.5,3.1) and
 (4.25,3.8) .. (4,4) -- (4,4) .. controls (3.75,4.24) and (2.9,3.8)
 .. (2.74,3.67) -- (2.74,3.67) .. controls ( 3, 3.4) and (3.2 ,3.1 )
 .. (3.27,2.95); \draw[line width=0.5pt ] (5,0) .. controls (4.3,1.5)
 and (3.6,3.5) .. (3.4,4.5);
\node(a)at(3.2,4.7){   $\Pi_{{\cal D}}$};
\node(a)at(4.34,4.9){   $\Pi_{{\cal D'}}$};
 \path[ fill=darkgray] (5,0) .. controls (4.3,1.5) and (4,2.4)
 .. (4.05,3.8) -- (4.05,3.8) .. controls (3.9,3.5) and (3.6,3.3) ..
 (3.4,3.1) -- (3.4,3.1).. controls (3.8,2.6) and (4.4,1.3) .. (5,0);
 \draw[line width=0.5pt ] (5,0) .. controls (4.6,1) and (2,1.4)
 .. (0,1);
\draw[line width=0.5pt ] (5,0) .. controls (4.3,1.5) and (4,2.4)
.. (4.05,4) -- (4.05,4) -- (4.1,5);
\draw[line width=0.5pt ] (4,4) .. controls (4.5,3.5) and (0.6,1)
.. (0,1);
\draw[line width=0.5pt ] (4,4) .. controls (3.5,4.5) and (0.4,2.1)
.. (-0.2,2);
\draw[dotted, line width=0.5pt ] (5,0) .. controls (4.5,1) and
(2.6,1.7) .. (1.8,1.9);
\draw[ line width=0.5pt ] (1.8,1.9) .. controls (1,2.1) and (0.5,2.1)
.. (-0.2,2);
\draw[line width=0.5pt ] (5,0) .. controls (4.4,1.3) and (3.8,2.6)
.. (3.4,3.1);
\draw[dotted, line width=0.5pt ] (5,0) .. controls (4.5,1.1) and
(3.4,2.7) .. (3.28,2.95);
\draw[ line width=0.5pt ] (3.28,2.95) .. controls (3.1,3.2) and
(3.25,3.1) .. (2.7,3.7);
\end{tikzpicture} 
\caption{A polar piece}
\label{fig:polarzone}
\end{figure}
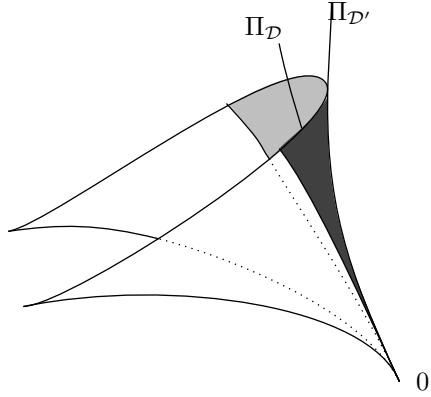

In order to get a first flavour of what will happen later, we will now
visualize some pieces and some sets $\mathscr B(x,a|x|^q)$ by drawing
their real slices.  We call \emph{real slice} of a real algebraic set
$Z \subset \C^n$ the intersection of $Z$ with $\{z_1=t\} \cap P$
where \red{$z_1$} is a generic linear form and $P$ a general $(2n-1)$-plane in
$\C^n \cong \Bbb R^{2n}$.  We use again the notations of the
previous sections: $\ell =(z_1,z_2)$ is a generic plane projection for
$(X,0)$ and $h=z_1|_X$.

We first consider a component $M
  \subset X_{q_i}$ of the geometric decomposition of $(X,0)$ which is
not a polar piece.  We assume that $q_i>1$ (so $i \in
  \{1,\ldots,\nu-1\}$) and that the sheets of the cover $\ell |_X \colon X \to
\C^2$ inside this zone have pairwise contact $ >q_i$, i.e., there
exists $q'_i >q_i$ such that for an arc $\gamma_0$ in $M$, all the
arcs $\gamma_0' \neq \gamma_0$ in $\ell^{-1}(\ell(\gamma_0))$ have
contact at most $q'_i$ with $\gamma_0$ and the contact $q'_i$ is reached
for at least \red{two of these arcs in $M$}.
 
In Figure \ref{fig:horns} the dotted circles represent the boundaries of
the real slices of balls $ B(x,a|x|^q) \subset \C^n$ for some $x\in
M$. The real slices of the corresponding sets $\mathscr B(x,a|x|^q) \subset X$
are the thickened arcs.
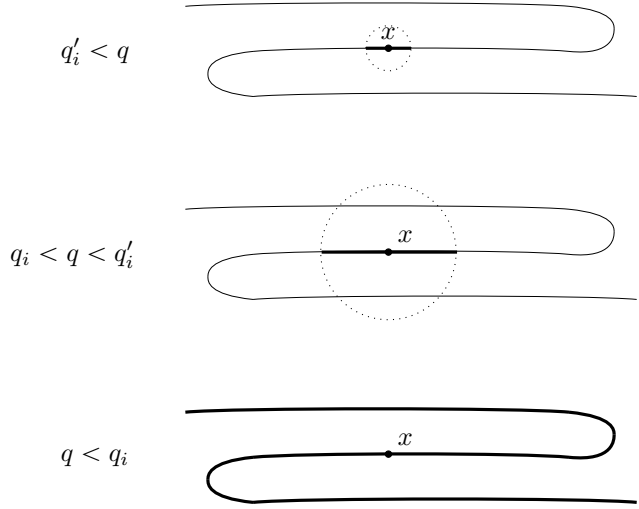
\begin{figure}[ht]
\centering
\begin{tikzpicture} 
\begin{scope}[scale=0.6]
        \node at(-6.5,0)[ ]{$q'_i<q$};
       
       \draw[line width=0.1pt] (-4.5,1).. controls (-3.5,1.1) and (3,1.1)..(4,1);   
     \draw[line width=0.1pt] (5,0.5).. controls (5,0.6) and (5,0.9)..(4,1);   
       \draw[line width=0.1pt] (5,0.5).. controls (5,0.4) and (5,-0.1)..(4,0);   
        \draw[line width=0.1pt] (4,0).. controls (3,0.1) and (-2,0.1)..(-3,0);   
          \draw[line width=0.1pt] (-4,-0.5).. controls (-4,-0.6) and (-4,-0.9)..(-3,-1);   
       \draw[line width=0.1pt] (-4,-0.5).. controls (-4,-0.4) and (-4,-0.1)..(-3,0); 
    \draw[line width=0.1pt] (5.5,-1).. controls (4.5,-0.9) and (-2,-0.9)..(-3,-1);   
  \draw[ dotted]  (0,0.07)circle(0.5);
   \draw[fill]  (0,0.07)circle(2pt);
    \node at(0,0.4){$x$};
    \draw[line width=1.2pt] (-0.50,0.07)--(0.50,0.07);
   
\begin{scope}[yshift=-4.5cm]
     \node at(-7,0)[ ]{$q_i<q<q'_i$};
     \draw[line width=0.1pt] (-4.5,1).. controls (-3.5,1.1) and (3,1.1)..(4,1);   
     \draw[line width=0.1pt] (5,0.5).. controls (5,0.6) and (5,0.9)..(4,1);   
       \draw[line width=0.1pt] (5,0.5).. controls (5,0.4) and (5,-0.1)..(4,0);   
        \draw[line width=0.1pt] (4,0).. controls (3,0.1) and (-2,0.1)..(-3,0);   
          \draw[line width=0.1pt] (-4,-0.5).. controls (-4,-0.6) and (-4,-0.9)..(-3,-1);   
       \draw[line width=0.1pt] (-4,-0.5).. controls (-4,-0.4) and (-4,-0.1)..(-3,0); 
    \draw[line width=0.1pt] (5.5,-1).. controls (4.5,-0.9) and (-2,-0.9)..(-3,-1);   
  \draw[ dotted]  (0,0.05)circle(1.5);
   \draw[fill]  (0,0.05)circle(2pt);
    \node at(0,0.4)[right]{$x$};
    \draw[line width=1.2pt] (-1.48,0.05)--(1.52,0.05);
     \end{scope} 
    
\begin{scope}[yshift=-9cm]       
  \node at(-6.5,0)[ ]{$q<q_i$};
      \draw[line width=1.2pt] (-4.5,1).. controls (-3.5,1.1) and (3,1.1)..(4,1);   
     \draw[line width=1.2pt] (5,0.5).. controls (5,0.6) and (5,0.9)..(4,1);   
       \draw[line width=1.2pt] (5,0.5).. controls (5,0.4) and (5,-0.1)..(4,0);   
        \draw[line width=1.2pt] (4,0).. controls (3,0.1) and (-2,0.1)..(-3,0);   
          \draw[line width=1.2pt] (-4,-0.5).. controls (-4,-0.6) and (-4,-0.9)..(-3,-1);   
       \draw[line width=1.2pt] (-4,-0.5).. controls (-4,-0.4) and (-4,-0.1)..(-3,0); 
    \draw[line width=1.2pt] (5.5,-1).. controls (4.5,-0.9) and (-2,-0.9)..(-3,-1);   
   \draw[fill]  (0,0.07)circle(2pt);
    \node at(0,0.4)[right]{$x$};            
     \end{scope} 
   \end{scope}
  \end{tikzpicture}  
    \caption{Balls through a non-polar piece}
  \label{fig:horns}
\end{figure}

Next, we consider a polar piece $N\subseteq X_{q_i}$. If $x\in N$ and
$a$ is large enough then $\mathscr B(x,a|x|^{q_i})$ either has
essential topology or high abnormality (or both).  Figure \ref{fig:horns2}
represents the real slice of a ball $B(x,a|x|^{q_i})$ with $x\in N$
when $\mathscr B(x,a|x|^{q_i})$ has high abnormality. It includes the real slice of $N$ which is the gray arc.

  \begin{figure}[ht]
\centering
\begin{tikzpicture} 
\begin{scope}[scale=0.6]
   \draw[line width=0.1pt] (-4.5,1).. controls (-3.5,1.1) and (3,1.1)..(4,1);   
     \draw[line width=0.1pt] (5,0.5).. controls (5,0.6) and (5,0.9)..(4,1);   
       \draw[line width=0.1pt] (5,0.5).. controls (5,0.4) and (5,-0.1)..(4,0);   
        \draw[line width=0.1pt] (4,0).. controls (3,0.1) and (-2,0.1)..(-3,0);           
          \draw[line width=1.2pt] (-1.4,0.05)--(-3,0);            
          \draw[line width=1.2pt] (-1.55,-0.95)--(-3,-1);           
          \draw[gray,line width=2pt] (-4,-0.5).. controls (-4,-0.6) and (-4,-0.9)..(-3,-1);   
       \draw[gray,line width=2pt] (-4,-0.5).. controls (-4,-0.4) and (-4,-0.1)..(-3,0); 
    \draw[line width=0.1pt] (5.5,-1).. controls (4.5,-0.9) and (-2,-0.9)..(-3,-1);   
  \draw[ dotted]  (182.5:3.7)circle(2.3);
   \draw[fill=white]  (182.5:3.7)circle(2pt);
    \node at(177:3.9){$x$};
    \node at(190:4.5){$N$};
 \end{scope} 
 \end{tikzpicture}  
  \caption{ }
  \label{fig:horns2}
\end{figure}

The rest of the section is organized as follows. The aim of
Subsections \ref{subsec:q1} and \ref{subsec:X'q1} is to specify the set
$(X'_{q_1},0)$ of Proposition \ref{prop:recover Xq}. This will be the first
step of an induction carried out in Subsections \ref{subsec:q2} to
\ref{subsec:X'qi} leading to the construction of subgerms
$(X'_{q_i},0)$ and $(A'_{q_i,q_j},0) \subset (X,0)$ having the
properties stated in Proposition \ref{prop:recover Xq}.  Finally, the
bilipschitz invariance of the germs $(X'_{q_i},0)$ and $A'_{q_i,q_j}$ is proved in
Proposition \ref{prop:invariance}, completing the proof of Proposition
\ref{prop:recover Xq}.

\subsection{Finding the rate $\boldsymbol{q_1}$} \label{subsec:q1}

\begin{lemma}\label{le:ball projection} 
  Let $\ell\colon X\to \C^2$ be a generic plane projection, and
  $A\subset X$ a union of polar wedges about the
  components of the polar curve for $\ell$.  If
  $x\in X$ and $r<|x|$ are such that $\mathscr B(x,r)\cap
  A=\emptyset$, then there exists $K>1$ such that $\ell$ restricted to
  $\mathscr B(x,\frac r{K^2})$ is an \red{inner} $K$-bilipschitz homeomorphism onto its
  image.  Moreover, $\mathscr B(x,\frac r{K^2})$ has trivial topology and
  abnormality $\le K$.
\end{lemma}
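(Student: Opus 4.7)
The plan has two stages: apply Proposition \ref{prop:thinzones} to obtain a uniform local bilipschitz bound for $\ell$ on all of $\mathscr{B}(x,r)$, and then promote this local bound to a global statement on the smaller ball $\mathscr{B}(x,r/K^2)$ via a path-lifting and monodromy argument.

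First I would observe that since $A$ is a union of polar wedges, Proposition \ref{prop:thinzones} guarantees that $A$ contains the set $\mathcal{B}_{K_0}$ of points $p$ at which the local bilipschitz constant $K(p)$ of $\ell$ exceeds $K_0$. The hypothesis $\mathscr{B}(x,r)\cap A=\emptyset$ therefore forces $K(p)\le K_0$ throughout $\mathscr{B}(x,r)$. Set $K:=\max(K_0,3)$ (the constant $3$ is needed for the numerical estimates below). Because $\ell$ is a linear projection one has $\|d\ell_p(v)\|\le\|v\|$ automatically, while the bound on $K(p)$ gives $\|d\ell_p(v)\|\ge\|v\|/K$. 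Since $\Pi\subset A$, the set $\mathscr{B}(x,r)$ is disjoint from $\Pi$, so $\ell$ is a local diffeomorphism on $\mathscr{B}(x,r)$.

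Next comes path lifting. Because $\ell$ is a local diffeomorphism with $\|d\ell^{-1}\|\le K$, any path in $\C^2$ of length $L$ starting at $\ell(y)$ with $y\in\mathscr{B}(x,r)$ lifts uniquely in $X$ to a path of length at most $KL$ from $y$, as long as the lift remains in $\mathscr{B}(x,r)$; this holds automatically so long as the lift stays inside the Euclidean ball $B(x,r)$, because a continuous curve starting at $x$ in $B(x,r)\cap X$ cannot leave the component $\mathscr{B}(x,r)$. Lifting Euclidean segments emanating from $\ell(x)$ of length up to $r/K$ produces a topological $4$-ball $\Sigma\subset\mathscr{B}(x,r)$ on which $\ell$ restricts to a $K$-bilipschitz homeomorphism onto $B(\ell(x),r/K)$; note $\Sigma\subset X\setminus\{0\}$ since $r<|x|$. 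Given $p,q\in\mathscr{B}(x,r/K^2)$, lifting the segment $[\ell(p),\ell(q)]$ (of length $\le 2r/K^2$) from $p$ yields a lift of length $\le 2r/K$, so it stays inside $B(x,r/K^2+2r/K)\subset B(x,r)$ once $K\ge 3$, ending at some $q'$ with $\ell(q')=\ell(q)$.

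To finish I would argue $q'=q$ by monodromy: concatenating the lifted segment with any path from $q$ back to $p$ inside $\mathscr{B}(x,r/K^2)$ gives a loop in $\mathscr{B}(x,r)$ whose $\ell$-image is a loop in the convex ball $B(\ell(x),r/K^2)$, and the straight-line contraction of that loop to a point lifts slice by slice (each intermediate segment again of length $\le 4r/K^2$, lift $\le 4r/K$, staying in $\mathscr{B}(x,r)$), forcing $q'=q$. This yields the $K$-bilipschitz homeomorphism (the upper bound $|\ell(p)-\ell(q)|\le|p-q|$ is free; the lower bound comes from the length of the lifted segment). Trivial topology of $\mathscr{B}(x,r/K^2)$ follows from the inclusion $\mathscr{B}(x,r/K^2)\subset\Sigma$: any $p$ in $\mathscr{B}(x,r/K^2)$ is joined to $x$ by a path whose $\ell$-image lies in $B(\ell(x),r/K^2)\subset B(\ell(x),r/K)$, and lifting this image from $x$ places $p$ in $\Sigma$. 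Finally, abnormality $\le K$ is immediate: for any $p,p'\in\mathscr{B}(x,r/K^2)$ the lifted segment furnishes an inner path of length at most $K|\ell(p)-\ell(p')|\le K|p-p'|$.

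The main technical obstacle is the monodromy step establishing injectivity; the particular threshold $r/K^2$ is chosen so that every intermediate segment in the contracting homotopy fits inside the length budget $r/K$ permitted by the local bilipschitz bound, which is what lets path-lifting be valid throughout the contraction.
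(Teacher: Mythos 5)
Your plan follows the paper's proof quite closely: both arguments begin by observing (from Proposition \ref{prop:thinzones}) that $\mathscr B(x,r)\cap A=\emptyset$ forces a uniform bound $K$ on the local bilipschitz constant of $\ell$ throughout $\mathscr B(x,r)$, then exhibit a topological ball inside $\mathscr B(x,r)$ — your $\Sigma$, the paper's $\mathcal B$ — on which $\ell$ is a $K$-bilipschitz homeomorphism onto the Euclidean ball $B(\ell(x),r/K)$, note that $\mathscr B(x,r/K^2)$ lies inside it, and read off trivial topology and the abnormality bound from that inclusion. The paper is in fact terser than you are at the key step: it simply asserts that $\ell$ restricted to the relevant component of $\ell^{-1}(B(\ell(x),r/K))$ is a homeomorphism onto the full ball, without spelling out why. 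You are trying to justify exactly that assertion, which is the right instinct.

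The one place your write-up wobbles is the monodromy step. As stated, "concatenating the lifted segment with any path from $q$ back to $p$" is not a concatenation at all: the lifted segment ends at $q'$, not $q$, so no loop in $X$ is produced until one already knows $q'=q$, which is what you are trying to establish. Moreover the straight-line contraction "slice by slice" doesn't give the length bound $\le 4r/K^2$ you claim: the loop downstairs consists of the segment $[\ell(p),\ell(q)]$ and the $\ell$-image of an arbitrary path $\gamma_2\subset\mathscr B(x,r/K^2)$, and that second arc can be as long as one likes, so the intermediate slices of the homotopy are neither segments nor short. The cleanest repair avoids the homotopy entirely. Having built $\sigma:=(\ell|_\Sigma)^{-1}:B(\ell(x),r/K)\to\Sigma$ by radial path-lifting from $x$ (continuous, injective, open, hence a homeomorphism onto its image), take any path $\gamma$ in $\mathscr B(x,r/K^2)$ from $x$ to $p$; then $\gamma$ and $\sigma\circ\ell\circ\gamma$ are both lifts of $\ell\circ\gamma$ through $\mathscr B(x,r)$ starting at $x$, where $\ell$ is a local homeomorphism, so they coincide by uniqueness of path lifting. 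This gives $\mathscr B(x,r/K^2)\subset\Sigma$ directly, and the bilipschitz bound and abnormality estimate for $p,q\in\mathscr B(x,r/K^2)$ then follow as you intend from the bilipschitz homeomorphism $\ell|_\Sigma$. (Equivalently, one may run an open-and-closed argument on $\{p\in\mathscr B(x,r/K^2):\sigma(\ell(p))=p\}$.) With that substitution your argument is complete and matches the paper's.
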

\begin{proof}There exists $K$ such that $\ell|_{\overline{X\setminus A}}$ is
  locally a $K$-bilipschitz map (see Section
  \ref{sec:polarwedges}). Let $B(y,r)\subset \C^2$ denote the ball of
  radius $r$ about $y\in \C^2$.  Then $ B(\ell(x), \frac
  rK)\subseteq\ell(\mathscr B(x,r))$. Let $\cal B$ be the component of
  $\ell^{-1}(B(\ell(x), \frac r{K}))$ which is contained in $\mathscr
  B(x,r)$. So $\ell|_{\cal B}$ is a $K$-bilipschitz homeomorphism of
  $\cal B$ onto its image $B(\ell(x), \frac r{K})$.  Now $\mathscr B(x, \frac
  r{K^2})\subseteq \cal B$, so $\mathscr B(x, \frac r{K^2})$ has trivial
  topology. Moreover, for each pair of points $x',x'' \in \mathscr B(x,
  \frac r{K^2})$, we have:
$$d_{inner}(x',x'') \leq K d_{\C^2}(\ell(x'),\ell(x''))\,.$$
On the other hand, $ d_{\C^2}(\ell(x'),\ell(x''))\leq d_{outer}(x',x'') $, so
$$ d_{inner}(x',x'')/d_{outer}(x',x'') \leq K\,.$$ 
Thus $\alpha( \mathscr B(x,\frac r{K^2}))\le K$.  
\end{proof}

\begin{lemma}\label{le:bubbles}
\begin{enumerate}
\item\label{it:bubbles-1} There exist $a> 0$, $K_1>1$ and $\epsilon_0>0$ such
  that for all $x\in X\cap B_{\epsilon_0}$ the set $\mathscr B(x,a|x|^{q_1})$
  has trivial topology and abnormality at most $K_1$.
\item\label{it:bubbles-2}  For all $q>q_1$ and $a>0$ there exist $K_1>1$ and $\epsilon_1$ with $\epsilon_1\le \epsilon_0$ such that   for all $x\in X\cap B_{\epsilon_1}$ the set $\mathscr B(x,a|x|^{q})$
  has trivial topology and abnormality at most $K_1$.
\item\label{it:bubbles-3} For all $q<q_1$, $K_2>1$ and $a>0$
  there exist $\epsilon_1$ with $\epsilon_1\le\epsilon_0$ such that for all $x\in X_{q_1}
  \cap B_{\epsilon_1}$ we have either $\alpha(\mathscr B(x,a|x|^{q}))>
  K_2$ or $\mathscr B(x,a|x|^{q})$ has essential topology.
\end{enumerate}
\end{lemma}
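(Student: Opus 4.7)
The plan is to analyze the ball $\mathscr B(x, r)$ for three ranges of the scale $r$ relative to $|x|^{q_1}$, using the geometric decomposition from Section \ref{sec:carrousel1} together with the explicit holomorphic parametrization of polar wedges recalled after Proposition \ref{prop:polar wedge}. Parts \eqref{it:bubbles-1}--\eqref{it:bubbles-2} will say that balls at or below the $q_1$-scale are asymptotically flat (trivial topology, abnormality close to $1$), while \eqref{it:bubbles-3} will say that in a polar piece any ball substantially above the $q_1$-scale detects either the polar curve topologically or the branched-cover structure of $\ell$ metrically.

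For \eqref{it:bubbles-1} I would choose $a>0$ small and split on the position of $x$. If $x$ is at outer distance $\geq 2a|x|^{q_1}$ from every polar wedge $A_\alpha$, then $\mathscr B(x, a|x|^{q_1})\cap A_\alpha=\emptyset$ for every $\alpha$, and Lemma \ref{le:ball projection} immediately yields trivial topology and abnormality $\leq K$ on the sub-ball $\mathscr B(x, a|x|^{q_1}/K^2)$; rescaling $a\mapsto a/K^2$ finishes this case. If instead $x$ lies within $2a|x|^{q_1}$ of some wedge $A_\alpha$ of rate $s_\alpha\leq q_1$, I would use the chart $(u,v)\mapsto(z_1,\ldots,z_n)$ of Section \ref{sec:polarwedges}, extended slightly past the wedge boundary: for $a$ sufficiently small, the preimage of $\mathscr B(x, a|x|^{q_1})$ in this chart is a topological disc on which the pullback metric is bilipschitz-equivalent to the weighted Euclidean metric scaling $u$ by $|x|^{(N-1)/N}$ and $v$ by $|x|^{s_\alpha}$; inner and outer distances in $X$ are both comparable to this pullback, so abnormality is controlled and topology is trivial. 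Part \eqref{it:bubbles-2} then follows from \eqref{it:bubbles-1}: since $a|x|^q=a|x|^{q-q_1}|x|^{q_1}$ with $q-q_1>0$, for $\epsilon_1$ small enough $\mathscr B(x, a|x|^q)$ is contained in a ball whose radius satisfies the hypothesis of \eqref{it:bubbles-1}.

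For \eqref{it:bubbles-3}, fix $q<q_1$, $K_2>1$, $a>0$ and let $x\in X_{q_1}$. By Proposition \ref{prop:resolution} the polar piece containing $x$ corresponds to a $\mathcal{P}$-node and contains a component $\Pi_0$ of the polar curve; its link cross-section at $|z_1|=|x|$ has diameter of order $|x|^{q_1}$. Since $a|x|^q\gg|x|^{q_1}$ for $|x|$ small, $\mathscr B(x, a|x|^q)$ extends well beyond this cross-section. I would argue the dichotomy: either $\mathscr B(x, a|x|^q)$ encloses a loop winding around $\Pi_0$ in the link, in which case it contains the essential torus bounding the polar-piece solid torus in the JSJ decomposition of $X^{(|x|)}$ (incompressible thanks to the no-adjacent-nodes condition on $\pi$), yielding essential topology in $X\setminus\{0\}$; or it does not, in which case the component of $B(x, a|x|^q)\cap X$ containing $x$ is confined to one sheet of the branched double cover $\ell|_{A_\alpha}$ while the conjugate sheet contains points at outer distance much less than $a|x|^q$ from $\mathscr B(x, a|x|^q)$ but whose inner distance from $x$ is forced to detour around $\Pi_0$ through longer, lower-rate pieces of the decomposition, producing an inner-to-outer ratio exceeding $K_2$ once $|x|$ is small enough.

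The main obstacle is \eqref{it:bubbles-3}, specifically making the topology-versus-abnormality dichotomy uniform in $x\in X_{q_1}$. The topological half rests on the Seifert structure of the link cross-section and the incompressibility of the boundary of the polar-piece solid torus, while the metric half rests on the branched-cover structure of $\ell$ restricted to the wedge together with an inner-distance lower bound coming from the rate $q_1$. The delicate case is when the polar piece is itself a solid torus (extremity of a bamboo in $\Gamma$), where the topological argument needs extra care and one must make sure the abnormality route is available in its place.
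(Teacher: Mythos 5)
Your proposal for part \eqref{it:bubbles-1} takes a genuinely different route from the paper. The paper uses an elegant two-projection trick: fix a generic pair of generic projections $\ell_{\mathcal D_1}$, $\ell_{\mathcal D_2}$ whose polar wedges are pairwise disjoint in $B_{2\epsilon_0}\setminus\{0\}$; since $q_1 \geq s$ for every polar rate $s$, any ball $\mathscr B(x,c|x|^{q_1})$ is disjoint from at least one of the two families of polar wedges, so Lemma \ref{le:ball projection} applies with whichever projection works. This completely sidesteps analyzing the metric near the polar. Your chart argument in the $(u,v)$ coordinates may be made to work, but it requires extending the chart ``slightly past the wedge boundary'' and controlling a transition region you leave unaddressed, and in the borderline case $s_\alpha = q_1$ the ball spans the whole $v$-direction of the wedge, which needs separate care. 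The paper's route is cleaner and uniform.

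The more serious issue is in part \eqref{it:bubbles-3}. Your dichotomy hinges on the assertion that if $\mathscr B(x,a|x|^q)$ encloses a loop winding around $\Pi_0$ it contains an incompressible torus, ``incompressible thanks to the no-adjacent-nodes condition.'' That claim is false in the decisive case. When the polar piece is a $D(q_1)$-piece, its link cross-section is a \emph{solid torus}, and the boundary torus of a solid torus is always compressible from the inside, regardless of any condition on the resolution graph. Worse, the ball $\mathscr B(x,a|x|^q)$ at level $|z_1|\approx |x|$ meets the solid torus in a region that looks like $D^2\times(\text{arc})$ since the radius $a|x|^q$ is much smaller than the circumference $\sim|x|$ of the Seifert circle; this region is contractible, so there is no essential topology to detect. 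This is exactly why the paper's proof splits into two cases according to whether $X_{q_1}$ is or is not a $D$-piece: essential topology comes only from a $B$-piece whose fiber $F$ has genus or multiple boundary components (and then from the cited result in \cite{BNP} that $F$ carries curves not null-homotopic in $X\setminus\{0\}$), while \emph{every} $D$-piece — not just extremities of bamboos, as you suggest — requires the abnormality route.

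For the abnormality half you sketch the right intuition (points on the conjugate sheet are close in outer distance but inner paths must detour around $\Pi_0$), but the proof requires a concrete reason why the outer distance is small. That reason is Lemma \ref{le:parallel}: the Nash modification shows that as one approaches an exceptional curve, the tangent planes converge, so the two lifted arcs become almost parallel straight lines. Only then does the outer distance between the pair stay of order $o(t^{p'})$ while the inner distance is $O(t^{p'})$, driving the ratio to infinity; this is the content of Lemma \ref{le:non-normal}. Without that input, your claimed inner-to-outer estimate is unsupported. I would encourage you to make the $D$-piece case the centerpiece of your proof of \eqref{it:bubbles-3}, and to isolate the limit-of-tangent-planes statement as the key lemma you need.
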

Before proving the lemma we note the following immediate corollary:
\begin{proposition}[Finding $q_1$]\label{prop:finding q1}
  $q_1$ is the infimum of all $q$ satisfying:  there exists $K_1>0$
  such that for all $a>0$ there exists $\epsilon_1>0$ such that all
  sets $\mathscr B(x,a|x|^q)$ with $x \in X\cap B_{\epsilon_1}$ have trivial
  topology and abnormality at most $K_1$.\qed
\end{proposition}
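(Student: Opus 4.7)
The proposition is a formal consequence of Lemma~\ref{le:bubbles}. Parts~\eqref{it:bubbles-1}--\eqref{it:bubbles-2} exhibit $q_1$ and every larger $q$ as admissible values for the infimum (take the $a$, $K_1$, $\epsilon_0$ supplied there). Conversely, part~\eqref{it:bubbles-3}, applied with $K_2$ equal to any candidate $K_1$, produces a point $x\in X_{q_1}$ for which $\mathscr B(x, a|x|^q)$ has essential topology or abnormality exceeding $K_1$; this rules out every $q<q_1$, so $q_1$ is exactly the asserted infimum. The real content lies in the lemma.

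For parts~\eqref{it:bubbles-1}--\eqref{it:bubbles-2}, I plan to combine Lemma~\ref{le:ball projection} with a local analysis of polar wedges. Let $A=\bigcup_i A_i$ be a union of polar wedges around the components of $\Pi$, each $A_i$ a $D(s_i)$-piece with polar rate $s_i\le q_1$. For $x\notin A$, the outer distance from $x$ to $A$ is at least a positive constant times $|x|^{q_1}$, because the transversal size of $A_i$ at height $|x|$ is of order $|x|^{s_i}\le |x|^{q_1}$; so taking $a$ small (case $q=q_1$) or any $a>0$ with $|x|$ small (case $q>q_1$) makes $\mathscr B(x, a|x|^q)$ disjoint from $A$, and Lemma~\ref{le:ball projection} gives trivial topology and abnormality at most $K$. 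For $x\in A_i$, the parametrization $z_j=u^N f_{j,0}(u)+vu^{Ns_i}h_j(u,v)$ from the discussion following Proposition~\ref{prop:polar wedge} and Lemma~\ref{le:h3} shows that on scales below $|x|^{s_i}$ the wedge is metrically flat up to a bounded factor; a ball of radius $a|x|^q$ with $q\ge s_i$ and $a$ small then lies in a smooth local sheet with inner and outer metrics comparable.

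The main obstacle is part~\eqref{it:bubbles-3}, where the ball radius $a|x|^q$ with $q<q_1$ exceeds the fiber scale $|x|^{q_1}$ of the piece through $x$, and one must extract either essential topology or unbounded abnormality as $|x|\to 0$. I plan to split on the type of $X_{q_1}$-piece containing $x$. In a non-polar $B(q_1)$-piece, Amalgamation~\ref{amalg:X} ensures the Seifert fiber $F$ is not a disc; the ball engulfs a complete copy of $F$ and inherits its essential topology. In a polar $D(q_1)$-piece the fiber is a disc, so one must instead produce high abnormality: the ball spills through the adjacent $A(q_1,q')$-collars into pieces of lower rate $q'<q_1$, and using the wedge parametrization I would exhibit two points in the ball whose outer separation is of order $|x|^q$ but whose inner separation in $X$ is of order $|x|^{q'}$, so the ratio $|x|^{q'-q}$ exceeds any prescribed $K_2$ once $\epsilon_1$ is small. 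This polar case---calibrating how the ball leaks out of the polar wedge and bounding the resulting inner distances from below---is the delicate step where most of the technical bookkeeping lives.
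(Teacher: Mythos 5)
Your high-level reading is right: the proposition is an immediate corollary of Lemma~\ref{le:bubbles}, with parts~\eqref{it:bubbles-1}--\eqref{it:bubbles-2} showing all $q\ge q_1$ are admissible and part~\eqref{it:bubbles-3} excluding all $q<q_1$. Your sketch of part~\eqref{it:bubbles-3} (essential topology from the fiber of a non-disc $B(q_1)$-piece; high abnormality from a polar $D(q_1)$-piece via the ball spilling through adjacent $A$-collars) matches the paper's strategy, with the polar case being precisely what the paper packages into Lemma~\ref{le:non-normal}.

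Your argument for parts~\eqref{it:bubbles-1}--\eqref{it:bubbles-2}, however, has a genuine gap at the step ``for $x\notin A$, the outer distance from $x$ to $A$ is at least a positive constant times $|x|^{q_1}$, because the transversal size of $A_i$ at height $|x|$ is of order $|x|^{s_i}$.'' The transversal size of a polar wedge says nothing about how far an \emph{external} point is from it: a point $x$ just outside $A$ has $d(x,A)$ arbitrarily small compared to $|x|^{q_1}$, so $\mathscr B(x,a|x|^{q_1})$ may well meet $A$ and Lemma~\ref{le:ball projection} does not apply. You would then be stuck in an intermediate regime where the ball partially overlaps a polar wedge but does not sit comfortably inside one sheet, and your ``metrically flat sheet'' analysis does not cover that case either. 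The paper sidesteps this entirely with a two-projections argument: take a generic pair of generic plane projections $\ell_{\cal D_1},\ell_{\cal D_2}$ with unions of polar wedges $A_1,A_2$ chosen pairwise disjoint off the origin, then shrink the wedge constant so that any $\mathscr B(x,c|x|^{q_1})$ is disjoint from at least one of $A_1,A_2$, and apply Lemma~\ref{le:ball projection} with whichever $\ell_{\cal D_i}$ corresponds to the avoided wedges. That one idea replaces both branches ($x\notin A$ and $x\in A_i$) of your case split and removes the boundary-regime difficulty. Your direct analysis inside a polar wedge via the parametrization is plausible when the ball lies deep inside one sheet, but as written it neither proves that nor handles the overlap regime; I'd recommend adopting the two-projections device.
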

\begin{proof}[Proof of Lemma \ref{le:bubbles} \eqref{it:bubbles-1} and \eqref{it:bubbles-2}] We just prove \eqref{it:bubbles-1}, since \eqref{it:bubbles-2} follows immediately from  \eqref{it:bubbles-1}, using that $\mathscr B(x,a|x|^q)=\mathscr B(x,a'|x|^{q_1})$ with $a'=a|x|^{q-q_1}$, which can be made as small as one wants by reducing $\epsilon_1$.

 Let $\ell_{\cal D_1} \colon(X,0) \to (\C^2,0)$ and $\ell_{\cal D_2}
  \colon(X,0) \to (\C^2,0)$ be a generic pair of generic plane
  projections. For $i=1,2$ let $A_i$ be a union of polar wedges about
  the components of the polar curve $\Pi_{\cal D_i}$.

  Note that a polar wedge of rate $s$ about a component $\Pi_0$ of the
  polar curve is, up to equivalence, the component containing $\Pi_0$ of a
  set of the form $\bigcup_{x\in\Pi_0}\mathscr B(x,c|x|^s)$, so we may
  take our polar wedges to be of this form, and we may choose $c$ and
  $\epsilon_0$ small enough that none of the polar wedges in $A_1$ and
  $A_2$ intersect each other in $B_{2\epsilon_0}\setminus\{0\}$. Now
  replace $c$ by $c/4$ in the construction of the polar wedges. Since
  $q_1\ge s$ for every polar rate $s$, for any $x\in
  (X\cap B_{\epsilon_0})\setminus\{0\}$ the set $\mathscr B(x,c |x|^{q_1})$
  is then disjoint from one of $A_1$ and $A_2$. Thus  part
  \eqref{it:bubbles-1} of the lemma follows from Lemma \ref{le:ball
    projection}, with $K_1$ chosen such that each restriction
  $\ell_{D_i}$ to $(X \setminus A_i) \cap B_{2\epsilon}$ is a local
  $K_1$-bilipschitz homeomorphism, and with $a=\frac c{K_1^2}$. 
\end{proof}

\begin{proof}[Proof of Lemma \ref{le:bubbles} \eqref{it:bubbles-3}]
  Fix $q<q_1$, $K_2>1$ and $a>0$. We will deal with $X_{q_1}$
  component by component, so we assume for simplicity that
  $X_{q_1}$ consists of a single component (see Definition \ref{def:component}).

  Assume first that $X_{q_1}$ is not a $D(q_1)$-piece. Then it is a
  $B$-piece and there exist $\epsilon\le \epsilon_0$ and $K>1$ such that $X_{q_1}\cap
  B_{\epsilon}$ is $K$-bilipschitz equivalent to a standard model as
  in Definition \ref{def:p1}. Let $x \in X_{q_1} \cap
  B_{\epsilon}$. If $d$ is the diameter of the fiber $F$ used in constructing the model
  then $\mathscr B(x,2Kd|x|^{q_1})$ contains a shrunk copy of the fiber $F$. 
  Then, if $\epsilon$ is small enough, $\mathscr B(x,a |x|^{q})$ also
  contains a copy of $F$, since $q<q_1$.  In \cite{BNP} it is proved
  that $F$ contains closed curves which are not null-homotopic in
  $X\setminus\{0\}$, so $\mathscr B(x,a|x|^{q})$ has essential topology.
  
  We now assume
  $X_{q_1}$ is a $D(q_1)$-piece.
  Consider the resolution $\pi \colon (\widetilde{X},E) \to (X,0)$
  with dual graph $\Gamma$ defined in Section
  \ref{sec:decomposition vs resolution}. We will use again the notations of Section
  \ref{sec:decomposition vs resolution}.  For each irreducible component of $E$, let $N(E_i)$ be a small
  closed tubular neighbourhood of $E_i$. Let \red{$\Gamma_{q_1}$} be the
  subgraph of $\Gamma$ which consists of the \P-node corresponding to $X_{q_1}$  plus attached bamboos. After adjusting $N(E_i)$ if necessary, one
  can assume that the strict transform $\overline{\pi^{-1}(X_{q_1}
    \setminus \{0\})}$ of $X_{q_1}$ by $\pi$ is the set $\cal N(\Gamma_{q_1})$
  (see Proposition \ref{prop:resolution}). 
  
We set $E'=\bigcup_{v_i \in \Gamma_{q_i}} E_i$ and  $E''=\overline{E \setminus E'}$. 

Let $\widehat{\alpha}_q \colon (X \setminus \{0\}) \times (0,\infty)
\to [1, \infty)$ be the map which sends $(x,a)$ to the abnormality
$\alpha(\mathscr B(x,a|x|^{q}))$ and let $\widetilde{\alpha}_q \colon
(\widetilde{X} \setminus E) \times (0,\infty) \to [1, \infty) $ be
the lifting of $\widehat{\alpha}_q $ by $\pi$.
The intersection $E \cap \overline{\pi^{-1}(X_{q_1} \setminus \{0\})}$
is a compact set inside $E' \setminus E''$, so to prove Lemma
\ref{le:bubbles} \eqref{it:bubbles-3}, it suffices to prove that for
each $y\in E' \setminus E''$,
$$\lim_{x \to y \atop x \in \widetilde{X}\setminus E} \widetilde{\alpha}_q(x,a) = \infty\,.$$

 This follows from the following more general Lemma, which will be used again later. 
\end{proof}

\begin{lemma} \label{le:non-normal} Let $N$ be a polar piece of rate
  $q_i$ i.e., $N=\pi(\cal N(\Gamma_{q_i}'))$ where $\Gamma_{q_i}$ is a
  subgraph of $\Gamma$ consisting of a \P-node plus any attached
  bamboo. Assume that the outer boundary of $N$ is connected. Set
  $E'=\bigcup_{v_j \in \Gamma_{q_i}} E_j$ and $E''=\overline{E \setminus
    E'}$. Then for all $q$ with $q<q_i$ and all $y\in E' \setminus E''$,
$$\lim_{x \to y \atop x \in \widetilde{X}\setminus E} \widetilde{\alpha}_q(x,a) = \infty\,.$$  \end{lemma}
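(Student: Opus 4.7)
Fix $y\in E'\setminus E''$ and pick a sequence $x_n\to y$ in $\widetilde X\setminus E$; write $p_n=\pi(x_n)\in N$ and $\epsilon_n=|p_n|\to 0$. Since $y$ lies in the interior of the subgraph $\Gamma'$, for large $n$ the point $p_n$ sits in the interior of $N$, its inner distance to $\partial N$ bounded below by $c\,\epsilon_n^{q_i}$ for a constant $c=c(y)>0$. The aim is to exhibit, for each large $n$, a pair of points in $\mathscr B(p_n,a\epsilon_n^{q})$ whose inner-to-outer distance ratio diverges as $n\to\infty$.

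The strategy is to exploit the branched double cover $\ell|_{N}$ (branched along the polar curve $\Pi_0$) and the ``folded'' embedding $N\hookrightarrow\mathbb C^n$. Using the parametrization of a polar wedge $A_0\subset N$ from Section~\ref{sec:polarwedges} together with Lemma~\ref{le:h3}, the $z_j$-components with $j\ge 3$ contribute the principal part of the outer distance between points that project to the same $\ell$-image, and this outer distance is controlled by the polar rate $s\ge q_i$. Since $q<q_i\le s$, the ball $\mathscr B(p_n,a\epsilon_n^{q})$ is strictly larger than both the transverse scale $\epsilon_n^{q_i}$ of $N$ and the polar-wedge scale $\epsilon_n^{s}$; it therefore contains substantial portions of both sheets of the cover and of the adjoining pieces, so that any candidate pair will automatically fit inside once $\epsilon_n$ is small.

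The main obstacle is producing a pair whose inner/outer ratio genuinely diverges: naive candidates such as $(u,v)$ and $(u,-v)$ at the same $\ell$-image have comparable inner and outer distances and yield only bounded abnormality. One must instead exploit the fact that the ball is ``oversized'' with respect to the intrinsic scale of $N$. My plan is to combine the paired-sheet construction with the extra room afforded by $q<q_i$: either pair points drawn from distinct branches of $\Pi$ (when present) that happen to come close in the $\mathbb C^n$-embedding, or iterate the branching to create pairs whose outer distance carries an additional rate factor controlled by $\epsilon_n^{q_i-q}$, while the inner distance remains bounded below at rate $\epsilon_n^{q_i}$. Translating this heuristic into a rigorous estimate requires a careful comparison of the inner metric on $N$ (bilipschitz to the model $D(q_i)$-piece via Proposition~\ref{prop:polar wedge}) with the outer metric inherited from $\mathbb C^n$, and that comparison is the essential technical content of the lemma.
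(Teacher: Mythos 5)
There is a genuine gap: you have correctly ruled out the naive candidate pair (two points of the polar wedge with the same $\ell$-image), but the replacement you sketch is not a proof, and your guiding heuristic points in the wrong direction. You expect the divergence to come from a rate gap $\epsilon_n^{q_i-q}$ gained by ``oversizing'' the ball, i.e.\ from comparing two fixed powers of $\epsilon_n$. But a ratio of two fixed powers of $t$ stays comparable on each level set $|x|=t$; it would only give a different, still bounded, abnormality constant, not $\alpha\to\infty$. The divergence has to come from a sub-power factor $f(t)\to 0$, and producing that factor is exactly the piece you have not identified.

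The argument the paper uses (and the idea you are missing) is this. Do not look inside the polar wedge at all. Instead, look at the $A(q'',q_i)$-collar attached to the \emph{outer} boundary of $\ell(N)$: its lift $\tilde A(q'',q_i)$ is a connected covering of degree $>1$ (the hypothesis that $\partial_0 N$ is connected enters here), and $\tilde A(q'',q_i)$ lies in a region where Lemma~\ref{le:parallel} applies, i.e.\ the Gauss map has a single limit plane along all arcs entering that region. Fix $q<q'<p'<p''<q_i$. Lift the family of plane arcs $\gamma_s(t)=\ell(\gamma(t))+(0,st^{p'})$ to two sheets, giving pairs $P_i(t)$ (at parameter $s=1$) and a deeper pair $p_i(t)$ (at $s=t^{p''-p'}$). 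The inner distance between $P_1(t)$ and $P_2(t)$ is forced to be $\gtrsim t^{p'}$ because any inner path between the two sheets must traverse $N$. The outer distance, however, is sandwiched: $d(p_1,p_2)=O(t^{p''})$, and passing from the $p_i$ pair to the $P_i$ pair one moves along two nearly straight, nearly parallel segments (Lemma~\ref{le:parallel}), so the spread grows only by $f(t)(t^{p'}-t^{p''})$ with $f(t)\to 0$. Hence $l_{out}/l_{inn}=O(t^{p''-p'})+O(f(t))\to 0$. The key analytic input is the constancy of the limit tangent plane on the complement of the $\mathcal P$-curves (via the Nash modification and \cite{GS,spivakovsky}); without it the flattening factor $f(t)\to 0$ is unavailable, and no power-counting with $q<q_i$ can substitute for it. Your proposal never invokes this, and so cannot close the estimate.
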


The proof of Lemma \ref{le:non-normal} needs a preparatory Lemma
\ref{le:parallel}.

Recall that the resolution $\pi$ factors through the Nash modification
$\nu \colon \widecheck{X} \to X$. Let $\sigma \colon \widetilde{X} \to
\grassman(2,n)$ be the map induced by the projection $p_2 \colon
\widecheck X \subset X \times \grassman(2,n) \to \grassman(2,n)$.  The
map $\sigma$ is well defined on $E$ and according to \cite[Section
2]{GS} (see also \cite[Part III, Theorem 1.2]{spivakovsky}), its
restriction to $E$ is constant on any connected component of the
complement of \P-curves in $E$.  The following lemma about limits of
tangent planes follows from this:

\begin{lemma}\label{le:parallel}
{\rm (1)}~~Let $\Gamma' $ be a maximal connected component of the graph $\Gamma$ 
with its \P-nodes removed (we call this a \emph{\P-Tjurina component}
of\/ $\Gamma$).  There exists $P_{\,\Gamma'} \in \grassman(2,n)$ such
that $\lim_{t\to 0}T_{\gamma(t)}X=P_{\,\Gamma'}$ for any real analytic
arc $\gamma \colon ([0,\epsilon],0) \to (\pi(N(\Gamma')),0)$ with
$|\gamma(t)|=O(t)$ and whose strict transform meets $\bigcup_{v \in
  \Gamma'} E_v$.

{\rm (2)}~~Let $E_{k} \subset E$ be a \P-curve and $x \in E_{k}$ be a
smooth point of the exceptional divisor $E$. There exists a plane
$P_{x}\in \grassman(2,n)$ such that $\lim_{t\to
  0}T_{\gamma(t)}X=P_{x}$ for any real analytic arc $\gamma \colon
([0,\epsilon],0) \to (\pi( {\mathcal N}(E_{k})),0)$ with
$|\gamma(t)|=O(t)$ and whose strict transform meets $E$ at $x$.\qed
\end{lemma}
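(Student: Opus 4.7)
The plan is to deduce both parts of the lemma directly from the two facts recalled just above the statement: that $\pi$ factoring through the Nash modification makes the induced map $\sigma \colon \widetilde X \to \grassman(2,n)$ a continuous extension of the Gauss map $x \mapsto T_x X$; and that $\sigma|_E$ is constant on each connected component of $E \setminus \{\text{\P-curves}\}$. Once these are in hand, both statements reduce to an application of continuity of $\sigma$ at the endpoint on $E$ of the strict transform of $\gamma$.

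For part (1), I would first observe that because $\Gamma'$ is connected and contains no \P-node, the union $\bigcup_{v \in \Gamma'} E_v$ is a connected subset of $E$ that is disjoint from every \P-curve, and hence is contained in a single connected component of $E \setminus \{\text{\P-curves}\}$. By the cited Gonzalez-Sprinberg/Spivakovsky constancy, $\sigma$ takes a single value on this union, which I will denote $P_{\Gamma'}$. Now, given a real analytic arc $\gamma \colon ([0,\epsilon],0) \to (\pi(N(\Gamma')),0)$ with $|\gamma(t)| = O(t)$, its strict transform $\widetilde\gamma$ is a real analytic arc in $\widetilde X$, and the hypothesis $\gamma(t) \in \pi(N(\Gamma'))$ combined with properness of $\pi$ forces $\widetilde\gamma(t)$ to lie in $N(\Gamma')$ and to approach a limit point $y \in \bigcup_{v \in \Gamma'} E_v$ as $t \to 0$. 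Continuity of $\sigma$ at $y$ then yields
\[
\lim_{t \to 0} T_{\gamma(t)} X \;=\; \lim_{t \to 0} \sigma(\widetilde\gamma(t)) \;=\; \sigma(y) \;=\; P_{\Gamma'}.
\]

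Part (2) follows by the same argument with one small modification: $x$ being a smooth point of $E$ means it lies on $E_k$ and on no other component of $E$, so $\sigma$ is continuous at $x$ even though it need not be constant along the \P-curve $E_k$. Setting $P_x := \sigma(x)$, any arc $\gamma$ of the prescribed type has $\widetilde\gamma(t) \to x$ as $t \to 0$, and continuity gives $T_{\gamma(t)}X = \sigma(\widetilde\gamma(t)) \to P_x$. The dependence of $P_x$ on $x$ rather than only on $E_k$ is precisely the feature that allows $E_k$ to be a \P-curve. I do not expect any genuine obstacle here: the substance of the lemma is packaged into the two cited inputs, and the only routine verification is that a real analytic arc $\gamma$ with $|\gamma(t)| = O(t)$ lifts to a real analytic arc with a well-defined endpoint on $E$, which is immediate from the real analyticity of $\gamma$ and the properness of $\pi$.
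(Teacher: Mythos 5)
Your proof is correct and follows essentially the same route the paper intends: the paper places a \qed at the end of the statement precisely because it regards the lemma as an immediate consequence of the two facts recalled just above it — continuity of $\sigma$ on $\widetilde X$ (from factoring through the Nash modification) and constancy of $\sigma|_E$ on connected components of $E$ minus the \P-curves (Gonzalez-Sprinberg/Spivakovsky) — and you have spelled out exactly this deduction, including the small point that $\bigl(\bigcup_{v\in\Gamma'}E_v\bigr)\setminus(\text{\P-curves})$ is connected so that $\sigma$ takes a single value $P_{\Gamma'}$ there, extended by continuity to the full union.
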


\begin{proof}[Proof of Lemma \ref{le:non-normal}]
Let $y\in E' \setminus E''$ and let $\gamma \colon [0,\epsilon] \to X$ be
a real analytic arc inside $N \cap B_{\epsilon}$ whose strict
transform $\widetilde{\gamma}$ by $\pi$ meets $E$ at $y$ and such that
$|\gamma(t)|=O(t)$. We then have to prove that $$\lim_{t \to 0}
\widetilde{\alpha}_q(\widetilde{\gamma}(t),a) = \infty\,.$$ 

Let $B$ be the component of the unamalgamated intermediate carrousel
such that the outer boundary of a component $B'$ of $\ell^{-1}(B)$ is
the outer boundary of $N$. Let $N'\subseteq N$ be $B'$ with its polar
wedges amalgamated, so that $\ell|_{N'}\colon N'\to \ell(N')$ is a
branched cover of degree $>1$.

 Then 
  $\ell(N)$ has an $A(q'',q_i)$ annulus outside it for some $q''$,
  which we will simply call $A(q'',q_i)$. The lift of $A(q'',q_i)$ by
  $\ell$ is a covering space. Denote by $\tilde A(q'',q_i)$ the
  component of this lift that intersects $N$; it is connected since the
  outer boundary of $N$ is connected, and the degree of the covering is $>1$ since its inner boundary is the outer boundary of $N'$.

$\tilde A(q'',q_i)$ is
  contained in a $N(\Gamma')$ where $\Gamma'$ is a \P-Tjurina
  component of the resolution graph $\Gamma$ so we can apply part (1)
  of Lemma \ref{le:parallel} to any suitable arc inside it. This will
  be the key argument later in the proof.

  We will prove the lemma for $q'$ with $q''<q'<q_i$ since it is then
  certainly true for smaller $q'$.

Choose $p'$ with $q'<p'<q_i$
and consider the arc $\gamma_0\colon
[0,\epsilon]\to \C^2$ defined by $\gamma_0 = \ell \circ \gamma $ and the
function
 $$ \gamma_s(t) := \gamma_0(t)+(0,st^{p'})\quad \text{for
 }(s,t)\in[0,1]\times[0,\epsilon] \,.
 $$ 
 We can think of this as a family, parametrized by $s$, of arcs
 $t\mapsto \gamma_s(t)$, or as a family, parametrized by $t$, of real
 curves $s\mapsto \gamma_s(t)$. For $t$ sufficiently small
 $\gamma_1(t)$ lies in $\ell(\mathscr B(\gamma(t),a'|\gamma(t)|^{q'}))$ and also lies in the $A(q'',q_i)$  mentioned
 above. Note that for any $s$ the point $\gamma_s(t)$ is distance
 $O(t)$ from the origin.

 We now take two different continuous lifts $\gamma_s^{(1)}(t)$ and
 $\gamma_s^{(2)}(t)$ by $\ell$ of the family of arcs $\gamma_s(t)$,
 for $0\le s\le 1$, with $\gamma_0^{(1)}=\gamma$ and $\gamma_0^{(2)}$
 also in $N$ (possible since the covering degree of $\tilde
 A(q'',q_i)\to A(q'',q_i)$ is $>1$). Since $q<p'$, $\gamma_s^{(1)}(t)$
 and $\gamma_s^{(2)}(t)$ lie in $\mathscr B(\gamma(t),a'|\gamma(t)|^q)$
 for each $t, s$.
  
  To make notation simpler we set
 $P_1 = \gamma_1^{(1)}$ and $P_2 = \gamma_1^{(2)}$. Since the points
 $P_1(t)$ and $P_2(t)$ are on different sheets of the covering of
 $A(q'',q_i)$, a shortest path between them will have to travel through
 $N$, so its length $l_{inn}(t)$ satisfies $l_{inn}(t)=O(t^{p'})$.

 We now give a rough estimate of the outer distance
 $l_{out}(t)=|P_1(t)-P_2(t)|$ which will be sufficient to show
 $\lim_{t\to 0}(l_{inn}(t)/l_{out}(t))=\infty$, completing the proof. For
 this, we choose $p''$ with $p'<p''<q_i$ and consider the arc
 $p\colon[0,\epsilon] \to \Bbb C^2$ defined by:
 $$p(t):= \gamma_{s_t}(t)=\gamma_0(t)+(0,t^{p''})\quad\text{with
 }s_t:=t^{p''-p'}\,,
 $$ 
 and its two liftings $p_1(t):= \gamma_{s_t}^{(1)}(t)$ and $p_2(t):=
 \gamma_{s_t}^{(2)}(t)$, belonging to the same sheets of the cover
 $\ell$ as the arcs $P_1$ and $P_2$. A real slice of the situation is
 represented in Figure \ref{fig:lemma}.

 \begin{figure}[ht]
\centering
\begin{tikzpicture} 
\begin{scope}[scale=1.5]
 \draw[line width=0.3pt] (0,0).. controls (1,0.1) and (3,0.1)..(4,0);   
 \draw[line width=0.3pt] (0,1.5).. controls (1,1.6) and (3,1.6)..(4,1.5);   
     \draw[line width=1.5pt] (5,0.75).. controls (5,0.8) and (5,1.4)..(4,1.5);   
       \draw[line width=1.5pt] (5,0.75).. controls (5,0.7) and (5,-0.1)..(4,0);   
       
       \draw[fill]  (8.5:5.05)circle(2pt);
       \node(a)at(8.5:5.3){   $\Pi_{\ell}$};
        \draw[fill]  (15:4.9)circle(2pt);
          \draw[fill]  (2:4.75)circle(2pt);
        \node(a)at(16:5.5){   $\gamma_0^{(1)}(t)$};
           \node(a)at(1:4.8)[right]{   $ \gamma_0^{(2)}(t)$};
\node(a)at(9:4.5){   $N$};

   \node(a)at(-6:2){   $P_2(t)$}; 
    \draw[fill]  (2:2)circle(2pt);
    
   \node(a)at(43:2.7){   $P_1(t)$};
    \draw[fill]  (38.5:2.5)circle(2pt);

     \node(a)at(-4:3.5){   $p_2(t)$}; 
    \draw[fill]  (0.5:3.5)circle(2pt);
    
   \node(a)at(27:4){   $p_1(t)$};
    \draw[fill]  (24:3.8)circle(2pt);

 \draw[line width=0.6pt,dotted] (2:2)--(38.5:2.5);
     \node(a)at(27:1.7){   $l_{out}(t)$};


\end{scope}
  \end{tikzpicture} 
   \caption{}
  \label{fig:lemma}
\end{figure}

 The points $p_1(t)$ and $p_2(t)$ are inner distance $O(t^{p''})$
 apart by the same argument as before, so their outer distance is at
 most $O(t^{p''})$. By Lemma \ref{le:parallel} the line from $p(t)$ to
 $\gamma_1(t)$ lifts to almost straight lines which are almost
 parallel, from $p_1(t)$ to $P_1(t)$ and from $p_2(t)$ to $P_2(t)$
 respectively, with degree of parallelism increasing as $t\to 0$. Thus
 as we move from the pair $p_i(t)$, $i=1,2$ to the pair $P_i(t)$ the
 distance changes by $f(t)(t^{p'}-t^{p''})$ where $f(t)\to 0$ as $t\to
 0$. Thus the outer distance $l_{out}(t)$ between the pair is at most
 $O(t^{p''})+f(t)(t^{p'}-t^{p''})$. Dividing by $l_{inn}(t)=O(t^{p'})$
 gives $l_{out}(t)/l_{inn}(t)=O(f(t))$, so $\lim_{t\to
   0}(l_{out}(t)/l_{inn}(t))=0$. Thus $\lim_{t\to
   0}(l_{inn}(t)/l_{out}(t))=\infty$, completing the proof of  Lemma \ref{le:non-normal}.
\end{proof}

\subsection{Constructing $\boldsymbol{X'_{q_1}}$} \label{subsec:X'q1}

\begin{definition}[{\bf $\boldsymbol q$-neighbourhood}]
  Let $(U,0)\subset (\widehat X,0)\subset (X,0)$ be semi-algebraic
  sub-germs.  A \emph{$q$-neighbourhood of $(U,0)$ in $(\widehat
    X,0)$} is a germ $(N,0)\supseteq (U,0)$ with $N\subseteq \{x\in
  \widehat X~|~d(x,U)\le Kd(x,0)^q\}$ for some $K$, using inner metric
  in $(\widehat X,0)$. 
 \end{definition}

For   $a>0$ and $L>1$, define:
\begin{align*}
  Z_{q_1,a,L}:=\bigcup \bigl\{&\mathscr
B(x,a|x|^{q_1}): \alpha(\mathscr B(x,a|x|^{q_1}))>L \text{  or }\\ &\mathscr
B(x,a|x|^{q_1}) \text{ has essential topology } \bigr\}
\end{align*}

\begin{lemma} \label{le:Xq1} For sufficiently large $a$ and $L$ and
  sufficiently small $\epsilon_1\leq \epsilon_0$, the set $Z_{q_1,a,L} \cap
  B_{\epsilon_1}$ is a $q_1$-neighbourhood of $X_{q_1} \cap
  B_{\epsilon_1}$ in $X \cap B_{\epsilon_1}$.
\end{lemma}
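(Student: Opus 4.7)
The plan is to reduce the lemma to two inclusions: (i) $X_{q_1}\cap B_{\epsilon_1}\subseteq Z_{q_1,a,L}$, so that $Z_{q_1,a,L}$ covers $X_{q_1}$; and (ii) there exists $K$ such that every $x\in Z_{q_1,a,L}\cap B_{\epsilon_1}$ satisfies $d_{\text{inn}}(x,X_{q_1})\le K|x|^{q_1}$, so that $Z_{q_1,a,L}$ stays within a $q_1$-neighbourhood. These together are exactly the definition of a $q_1$-neighbourhood. The order of choices is: first pick $a$ large, then choose $L$ large enough to force (ii) but small enough that (i) still holds.

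For (i), I argue component by component in $X_{q_1}$. If $N\subseteq X_{q_1}$ is a $B(q_1)$-piece whose fiber $F$ is not a disc (in particular, every non-polar component of $X_{q_1}$ is of this form, since otherwise Amalgamation \ref{amalg:X} would have absorbed it into a lower rate), then the standard model $dr^2+r^2d\theta^2+r^{2q_1}g_\theta$ of Definition \ref{def:p1} shows that whenever $x\in N$ sits at inner distance $\gtrsim|x|^{q_1}$ from $\partial N$ and $a$ exceeds the $g_\theta$-diameter of $F$, the ball $\mathscr B(x,a|x|^{q_1})$ contains a topological copy of $F$; since closed curves in $F$ are not null-homotopic in $X\setminus\{0\}$—exactly the point used in the proof of Lemma \ref{le:bubbles}\eqref{it:bubbles-3}—this ball has essential topology. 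When $N$ is a polar $D(q_1)$-piece, I invoke its branched-cover structure via Proposition \ref{prop:polar wedge} and adapt the construction from the proof of Lemma \ref{le:non-normal}: I build two points in $\mathscr B(x,a|x|^{q_1})$ on different sheets of the cover whose outer separation is $O(|x|^{q_1})$ but whose inner separation is forced to pass through the polar curve. Thanks to the asymptotic parallelism supplied by Lemma \ref{le:parallel}, the inner-to-outer ratio grows with $a$, so $\alpha(\mathscr B(x,a|x|^{q_1}))$ can be pushed above any prescribed $L_0$ by taking $a$ large.

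For (ii), choose $K$ much larger than $a$ and suppose $y\in X\cap B_{\epsilon_1}$ satisfies $d_{\text{inn}}(y,X_{q_1})>K|y|^{q_1}$, so that $\mathscr B(y,a|y|^{q_1})$ is disjoint from $X_{q_1}$. Using two generic projections $\ell_{\cal D_1},\ell_{\cal D_2}$ as in the proof of Lemma \ref{le:bubbles}\eqref{it:bubbles-1} and shrinking the defining constants of the polar wedges of smaller rate, at least one of the two projections has all its polar wedges missed by $\mathscr B(y,a|y|^{q_1})$; Lemma \ref{le:ball projection} then gives that $\mathscr B(y,a|y|^{q_1})$ has trivial topology and abnormality $\le K_1$. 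Choosing $L>K_1$ (compatibly with the upper bound from (i), by enlarging $a$ further if necessary) excludes $y$ from $Z_{q_1,a,L}$, and passing from $y$ to an arbitrary $x\in\mathscr B(y,a|y|^{q_1})$ only changes $d_{\text{inn}}(x,X_{q_1})$ by the additive constant $a|y|^{q_1}$, which is absorbed into $K$. The main obstacle is the quantitative form of (i) for polar pieces: one must show that the abnormality genuinely grows with $a$, not merely exceeds $1$. This is handled by using Lemma \ref{le:parallel} to bound the error in the parallel-lifting argument by $o(1)$ in $|x|$, so that the inner distance between the two chosen sheets—asymptotically twice the outer distance from a sheet to $\Pi_0$—dominates the outer chord by a factor that diverges with $a$.
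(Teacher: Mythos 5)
Your overall structure---reducing to the two inclusions (i) $X_{q_1}\subseteq Z_{q_1,a,L}$ and (ii) $Z_{q_1,a,L}$ contained in a $q_1$-neighbourhood of $X_{q_1}$---is exactly the paper's, and your treatment of (ii) and of the non-polar (non-$D$) case of (i) matches the paper's Items~(3) and (1) respectively. The gap is in the polar $D(q_1)$-piece case of (i). You assert that re-running the Lemma~\ref{le:non-normal} construction gives an inner-to-outer ratio that ``grows with $a$'' and hence that $\alpha(\mathscr B(x,a|x|^{q_1}))$ can be pushed above any prescribed $L_0$ by taking $a$ large. This is not the right mechanism: for a point $x$ at a fixed distance from the origin, enlarging $a$ does not make the abnormality diverge (eventually the ball is all of $X\cap B_{\epsilon_0}$, whose abnormality is a fixed finite number), and in the Lemma~\ref{le:non-normal} estimate the ratio $l_{out}/l_{inn}$ is bounded below by the parallelism defect $f(|x|)$, which is controlled by $|x|$, not by $a$. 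What actually makes $\alpha(\mathscr B(x,a|x|^{q_1}))$ large is $|x|\to 0$; the role of $a$ is only to be large enough that the ball reaches across the polar wedge into the neighbouring $A$-piece, where the covering degree of $\ell$ exceeds $1$.

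The paper proves this step by a rather different device than the one you sketch: it takes a real analytic arc $\gamma(t)$ with $|\gamma(t)|=O(t)$ hitting the relevant component of the exceptional divisor, sets $a(t)=|\gamma(t)|^{q-q_1}$ with $q_2<q<q_1$ so that $\mathscr B(\gamma(t),a(t)|\gamma(t)|^{q_1})=\mathscr B(\gamma(t),|\gamma(t)|^{q})$, and invokes the already-established Lemma~\ref{le:bubbles}~\eqref{it:bubbles-3}; it then passes to the lift $\widetilde\alpha_{q_1}$ on the resolution $\widetilde X$ and uses compactness of $\mathcal N(\Gamma_1)\cap E$ inside $E_{q_1}\setminus E'$ to obtain a bound that is \emph{uniform} over $x$. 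That uniformity is the second thing your sketch omits: the lemma needs abnormality $>L$ simultaneously for every $x\in X_{q_1}\cap B_{\epsilon_1}$, not just along one family of test arcs, and that requires the compactness argument on the resolution (or an equivalent device). Both points---the correct dependence on $|x|$ versus $a$, and the passage to a uniform statement---need to be filled in before the polar $D$-piece case is complete.
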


\begin{proof} As before, to simplify notation we assume that
  $X_{q_{1}}\setminus \{0\}$ is connected, since otherwise we can argue  component by component.
  The following three steps prove the lemma.
  \begin{enumerate}
  \item\label{itp:1} If $X_{q_1}$ is not a $D$-piece there exists
    $a_1$ sufficiently large and $\epsilon$ sufficiently small that
    for each $a\ge a_1$ and $x\in X_{q_1}\cap B_\epsilon$ the set
    $\mathscr B(x,a|x|^{q_1})$ has essential topology.
  \item\label{itp:2} If $X_{q_1}$ is a $D$-piece then for any $L>1$
    there exists $a_1$ sufficiently large and $\epsilon$ sufficiently
    small that for each $a\ge a_1$ and $x\in X_{q_1}\cap B_\epsilon$
    the set $\mathscr B(x,a|x|^{q_1})$ has abnormality $>L$.
  \item\label{itp:3} There exists a $K>0$ such that for all $a>0$ there
    exists a $q_1$-neighbourhood $Z_{q_1}(a)$ of $X_{q_1}$ and
    $\epsilon>0$ such that any set $\mathscr B(x,a|x|^{q_1})$ not intersecting
    $Z_{q_1}(a)$ has trivial topology and
    abnormality $\le K$.
  \end{enumerate}

  Item \eqref{itp:1} is the same proof as the first part of the proof
  of Lemma \ref{le:bubbles} \eqref{it:bubbles-3}.

  For Item \eqref{itp:2} we use again the resolution $\pi \colon
  (\widetilde{X},E)\to (X,0)$ with dual graph $\Gamma$ defined in
  section \ref{sec:decomposition vs resolution}  and the notations introduced in the proof of Lemma
  \ref{le:bubbles} \eqref{it:bubbles-3}.

Denote by $E_{q_1}$ the union of components of $E$ corresponding to vertices of $\Gamma$ which are nodes of rate $q_1$ and attached bamboos.
  We claim that for each $y \in E_{q_1} \setminus E' $ we have:
  $$\lim_{a \to \infty} \lim_{x \to y\atop x \not\in E } \widetilde{\alpha}_{q_1}(x,a) = \infty\,.$$  
  Indeed, consider a real analytic arc $\gamma$ in $X$ such that
  $|\gamma(t)|=O(t)$ and $\widetilde{\gamma}(0)= y$.  Choose
  $q_2<q<q_1$.  For each $t$, set $a(t) = |\gamma(t)|^{q-q_1}$, so we
  have $\lim_{t \to 0} a(t)=\infty$.  Since $q_2<q<q_1$, $\mathscr
  B(\gamma(t),|\gamma(t)|^{q})$ has trivial topology for all $t$. By
  Item \eqref{it:bubbles-3} of Lemma \ref{le:bubbles} we have that for
  any $K_2 >1$ and $a=1$, there is $\epsilon<\epsilon_0$ such that for
  each $t\leq \epsilon$, $\alpha(\mathscr B(\gamma(t),|\gamma(t)|^{q}))\ge
  K_2$.  As $\mathscr B(\gamma(t),|\gamma(t)|^{q})= \mathscr
  B(\gamma(t),a(t)|\gamma(t)|^{q_1})$, we then obtain
 $$ \lim_{t\to 0}\widetilde\alpha_{q_1}(\widetilde\gamma(t),a(t))=\lim_{t \to 0} \alpha(\mathscr B(\gamma(t),a(t)|\gamma(t)|^{q_1})) =\infty\,.$$

 Choose $L>1$. The set $\cal N(\Gamma_{q_ i}$ intersects $E$ in a compact set
 inside $E_{q_1} \setminus (E_{q_1}\cap E')$. Therefore, by continuity
 of $\widetilde\alpha_{q_1}$, there exists $a_1 >0$ and $\epsilon >0$ such
 that for all $a\geq a_1$ and $x \in ( \cal N(\Gamma_{q_1}) \setminus E) \cap
 \cal \pi^{-1}(B_{\epsilon})$ we have $\widetilde\alpha_{q_1}(x,a)=\alpha(\mathscr B(\pi(x),a|\pi(x)|^{q_1}))>L$.
 This completes the proof of Item \eqref{itp:2}

 To prove Item \eqref{itp:3} we pick $\ell_1$, $\ell_2$, $K$ and $c$
 as in the proof of Lemma \ref{le:bubbles} \eqref{it:bubbles-1}. So
 with $A_1$ and $A_2$ as in that proof, any $\mathscr B(x,c |x|^{q_1})$ is
 disjoint from one of $A_1$ and $A_2$. We choose $\epsilon>0$ which we
 may decrease later.

 We now choose $a>0$.  Denote by $A'_1$ and $A'_2$ the union of
 components of $A_1$ and $A_2$ with rate $<q_1$. Note that the
 distance between any pair of different components of $A'_1\cup A'_2$
 is at least $O(r^{q_2})$ at distance $r$ from the origin, so after
 decreasing $\epsilon$ if necessary, any $\mathscr B(x,aK^2 |x|^{q_1})$
 which intersects one of $A'_1$ and $A'_2$ is disjoint from the other.
 Thus if $\mathscr B(x,aK^2 |x|^{q_1})$ is also disjoint from $X_{q_1}$,
 then the argument of the proof of Lemma \ref{le:bubbles}
 \eqref{it:bubbles-1} shows that $\mathscr B(x,a|x|^{q_1})$ has trivial
 topology and abnormality at most $K$. Since the union of those
 $B(x,aK^2 |x|^{q_1})$ which are not disjoint from $X_{q_1}$ is a
 $q_1$-neighbourhood of $X_{q_1}$, Item \eqref{itp:3} is proved.
  \end{proof}

The germ $(X'_{q_1},0)$ of Proposition \ref{prop:recover Xq} will be  defined as a smoothing of $Z_{q_1,a,L}$ for $L$ and $a$ sufficiently large. Let us first define what we mean by smoothing.

The outer boundary of $X_{q_1}$ attaches to $A(q',{q_1})$-pieces of
the (non-amalgamated) carrousel decomposition with $q'<q_1$, so we can add
$A(q_1,q_1)$ collars to the outer boundary of $X_{q_1}$ to obtain a
$q_1$-neighbourhood of $X_{q_1}$ in
$X$. We use $X^+_{q_1}$ to denote such an
enlarged version of $X_{q_1}$. An arbitrary $q_1$-neighbourhood of
$X_{q_1}$ in $X$ can be embedded in one
of the form $X^+_{q_1}$, and we call the process of replacing such a neighbourhood by $X^+_{q_1}$ a \emph{smoothing} of $X_{q_1}$.

This smoothing process can be applied more generally as follows:
\begin{definition}[{\bf Smoothing}] \label{def:smoothing} 
Let
   $(Y,0)$ be a subgerm of $(X,0)$ which is a union of $B$- and $A$-pieces lifted from a plane projection of $X$. So $(Y,0)$ has (possibly empty) inner and  outer boundaries.  Assume that the outer boundary components of $Y$  are
   inner boundary components of $A(q',q)$-pieces with $q'<q$.

Let $W$ be a $q$-neighbourhood of $Y$ in $(X,0)$ and let $V$ be the union of
the components of $\overline{W \setminus Y}$ which are inside
$A(q',q)$-pieces attached to the outer boundaries of $Y$. We call
the union $Z=V \cup Y$ an \emph{outer $q$-neighbourhood} of $Y$.

We call a \emph{smoothing} of $Z$ any outer $q$-neighbourhood $Z^+$
of $Z$ obtained by adding to $Y$ some $A(q,q)$-pieces to its outer
boundaries. 
\end{definition} 
 
\begin{remark}The smoothing $Z^+$ of $Z$ is uniquely determined
    from $Z$ up to homeomorphism since a $q$-neighbourhood of the form
    $Z^+$ is characterized among all outer $q$-neighbourhoods of $Z$
    by the fact that the links of its outer boundary components are
    tori and the number of outer boundary components is minimal among
    outer $q$-neighbourhoods of $Z$ in $X$.
\end{remark}

\subsection*{Definition of the germ $\boldsymbol{(X'_{q_1},0)}$}
 
Let $a>0$ and $L>1$ sufficiently large as in Lemma \ref{le:Xq1}. We
define $X'_{q_1}$ as the smoothing
$$X'_{q_1}:=Z_{q_1,a,L}^+\,.$$
Since $Z_{q_1,a,L}$ is a $q_1$ neighbourhood of $X_{q_1}$ inside a
Milnor ball $B_\epsilon$, the germ $(X'_{q_1},0)$ is equivalent to
$(X_{q_1},0)$.

\subsection{Discovering $\boldsymbol{q_2}$}\label{subsec:q2}

\red{Using the results of  Section \ref{sec:carrousel1} and the  beginning of Section \ref{sec:detecting1}}, we discover $q_2$ in two steps.

\subsection*{Step 1} We consider $N_{q,1}(X'_{q_1})^+$ with $q\le q_1$. If $q$ is close to $q_1$ this just adds collars to the boundary
 of $X'_{q_1}$ so it does not change the topology. 
Let $q_1'$ be the infimum of $q$ for which the topology  \red{and the number of proximity classes}  of 
   $ N_{q,1}(X'_{q_1})^+$ has not changed.  Then for $\alpha>0$
sufficiently small, the topology $N_{q'_1,\alpha}(X'_{q_1})^+$ has
also not changed.

\subsection*{Step 2} Using essentially the same argument as we used to
discover $q_1$(Proposition \ref{prop:finding q1}) we discover $q_2$ as follows.
$q_2$ is the infimum of all $q$ with $q_1'< q<q_1$ such that there exists
$K_1>0$ such that for all $a>0$ there exists $\epsilon_1>0$ such that
all sets $\mathscr B(x,a|x|^q)$ with $x\in X\setminus
   N_{q'_1,\alpha}(X'_{q_1})^+$ have trivial topology and abnormality at
most $K_1$. If there are no such $q$ we set $q_2=q'_1$.

\subsection{Constructing $\boldsymbol{X'_{q_2}}$ and
  $\boldsymbol{A'_{q_2,q_1}}$}\label{subbsecsec:Xq2 and Aq2q1} 

\red{$X'_{q_2}$ decomposes as the union $X'_{q_2} = X''_{q_2} \cup X'''_{q_2}$ where $X''_{q_2}$ (resp.\ $X'''_{q_2}$) is the union of components which have nonempty (resp.\ empty) inner boundary.   

We first discover $X'''_{q_2}$.  We use again the procedure of Subsection
\ref{subsec:X'q1}, working now inside $X\setminus N_{q,a}(X'_{q_1})$ with $q$ slightly
smaller than $q_2$ (see also Step 2 of Subsection \ref{subsec:q2}).  Namely, choose $q$ slightly smaller than $q_2$ and set: 
 \begin{align*}
  Z_{q_2,a,L}:=\bigcup \bigl\{&\mathscr
B(x,a|x|^{q_2}) :  x \in X\setminus N_{q,a}(X'_{q_1}) \text{ and}\\ & \alpha(\mathscr B(x,a|x|^{q_2}))>L \text{  or } \mathscr
B(x,a|x|^{q_2}) \text{ has essential topology} \bigr\}
\end{align*}
with $a$ and $L$ sufficiently large. Then $X'''_{q_2}:=  Z_{q_2,a,L}^+$ is a $q_2$-neighbourhood of the union of components of $X_{q_2}$ having empty inner boundary. 

We now describe $X''_{q_2}$.  
If $q_2 > q'_1$, then $X''_{q_2}$ is the union of the components $A$ of  $\overline{N_{q_2,\beta}(X'_{q_1})^+\setminus N_{q_2,\alpha}(X'_{q_1})^+}$ with $d(A\cap S_\epsilon,X'''_{q_2}\cap S_\epsilon)=O(\epsilon^q)$ and $q\ge q_2$.  If $q_2=q'_1$ then $X''_{q_2}$ has some extra components which are described as follows. There exist  $\alpha$ sufficiently small that    $N_{q_2,\alpha}(X'_{q_1})^+$ has the same topology as $X'_{q_1}$ and there exists  $\beta>0$ such that for all $a\ge \beta$ the topology of  $N_{q_2,a}(X'_{q_1})^+$ is no longer that of $X'_{q_1}$ and increasing $a$ does not change the topology. 

Note that  $\overline{N_{q_2,\beta}(X'_{q_1})^+ \setminus X'_{q_1}}$ consists of at least one piece which is not an $A(q_2,q_1)$-piece and maybe some  $A(q_2,q_1)$-pieces. 
Then the extra components of 
$X''_{q_2}$ are  the components of $\overline{N_{q_2,\beta}(X'_{q_1})^+\setminus N_{q_2,\alpha}(X'_{q_1})^+}$ which are not $A(q_2,q_2)$-pieces  or such that the  cardinality of their proximity class is strictly greater than that of the corresponding components of  $N_{q',1}(X'_{q_1})^+$ with $q_2<q'<q_1$.

Now $\overline{N_{q_2,\alpha}(X'_{q_1})^+ \setminus X'_{q_1}}$ consists of
$A(q_2,q_1)$-pieces. We define $A'_{q_2,q_1}$ to be the union of these
$A(q_2,q_1)$-pieces which intersect $N_{q_2,\alpha}(X'_{q_1})^+$.
}
\subsection{Constructing $\boldsymbol{X'_{q_i}}$ and $\boldsymbol{A'_{q_i,q_j}}$ for $\boldsymbol{i>2}$}\label{subsec:X'qi}

We now assume we have already done the construction for smaller $i$. Let
$X^{(i-1)}$ be the union of all $X'_{q_j}$ with $j\le i-1$ and all
$A(q_j,q_k)$-pieces connecting an $X'_{q_j}$ with $X'_{q_k}$ with
$k<j\le i-1$.
 
We  use the same arguments as in the construction for $i=2$, using
$X^{(i-1)}$ in place of $X'_{q_1}$ in the discovery of $q_i$ and construction of the sets  $X'_{q_i}$ and $A'_{q_i,q_j}$.  

This iterative procedure ends when   \red{steps 1 and  2 of Subsection \ref{subsec:q2} adapted to the discovery of
$q_i$  leads to $q_i=1$, in which case $i=\nu$}.  We then define
$X'_{q_\nu}=X'_1$ as the closure of the complement of the result of
gluing a piece of type $A(1,q_j)$ on each outer boundary of
$X^{(\nu-1)}$.

This completes the construction from the outer metric of the
decomposition of $(X,0)$ in Proposition \ref{prop:recover Xq}. Finally, we show that the decomposition can still be recovered after
a bilipschitz change to the metric.


\begin{proposition}\label{prop:invariance}
  Let $(X,0) \subset (\Bbb C^n,0)$ and $(X',0)\subset (\Bbb C^{n'},0)$
  be two germs of normal complex surfaces endowed with their outer
  metrics. Assume that there is a 
  bilipschitz map $\Phi \colon (X,0) \to (X',0)$. Then the inductive
  process described in Subsections \ref{subsec:q1} to
  \ref{subsec:X'qi} leads to the same sequence of rates $q_i$ for both
  $(X,0)$ and $(X',0)$ and for $a>0$ and $L>1$ sufficiently large, the
  corresponding sequences of subgerms $Z_{q_i,a,L}$ in $X$ and
  $Z'_{q_i,a,L}$ in $X'$ have the property that $\Phi(Z_{q_i,a,L}^+)$
  and $Z'^+_{q_i,a,L}$ are equivalent.
 \end{proposition}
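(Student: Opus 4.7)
The plan is to verify that every ingredient in the inductive construction of the subgerms $Z_{q_i,a,L}$ is bilipschitz invariant modulo changes in the constants $a$ and $L$, and that the equivalence relation of Definition \ref{def:equivalent pieces} absorbs what remains.

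First I would record three basic estimates for a $K$-bilipschitz map $\Phi\colon(X,0)\to(X',0)$. Since $K^{-1}|x|\le|\Phi(x)|\le K|x|$, a direct calculation gives the sandwich
$$\mathscr B\bigl(\Phi(x),aK^{-(1+q)}|\Phi(x)|^q\bigr)\ \subseteq\ \Phi\bigl(\mathscr B(x,a|x|^q)\bigr)\ \subseteq\ \mathscr B\bigl(\Phi(x),aK^{1+q}|\Phi(x)|^q\bigr).$$
Moreover $\Phi$ preserves trivial versus essential topology, being a homeomorphism, and the abnormality of any subset changes by a factor at most $K^2$ under $\Phi$ because inner and outer distances both change by a factor at most $K$.

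For the base case, the characterisation of $q_1$ in Proposition \ref{prop:finding q1} is quantified over all $a>0$ with an arbitrary finite threshold $K_1$, so the sandwich together with the invariance of topology and of abnormality gives $q_1(X)=q_1(X')$. Applied to the definition of $Z_{q_1,a,L}$, the same estimates yield
$$\Phi\bigl(Z_{q_1,aK^{-(1+q_1)},LK^2}\bigr)\ \subseteq\ Z'_{q_1,a,L}\ \subseteq\ \Phi\bigl(Z_{q_1,aK^{1+q_1},LK^{-2}}\bigr).$$
For $a,L$ sufficiently large, Lemma \ref{le:Xq1} applied in both $X$ and $X'$ ensures that each of these three sets is a $q_1$-neighbourhood of the corresponding polar piece; the sandwich then forces $\Phi(Z^+_{q_1,a,L})$ to be a $q_1$-neighbourhood of $X'_{q_1}$ as well. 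Two smoothings of $q_1$-neighbourhoods of $X'_{q_1}$ differ only in $A(q_1,q_1)$-collars at their outer boundaries, so $\Phi(Z^+_{q_1,a,L})$ and $Z'^+_{q_1,a,L}$ are equivalent in the sense of Definition \ref{def:equivalent pieces}.

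For the inductive step I would replace the searchlights $\mathscr B(x,a|x|^q)$ by the $q$-neighbourhoods $N_{q,a}(X^{(i-1)})$ used to build $X'_{q_i}$ and $A'_{q_i,q_j}$; these satisfy the analogous sandwich estimate with constants $K^{\pm(1+q)}$, since outer distance to an already-constructed, bilipschitz-invariant subgerm scales the same way. Consequently the threshold $q'_i$ at which the topology of $N_{q,1}(X^{(i-1)})^+$ first changes, being a topological invariant, coincides for $X$ and $X'$, and the infimum defining $q_i$ from the searchlight argument in the complement of $X^{(i-1)}$ is invariant by the same quantifier argument as for $q_1$. The main obstacle will be the delicate case $q_i=q'_i$: the components of $\overline{N_{q_i,\beta}(X^{(i-1)})^+\setminus N_{q_i,\alpha}(X^{(i-1)})^+}$ selected as non-$A(q_i,q_i)$ pieces to form $X''_{q_i}$ must be matched under $\Phi$ with their counterparts in $X'$, even though the admissible ranges of $\alpha,\beta$ must be rescaled by $K^{\pm(1+q_i)}$. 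The resolution is that rescaling within the admissible range only adds or removes $A(q_i,q_i)$-collars from each selected piece, which is exactly the equivalence relation of Definition \ref{def:equivalent pieces}; hence the pieces forming $X'_{q_i}$ and the intermediate annular pieces $A'_{q_i,q_j}$ match up to equivalence. Finally $X'_1$, defined as the closure of the complement of $X^{(\nu-1)}$, is preserved by $\Phi$ up to equivalence, completing the induction.
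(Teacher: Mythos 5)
Your proof is correct and follows essentially the same approach as the paper: the paper's proof consists exactly of the three observations you record (the sandwich estimate for the sets $\mathscr B(x,a|x|^q)$, the $K^2$-control on abnormality, and the preservation of $q$-neighbourhoods) and then leaves the inductive verification to the reader, which you have carried out in more detail.
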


\begin{proof}

  Let $K$ be the bilipschitz constant of $\Phi$ in a fixed
  neighbourhood $V$ of the origin. The proof follows from the
  following three observations.

(1).~ Given $x\in V$, $a>0$ and $q\geq 1$, there exists $a'>0$ such that 
$$\Phi(\mathscr B(x,a|x|^q))\subset \mathscr B(\Phi(x),a'|\Phi(x)|^q)\,.$$
Indeed, $\Phi(\mathscr B(x,a|x|^q))\subset \mathscr B(\Phi(x),aK|x|^q) \subset
\mathscr B(\Phi(x),aK^{q+1}|\Phi(x)|^q)$. So any $a'\geq aK^{q+1}$ works.

(2).~ Let $N$ be a subset of $V$. Then the abnormality of $\Phi(N)$ is controlled by that of $N$: 
$$\frac 1{K^2}\alpha(N) \leq \alpha(\Phi(N)) \leq K^2 \alpha(N)\,.$$

(3).~Let $(U,0)\subset(\widehat{X},0) \subset(X,0)$ be
  semi-algebraic sub-germs. If $(N,0)$ is a $q$-neighbourhood of
  $(U,0)$ in $(\widehat{X},0)$ then $(\Phi(N),0)$ is a
  $q$-neighbourhood of $(\Phi(U),0)$ in $(\Phi(\widehat{X}),0)$.
\end{proof}

\section{Explicit computation of polar rates}\label{sec:explicit computation}

The argument of the proof of Lemma
\ref{le:non-normal} enables one to compute the rate
of a polar piece in simple examples such as the following.
  Assume $(X,0)$ is a hypersurface
  with equation $z^2=f(x,y)$ where $f$ is reduced.  The projection
  $\ell = (x,y)$ is generic and its discriminant curve $\Delta$ has
  equation $f(x,y)=0$.  Consider a branch $\Delta_0$ of $\Delta$ which
  lifts to a polar piece $N$ in $X$. We consider the Puiseux expansion
  of $\Delta_0$:
 $$y=\sum_{i\geq1} a_i x^{q_i} \in \C\{x^{1/m}\}\,.$$
 
 The polar rate  $s$ of $N$ is the minimal $r \in \frac1m \N$ such that
 for any small $\lambda$, the curve $\gamma$: $y=\sum_{i\geq1} a_i
 x^{q_i} + \lambda x^r$ is in $\ell(N)$.  In order to compute $s$, we
 set $r'=r m$   and we
 parametrize $\gamma$ as:
 $$ x=w^{m}, \ \ y= \sum_{i\geq1} a_i w^{m q_i} + w^{r'}\,. $$
 Replacing in the equation of $X$, and approximating by elimination of the
 monomials with higher order in $w$, we obtain $z^2 \sim aw^{r'+b'}$
 for some $a\ne 0$ and some positive integer $b'$. We then  have an outer
   distance of  $O(x^{\frac {r+b}{2}})$ between the two sheets of the
   cover $\ell$, where $b = \frac{b'}{m}$. So 
 the  optimal $s$ such that the curve $\gamma$ is in $\ell(N)$ is
 given by $\frac{s+b}{2} = s$, i.e., $s=b$.   
   
\begin{example}\label{ex:VTrates}  We apply this to the singularity $D_5$ with equation $z^2=-(x^2y
 + y^4)$ (see Example \ref{ex:D5-resolution}). The discriminant curve of
 $\ell = (x,y)$ has equation $y(x^2+y^3)=0$.  For the polar piece
 $\pi({\mathcal N}(E_6))$, which projects on a neighbourhood of the
 cusp $\delta_2 = \{x^2+y^3=0\}$, we use $y=w^2$, $x=iw^3+w^{r'}$, so
 $z^2\sim-2iw^{5+r'}$, so  $b'=5$ and this polar piece has rate $s=5/2$.  Similarly
 one computes that the polar
 piece $\pi({\mathcal N}(E_4))$, which projects on a neighbourhood of
 $\delta_1 = \{y=0\}$, has rate $2$.
\end{example}

Notice that in the above computation, we have recovered  the relation $q(r)=\frac{r+s}{2}$ established for $r\geq s$ in  the proof of Lemma \ref{lemma:constant}. In fact, combining  Lemma \ref{lemma:constant} and Lemma
\ref{le:non-normal}, we can compute polar rates in a more general setting, as we explain now,  using e.g., Maple.

Let $Y$ be a polar piece in $(X,0)$ which is a $D(s)$-piece. Let $q$ be the rate of the $B$-piece outside it, so there is an intermediate $A(q,s)$-piece  $A$ between them. Assume we know the rate $q$.  Let $\Pi_0$ be a component of $\Pi$ inside $Y$ and let $\Delta_0 = \ell(\Pi_0)$. Consider a Puiseux expansion of $\Delta_0$  as before and an irreducible  curve $\gamma$ with Puiseux expansion 
$$y=\sum_{i\geq1} a_i x^{q_i} + a x^r \,.$$
having contact $r > q$ with $\Delta_0$. Let $L_{\gamma}=\ell^{-1}(\gamma)$ and let $\ell' \colon X \to \C^2$ be a generic plane projection of $(X,0)$ which is also generic for $L_{\gamma}$. Let $L'_{\gamma}$ be the intersection of $L_{\gamma}$ with $Y \cup A$.     Let $q(r)$ be the greatest characteristic exponent of the curve  $ \ell'(L'_{\gamma})$. Given $r$ one can compute $q(r)$ from the equations of $X$. 

If $q< r < s$, then, according to Lemma \ref{le:non-normal}, $q(r)>r$. If $r \geq s$,  then, according to Lemma \ref{lemma:constant}, $q(r) \leq s$.  Therefore, testing the inequality $q(r)>r$, one can choose an $r_0$ big enough so that $q(r_0) \leq r_0$, i.e., $r_0 \geq s$. Now, we have $q(r_0)=\frac{r_0+s}{2}$ (again by Lemma \ref{lemma:constant}). Therefore $s=2 q(r_0)+r_0$. 

\begin{example} This method  gives rise to the polar rate $s=\frac{7}{5}$ claimed in Example \ref{ex:very big0}, using a Maple computation. 
\end{example}

\section{Carrousel decomposition  from Lipschitz geometry} 
\label{sec:finding remaining polars}
 
The aim of this section is to complete the proof of Theorem
\ref{th:invariants from  geometry}. We will first
  prove that the outer Lipschitz geometry determines
  the sections of the complete \carrousel\ of the discriminant curve
  $\Delta$ of a generic plane projection of $(X,0)$.

Let $\ell=(z_1,z_2)\colon X\to \C^2$ be the generic plane projection
and $h=z_1|_X$ the generic linear form chosen in Sections \red{\ref{sec:carrousel1} and}
\ref{sec:resolve hyperplane pencil}. We denote by $ F(t) :=
h^{-1}(t)$ the Milnor fibre of $h$ for $t\in(0,\epsilon]$.  \red{We will continue to use the terminology $X_{q_i}$ and $A_{q_i,q_j}$ for the pieces of the geometric decomposition as constructed from the carrousel in Section \ref{sec:carrousel1}, and $X'_{q_i}$ and $A'_{q_i,q_j}$ for pieces equivalent to them (Definition \ref{def:equivalent pieces}) as recovered using outer geometry (Section \ref{sec:detecting1}).}

\begin{proposition} \label{prop:find carrousel} The  outer Lipschitz
  geometry of $(X,0)$ determines:
 \begin{enumerate}
 \item \label{eq:carrousel} the  combinatorics of the  complete  carrousel section of $\Delta$.
 \item \label{eq:polar} the number of components of the polar curve
   $\Pi$ in each $B$- or $D$-piece of $(X,0)$.
 \end{enumerate}
\end{proposition}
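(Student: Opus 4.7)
The plan is to combine the reconstruction of the geometric decomposition (Proposition \ref{prop:recover Xq}) with the curve-reconstruction method of Section \ref{sec:curve}, applied now to the polar curve $\Pi\subset X$. Once the outer Lipschitz geometry of $\Pi$ has been recovered, Pham--Teissier \cite{PT} (which ensures that $\ell|_\Pi$ is bilipschitz for the outer metric) combined with Proposition \ref{prop:plane curve} yields the embedded topology of $\Delta=\ell(\Pi)$; attaching to the resulting plain carrousel of $\Delta$ the rates of the polar pieces of $X$ together with the Puiseux refinements of $\Pi$ beyond those rates then assembles the complete carrousel.

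First I would use Proposition \ref{prop:recover Xq} to produce the geometric decomposition $(X,0)=\bigcup_i X'_{q_i}\cup\bigcup_{i>j} A'_{q_i,q_j}$ from the outer metric. Among these pieces the polar ones are singled out by the behaviour of the abnormality function: in a polar piece $N$ of rate $s$, Lemma \ref{le:non-normal} shows that $\alpha(\mathscr B(x,a|x|^s))\to\infty$ as $a\to\infty$ for points $x$ approaching $\Pi$, while for a non-polar $B(s)$-piece Lemma \ref{le:ball projection} (applied with a generic pair of plane projections, as in the proof of Lemma \ref{le:bubbles}\eqref{it:bubbles-1}) keeps the abnormality uniformly bounded. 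The locus where this divergence is fastest forms a bilipschitz-invariant $s$-neighbourhood of $\Pi\cap N$, whose connected components biject with the branches of $\Pi\cap N$. Counting these components across every polar piece and polar $A$-piece proves item \eqref{eq:polar}, and choosing one representative per component produces, for each branch $\Pi_0\subset N$, a distinguished arc $p'(t)$ with $d(0,p'(t))=O(t)$ that tracks $\Pi_0$.

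Next, for each such arc I would repeat the argument at the end of the proof of Proposition \ref{prop:plane curve}, replacing the balls $\mathscr B(p'(t),\delta t)$ by $\mathscr B(p'(t),\delta t^r)$ for $r\ge s$. By Remark \ref{rk:beyond} the contact exponents between the components of $\Pi\cap\mathscr B(p'(t),\delta t^r)$ recover the outer Lipschitz geometry of $\Pi$ beyond rate $s$, and the $K^4$-trick from Section \ref{sec:curve} makes this recovery stable under arbitrary bilipschitz deformation of the metric on $X$. Iterating over all polar pieces and amalgamating the resulting data yields the full outer Lipschitz geometry of $\Pi$; Pham--Teissier and Proposition \ref{prop:plane curve} then deliver the embedded topology of $\Delta$. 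Since the rates of the polar pieces of $X$ are exactly the polar rates, which are the rates at which the intermediate carrousel refines the plain carrousel, attaching these rates and the higher-order Puiseux data visible in $\Pi$ beyond them produces the complete carrousel, proving item \eqref{eq:carrousel}.

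The main technical obstacle I anticipate is a converse to Lemma \ref{le:non-normal}: one must show that points $x\in N$ far from $\Pi$ have uniformly bounded abnormality at every scale $a|x|^s$, so that the divergence criterion really localizes $\Pi$ inside a thin $s$-neighbourhood and its connected components biject with those of $\Pi\cap N$. This should follow from the argument of Proposition \ref{prop:thinzones} applied outside the polar wedges of two generic projections (as in Lemma \ref{le:bubbles}\eqref{it:bubbles-1}), with the estimates made uniform in $x$ as $x$ ranges over $N$ away from $\Pi$; securing this uniformity is where most of the work will lie.
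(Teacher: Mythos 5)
The central mechanism you propose for item \eqref{eq:polar} does not work. You claim that the locus of divergent abnormality inside a polar piece $N$ of rate $s$ forms an $s$-neighbourhood of $\Pi\cap N$ ``whose connected components biject with the branches of $\Pi\cap N$.'' But as noted in the remark following Definition \ref{def:polar}, the components of $\Pi\cap N$ have pairwise contact \emph{exactly} $s$: an $s$-neighbourhood with a sufficiently small coefficient separates them, while with a large one it merges them, and there is no canonical threshold readable from the outer metric. Moreover, the sets $Z_{q_i,a,L}$ built in Section \ref{subsec:X'q1} from the abnormality criterion are $q_i$-neighbourhoods of the \emph{entire} polar piece $X_{q_i}$, not of individual polar branches; Lemma \ref{le:bubbles}\eqref{it:bubbles-1} shows the abnormality stays bounded at scale $q_i$, so there is no finer divergence signal at that rate that could resolve branches. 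The paper counts the components of $\Pi$ in each piece by a completely different and more robust route: it uses the Milnor fibre decomposition determined by item \eqref{it:resolution of pencils} of Theorem \ref{th:invariants from geometry}, establishes the degree of each branched cover $\ell\colon F_{q_i,0}(\epsilon)\to B_{q_i,0}(\epsilon)$ via arcs in $\partial_0 A\!X_{q_i}$ together with Lemma \ref{le:outer boundary}, and then invokes the Hurwitz formula $\beta=n-\chi(F_{q_i,0}(\epsilon))$ (suitably modified for overlapping and nesting) to extract the number of branch points, i.e., of polar branches.

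A second gap lies in the ball step: the curve-reconstruction procedure of Section \ref{sec:curve} recovers the geometry of a curve from balls centered on points \emph{of that curve}, but your construction only produces an $s$-neighbourhood of $\Pi$, not arcs lying on a single branch of $\Pi$. The paper sidesteps this by basing the arcs $p(t)$ on the boundary $\partial_0 A\!X_{q_i,0}$ (available from the decomposition) and examining the components of $\ell^{-1}(\ell(F'_{q_i,0}(t)))$ met by $B(p(t),t^q)$; this directly gives, via Remark \ref{rk:beyond}, the carrousel beyond rate $q_i$. Finally, your last step --- ``attaching to the resulting plain carrousel of $\Delta$ the rates of the polar pieces'' --- is underspecified: knowing the polar rates as a set and the embedded topology of $\Delta$ does not determine where those rates attach in the carrousel tree; the paper's iterative bottom-up assembly, with explicit handling of overlapping (an $A\!X_{q_i}$ enclosing components of an $A\!X_{q_j}$ with $j<i$) and nesting (nested inner-boundary circles), is what resolves this ambiguity and produces the complete carrousel.
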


\begin{proof}[Proof of Proposition \ref{prop:find carrousel}]
  By Proposition \ref{prop:recover Xq} we may assume that we have
  recovered the pieces $X_{q_i}$ of $X$ for $i=1,\dots,\nu$ and the
  intermediate $A$-pieces up
  to equivalence (Definition \ref{def:equivalent pieces}).

  We will recover the combinatorics of the carrousel sections by an
  inductive procedure on the rates $q_i$ starting with $q_1$. We
  define $A\!X_{q_i}$ to be
$$A\!X_{q_i}:= X_{q_i}\cup \bigcup_{k<i}A_{q_i,q_k}\,.$$

Note that $A\!X_{q_1}=X_{q_1}$. We first recover the  pieces
  of $\ell(A\!X_{q_1} \cap F(\epsilon))$ and inside them, the section of
  the complete carrousel of $\Delta$ beyond rate $q_1$ (Definition
  \ref{def:beyond}). Then we glue to some of their outer boundaries the pieces
  corresponding to $\ell((A\!X_{q_1}\cup A\!X_{q_2})\cap
  F(\epsilon))\setminus\ell (A\!X_{q_1} \cap F(\epsilon))$, and inside
  them, we determine the  complete carrousel beyond $q_2$, and so
  on. At each step, the outer Lipschitz geometry will determine the
  shape of the new pieces we have to glue, how they are glued, and
  inside them, the complete carrousel section beyond the corresponding
  rate.

  For any $i<\nu$ we will denote by \red{$A\!X_{q_i+}$} the result of adding
  $A(q',q_i)$-pieces to all components of the outer boundary
  $\partial_oA\!X_{q_i}$ of $A\!X_{q_i}$ for some $q'$ with
  $q_{i+1}<q'<q_i$.  We can assume that the added $A$-pieces are
  chosen so that the map $\ell$ maps each by a branched covering map to an
  $A(q',q_i)$-piece in $\C^2$. We will denote by $\partial_oA\!X_{q_i+}$
  the outer boundary of $A\!X_{q_i+}$, so it is a horn-shaped cone on a
  family of tori with rate $q'$. We denote
$$F_{q_i}(t):=F(t)\cap A\!X_{q_i},\quad F_{q_i+}(t):=F(t)\cap A\!X_{q_i+}\,,$$ 
and their outer boundaries by
$$\partial_oF_{q_i}(t)=F(t)\cap \partial_oA\!X_{q_i},\quad \partial_oF_{q_i+}(t)=F(t)\cap \partial_oA\!X_{q_i+}\,.$$

A straightforward consequence of \red{the proof of} Theorem
\ref{th:invariants from geometry}\eqref{it:resolution of pencils} is
that the outer Lipschitz geometry determines the isotopy class of the
Milnor fibre $F(t)$ and how the Milnor fiber lies in a neighbourhood
of the exceptional divisor of the resolution described there. In
particular it determines  the intersection
$F_{q_i}(t) = F(t)\cap A\!X_{q_i}$ for $i<\nu$ and $t\in (0,\epsilon]$
\red{up to  small deformation caused by any bilipschitz change of
  metric}.

Set $B_{q_1}=\ell(A\!X_{q_1})$ and
  $B_{q_1}(\epsilon):=\ell(F_{q_1}(\epsilon))$, so $B_{q_1}(\epsilon)$
  consists of the innermost disks of the intermediate carrousel
  section of $\Delta$. Notice that $B_{q_1}(\epsilon)$ is the union of
  all the sections of the complete carrousel beyond rate $q_1$.  Consider
  the restriction $\ell \colon F_{q_1}(\epsilon) \to B_{q_1}(\epsilon)
  $. We want to show that this branched cover can be seen in terms of
  the outer Lipschitz geometry of a neighbourhood of $A\!X_{q_1}$.  Since
  $B_{q_1}(\epsilon)$ is a union of disks, the same is true for
  $B_{q_1+}(\epsilon):=\ell(F_{q_1+}(\epsilon))$.

  Consider a component $F_{q_1,0}(t)$ of $F_{q_1}(t)$ and let
  $A\!X_{q_1,0}$ be the component of $A\!X_{q_1}$ such that
  $F_{q_1,0}(t)\subseteq F(t)\cap A\!X_{q_1,0}$. Consider a continuous
  arc $ p\colon[0,\epsilon] \to \partial_oA\!X_{q_1,0}$ such that
  $p(t)\in \partial_oF_{q_1,0}(t)$ for $t\in(0,\epsilon]$. \red{By the argument of} Lemma \ref{le:outer boundary}, if $\delta$ is big enough then for
  all $t$ small enough a component of $F_{q_1}(t)$ maps to the
  component $B_{q_1,0}(t):=\ell(F_{q_1,0}(t))$ of $B_{q_1}(t)$ if and
  only if it intersects the ball $B(p(t),\delta t^{q_1})$.
  
  If there is a component of $F_{q_1}(t)$ which does not map on $
  B_{q_1,0}(t)$, we iterate the process until we have determined
  the number of discs in $B_{q_1}(t)$ and how $\ell$ maps the
  components of $F_{q_1}(t)$ onto them.

 Using similar arcs on the outer boundaries of the components of
  $\overline{A\!X_{q_1+} \setminus A\!X_{q_1}}$, we obtain by the same
  argument that the outer Lipschitz geometry determines the degree of the
  restriction of $\ell$ on each annular component of
  $\overline{F_{q_1+}(t) \setminus F_{q_1}(t)}$, and then, the degree
  of $\ell$ restricted to each component of $\partial_o F_{q_1+}(t)$,
  resp.\ $\partial_o F_{q_1}(t)$ (alternatively, this follows from the
  proof of Lemma \ref{le:non-normal}).

  We hence obtain the degree $n$ of $\ell|_{F_{q_1,0}(t)}\colon
  F_{q_1,0}(t)\to B_{q_1,0}(t)$.  We wish to determine the number
  \red{$\beta_0$} of branch points of this map. Since the cover is general,
  the inverse image by $\ell|_{F_{q_1,0}(t)}$ of a branch point
  consists of $n-1$ points in $F_{q_1,0}(t)$ and the Hurwitz formula
  applied to $\ell|_{ F_{q_1,0}(t)}$ gives:
$$\red{\beta_0}=n-\chi(F_{q_1,0}(t)) \,.$$
\red{Denote by $\beta$ the product of $\beta_0$ by the number of components of $AX_{q_1,0}\cap F(t)$.} 
 
We now determine the number of components of the polar inside the
component $A\!X_{q_1,0}$ of $A\!X_{q_1}$ which contains $F_{q_1,0}(t)$.  We use a
similar argument as in the proof of Proposition \ref{prop:plane
  curve}, taking account of Remark \ref{rk:beyond}.

\red{We denote by $D(t)$ the connected component containing $p(t)$ of the intersection of $A\!X_{q_1}$ with a ball $B(p(t),\eta t)$, $\eta <\!\!<1$.}
There is a $\mu>0$ such that for all
$q>q_1$ sufficiently close to $q_1$ the ball
$B(p(t),t^{q})$ intersects 
\red{$\ell^{-1}(\ell(D(t)))$} in $\mu$  \red{connected components}
$D_1(t),\ldots,D_{\mu}(t)$ for all small $t$.
For each $j,k$ with $1 \leq j<k\leq \mu$, let $q_{jk}$ be defined by
$d(D_j(t),D_k(t)) = O(t^{q_{jk}})$.

Let $A$ be the set of indices $j\in\{1,\dots,\mu\}$ \red{with $D_j(t) \subset A\!X_{q_1+,0}$}. The
outermost piece of $A\!X_{q_1,0}$ before amalgamation is a union of
equisingular curves $\Pi_{\cal D} \cap A\!X_{q_1,0}$ with $\cal D $
generic and each curve $\Pi_{\cal D} \cap A\!X_{q_1,0}$ consists of
equisingular components having pairwise contact $q_1$ (see Section
\ref{sec:polarwedges}). Therefore the
collection of rates $q_{j,k}$ indexed by $j,k \in A$ reflects the
outer Lipschitz geometry beyond rate $q$ (Definition \ref{def:beyond}) of each
irreducible component $\Pi_{\cal D,0}$ of $\Pi_{\cal D} \cap
A\!X_{q_1,0}$. In particular, it determines the number $r$ of points in
the intersection $\Pi_{\cal D,0} \cap F(\epsilon)$. Then $\Pi \cap
A\!X_{q_1,0}$ consists of $\frac{\beta}{r}$ equisingular irreducible
curves with pairwise contact $q_1$ and such that each component has
the outer Lipschitz geometry of $\Pi_{\cal D,0}$. 

  \red{As in the proof of Proposition \ref{prop:plane curve}, the components we use to determine the numbers $q_{jk}$ may disintegrate into several pieces with a bilipschitz change of metric, but this can be dealt with as in that proof.}

To complete the initial step of our induction, we will now recover the
section of the complete carrousel of $\Delta$ beyond rate $q_1$, i.e.,
inside each component of $B_{q_1}(\epsilon)$.  
As we
have  just seen, the collection of rates $q_{j,k}$ indexed by $j,k \in
A$ determines the number of components of the polar inside each
component $A\!X_{q_1,0}$ of $A\!X_{q_1}$ and their outer
geometry. Since the projection $\ell=\ell_{\cal D}$ is a generic
projection for its polar curve $\Pi=\Pi_{\cal D}$ \cite[Lemme 1.2.2
ii)]{T3}, we have then showed that the outer Lipschitz geometry recovers the
outer Lipschitz geometry of $\ell(\Pi)$ beyond rate $q_1$ inside $B_{q_1,0}$, or
equivalently, the section of the complete carrousel of the
discriminant curve $\Delta=\ell(\Pi)$ inside $B_{q_1,0}(\epsilon)$.

Doing this for each component $B_{q_1,0}(\epsilon)$ of $B_{q_1}(\epsilon)$ we then
reconstruct the complete carrousel section of $\Delta$ beyond rate $q_1$.

This completes the initial step of our induction.

\red{\subsection*{Overlapping} In the next steps $i
\geq 2$, we give special attention to the following  phenomenon. It
can happen that $\ell(A\!X_{q_i})$ contains connected components of some
$\ell(A\!X_{q_j})$ for $j<i$. We say that $A\!X_{q_i}$ \emph{overlaps}
$A\!X_{q_j}$. This occurs when some components of $X_{q_i}$ are obtained by amalgamation of $D$-pieces to $B(q_i)$-pieces lifted from $B_{q_i}$-pieces of the carrousel. This phenomenon is illustrated further in Example
\ref{ex:very big2} where the thick part $A\!X_{1}$ overlaps the
components of a thin part.}


\smallskip We now consider $F_{q_2}(\epsilon)$. As before, we know
from the outer Lipschitz metric for $F_{q_2+}(\epsilon)$ how the outer boundary
$\partial_oF_{q_2}(\epsilon)$ covers its image. We will focus first on
a single component $F_{q_2,0}(\epsilon)$ of $F_{q_2}(\epsilon)$. The
map $\ell|_{F_{q_2,0}(\epsilon)}\colon F_{q_2,0}(\epsilon)\to
\ell(F_{q_2,0}(\epsilon))$ is a covering map in a neighbourhood of its
outer boundary, whose degree, $m$ say, is determined from the outer Lipschitz
geometry. The image $\partial_o\ell(F_{q_2,0}(\epsilon))$ of the outer
boundary is a circle which bounds a disk $B_{q_2,0}(\epsilon)$ in
the plane $\{z_1=\epsilon\}$. The image $\ell(F_{q_2,0}(\epsilon))$ is this
$B_{q_2,0}(\epsilon)$, possibly with some smaller disks removed,
depending on whether and how $A\!X_{q_{2}}$ overlaps $A\!X_{q_1}$.  The
image of the inner boundaries of $F_{q_2,0}(\epsilon)$ will consist of disjoint
circles inside $B_{q_2,0}(\epsilon)$ of size proportional to $\epsilon^{q_1}$.  Consider the components of
$F_{q_1}(\epsilon)$ which fit into the inner boundary components of
$F_{q_2,0}(\epsilon)$. Their images form a collection of disjoint disks
$B_{q_1,1}(\epsilon),\dots,B_{q_1,s}(\epsilon)$.  For each $j$ denote by $F_{q_1,j}(\epsilon)$ the
union of the components of $F_{q_1}(\epsilon)$ which meet $F_{q_2,0}(\epsilon)$ and map to
$B_{q_1,j}(\epsilon)$ by $\ell$. By the first step of the induction we know the
degree $m_j$ of the map $\partial_o F_{q_1,j}(\epsilon)\to \partial_o
B_{q_1,j}(\epsilon)$. This degree may be less than $m$, in which case
$\ell^{-1}(B_{q_1,j}(\epsilon))\cap F_{q_2,0}(\epsilon)$ must consist of $m-m_j$
disks. Thus, after removing $\sum_j(m-m_j)$ disks from $F_{q_2,0}(\epsilon)$, we
have a subset $\hat F_{q_2,0}(\epsilon)$ of $F_{q_2,0}(\epsilon)$ which maps to
$\overline{B_{q_2,0}(\epsilon)\setminus \bigcup_{j=1}^s B_{q_1,j}(\epsilon)}$ by a
branched covering. Moreover, the branch points of the branched cover
$\hat F_{q_2,0}(\epsilon)\to \overline{B_{q_2,0}(\epsilon)\setminus \bigcup_{j=1}^s
  B_{q_1,j}(\epsilon)}$ are the intersection points of $F_{q_2,0}(\epsilon)$ with the
polar $\Pi$. We again apply the Hurwitz formula to discover the 
number of these branch points: it is $m(1-s)-\chi(\hat
F_{q_2,0}(\epsilon))=m(1-s)-\chi(F_{q_2,0}(\epsilon))+\sum_{j=1}^s(m-m_j)$.

Then we use balls along arcs as before to find the number of branches
of the polar in each component of $A\!X_{q_2}$, and since outer Lipschitz geometry
tells us which components of $F_{q_2}(\epsilon)$ lie over which
components of $\ell(F_{q_2}(\epsilon))$, we recover how many
components of the discriminant meet each component of
$\ell(F_{q_2}(\epsilon))$.

Different components $F_{q_2,0}(\epsilon)$ of $F_{q_2}(\epsilon)$ may correspond to the
same $B_{q_2,0}(\epsilon)$, but as already described, this is detected using
outer Lipschitz geometry (Lemma \ref{le:outer boundary}). We write $B_{q_2}(\epsilon)$ for
the union of the disks $B_{q_2,0}(\epsilon)$ as we run through the components of
$F_{q_2}$. The resulting embedding of $B_{q_1}$ in $B_{q_2}$ is the
next approximation to the \carrousel. Moreover, by the same procedure
as in the initial step, we determine the complete carrousel section of
$\Delta$ beyond rate $q_2$, i.e., within $B_{q_2}=\ell(A\!X_{q_1}\cup A\!X_{q_2})$.

\smallskip Iterating this procedure for $F_{q_i}(\epsilon)$ with $2<i<\nu$, we build up for $i=1,\dots,\nu-1$ the picture
of the complete carrousel section for $\Delta$ beyond rate $q_i$ while
finding the number of components of the polar in each component of
$A\!X_{q_i}$ and therefore the number of components of the discriminant
in the pieces of the \carrousel.

Finally for a component $A\!X_{1,0}$ of $A\!X_{q_\nu}=A\!X_1$ the
degree of the map to $\ell(A\!X_{1,0})$ can be discovered by the same
ball procedure as before; alternatively it is the multiplicity of the
maximal ideal cycle at the corresponding $\cal L$-node, which was
determined in Item \eqref{it:multiplicity and Zmax} of Theorem
\ref{th:invariants from geometry}. The same Euler characteristic
calculation as before gives the number of components of the polar in
each piece, and the ball procedure as before determines the complete
carrousel section of $\Delta$ beyond rate $q_\nu=1$, i.e., the complete
carrousel section.
\end{proof}
   
\begin{proof}[Completion of proof of Theorem \ref{th:invariants from
    geometry}] Parts \eqref{it:resolution of pencils} to
  \eqref{it:hyperplane section}  of Theorem \ref{th:invariants from
    geometry} were proved in Section
  \ref{sec:resolve hyperplane pencil} and part
  \eqref{it:discriminant} is proved above
  (Proposition \ref{prop:find carrousel}).  \eqref{it:discriminant}$\Rightarrow$\eqref{it:curves} is 
straightforward:
By Pham-Teissier \cite{PT} (see also Fernandes \cite{F},
Neumann-Pichon \cite{NP}), the outer Lipschitz  geometry of a complex curve in $\C^n$ is determined by the
topology of a generic plane projection of this curve.  By Teissier
\cite[page 462 Lemme 1.2.2 ii)]{T3}, if one takes a generic plane
projection $\ell\colon (X,0)\to(\C^2,0)$, then this projection is
generic for its polar curve. Therefore the topology of the
discriminant curve determines the outer  Lipschitz  geometry of the polar curve.

It remains to prove part
  \eqref{it:resolution of pencils1}.

  We consider the decomposition of the link $X^{(\epsilon)}$ as the
  union of the Seifert fibered manifolds $X_{q_i}^{(\epsilon)}$ and
  show first that we know the Seifert fibration for each
  $X_{q_i}^{(\epsilon)}$.  Indeed, the Seifert fibration for a
  component of a $X_{q_i}^{(\epsilon)}$ is unique up to isotopy unless
  that component is the link of a $D$- or $A$-piece. Moreover, on any
  torus connecting two pieces $X_{q_i}^{(\epsilon)}$ and
  $X_{q_j}^{(\epsilon)}$ we know the relative slopes of the Seifert
  fibrations, since this is given by the rates $q_i$ and $q_j$. So if
  there are pieces of the decomposition into Seifert fibered pieces
  where the Seifert fibration is unique up to isotopy, the Seifert fibration
  is determined for all pieces. This fails only if the link
  $X^{(\epsilon)}$ is a lens space or torus bundle over $S^1$, in
  which case $(X,0)$ is a cyclic quotient singularity or a cusp
  singularity. These are taut, so the theorem is trivial for
  them.

  We have shown that the outer Lipschitz geometry determines the decomposition
  of $(X,0)$ as the union of subgerms $(X_{q_i},0)$ and intermediate $A$-pieces as well as the
  number of components of the polar curve in each piece.  The link $L$ of
  the polar curve is a union of Seifert fibers in the links of some of the
  pieces, so we know the topology of the polar curve.  By taking two
  parallel copies of $L$ (we will call it $2L$), as in the proof of
  \eqref{it:resolution of pencils} of Theorem \ref{th:invariants from
    geometry}, we fix the Seifert fibrations and can therefore recover
  the dual graph $\Gamma$ we are seeking as the minimal negative definite
  plumbing diagram for the pair $(X^{(\epsilon)},2L)$.  Since we know
  the number of components of the polar curve in each piece, the proof is
  complete.
\end{proof}

\begin{example} \label{ex:D5 2} Let us show how the procedures
  described in Section \ref{sec:detecting1} and in proof of 
  \eqref{eq:carrousel} of Proposition \ref{prop:find carrousel} work
  for the singularity $D_5$, i.e., recover the decomposition $X
  = X_{5/2} \cup X_2 \cup X_{3/2} \cup X_1$   plus intermediate A-pieces, and the complete carrousel sections of the discriminant curve $\Delta$.  It starts by
  considering high $q$, and then, by 
  decreasing gradually  $q$, one reconstructs the sequence
  $X_{q_1},\ldots,X_{q_{\nu}}$ up to equivalence as described in Proposition
  \ref{prop:recover Xq}, using the procedure of Section \ref{sec:detecting1}.   
At each step we will do the following:  
 \begin{enumerate}
 \item  On the left in the figures below we draw the dual resolution
   graph with black vertices corresponding to the piece $X_{q_i}$
   just discovered, while the vertices corresponding to the previous
   steps are in gray, and the remaining ones in white. We weight each
   vertex by the corresponding rate $q_i$.
 \item We describe the cover $\ell \colon F_{q_i} \to B_{q_i}$ and $B_{q_i}$  and
   on the right we draw  the new pieces of the carrousel section. To simplify the figures (but not the data), we will rather draw the sections  of the $A\!X_{q_i}$-pieces. The
    $ \Delta$-pieces are in gray.  We add arrows on the graph corresponding to
     the strict transform of the corresponding branches of the
     polar. 
\end{enumerate}
We now carry out these steps. We use the indexing $v_1,\ldots,v_6$ of the vertices of $\Gamma$ introduced in Example  \ref{ex:D5-resolution}.
 \begin{itemize}
 \item The characterization of Proposition \ref{prop:finding q1} leads to $q_1=5/2$. We get $X_{5/2} =\pi({\mathcal N}(E_6))$ up to collars. Using
   the multiplicities of the generic linear form (determined by
   \eqref{it:multiplicity and Zmax} of Theorem \ref{th:invariants from
     geometry}) we obtain that $F_{5/2} = F \cap \pi({\mathcal N}(E_6)) $
   consists of two discs.  Then $B_{5/2}=\ell(F_{5/2})$ also
   consists of two discs. By the Hurwitz formula, each of them contains
   one branch point of $\ell$.
   \begin{center}
\begin{tikzpicture}
 
   \draw[thin ](0,0)--(3,0);
\draw[thin ](1,0)--(1,1);
\draw[thin ](1,0)--(2,1);
        
\draw[thin,>-stealth,->](2,1)--+(1,0.5);

  \draw[fill=white   ] (1,0)circle(2pt);
  \draw[fill=white  ] (3,0)circle(2pt);
     \draw[fill =white ] (1,1)circle(2pt);      
  \draw[fill=white   ] (0,0)circle(2pt);
   \draw[fill=white   ] (2,0)circle(2pt); 
   \draw[fill=white ] (1,0)circle(2pt); 
     \draw[fill=black  ] (2,1)circle(2pt);

  \node(a)at(2,1.3){   $5/2$};


 \draw[ fill=lightgray] ( 0:6)+(120:0.7)circle(4pt);
  \draw[fill=black] ( 0:6)+(120:0.7)circle(0.5pt);

     \node(a)at(5.5,0.1){  $5/2$};
      \node(a)at(6.5,0.1){  $5/2$};

 \draw[ fill=lightgray] ( 0:6)+(60:0.7)circle(4pt);
  \draw[fill=black] ( 0:6)+(60:0.7)circle(0.5pt);

 \end{tikzpicture} 
   \end{center}
\item $q_2=2$ is the next rate appearing,   and
  $X_{2} = \pi({\mathcal N}(E_4))$. $F_{2}$ is a disk as well as
  $B_2$. Then it creates another carrousel section disk. There is one
  branching point inside $B_2$.
\begin{center}
\begin{tikzpicture}
    \draw[thin ](0,0)--(3,0);
\draw[thin ](1,0)--(1,1);
\draw[thin ](1,0)--(2,1);
      
\draw[thin,>-stealth,->](3,0)--+(0.8,0.8);
        
\draw[thin,>-stealth,->](2,1)--+(1,0.5);

  \draw[fill=white   ] (1,0)circle(2pt);
  \draw[fill=black  ] (3,0)circle(2pt);
     \draw[fill =white ] (1,1)circle(2pt);      
  \draw[fill=white   ] (0,0)circle(2pt);
   \draw[fill=white   ] (2,0)circle(2pt);
   \draw[fill=white ] (1,0)circle(2pt); 
     \draw[fill=lightgray ] (2,1)circle(2pt);

  \node(a)at(2,1.3){   $5/2$};     
 \node(a)at(3,-0.3){   $2$};
     

    \draw[ fill=lightgray] ( 0:6)+(120:0.7)circle(4pt);
  \draw[fill=black] ( 0:6)+(120:0.7)circle(0.5pt);

     \node(a)at(5.5,0.1){  $5/2$};
      \node(a)at(6.5,0.1){  $5/2$};
     
 \draw[ fill=lightgray] ( 0:6)+(60:0.7)circle(4pt);
 \draw[fill=black] ( 0:6)+(60:0.7)circle(0.5pt);

 \node(a)at(8.3,0.1){  $2$};
     
      \draw[ fill=lightgray] ( 0:8)+(60:0.7) circle(6pt);
 \draw[fill=black] ( 0:8)+(60:0.7) circle(0.5pt);

 \end{tikzpicture} 
\end{center}
\item Next is $q_3=3/2$. $F_{3/2}$ is a sphere with 4 holes. Two of the boundary circles are common boundary with $F_{1}$, the two others   map on the same circle   $\partial B_{5/2}$.  A simple observation of the multiplicities of the generic linear form shows that there $X_{3/2}$ does not overlap $X_{5/2}$ so $B_{3/2} = \ell(F_{3/2})$ is connected with one outer boundary component and inner boundary glued to $B_{5/2}$.
  \begin{center}
\begin{tikzpicture}
 
   \draw[thin ](0,0)--(3,0);
\draw[thin ](1,0)--(1,1);
\draw[thin ](1,0)--(2,1);
      
\draw[thin,>-stealth,->](3,0)--+(0.8,0.8);
        
\draw[thin,>-stealth,->](2,1)--+(1,0.5);

  \draw[fill=black   ] (1,0)circle(2pt);
  \draw[fill=lightgray  ] (3,0)circle(2pt);
     \draw[fill =black  ] (1,1)circle(2pt);      
  \draw[fill=black    ] (0,0)circle(2pt);
   \draw[fill=white   ] (2,0)circle(2pt);

     \draw[fill=lightgray  ] (2,1)circle(2pt);
  
\node(a)at(0,-0.3){   $3/2$};
\node(a)at(1,-0.3){   $3/2$};
 \node(a)at(3,-0.3){   $2$};
 \node(a)at(1,1.3){   $3/2$};
  \node(a)at(2,1.3){   $5/2$}; 
  
  \begin{scope}[xshift=0.4cm]
    \draw[  ] ( 0:6)+(90:0.6)circle(1);


    \draw[ fill=lightgray] ( 0:6)+(120:0.7)circle(4pt);
  \draw[fill=black] ( 0:6)+(120:0.7)circle(0.5pt);

\draw[ fill=lightgray] ( 0:6)+(60:0.7)circle(4pt);
 \draw[fill=black] ( 0:6)+(60:0.7)circle(0.5pt);
  
    \node(a)at(4.2,-0.4){  $5/2$};
       \draw(4.5,-0.2)--(5.5,0.6);
        \draw(4.5,-0.4)--(6.2,0.6);
  
  \node(a)at(8.3,0.1){  $2$};
     
      \draw[ fill=lightgray] ( 0:8)+(60:0.7) circle(6pt);
  \draw[fill=black] ( 0:8)+(60:0.7) circle(0.5pt);
    
    \node(a)at(6,1){$3/2$};

      \node(a)at(4.2,-0.4){  $5/2$};
       \draw(4.5,-0.2)--(5.5,0.6);
        \draw(4.5,-0.4)--(6.2,0.6);

\end{scope} \end{tikzpicture} 
\end{center}
\item Last  is $q_4=1$, which corresponds to the thick piece.  
\begin{center}
\begin{tikzpicture}
 
   \draw[thin ](0,0)--(3,0);
\draw[thin ](1,0)--(1,1);
\draw[thin ](1,0)--(2,1);
      
\draw[thin,>-stealth,->](3,0)--+(0.8,0.8);
        
\draw[thin,>-stealth,->](2,1)--+(1,0.5);

  \draw[fill=lightgray   ] (1,0)circle(2pt);
  \draw[fill=lightgray  ] (3,0)circle(2pt);
     \draw[fill =lightgray  ] (1,1)circle(2pt);      
  \draw[fill=lightgray    ] (0,0)circle(2pt);
   \draw[fill=black   ] (2,0)circle(2pt);

     \draw[fill=lightgray  ] (2,1)circle(2pt);
  
\node(a)at(0,-0.3){   $3/2$};
\node(a)at(1,-0.3){   $3/2$};
\node(a)at(2,-0.3){   $1$};
 \node(a)at(3,-0.3){   $2$};
 \node(a)at(1,1.3){   $3/2$};
  \node(a)at(2,1.3){   $5/2$}; 
  
  \begin{scope}[xshift=0.4cm]
    \draw[  ] ( 0:6)+(90:0.6)circle(1);
   
    \node(a)at(6,1){$3/2$};
     \node(a)at(7.5,1){$1$};
    
  \draw[ fill=lightgray] ( 0:6)+(120:0.7)circle(4pt);
  \draw[fill=black] ( 0:6)+(120:0.7)circle(0.5pt);

 \draw[ fill=lightgray] ( 0:6)+(60:0.7)circle(4pt);
  \draw[fill=black] ( 0:6)+(60:0.7)circle(0.5pt);
  
    \node(a)at(4.2,-0.4){  $5/2$};
       \draw(4.5,-0.2)--(5.5,0.6);
        \draw(4.5,-0.4)--(6.2,0.6);
  
   \node(a)at(8.3,0.1){  $2$};
     
      \draw[ fill=lightgray] ( 0:8)+(60:0.7) circle(6pt);
  \draw[fill=black] ( 0:8)+(60:0.7) circle(0.5pt);
  \end{scope}
 \end{tikzpicture} 
\end{center}
\end{itemize}

We read in the carrousel section the topology of the discriminant
curve $\Delta$ from this carrousel sections: one smooth branch and one
transversal cusp with Puiseux exponent 3/2.
\end{example}

\begin{example}\label{ex:very big2}
  We now describe how one would reconstruct the decomposition $X =
  \bigcup X_{q_i}$ plus intermediate A-pieces and the carrousel section from the outer Lipschitz
  geometry for Example \ref{ex:very big0} (see also Example \ref{ex:very
    big1}). \red{We will again simplify the figures by drawing only the sections of $A\!X_{q_i}$-pieces.} The sequence of rates $q_i$  appears in the following order:
$$q_1=7/5, q_2=3/2, q_3=5/4, q_4=6/5, q_5=1\,.$$
\noindent {\bf Overlapping.} The inverse image by $\ell$ of the
  carrousel section of $\ell (X_{5/4})$ consists of two annuli
  (according to the multiplicities of the generic linear form) and
  each of them double-covers the disk image.  Then the restriction of
  $\ell$ to $X_{5/4}$ has degree $4$ while the total degree of the
  cover $\ell |_{X}$ is 6. Therefore either $X_{6/5}$ or $X_1$
  overlaps $X_{5/4}$. Moreover, the restriction of $\ell$ to $X_{6/5}$
  also has degree $4$ according to the multiplicity $10$ at the
  corresponding vertices. Therefore $X_1$ overlaps $X_{5/4}$.

We now show  step by step the construction of the carrousel section.  Here again we represent the sections of the $A\!X_{q_i}$-pieces rather than that of the $X_{q_i}$ and intermediate annular pieces. 

\begin{itemize}
\item $q_1=7/5$.  $X_{7/5}$ consists of two polar pieces. We represent
  the two $\Delta$-pieces with two different gray colors in the carrousel section.
\begin{center}
\begin{tikzpicture}  
 
   \draw[ fill=lightgray] (2,0)circle(6pt);
   \draw[ fill=black] (2,0)circle(.8pt);
   \node(a)at(2.6,0){$7/5$};

   \draw[ fill=lightgray] (3.5,0)circle(6pt);
   \draw[ fill=black] (3.5,0)circle(.8pt);
   \node(a)at(4.1,0){$7/5$};

   \draw[ fill=lightgray] (5,0)circle(6pt);
   \draw[ fill=black] (5,0)circle(.8pt);
   \node(a)at(5.6,0){$7/5$};
   
   \draw[ fill=lightgray] (6.5,0)circle(6pt);
   \draw[ fill=black] (6.5,0)circle(.8pt);
   \node(a)at(7.1,0){$7/5$};
     
   \draw[ fill=lightgray] (8,0)circle(6pt);
   \draw[ fill=black] (8,0)circle(.8pt);
   \node(a)at(8.6,0){$7/5$};

 \begin{scope}[yshift=-0.6cm]
 
   \draw[ fill=gray] (2,0)circle(6pt);
   \draw[ fill=black] (2,0)circle(.8pt);
   \node(a)at(2.6,0){$7/5$};

   \draw[ fill=gray] (3.5,0)circle(6pt);
   \draw[ fill=black] (3.5,0)circle(.8pt);
   \node(a)at(4.1,0){$7/5$};

   \draw[ fill=gray] (5,0)circle(6pt);
   \draw[ fill=black] (5,0)circle(.8pt);
   \node(a)at(5.6,0){$7/5$};
   
   \draw[ fill=gray] (6.5,0)circle(6pt);
   \draw[ fill=black] (6.5,0)circle(.8pt);
   \node(a)at(7.1,0){$7/5$};
     
   \draw[ fill=gray] (8,0)circle(6pt);
   \draw[ fill=black] (8,0)circle(.8pt);
   \node(a)at(8.6,0){$7/5$};
  
\end{scope}
\end{tikzpicture}  
\end{center}
\item $q_2=3/2$
\begin{center}
\begin{tikzpicture}  
      
   \draw[fill=lightgray] (2,0)circle(8pt);
   \draw[fill=black] ( 2.1,0)circle(.8pt);
   \draw[fill=black] ( 1.9,0)circle(.8pt);
   \node(a)at(2.6,0){$3/2$};
   
  \begin{scope} [xshift=1.5cm] 
   \draw[ fill=lightgray] (2,0)circle(8pt);
   \draw[fill=black] ( 2.1,0)circle(.8pt);
   \draw[fill=black] ( 1.9,0)circle(.8pt);
   \node(a)at(2.6,0){$3/2$};
  \end{scope}
  
  \begin{scope} [xshift=3cm] 
   \draw[ fill=lightgray] (2,0)circle(8pt);
   \draw[fill=black] ( 2.1,0)circle(.8pt);
   \draw[fill=black] ( 1.9,0)circle(.8pt);
   \node(a)at(2.6,0){$3/2$};
  \end{scope}
  
  \begin{scope} [xshift=4.5cm] 
   \draw[fill=lightgray] (2,0)circle(8pt);
   \draw[fill=black] ( 2.1,0)circle(.8pt);
   \draw[fill=black] ( 1.9,0)circle(.8pt);
   \node(a)at(2.6,0){$3/2$};
  \end{scope}
  
  \begin{scope} [xshift=6cm] 
   \draw[fill=lightgray] (2,0)circle(8pt);
   \draw[fill=black] ( 2.1,0)circle(.8pt);
   \draw[fill=black] ( 1.9,0)circle(.8pt);
   \node(a)at(2.6,0){$3/2$};
  \end{scope}
  
 \begin{scope}[yshift=-0.7cm]

   \draw[ fill=lightgray] (2,0)circle(6pt);
   \draw[ fill=black] (2,0)circle(.8pt);
   \node(a)at(2.6,0){$7/5$};

   \draw[ fill=lightgray] (3.5,0)circle(6pt);
   \draw[ fill=black] (3.5,0)circle(.8pt);
   \node(a)at(4.1,0){$7/5$};

   \draw[ fill=lightgray] (5,0)circle(6pt);
   \draw[ fill=black] (5,0)circle(.8pt);
   \node(a)at(5.6,0){$7/5$};
   
   \draw[ fill=lightgray] (6.5,0)circle(6pt);
   \draw[ fill=black] (6.5,0)circle(.8pt);
   \node(a)at(7.1,0){$7/5$};
     
   \draw[ fill=lightgray] (8,0)circle(6pt);
   \draw[ fill=black] (8,0)circle(.8pt);
   \node(a)at(8.6,0){$7/5$};

 \begin{scope}[yshift=-0.6cm]
 
   \draw[ fill=gray] (2,0)circle(6pt);
   \draw[ fill=black] (2,0)circle(.8pt);
   \node(a)at(2.6,0){$7/5$};

   \draw[ fill=gray] (3.5,0)circle(6pt);
   \draw[ fill=black] (3.5,0)circle(.8pt);
   \node(a)at(4.1,0){$7/5$};

   \draw[ fill=gray] (5,0)circle(6pt);
   \draw[ fill=black] (5,0)circle(.8pt);
   \node(a)at(5.6,0){$7/5$};
   
   \draw[ fill=gray] (6.5,0)circle(6pt);
   \draw[ fill=black] (6.5,0)circle(.8pt);
   \node(a)at(7.1,0){$7/5$};
     
   \draw[ fill=gray] (8,0)circle(6pt);
   \draw[ fill=black] (8,0)circle(.8pt);
   \node(a)at(8.6,0){$7/5$};
 
 \end{scope}
\end{scope}

\end{tikzpicture}  
\end{center}
\item $q_3=5/4$
\begin{center}
\begin{tikzpicture} 

   \draw[fill=lightgray] (2,0)circle(8pt);
   \draw[fill=black] ( 2.1,0)circle(.8pt);
   \draw[fill=black] ( 1.9,0)circle(.8pt);
   \node(a)at(2.6,0){$3/2$};
   
  \begin{scope} [xshift=1.5cm] 
   \draw[ fill=lightgray] (2,0)circle(8pt);
   \draw[fill=black] ( 2.1,0)circle(.8pt);
   \draw[fill=black] ( 1.9,0)circle(.8pt);
   \node(a)at(2.6,0){$3/2$};
  \end{scope}
  
  \begin{scope} [xshift=3cm] 
   \draw[ fill=lightgray] (2,0)circle(8pt);
   \draw[fill=black] ( 2.1,0)circle(.8pt);
   \draw[fill=black] ( 1.9,0)circle(.8pt);
   \node(a)at(2.6,0){$3/2$};
  \end{scope}
  
  \begin{scope} [xshift=4.5cm] 
   \draw[fill=lightgray] (2,0)circle(8pt);
   \draw[fill=black] ( 2.1,0)circle(.8pt);
   \draw[fill=black] ( 1.9,0)circle(.8pt);
   \node(a)at(2.6,0){$3/2$};
  \end{scope}
  
  \begin{scope} [xshift=6cm] 
   \draw[fill=lightgray] (2,0)circle(8pt);
   \draw[fill=black] ( 2.1,0)circle(.8pt);
   \draw[fill=black] ( 1.9,0)circle(.8pt);
   \node(a)at(2.6,0){$3/2$};
  \end{scope}
  
 \begin{scope}[yshift=-0.7cm]

   \draw[ fill=lightgray] (2,0)circle(6pt);
   \draw[ fill=black] (2,0)circle(.8pt);
   \node(a)at(2.6,0){$7/5$};

   \draw[ fill=lightgray] (3.5,0)circle(6pt);
   \draw[ fill=black] (3.5,0)circle(.8pt);
   \node(a)at(4.1,0){$7/5$};

   \draw[ fill=lightgray] (5,0)circle(6pt);
   \draw[ fill=black] (5,0)circle(.8pt);
   \node(a)at(5.6,0){$7/5$};
   
   \draw[ fill=lightgray] (6.5,0)circle(6pt);
   \draw[ fill=black] (6.5,0)circle(.8pt);
   \node(a)at(7.1,0){$7/5$};
     
   \draw[ fill=lightgray] (8,0)circle(6pt);
   \draw[ fill=black] (8,0)circle(.8pt);
   \node(a)at(8.6,0){$7/5$};

 \begin{scope}[yshift=-0.6cm]
 
   \draw[ fill=gray] (2,0)circle(6pt);
   \draw[ fill=black] (2,0)circle(.8pt);
   \node(a)at(2.6,0){$7/5$};

   \draw[ fill=gray] (3.5,0)circle(6pt);
   \draw[ fill=black] (3.5,0)circle(.8pt);
   \node(a)at(4.1,0){$7/5$};

   \draw[ fill=gray] (5,0)circle(6pt);
   \draw[ fill=black] (5,0)circle(.8pt);
   \node(a)at(5.6,0){$7/5$};
   
   \draw[ fill=gray] (6.5,0)circle(6pt);
   \draw[ fill=black] (6.5,0)circle(.8pt);
   \node(a)at(7.1,0){$7/5$};
     
   \draw[ fill=gray] (8,0)circle(6pt);
   \draw[ fill=black] (8,0)circle(.8pt);
   \node(a)at(8.6,0){$7/5$};
 
 \end{scope}
\end{scope}
    
 \begin{scope}[yshift=-2.1cm]

 \begin{scope}[xshift=-2cm]  
  \draw[fill=lightgray] (5,-0.4)circle(12pt);
  \draw[fill=black] ( 4.85,-0.25)circle(.8pt);
  \draw[fill=black] ( 4.85,-0.55)circle(.8pt);
  \draw[fill=black] ( 5.15,-0.25)circle(.8pt);
  \draw[fill=black] ( 5.15,-0.55)circle(.8pt);
   \node(a)at(4.2,-0.4){$5/4$};
 \end{scope}  
\end{scope}  
     
\end{tikzpicture}  
\end{center}

\item $q_4=6/5$ and $q_5=1$
\begin{center}
\begin{tikzpicture}  
\begin{scope}[yshift=-4.5cm]
 \begin{scope}[xshift=6cm, yshift=-1.5cm]
   
  \draw[fill=lightgray] (2,0)circle(8pt);
  \draw[fill=black] ( 2.1,0)circle(.8pt);
  \draw[fill=black] ( 1.9,0)circle(.8pt);
  \node(a)at(2,-0.5){$3/2$};
   
  \begin{scope} [xshift=1cm] 
   \draw[ fill=lightgray](2,0)circle(8pt);
   \draw[fill=black] ( 2.1,0)circle(0.8pt);
   \draw[fill=black] ( 1.9,0)circle(0.8pt);
   \node(a)at(2,-0.5){$3/2$};
  \end{scope}
  
  \begin{scope} [xshift=2cm] 
   \draw[fill=lightgray] (2,0)circle(8pt);
   \draw[fill=black] ( 2.1,0)circle(0.8pt);
   \draw[fill=black] ( 1.9,0)circle(0.8pt);
   \node(a)at(2,-0.5){$3/2$};
  \end{scope}
  
  \begin{scope} [xshift=0.5cm, yshift=-1cm] 
   \draw[fill=lightgray] (2,0)circle(8pt);
   \draw[fill=black] ( 2.1,0)circle(0.8pt);
   \draw[fill=black] ( 1.9,0)circle(0.8pt);
   \node(a)at(2,-0.5){$3/2$};
  \end{scope}
  
  \begin{scope} [xshift=1.5cm, yshift=-1cm] 
   \draw[fill=lightgray] (2,0)circle(8pt);
   \draw[fill=black] ( 2.1,0)circle(0.8pt);
   \draw[fill=black] ( 1.9,0)circle(0.8pt);
   \node(a)at(2,-0.5){$3/2$};
  \end{scope}
 \end{scope}

 \begin{scope}[yshift=-2.7cm]
   \draw[ ] (0:5) circle(2cm);
 
    \draw[fill=lightgray] (5,0)circle(12pt);
    \draw[fill=black] ( 4.85,-0.15)circle(0.8pt);
  
    \draw[fill=black] ( 4.85,0.15)circle(0.8pt);
    
    \draw[fill=black] ( 5.15,-0.15)circle(0.8pt);
      
    \draw[fill=black] ( 5.15,0.15)circle(0.8pt);
  
     \draw[ fill=lightgray] (0:5)+(0:1.2)circle(6pt);
       \draw[ fill=black] (0:5)+(0:1.2)circle(.8pt);
  
    \draw[fill=lightgray ] (0:5)+(72:1.2)circle(6pt);
       \draw[ fill=black] (0:5)+(72:1.2)circle(.8pt);
  
    \draw[fill=lightgray ] (0:5)+(144:1.2)circle(6pt);
       \draw[ fill=black] (0:5)+(144:1.2)circle(.8pt);

    \draw[fill=lightgray ] (0:5)+(216:1.2)circle(6pt);
       \draw[ fill=black] (0:5)+(216:1.2)circle(.8pt);
  
  \draw[fill=lightgray ] (0:5)+(-72:1.2)circle(6pt);
       \draw[ fill=black] (0:5)+(-72:1.2)circle(.8pt);

    \draw[fill=gray] (0:5)+(185:1.0)circle(6pt);
       \draw[ fill=black] (0:5)+(185:1.0)circle(.8pt);
  
    \draw[fill=gray] (0:5)+(257:1.0)circle(6pt);
       \draw[fill=black] (0:5)+(257:1.0)circle(.8pt);
  
   \draw[fill=gray ] (0:5)+(-31:1.0)circle(6pt);
       \draw[ fill=black] (0:5)+(-31:1.0)circle(.8pt);

      \draw[fill=gray ] (0:5)+(41:1.0)circle(6pt);
       \draw[ fill=black] (0:5)+(41:1.0)circle(.8pt);

     \draw[fill=gray ] (0:5)+(-247:1.0)circle(6pt);
       \draw[fill=black] (0:5)+(-247:1.0)circle(.8pt);
    
     \node(a)at(-5:6.6){$6/5$};
          \node(a)at(1.6,-1){${\small 5/4}$};
          \draw(1.9,-1)--(4.8,0);
 
          \node(a)at(8.3,-1){${\small 7/5}$};
          \draw(8.2,-0.7)--(6.2,0);
  

   \draw[fill=black] (1,1)circle(.8pt);
           \draw[fill=black] (0.5,1)circle(.8pt);
          \draw[fill=black] (0,1)circle(.8pt);
             \draw[fill=black] (1,0)circle(.8pt);
           \draw[fill=black] (0.5,0)circle(.8pt);
         \draw[fill=black] (0,0)circle(.8pt);
       
     \node(a)at(2,1.5){${\small 1}$}; 
\end{scope}  
\end{scope}
\end{tikzpicture}  
\end{center}
\end{itemize}
\noindent This completes the recovery of the carrousel decomposition
of Example \ref{ex:very big0}.
\end{example}

  \begin{example}\label{ex:vvb} 
\red{Our final example exhibits some $B$-pieces of type $A(q,q)$.  It is
    obtained by modifying the previous example (discussed also in
    \ref{ex:very big1}, \ref{ex:very big0} and in \cite{BNP}), which
    had equation $ (zx^2+y^3)(x^3+zy^2)+z^7= 0 $. We use instead the
    equation
    $$(zx^2+y^3)(x^5+z^3y^2)+z^9= 0$$
    which replaces one the two $B(6/5)$-pieces of the geometric
    decomposition of the previous example by a $B(8/7)$-piece.  Each
    of these $B$-pieces has non-trivial topology, and by Proposition
    \ref{prop:Anne doesn't like this} each of $X_{6/5}$ and $X_{8/7}$ has a second piece which is a  $B$-pieces of type $A(6/5,6/5)$ resp.\
      $A(8/7,8/7)$.

      The first figure 
      shows the resolution graph of a general linear system of
      hyperplane sections for this example, followed by
      the graph of the resolution which resolves the base points of
      the generic polar, for which an additional blow-up was needed
      (the intersection numbers at the bottom right are $-3$ and
      $-1$). The rates associated with each vertex are also shown.
      The second figure gives a picture of the complete carrousel
      section. We use the complete carrousel here to make the
      $A$-pieces evident.}
\begin{figure}[h]
\begin{center}
\begin{tikzpicture}
\draw[thin ](-1.5,2.5)--(1.5,2.5);

\draw[thin ](-2.5,0)--(-1.5,1);
\draw[thin ](-.5,0)--(-1.5,1);
\draw[thin ](-.5,0)--(.5,.5);
\draw[thin ](.5,.5)--(1.5,1);
\draw[thin ](1.5,1)--(2.5,0);
\draw[thin ](-1.5,1)--(-1.5,2.5);
\draw[thin ](1.5,1)--(1.5,2.5);
     
\draw[fill=white ] (-1.5,1)circle(2pt);
\draw[fill=white ] (1.5,1)circle(2pt);
\draw[fill=white ] (.5,.5)circle(2pt);
\draw[fill=white ] (-.5,0)circle(2pt);
\draw[fill=white] (2.5,0)circle(2pt);
\draw[fill=white] (-2.5,0)circle(2pt);

\draw[thin] (-1.5,2.5)..controls (-0.5,3.75) and (0.5,3.75)..(1.5,2.5);
\draw[thin] (-1.5,2.5)..controls (-0.5,3.5) and (0.5,3.5)..(1.5,2.5);
\draw[thin] (-1.5,2.5)..controls (-0.5,3.25) and (0.5,3.25)..(1.5,2.5);
\draw[thin] (-1.5,2.5)..controls (-0.5,3) and (0.5,3)..(1.5,2.5);
\draw[thin] (-1.5,2.5)..controls (-0.5,2.75) and (0.5,2.75)..(1.5,2.5);
\draw[thin] (-1.5,2.5)..controls (-0.5,2.25) and (0.5,2.25)..(1.5,2.5);
\draw[thin] (-1.5,2.5)..controls (-0.5,2) and (0.5,2)..(1.5,2.5);
\draw[thin] (-1.5,2.5)..controls (-0.5,1.75) and (0.5,1.75)..(1.5,2.5);
\draw[thin] (-1.5,2.5)..controls (-0.5,1.5) and (0.5,1.5)..(1.5,2.5);
\draw[thin] (-1.5,2.5)..controls (-0.5,1.25) and (0.5,1.25)..(1.5,2.5);

\node(a)at(-2.5,-0.35){$-2$};
\node(a)at(-2.9,0){$(5)$};

\node(a)at(-.5,-0.3){$-4$};
\node(a)at(-.5,0.4){$(4)$};

\node(a)at(.5,0.2){$-3$};
\node(a)at(.5,0.75){$(6)$};

\node(a)at(2.5,-0.35){$-2$};
\node(a)at(2.9,0){$(7)$};

\node(a)at(-1.5,0.65){$-1$};
\node(a)at(-2,1){$(10)$};

\node(a)at(1.5,0.65){$-1$};
\node(a)at(2,1){$(14)$};

\node(a)at(-0.4,3.6){$-1$};
\node(a)at(-0.4,1.4){$-1$};
\node(a)at(0.4,3.6){$(2)$};
\node(a)at(0.4,1.4){$(2)$};
\node(a)at(-1.6,2.9){$(1)$};
\node(a)at(1.6,2.9){$(1)$};
\node(a)at(-1.85,2.15){$-35$};
\node(a)at(1.8,2.15){$-41$};
 
  \draw[thin,>-stealth,->](1.5,2.5)--+(1.2,0.4);
  \draw[thin,>-stealth,->](1.5,2.5)--+(1.3,0.2);
  \draw[thin,>-stealth,->](1.5,2.5)--+(1.3,0);
  \draw[thin,>-stealth,->](1.5,2.5)--+(1.3,-0.2);
  \draw[thin,>-stealth,->](1.5,2.5)--+(1.2,-0.4);
        
  \draw[thin,>-stealth,->](-1.5,2.5)--+(-1.2,0.4);
  \draw[thin,>-stealth,->](-1.5,2.5)--+(-1.3,0);
  \draw[thin,>-stealth,->](-1.5,2.5)--+(-1.2,-0.4);

   \draw[ fill=white] (0,3.44)circle(2pt);
   \draw[ fill=white] (0,3.25)circle(2pt);
   \draw[ fill=white] (0,3.05)circle(2pt);
   \draw[ fill=white] (0,2.86)circle(2pt);
   \draw[ fill=white] (0,2.68)circle(2pt);
   \draw[ fill=white] (0,2.5)circle(2pt);
   \draw[ fill=white] (0,2.31)circle(2pt);
   \draw[ fill=white] (0,2.12)circle(2pt);
   \draw[ fill=white] (0,1.94)circle(2pt);
   \draw[ fill=white] (0,1.75)circle(2pt);
   \draw[ fill=white] (0,1.56)circle(2pt);
  \draw[ fill=white ] (1.5,2.5)circle(2pt);
     \draw[ fill=white] (-1.5,2.5)circle(2pt);
  \end{tikzpicture} 
\begin{tikzpicture}
\draw[thin ](-1.5,2.5)--(1.5,2.5);

\draw[thin ](-2.5,0)--(-1.5,1);
\draw[thin ](-.5,0)--(-1.5,1);
\draw[thin ](-.5,0)--(.5,.5);
\draw[thin ](.5,.5)--(1.5,1);
\draw[thin ](1.5,1)--(2.5,0);
\draw[thin ](-1.5,1)--(-1.5,2.5);
\draw[thin ](1.5,1)--(1.5,2.5);
\draw[thin ](2.5,0)--(3.0,-0.8);

\draw[thin] (-1.5,2.5)..controls (-0.5,3.75) and (0.5,3.75)..(1.5,2.5);
\draw[thin] (-1.5,2.5)..controls (-0.5,3.5) and (0.5,3.5)..(1.5,2.5);
\draw[thin] (-1.5,2.5)..controls (-0.5,3.25) and (0.5,3.25)..(1.5,2.5);
\draw[thin] (-1.5,2.5)..controls (-0.5,3) and (0.5,3)..(1.5,2.5);
\draw[thin] (-1.5,2.5)..controls (-0.5,2.75) and (0.5,2.75)..(1.5,2.5);
\draw[thin] (-1.5,2.5)..controls (-0.5,2.25) and (0.5,2.25)..(1.5,2.5);
\draw[thin] (-1.5,2.5)..controls (-0.5,2) and (0.5,2)..(1.5,2.5);
\draw[thin] (-1.5,2.5)..controls (-0.5,1.75) and (0.5,1.75)..(1.5,2.5);
\draw[thin] (-1.5,2.5)..controls (-0.5,1.5) and (0.5,1.5)..(1.5,2.5);
\draw[thin] (-1.5,2.5)..controls (-0.5,1.25) and (0.5,1.25)..(1.5,2.5);

\node(a)at(-2.4,-0.35){$\frac75$};
\node(a)at(-2.9,0){$(39)$};

\node(a)at(-.5,-0.35){$\frac54$};
\node(a)at(-.5,0.4){$(31)$};

\node(a)at(.5,0.75){$(46)$};

\node(a)at(2.9,0){$(54)$};

\node(a)at(2.7,-1){$\frac{10}7$};
\node(a)at(3.4,-1){$(55)$};

\node(a)at(-1.5,0.65){$\frac65$};
\node(a)at(-2,1){$(77)$};

\node(a)at(1.5,0.65){$\frac87$};
\node(a)at(2,1){$(107)$};

\node(a)at(-0.4,3.7){$\frac32$};
\node(a)at(-0.4,1.3){$\frac32$};
\node(a)at(0.2,3.7){$(15)$};
\node(a)at(0.2,1.3){$(15)$};
\node(a)at(-1.6,2.9){$(7)$};
\node(a)at(1.6,2.9){$(7)$};
\node(a)at(-1.7,2.15){$1$};
\node(a)at(1.7,2.15){$1$};
 
  \draw[thin,>-stealth,->](1.5,2.5)--+(1.2,0.6);
  \draw[thin,>-stealth,->](1.5,2.5)--+(1.2,0.5);
  \draw[thin,>-stealth,->](1.5,2.5)--+(1.22,0.4);
  \draw[thin,>-stealth,->](1.5,2.5)--+(1.22,0.3);
  \draw[thin,>-stealth,->](1.5,2.5)--+(1.24,0.2);
  \draw[thin,>-stealth,->](1.5,2.5)--+(1.24,0.1);
  \draw[thin,>-stealth,->](1.5,2.5)--+(1.25,0);
  \draw[thin,>-stealth,->](1.5,2.5)--+(1.25,-0.1);
  \draw[thin,>-stealth,->](1.5,2.5)--+(1.25,-0.2);
  \draw[thin,>-stealth,->](1.5,2.5)--+(1.24,-0.3);
  \draw[thin,>-stealth,->](1.5,2.5)--+(1.24,-0.4);
  \draw[thin,>-stealth,->](1.5,2.5)--+(1.22,-0.5);
  \draw[thin,>-stealth,->](1.5,2.5)--+(1.22,-0.6);
  \draw[thin,>-stealth,->](1.5,2.5)--+(1.2,-0.7);
  \draw[thin,>-stealth,->](1.5,2.5)--+(1.2,-0.8);
        
  \draw[thin,>-stealth,->](-1.5,2.5)--+(-1.2,0.4);
  \draw[thin,>-stealth,->](-1.5,2.5)--+(-1.3,0);
  \draw[thin,>-stealth,->](-1.5,2.5)--+(-1.2,-0.4);

  \draw[thin,>-stealth,->](-2.5,0)--+(-.8,-.6);
  \draw[thin,>-stealth,->](-.5,0)--+(.9,-.6);
  \draw[thin,>-stealth,->](-1.5,2.5)--+(-1.2,-0.4);

  \draw[thin,>-stealth,->](3.0,-0.8)--+(1.0,0.4);

\draw[fill=white ] (-1.5,1)circle(2pt);
\draw[fill=white ] (1.5,1)circle(2pt);
\draw[fill=white ] (.5,.5)circle(2pt);
\draw[fill=white ] (-.5,0)circle(2pt);
\draw[fill=white] (2.5,0)circle(2pt);
\draw[fill=white] (-2.5,0)circle(2pt);
\draw[fill=white] (3.0,-.8)circle(2pt);

   \draw[thin,>-stealth,->] (0,3.44)--+(.7,0.33);
   \draw[thin,>-stealth,->] (0,3.25)--+(.7,0.27);
   \draw[thin,>-stealth,->] (0,3.05)--+(.7,0.2);
   \draw[thin,>-stealth,->] (0,2.86)--+(.7,0.14);
   \draw[thin,>-stealth,->] (0,2.68)--+(.7,0.07);
   \draw[thin,>-stealth,->] (0,2.5)--+(.7,0.03);
   \draw[thin,>-stealth,->] (0,2.31)--+(.7,-0.07);
   \draw[thin,>-stealth,->] (0,2.12)--+(.7,-0.14);
   \draw[thin,>-stealth,->] (0,1.94)--+(.7,-0.20);
   \draw[thin,>-stealth,->] (0,1.75)--+(.7,-0.27);
   \draw[thin,>-stealth,->] (0,1.56)--+(.7,-0.33);

   \draw[ fill=white] (0,3.44)circle(2pt);
   \draw[ fill=white] (0,3.25)circle(2pt);
   \draw[ fill=white] (0,3.05)circle(2pt);
   \draw[ fill=white] (0,2.86)circle(2pt);
   \draw[ fill=white] (0,2.68)circle(2pt);
   \draw[ fill=white] (0,2.5)circle(2pt);
   \draw[ fill=white] (0,2.31)circle(2pt);
   \draw[ fill=white] (0,2.12)circle(2pt);
   \draw[ fill=white] (0,1.94)circle(2pt);
   \draw[ fill=white] (0,1.75)circle(2pt);
   \draw[ fill=white] (0,1.56)circle(2pt);

  \draw[ fill=white ] (1.5,2.5)circle(2pt);
  \draw[ fill=white] (-1.5,2.5)circle(2pt);
  \end{tikzpicture} 
  \end{center}
  \caption{Example \ref{ex:vvb}: Resolutions of hyperplane sections and basepoints of the polar.}
  \label{fig:vvb2}
\end{figure}
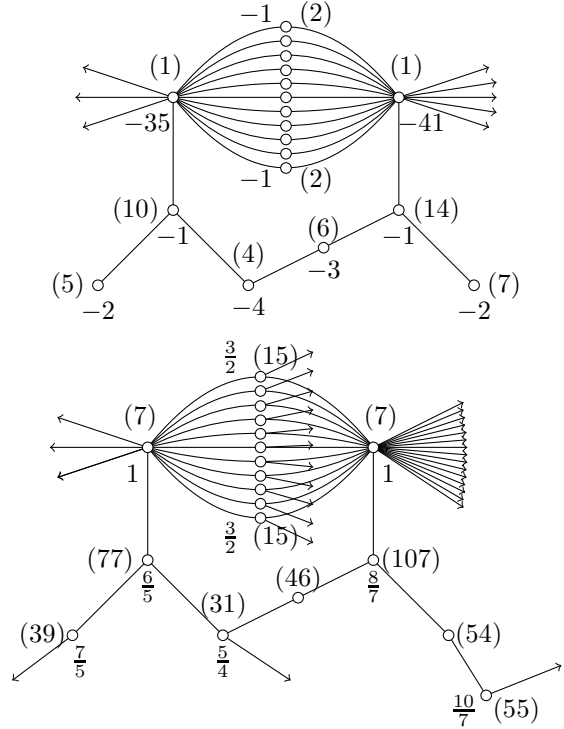

\begin{figure}[h]
\begin{center}
\begin{tikzpicture} 

\begin{scope}[xshift=7cm, yshift=-1cm]
   
 \draw[  ] (2,0)circle(10pt);
 \draw[ fill=lightgray] (2,0)circle(7pt);
 \draw[fill=white] ( 2.1,0)circle(2.3pt);
 \draw[fill=white] ( 1.9,0)circle(2.3pt);
 \draw[fill=black] ( 2.1,0)circle(.5pt);
 \draw[fill=black] ( 1.9,0)circle(.5pt);
 \node(a)at(2,-0.55){\small$3/2$};
    
 \begin{scope} [xshift=1cm] 
 \draw[  ] (2,0)circle(10pt);
 \draw[ fill=lightgray] (2,0)circle(7pt);
 \draw[fill=white] ( 2.1,0)circle(2.3pt);
 \draw[fill=white] ( 1.9,0)circle(2.3pt);
 \draw[fill=black] ( 2.1,0)circle(.5pt);
 \draw[fill=black] ( 1.9,0)circle(.5pt);
 \node(a)at(2,-0.55){\small$3/2$};
 \end{scope}
  
 \begin{scope} [xshift=2cm] 
 \draw[ ] (2,0)circle(10pt);
 \draw[ fill=lightgray] (2,0)circle(7pt);
 \draw[fill=white] ( 2.1,0)circle(2.3pt);
 \draw[fill=white] ( 1.9,0)circle(2.3pt);
 \draw[fill=black] ( 2.1,0)circle(.5pt);
 \draw[fill=black] ( 1.9,0)circle(.5pt);
 \node(a)at(2,-0.55){\small$3/2$};
 \end{scope}

 \begin{scope} [yshift=-1.2cm] 
 \draw[  ] (2,0)circle(10pt);
 \draw[ fill=lightgray] (2,0)circle(7pt);
 \draw[fill=white] ( 2.1,0)circle(2.3pt);
 \draw[fill=white] ( 1.9,0)circle(2.3pt);
 \draw[fill=black] ( 2.1,0)circle(.5pt);
 \draw[fill=black] ( 1.9,0)circle(.5pt);
 \node(a)at(2,-0.55){\small$3/2$};

  \begin{scope} [xshift=1cm] 
  \draw[  ] (2,0)circle(10pt);
  \draw[ fill=lightgray] (2,0)circle(7pt);
  \draw[fill=white] ( 2.1,0)circle(2.3pt);
  \draw[fill=white] ( 1.9,0)circle(2.3pt);
  \draw[fill=black] ( 2.1,0)circle(.5pt);
  \draw[fill=black] ( 1.9,0)circle(.5pt);
  \node(a)at(2,-0.55){\small$3/2$};
  \end{scope}

  \begin{scope} [xshift=2cm] 
  \draw[  ] (2,0)circle(10pt);
  \draw[ fill=lightgray] (2,0)circle(7pt);
  \draw[fill=white] ( 2.1,0)circle(2.3pt);
  \draw[fill=white] ( 1.9,0)circle(2.3pt);
  \draw[fill=black] ( 2.1,0)circle(.5pt);
  \draw[fill=black] ( 1.9,0)circle(.5pt);
\node(a)at(2,-0.55){\small$3/2$};
  \end{scope}
 \end{scope}
 \begin{scope} [yshift=-1.2cm] 
 \draw[  ] (2,0)circle(10pt);
 \draw[ fill=lightgray] (2,0)circle(7pt);
 \draw[fill=white] ( 2.1,0)circle(2.3pt);
 \draw[fill=white] ( 1.9,0)circle(2.3pt);
 \draw[fill=black] ( 2.1,0)circle(.5pt);
 \draw[fill=black] ( 1.9,0)circle(.5pt);
\node(a)at(2,-0.55){\small$3/2$};
   
  \begin{scope} [xshift=1cm] 
  \draw[  ] (2,0)circle(10pt);
  \draw[ fill=lightgray] (2,0)circle(7pt);
  \draw[fill=white] ( 2.1,0)circle(2.3pt);
  \draw[fill=white] ( 1.9,0)circle(2.3pt);
  \draw[fill=black] ( 2.1,0)circle(.5pt);
  \draw[fill=black] ( 1.9,0)circle(.5pt);
  \node(a)at(2,-0.55){\small$3/2$};
  \end{scope}

  \begin{scope} [xshift=2cm] 
  \draw[  ] (2,0)circle(10pt);
  \draw[ fill=lightgray] (2,0)circle(7pt);
  \draw[fill=white] ( 2.1,0)circle(2.3pt);
  \draw[fill=white] ( 1.9,0)circle(2.3pt);
  \draw[fill=black] ( 2.1,0)circle(.5pt);
  \draw[fill=black] ( 1.9,0)circle(.5pt);
  \node(a)at(2,-0.55){\small$3/2$};
  \end{scope}
 \end{scope}
 \begin{scope} [yshift=-2.4cm] 
 \draw[  ] (2,0)circle(10pt);
 \draw[ fill=lightgray] (2,0)circle(7pt);
 \draw[fill=white] ( 2.1,0)circle(2.3pt);
 \draw[fill=white] ( 1.9,0)circle(2.3pt);
 \draw[fill=black] ( 2.1,0)circle(.5pt);
 \draw[fill=black] ( 1.9,0)circle(.5pt);
 \node(a)at(2,-0.55){\small$3/2$};
   
  \begin{scope} [xshift=1cm] 
  \draw[  ] (2,0)circle(10pt);
  \draw[ fill=lightgray] (2,0)circle(7pt);
  \draw[fill=white] ( 2.1,0)circle(2.3pt);
  \draw[fill=white] ( 1.9,0)circle(2.3pt);
  \draw[fill=black] ( 2.1,0)circle(.5pt);
  \draw[fill=black] ( 1.9,0)circle(.5pt);
  \node(a)at(2,-0.55){\small$3/2$};
  \end{scope}

  \begin{scope} [xshift=2cm] 
  \draw[  ] (2,0)circle(10pt);
  \draw[ fill=lightgray] (2,0)circle(7pt);
  \draw[fill=white] ( 2.1,0)circle(2.3pt);
  \draw[fill=white] ( 1.9,0)circle(2.3pt);
  \draw[fill=black] ( 2.1,0)circle(.5pt);
  \draw[fill=black] ( 1.9,0)circle(.5pt);
  \node(a)at(2,-0.55){\small$3/2$};
  \end{scope}
 \end{scope}
 \begin{scope} [xshift=.5cm,yshift=-3.6cm] 
 \draw[  ] (2,0)circle(10pt);
 \draw[ fill=lightgray] (2,0)circle(7pt);
 \draw[fill=white] ( 2.1,0)circle(2.3pt);
 \draw[fill=white] ( 1.9,0)circle(2.3pt);
 \draw[fill=black] ( 2.1,0)circle(.5pt);
 \draw[fill=black] ( 1.9,0)circle(.5pt);
\node(a)at(2,-0.55){\small$3/2$};
   
  \begin{scope} [xshift=1cm] 
  \draw[  ] (2,0)circle(10pt);
  \draw[ fill=lightgray] (2,0)circle(7pt);
  \draw[fill=white] ( 2.1,0)circle(2.3pt);
  \draw[fill=white] ( 1.9,0)circle(2.3pt);
  \draw[fill=black] ( 2.1,0)circle(.5pt);
  \draw[fill=black] ( 1.9,0)circle(.5pt);
  \node(a)at(2,-0.55){\small$3/2$};
  \end{scope}
 \end{scope}
\end{scope}

\begin{scope}[yshift=-2.7cm]
\draw[ ] (0:5) circle(1.6cm);   
\draw[ ] (0:5) circle(2.2cm);
\draw[ ] (0:5) circle(2.8cm);  
\draw[ ] (0:5) circle(3cm);  
\draw[ ] (5,0)circle(24pt); 

    
\draw[ fill=lightgray] (5,0)circle(11pt);
     
\draw[fill=white] ( 4.85,-0.15)circle(2.5pt);
\draw[fill=white] ( 4.85,0.15)circle(2.5pt);
\draw[fill=white] ( 5.15,-0.15)circle(2.5pt);
\draw[fill=white] ( 5.15,0.15)circle(2.5pt);

\draw[fill=black] ( 4.85,-0.15)circle(.5pt);
\draw[fill=black] ( 4.85,0.15)circle(.5pt);
\draw[fill=black] ( 5.15,-0.15)circle(.5pt);
\draw[fill=black] ( 5.15,0.15)circle(.5pt);
   
\draw[fill=lightgray] (0:5)+(0:1.2)circle(4pt);
\draw[fill=white] (0:5)+(0:1.2)circle(2pt);
\draw[fill=black] (0:5)+(0:1.2)circle(.5pt);
\draw[  ] (0:5)+(0:1.2)circle(6.5pt);

\draw[fill=lightgray] (0:5)+(72:1.2)circle(4pt);
\draw[fill=white] (0:5)+(72:1.2)circle(2pt);
\draw[fill=black] (0:5)+(72:1.2)circle(.5pt);
\draw[ ] (0:5)+(72:1.2)circle(6.5pt);
     
\draw[fill=lightgray] (0:5)+(144:1.2)circle(4pt);
\draw[fill=white] (0:5)+(144:1.2)circle(2pt);
\draw[fill=black] (0:5)+(144:1.2)circle(.5pt);
\draw[ ] (0:5)+(144:1.2)circle(6.5pt);

\draw[fill=lightgray] (0:5)+(216:1.2)circle(4pt);
\draw[fill=white] (0:5)+(216:1.2)circle(2pt);
\draw[fill=black] (0:5)+(216:1.2)circle(.5pt);
\draw[] (0:5)+(216:1.2)circle(6.5pt);

\draw[fill=lightgray] (0:5)+(-72:1.2)circle(4pt);
\draw[fill=white] (0:5)+(-72:1.2)circle(2pt);
\draw[fill=black] (0:5)+(-72:1.2)circle(.5pt);
\draw[] (0:5)+(-72:1.2)circle(6.5pt);

\draw[fill=lightgray] (0:5)+(339.5:2.5)circle(4pt);
\draw[fill=white] (0:5)+(339.5:2.5)circle(2pt);
\draw[fill=black] (0:5)+(339.5:2.5)circle(.5pt);
\draw[  ] (0:5)+(339.5:2.5)circle(6.5pt);
             
\draw[fill=lightgray] (0:5)+(288:2.5)circle(4pt);
\draw[fill=white] (0:5)+(288:2.5)circle(2pt);
\draw[fill=black] (0:5)+(288:2.5)circle(.5pt);
\draw[ ] (0:5)+(288:2.5)circle(6.5pt);

\draw[] (0:5)+(236.5:2.5)circle(6.5pt);
\draw[fill=lightgray] (0:5)+(236.5:2.5)circle(4pt);
\draw[fill=white] (0:5)+(236.5:2.5)circle(2pt);
\draw[fill=black] (0:5)+(236.5:2.5)circle(.5pt);
       
\draw[ ] (0:5)+(185:2.5)circle(6.5pt);
\draw[fill=lightgray] (0:5)+(185:2.5)circle(4pt);
\draw[fill=white] (0:5)+(185:2.5)circle(2pt);
\draw[fill=black] (0:5)+(185:2.5)circle(.5pt);
    
\draw[fill=lightgray] (0:5)+(133.5:2.5)circle(4pt);
\draw[fill=white] (0:5)+(133.5:2.5)circle(2pt);
\draw[fill=black] (0:5)+(133.5:2.5)circle(.5pt);
\draw[ ] (0:5)+(133.5:2.5)circle(6.5pt);

\draw[fill=lightgray] (0:5)+(82:2.5)circle(4pt);
\draw[fill=white] (0:5)+(82:2.5)circle(2pt);
\draw[fill=black] (0:5)+(82:2.5)circle(.5pt);
\draw[ ] (0:5)+(82:2.5)circle(6.5pt);

\draw[fill=lightgray] (0:5)+(30.75:2.5)circle(4pt);
\draw[fill=white] (0:5)+(30.75:2.5)circle(2pt);
\draw[fill=black] (0:5)+(30.75:2.5)circle(.5pt);
\draw[ ] (0:5)+(30.75:2.5)circle(6.5pt);

\node(a)at(-5:6.2){\small$6/5$};
\node(a)at(-3:7.5){\small$8/7$};
        
\node(a)at(2,1.5){\small${ 1}$};
         
\node(a)at(8.3,-2.5){\small${ 7/5}$};
\node(a)at(7.3,-2.5){\small${10/7}$};
\draw(8.2,-2.2)--(5.4,-1.04);
\draw(6.9,-2.44)--(5.8,-2.28);

\node(a)at(1.6,-1){\small${5/4}$};
\draw(1.9,-1)--(4.8,0);
 


\draw[fill=white] (1,1.5)circle(3pt);
\draw[fill=white] (0.5,1.5)circle(3pt);
\draw[fill=white] (0,1.5)circle(3pt);
\draw[fill=white] (1,1)circle(3pt);
\draw[fill=white] (0.5,1)circle(3pt);
\draw[fill=white] (0,1)circle(3pt);
\draw[fill=white] (1,.5)circle(3pt);
\draw[fill=white] (0.5,.5)circle(3pt);
\draw[fill=white] (0,.5)circle(3pt);
\draw[fill=white] (1,0)circle(3pt);
\draw[fill=white] (0.5,0)circle(3pt);
\draw[fill=white] (0,0)circle(3pt);
\draw[fill=white] (1,-0.5)circle(3pt);
\draw[fill=white] (0.5,-0.5)circle(3pt);
\draw[fill=white] (0,-0.5)circle(3pt);
\draw[fill=white] (1,-1)circle(3pt);
\draw[fill=white] (0.5,-1)circle(3pt);
\draw[fill=white] (0,-1)circle(3pt);

\draw[fill=black] (1,1.5)circle(.5pt);
\draw[fill=black] (0.5,1.5)circle(.5pt);
\draw[fill=black] (0,1.5)circle(.5pt);
\draw[fill=black] (1,1)circle(.5pt);
\draw[fill=black] (0.5,1)circle(.5pt);
\draw[fill=black] (0,1)circle(.5pt);
\draw[fill=black] (1,.5)circle(.5pt);
\draw[fill=black] (0.5,.5)circle(.5pt);
\draw[fill=black] (0,.5)circle(.5pt);
\draw[fill=black] (1,0)circle(.5pt);
\draw[fill=black] (0.5,0)circle(.5pt);
\draw[fill=black] (0,0)circle(.5pt);
\draw[fill=black] (1,-0.5)circle(.5pt);
\draw[fill=black] (0.5,-0.5)circle(.5pt);
\draw[fill=black] (0,-0.5)circle(.5pt);
\draw[fill=black] (1,-1)circle(.5pt);
\draw[fill=black] (0.5,-1)circle(.5pt);
\draw[fill=black] (0,-1)circle(.5pt);
      
             \node(a)at(2,1.5){${\small 1}$};       
\end{scope}  

\end{tikzpicture}  
\end{center}\caption{Example \ref{ex:vvb}: Complete carrousel section.
}
\end{figure}
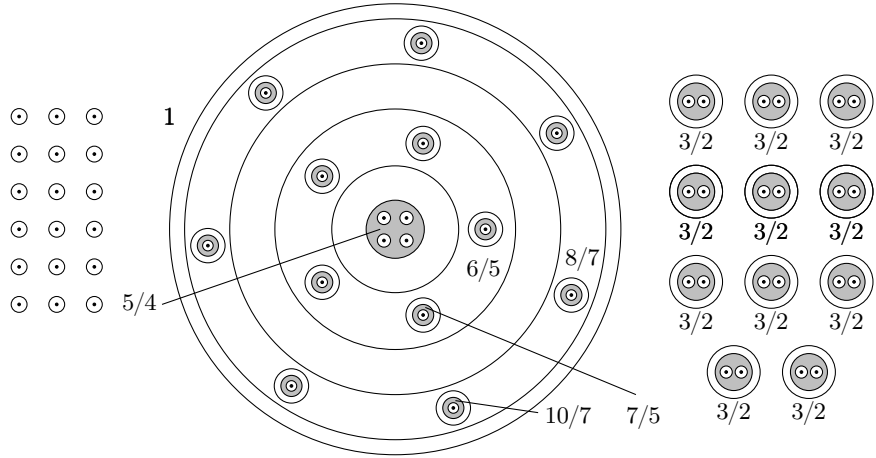
\end{example}

\newpage\hbox{~}

     \section*{\bf Part 4: Zariski equisingularity implies semi-analytic
   Lipschitz triviality}\label{Part 4}

\section{Equisingularity}\label{sec:notations}

  In this section, we define Zariski equisingularity as in Speder \cite{Sp} and we specify it in codimension $2$. For a family of hypersurface germs in $(\C^3,0)$ with isolated singularities, the definition is equivalent to the last definition of equisingularity stated by Zariski in \cite{Z3} using equidimensionality type.  We also define
Lipschitz triviality in this codimension and we fix notations for the rest of the paper.

\begin{definition} \label{def:zariski equisingularity} Let
  $(\mathfrak{X},0)\subset (\C^{n},0)$ be a reduced hypersurface germ
  and $(Y,0) \subset (\mathfrak{X},0)$ a nonsingular germ.
  $\mathfrak{X}$ is \emph{Zariski equisingular} along $Y$ near $0$ if
for a generic linear projection $\mathscr L \colon \C^n \to
  \C^{n-1}$ for the pair $(\mathfrak{X},Y)$ at $0$, the branch locus
  $\Delta \subset \Bbb C^{n-1}$ of the restriction $\mathscr L
  |_{\mathfrak{X}} \colon \mathfrak{X} \to \Bbb C^{n-1}$ is
  equisingular along $\mathscr L(Y)$ near $0$ (in particular,
  $(\mathscr L(Y),0) \subset (\Delta,0)$).  When $Y$ has codimension
  one in $\mathfrak{X}$, Zariski equisingularity means that $\Delta $
  is nonsingular.
\end{definition}

The notion of a generic linear projection is defined in  \cite[definition 4]{Sp}. 

When $Y$ has codimension one in $\mathfrak{X}$, Zariski equisingularity is
equivalent to Whitney conditions for the pair $(\mathfrak{X} \setminus Y,Y)$ and
also to topological triviality of $\mathfrak{X}$ along $Y$.

From now on we consider the case where $(Y,0)$ is the singular locus of $\mathfrak{X}$  and $Y$ has codimension
$2$ in $\mathfrak{X}$ (i.e., dimension $n-3$).  Then any slice of $\mathfrak{X}$ by a
smooth $3$-space transversal to $Y$ is a normal surface
singularity. If  $\mathscr L $ is a generic projection for $(\mathfrak{X},Y)$ at $0$ and $H$ a hyperplane through  $y \in Y$ close to $0$ which contains the line $\ker \mathscr L $, then the restriction of  $\mathscr L $ to $H \cap \mathfrak{X}$    is  a generic projection of the normal surface germ  $(H \cap \mathfrak{X} ,0)$ in the sense of Definition \ref{def:generic linear   projection}. 

We have the following
characterization of Zariski equisingularity for families of isolated
singularities in $(\Bbb C^3,0)$ which will be used throughout the rest of the paper:

\begin{proposition} Let $(\mathfrak{X},0)\subset (\C^{n},0)$ be a reduced
  hypersurface germ at the origin of $\C^{n}$ with $2$-codimension
  smooth singular locus $(Y,0)$. Then $\mathfrak{X}$ is Zariski equisingular along $Y$ near
  $0$ if for a generic linear projection $\mathscr L \colon \C^{n} \to
  \C^{n-1}$, the branch locus $\Delta  \subset \Bbb
  C^{n-1}$ of $\mathscr L |_\mathfrak{X}\colon \mathfrak{X} \to \C^{n-1}$ is topologically equisingular
  along $\mathscr L(Y)$ near $0$.\qed
 \end{proposition}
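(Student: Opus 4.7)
The plan is to reduce the claim to the classical codimension-one equivalence between Zariski equisingularity and topological equisingularity, originally due to Zariski and well documented in the literature (see e.g.\ \cite{Z1, LT, T1}). The key observation is that Definition \ref{def:zariski equisingularity} is inductive on the codimension of the stratum, and here the codimension of $Y$ in $\mathfrak X$ is exactly $2$, so the definition unfolds in exactly one step.

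First, I will unpack the definition. Since $(Y,0)$ has codimension $2$ in $(\mathfrak X,0)$, and $\mathscr L$ is a generic linear projection for $(\mathfrak X,Y)$, the restriction $\mathscr L|_Y\colon Y\to \mathscr L(Y)$ is a biholomorphism onto a smooth germ $\mathscr L(Y)\subset\Delta\subset\C^{n-1}$. The hypersurface $\Delta$ has dimension $n-2$, while $\mathscr L(Y)$ has dimension $n-3$, so $\mathscr L(Y)$ has codimension $1$ in $\Delta$. By Definition \ref{def:zariski equisingularity}, $\mathfrak X$ is Zariski equisingular along $Y$ near $0$ exactly when $\Delta$ is Zariski equisingular along $\mathscr L(Y)$ in the codimension-one sense; that is, when a generic further projection $\C^{n-1}\to\C^{n-2}$ sends $\Delta$ to a branch locus which is nonsingular along the image of $\mathscr L(Y)$.

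Second, I will invoke the classical theorem (Zariski \cite{Z1}, Speder \cite{Sp}) that in codimension one on a reduced hypersurface, Zariski equisingularity of $\Delta$ along $\mathscr L(Y)$ is equivalent to topological equisingularity of $\Delta$ along $\mathscr L(Y)$ (and indeed equivalent to the Whitney conditions, to the constancy of the Milnor number of transverse slices, etc.). Concretely, this says that the pair $(\Delta,\mathscr L(Y))$ admits a local topological trivialization along $\mathscr L(Y)$ if and only if the branch locus of a generic $1$-codimensional further projection is smooth. Combining this equivalence with the unfolding of the definition in the previous paragraph yields the statement of the proposition.

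I do not anticipate any genuine obstacle here: the proof is essentially a one-line translation between two well-known formulations of equisingularity for a reduced hypersurface along a codimension-one smooth subspace. The only point that deserves a word of justification is that the genericity condition on $\mathscr L$ in Definition \ref{def:zariski equisingularity} is compatible with the standard genericity hypotheses needed for the codimension-one equivalence, which follows from Speder's analysis of generic linear projections in \cite{Sp}, and from the fact (used throughout Part 1 of the paper) that slicing $\mathfrak X$ by a generic $3$-space transverse to $Y$ reduces questions about $(\mathfrak X,Y)$ to questions about a normal surface singularity and its generic plane projection.
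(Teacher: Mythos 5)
Your proof is correct and matches the paper's intended argument. The paper closes this proposition with a \verb|\qed| and no written proof, relying on exactly the reasoning you give: the definition of Zariski equisingularity is inductive, and since $Y$ has codimension $2$ in $\mathfrak X$ the induction unfolds in a single step to the requirement that $\Delta$ be Zariski equisingular along the codimension-one subspace $\mathscr L(Y)$; the paper's preceding remark that in codimension one Zariski equisingularity, Whitney conditions and topological triviality all coincide then gives the equivalence with topological equisingularity of $\Delta$.
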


\begin{remark} In \cite{Z3}, Zariski introduced an alternative definition of equisingularity  based on the concept of dimensionality. The definition  is  also by induction on the codimension and involves the discriminant locus of successive projections. At each step, a number called dimensionality is defined by induction, and it  is the same for ``almost all" projections. This defines a notion of genericity for such projections. The definition says   $\mathfrak{X}$ is \emph{equisingular} along $Y$ if  the dimensionality equals $1$ at the last step of the induction i.e., when $Y$ has codimension $1$ in $ \mathfrak{X}$.  This is equivalent to saying that the family of discriminants is equisingular as a family of curves. For details see \cite{Z3} or \cite{L}. In \cite{BH}, Brian\c{c}on and Henry proved that when $Y$ has codimension $2$ in $\mathfrak{X}$, i.e., for a family of complex
hypersurfaces in $\C^3$, the dimensionality can be computed using only linear projections.  As a consequence, in codimension $2$, Zariski  equisingularity as defined in \ref{def:zariski equisingularity} coincides with Zariski's equisingularity concept using dimensionality. 
\end{remark}

 \begin{definition} 
\begin{enumerate}\item 
$(\mathfrak{X},0)$   \emph{has constant (semi-analytic)  Lipschitz geometry} along $Y$ if   there exists a  smooth  (semi-analytic) retraction $r \colon (\mathfrak{X},0)  \to (Y,0)$  whose fibers are transverse to $Y$  and a neighbourhood $U$ of $0$ in $Y$ such that for all  $y \in U$, there exists a (semi-analytic) bilipschitz    homeomorphism  $h_y \colon (r^{-1}(y),y) \to (r^{-1}(0) \cap \mathfrak{X},0)$. 
\item  
The germ $(\mathfrak{X},0)$ is (semi-analytic)
  \emph{Lipschitz trivial} along $Y$ if there exists a germ at 0 of a
  (semi-analytic) bilipschitz homeomorphism $\Phi \colon(\mathfrak
  X,Y)\to (X,0)\times Y$ with $\Phi|_Y=id_Y$, where $(X,0)$ is a
  normal complex surface germ.
\end{enumerate}
 \end{definition}
 
 The aim of this last part is to prove Theorem \ref{th:zariski vs bilipschitz} stated in the introduction:
 
\begin{theorem*}
 The following are
  equivalent:
     \begin{enumerate}
  \item \label{it:zariski-}  $(\mathfrak{X},0)$ is Zariski equisingular along $Y$;  
  \item \label{it:constancy-} $(\mathfrak{X},0)$ has constant Lipschitz geometry along $Y$;
  \item \label{it:constancySA-} $(\mathfrak{X},0)$ has constant semi-analytic Lipschitz geometry along $Y$;
      \item \label{it:lipschitz-} $(\mathfrak{X},0)$ is semi-analytic Lipschitz trivial along $Y$  \end{enumerate}
\end{theorem*}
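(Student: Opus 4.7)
The plan is to prove only the non-trivial implication \eqref{it:zariski-}$\Rightarrow$\eqref{it:lipschitz-}, since \eqref{it:lipschitz-}$\Rightarrow$\eqref{it:constancySA-}$\Rightarrow$\eqref{it:constancy-} are formal and \eqref{it:constancy-}$\Rightarrow$\eqref{it:zariski-} already follows from part \eqref{it:discriminant} of Theorem \ref{th:invariants from geometry}. So assume $\mathfrak{X}$ is Zariski equisingular along $Y$. After picking coordinates so that $Y = \{z_1=\dots=z_{n-2}=0\}$ and a generic linear projection $\ell\colon\C^n\to\C^{n-1}$ for the pair $(\mathfrak{X},Y)$, write $X_y := \ell^{-1}(\ell(y))\cap \mathfrak{X}$ for the transverse slice over $y\in Y$. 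By genericity of $\ell$, its restriction to $X_y$ is a generic plane projection for the normal surface germ $(X_y,y)$ for every $y$ in a neighborhood of $0$ in $Y$, and by Zariski equisingularity the family of discriminant curves $\Delta_y \subset \C^2$ is topologically equisingular; equivalently, the Puiseux pairs and coincidence exponents of the branches of $\Delta_y$ are independent of $y$.

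The strategy is to trivialize in two stages, first downstairs on the discriminant side, then lift upstairs through the generic projection. Downstairs, the constancy of the Puiseux data in $y$ allows us to build a family of complete carrousel sections (in the sense of Section~\ref{sec:intermediate}) for $\Delta_y$ whose combinatorics are constant in $y$; the truncation exponents, the polar rates $s$ at each $\Delta$-wedge, and the coefficients of the truncated Puiseux series all vary real-analytically (in fact semi-algebraically) in $y$. Using the defining inequalities of $B_\kappa$, polar wedges $B_0$, and intermediate $A$-pieces, one constructs a semi-algebraic bilipschitz trivialization $\Psi\colon (\C^2\times Y,\{0\}\times Y)\to (\C^2,0)\times Y$ which is piecewise of the product form: on each piece it restricts to a semi-algebraic bilipschitz map that respects the Puiseux data, and on the boundary annuli $A(q,q')$ one interpolates linearly. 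This step is essentially the semi-algebraic analog of the classical fact that a topologically equisingular family of plane curves is bilipschitz trivial; it proceeds piece by piece of the carrousel and is quite mechanical.

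Upstairs, we lift $\Psi$ through $\ell$. Over any piece of the carrousel that does not contain a component of $\Delta$, the projection $\ell|_{\mathfrak{X}}$ is a finite unramified cover, and its restriction to each component of the preimage is a local bilipschitz homeomorphism with bounded constant (by Proposition \ref{prop:thinzones} and the local bilipschitz constant $K$ of a generic projection); hence $\Psi$ lifts uniquely to a semi-algebraic bilipschitz trivialization of the inverse image. The delicate step is over each $\Delta$-wedge $B_0$: here the preimage $A_0 \subset \mathfrak{X}$ is a polar wedge and $\ell|_{A_0}$ is a branched cover. We handle this using the explicit parametrization of polar wedges established in Section~\ref{sec:polarwedges} (the $(u,v)$-coordinates with $z_j = u^N f_{j,0}(u) + v u^{Ns} h_j(u,v)$ and the key fact from Lemma~\ref{le:h3} that $h_3$ is a unit). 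Zariski equisingularity of $\mathfrak{X}$ forces these $(u,v)$-parametrizations to depend semi-algebraically-bilipschitz-trivially on $y$, so the polar wedge over each $B_{0,y}$ is itself trivial as a $D(s)$-piece in a way compatible with the downstairs trivialization of $B_0$.

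Finally, one must glue the trivializations on polar-wedge pieces to those on the surrounding pieces along the intermediate $A(q,s)$-annuli. Since both sides agree on the boundary tori as covers of the corresponding downstairs $A$-pieces, a radial semi-algebraic bilipschitz interpolation across each such annulus (using the standard model of $A(q,q')$ from Definition~\ref{def:p2}) completes the construction of a global semi-algebraic bilipschitz map $\Phi\colon (\mathfrak{X},Y)\to (X_0,0)\times Y$ with $\Phi|_Y = \mathrm{id}_Y$. The principal obstacle is the polar-wedge step: showing that the branched cover structure along $\Pi_y$ varies in a semi-algebraic bilipschitz-trivial manner (not merely topologically trivial), which is where the constancy of polar rates provided by Zariski equisingularity and the explicit Puiseux parametrization of $A_0$ must be combined delicately. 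Controlling the bilipschitz constants uniformly in $y$ near $0$ (so that the glued map is genuinely bilipschitz and not merely fiberwise bilipschitz) is the technical heart of the argument.
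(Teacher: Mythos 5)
Your overall strategy — trivialize the carrousel decomposition downstairs in $\C^2\times Y$, then lift through the generic projection $\mathscr L$ — is indeed the paper's plan. However, you have buried the two genuinely hard steps as assertions rather than proving them, and both are serious gaps.

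First, you write that ``the truncation exponents, the polar rates $s$ at each $\Delta$-wedge, and the coefficients of the truncated Puiseux series all vary real-analytically (in fact semi-algebraically) in $y$.'' This begs the central question. Zariski equisingularity only says the plane curves $\Delta_y$ are topologically equisingular, which controls the Puiseux pairs and coincidence exponents of $\Delta_y$. But the polar rate $s$ of a $\Delta$-wedge is \emph{not} a topological invariant of $\Delta_y$: it is an analytic invariant of the surface $X_y$, measuring how a polar wedge shrinks around $\Pi_y$. The complete carrousel decomposition is built precisely by truncating at these rates, so if they jumped with $y$ the carrousels would not even have matching combinatorics, and your downstairs map $\Psi$ could not be constructed. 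The paper devotes Sections \ref{sec:test curves}--\ref{sec:constancy} to proving constancy of polar rates (Lemma \ref{prop:polar constancy}), via test curves $\gamma_t$, their spreadings and projected liftings, Teissier's restriction formula (Proposition \ref{magicTeissier}), and the L\^e--Ramanujam theorem. None of this appears in your argument, and it cannot be replaced by an appeal to semi-algebraicity.

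Second, you treat ``lift $\Psi$ through $\ell$'' as producing a bilipschitz map with only a local adjustment over polar wedges. But the lifted map is readily seen to be \emph{inner} bilipschitz away from the polar wedges; the whole difficulty is the \emph{outer} metric. Two points in a fiber $X_y$ that lie over the same point of $\C^2$ (vertically aligned) can be close in the outer metric while far in the inner metric, and nothing in your construction controls how $\Phi$ distorts this outer distance. The paper's Section \ref{sec:proofZtoL} handles this with a separate argument: it uses the constancy of the topological type of projected liftings (Corollary \ref{Z-test-curves}, again relying on the restriction formula) together with the carrousel foliation to show that outer distances between vertically aligned points scale by bounded factors, and then reduces arbitrary pairs to a path-in-$X$ plus a vertical segment. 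Your proposal neither distinguishes inner from outer nor supplies a mechanism to control the latter, so it does not establish that the glued map $\Phi$ is bilipschitz for the outer metric, which is what Lipschitz triviality requires.
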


The implications \eqref{it:lipschitz-} $\Rightarrow$ \eqref{it:constancySA-} and \eqref{it:constancySA-} $\Rightarrow$ \eqref{it:constancy-} are trivial.  
 \eqref{it:constancy-} $\Rightarrow$ \eqref{it:zariski-} is an easy  consequence
of part \eqref{it:discriminant} of Theorem \ref{th:invariants from
  geometry}: 

\begin{proof}[Proof of \eqref{it:constancy-} $\Rightarrow$
  \eqref{it:zariski-}] Assume $(\mathfrak{X},0)$ has constant
  semi-analytic Lipschitz geometry along $Y$. Let $\mathscr L \colon
  \C^n \to \C^{n-1}$ be a generic linear projection for
  $\mathfrak{X}$. Let $r\colon \mathscr L(\mathfrak{X}) \to \mathscr L
  (Y)$ be a smooth semi-analytic retraction whose fibers are
  transversal to $\mathscr L (Y)$. Its lift by $\mathscr L$ is a
  retraction $\widetilde{r} \colon \mathfrak{X} \to Y$ whose fibers
  are transversal to $Y$. For any $t \in Y$ sufficiently close to $0$,
  $X_t = (\tilde{r})^{-1}(t)$ is semi-analytically bilipschitz
  equivalent to $X_0 = (\tilde{r})^{-1}(0)$. Then, according to part
  \eqref{it:discriminant} of Theorem \ref{th:invariants from
    geometry}, the discriminants $\Delta_t$ of the restrictions
  $\mathscr L |_{ X_t}$ have same embedded topology.  This proves that
  $\mathfrak{X}$ is Zariski is equisingular along $Y$.
\end{proof}  
  The last sections of the paper are devoted to the proof of    \eqref{it:zariski-}  $\Rightarrow$ \eqref{it:lipschitz-}. 
 
\subsection*{Notations} Assume $\mathfrak{X}$ is Zariski equisingular along $Y$ at $0$.   Since Zariski equisingularity is a stable property under analytic
 isomorphism $(\C^{n},0) \rightarrow (\C^{n},0)$, we will assume
 without lost of generality that $Y = \{ {0}\} \times \Bbb C^{n-3}
 \subset \Bbb C^3 \times \Bbb C^{n-3} = \Bbb C^n$ where $ {0}$ is the
 origin in $\Bbb C^3$.  We will denote by $ (\underbar{x} ,t)$ the
 coordinates in $\C^{3}\times \Bbb C^{n-3}$, with $ \underbar{x}=
 (x,y,z) \in \Bbb C^3$ and $t\in \Bbb C^{n-3}$.  For each $t$, we set 
 $X_t = \mathfrak{X} \cap (\C^3 \times \{t\})$ and consider $\mathfrak{X}$ as the
 $n-3$-parameter family of isolated hypersurface singularities $(X_t,(
 {0},t)) \subset (\Bbb C^3 \times \{t\},(0,t))$. For simplicity, we
 will write $(X_t,0)$ for $(X_t,( {0},t))$.

 Let $\mathscr L \colon \C^n \rightarrow \C^{n-1}$ be a generic linear
 projection for $\mathfrak{X}$. We choose the coordinates
 $(\underbar{x},t)$ in $\C^3 \times \Bbb C^{n-3}$ so that $\mathscr L$
 is given by $\mathscr L(x,y,z,t)=(x,y,t)$.  For each $t$, the
 restriction $\mathscr L |_{\C^3 \times \{t\}} \colon \C^3 \times
 \{t\} \to \C^2 \times \{t\}$ is a generic linear projection for
 $X_t$.  We denote by $\Pi_t \subset X_t$ the polar curve of the
 restriction $ \mathscr L |_{X_t} \colon X_t \rightarrow \C^2\times
 \{t\}$ and by $\Delta_t $ its discriminant curve $\Delta_t = \mathscr
 L(\Pi_t)$.
 
 Throughout Part 4, we will use the Milnor balls defined as
 follows. By Speder \cite{Sp}, Zariski equisingularity implies Whitney
 conditions. Therefore one can choose a constant size $\epsilon>0$ for
 Milnor balls of the $(X_t,0)$ as $t$ varies in some small ball
 $D_\delta$ about $0\in \C^{n-3}$ and the same is true for the family
 of discriminants $\Delta_t$.  So we can actually use a
 ``rectangular'' Milnor ball as introduced in Section
 \ref{sec:carrousel1}:
  $$B^6_\epsilon=\{(x,y,z):|x|\le \epsilon, |(y,z)|\le R\epsilon\}\,,$$ and its projection 
   $$ B^4_\epsilon=\{(x,y):|x|\le \epsilon, |y|\le R\epsilon,
   \}\,,$$ where $R$ and $\epsilon_0$ are chosen so that for $\epsilon
   \leq \epsilon_0$ and $t$ in a small ball $D_\delta $,
  \begin{enumerate}
\item $B^6_\epsilon \times \{t\}$ is a Milnor ball for $(X_t,0)$;
\item  for each $\alpha$ such that $|\alpha| \leq \epsilon$,  $h_t^{-1}(\alpha)$ intersects the standard sphere ${\Bbb S}^6_{R \epsilon}$ transversely, where $h_t $ denotes the restriction $h_t = x |_{X_t}$;
\item the curve $\Pi_t$ and its tangent cone $T_0\Pi_t$
  meet $\partial B^6_\epsilon \times \{t\}$ only in the part $|x|=\epsilon$.
\end{enumerate}

\section{Proof outline that Zariski implies
  bilipschitz equisingularity}\label{sec:Ztob}

We start by outlining the proof that Zariski equisingularity implies
semi-analytic Lipschitz triviality.  We assume therefore that we have
a Zariski equisingular family $\mathfrak{X}$ as above.

We choose $B^6_\epsilon\subset \C^3$ and $D_\delta\subset \C^{n-3}$ as
in the previous section.  We want to construct a semi-analytic
bilipschitz homeomorphism $\Phi\colon
\mathfrak{X}\cap(B^6_\epsilon\times D_\delta) \to (X_0\times
\C^{n-3})\cap (B^6_\epsilon\times D_\delta)$ which preserves the
$t$-parameter.  Our homeomorphism will be a piecewise diffeomorphism.

In Section \ref{sec:carrousel1} we used a carrousel decomposition of
$B^4_\epsilon$ for the discriminant curve $\Delta_0$ of the
generic linear projection $ \mathscr L |_{X_0}\colon X_0\to
B^4_\epsilon$ and we lifted it to obtain   the geometric decomposition of
$(X,0)$ (Definition \ref{def:geometric decomposition}). Here, we will consider this construction for each $ \mathscr
L |_{X_t}\colon X_t \to B^4_\epsilon \times \{t\}$. 

Next, we construct a semi-analytic bilipschitz map $\phi\colon
B^4_\epsilon\times D_\delta\to B^4_\epsilon\times D_\delta$ which
restricts for each $t$ to a map $\phi_t\colon B^4_\epsilon\times
\{t\}\to B^4_\epsilon\times \{t\}$ which takes the carrousel
decomposition of $B^4_\epsilon$ for $X_t$ to the corresponding
carrousel decomposition for $X_0$. We also arrange that $\phi_t$
preserves a foliation of $B^4_\epsilon$ adapted to the \carrousels. A
first approximation to the desired map $\Phi\colon
\mathfrak{X}\cap(B^6_\epsilon\times D_\delta) \to (X_0\times
\C^{n-3})\cap (B^6_\epsilon\times D_\delta)$ is then obtained by
simply lifting the map $\phi$ via the branched covers $\mathscr L$ and
$ \mathscr L |_{X_0} \times id_{D_\delta}$:
$$
\begin{CD}
\mathfrak{X}\cap(B^6_\epsilon \times D_\delta) @>\Phi>> (X_0\cap B^6_\epsilon)\times D_\delta\\
@V \mathscr L VV @V \mathscr L |_{X_0} \times  id_{D_\delta} VV\\
B^4_\epsilon\times D_\delta @>\phi>>  B^4_\epsilon\times D_\delta
\end{CD}
$$

Now, fix $t$ in $D_{\delta}$. For $\underbar{x} \in X_t \cap
B^6_{\epsilon}$, denote by $K(\underbar{x},t)$ the local bilipschitz
constant of the projection $ \mathscr L |_{X_t} \colon X_t \to
B^4_\epsilon \times\{t\}$ (see Section \ref{sec:polarwedges}). Let us
fix a large $K_0>0$ and consider the set ${\mathcal B}_{K_0,t} = \{
\underbar{x} \in X_t \cap (B^6_{\epsilon}\times D_{\delta}) \ \colon \
K(\underbar{x},t) \geq K_0\}$.

We then show that $\Phi$ is bilipschitz for the inner metric except
possibly in the zone $C_{K_0} = \bigcup_{t \in D_{\delta}} {\mathcal
  B}_{K_0,t} $, where the bilipschitz constant for the linear
projection $ \mathscr L$ becomes large.  This is all done in Section
\ref{sec:foliated carrousel}.

Using the  approximation Proposition \ref{prop:thinzones} of $\mathcal B_{K_0,t}$ by   polar wedges, we  then prove that  $\Phi$ can be adjusted if necessary to be
bilipschitz on $C_{K_0}$.

 The construction just explained is based on the key Lemma \ref{prop:polar constancy} which states  constancy of the polar  rates of the components of $(\Delta_t,0)$ as $t$ varies. The proof of this lemma  is based on the use of families of plane curves  $(\gamma_t,0)$ such that $(\Delta_t \cup \gamma_t,0)$ has constant topological type as $t$ varies, and their {\it liftings} by $\mathscr L$, {\it spreadings} and {\it projected liftings} defined in Section \ref{sec:test curves}. The key argument is  the \emph{restriction formula } of Teissier \cite{T0}  that we recall in section \ref{sec:formula}. Similar techniques were used by Casas-Alvero in \cite{C} in his study of equisingularity of inverse images of plane curves by an analytic morphism $(\C^2,0) \to (\C^2,0)$.

Once $\Phi$ is inner bilipschitz, we must
show that the outer Lipschitz geometry is also preserved. This is done in Section  \ref{sec:constancy} using again  families of plane curves  $(\gamma_t,0)$ as just defined and the restriction formula. Namely we prove the invariance of bilipschitz type of the liftings by $\mathscr L$ of such families, which  enables one to test invariance of the
outer Lipschitz geometry.
 
 The pieces
of the proof are finally put together in Section \ref{sec:proofZtoL}.

\section{Foliated Carrousel Decomposition}\label{sec:foliated carrousel}

We use again the notations of Section \ref{sec:notations}.  In this
section, we construct a self-map $\phi\colon B^4_\epsilon\times
D_\delta \to B^4_\epsilon\times D_\delta$ of the image of the generic
linear projection which removes the dependence on $t$ of the carrousel
decompositions and we introduce a foliation of the carrousels by $1$-dimensional complex  leaves which will be preserved by $\phi$.  We first present a
parametrized version of the carrousel construction of section
\ref{sec:carrousel} for any choice of truncation of the Puiseux series
expansions of $\Delta$.

\subsection*{Carrousel depending on the parameter $\mathbf t$} 

  \red{Since the family of discriminants $\Delta_t$  is Zariski equisingular, then by \cite[Theorem 7]{Z0} (see also \cite{P}),  it admits a Puiseux parametrization with parameter, i.e.,  the branches of $\Delta_t$ admits Puiseux parametrizations of the form $$y = \sum_{i \geq 1} a_i(t)x^{p_i},$$ with $a_i(t) \in \C\{t\}$.
}
  

For each $t$, the tangent cone 
of $\Delta_t$ is a union of tangent lines $L^{(1)}_t,\dots, L^{(m)}_t$
and these are distinct lines for each $t$. 
In Section \ref{sec:carrousel} we described   carrousel
decompositions when there is no parameter $t$. They now decompose 
  a conical neighbourhood of each line $L^{(j)}_t$. These conical
neighbourhoods are as follows. Let the equation of the $j$-th line be
$y=a_1^{(j)}(t)x$.  We choose a small enough $\eta>0$ and $\delta>0$
such that cones
$$V^{(j)}_t:=\{(x,y):|y-a_1^{(j)}(t)x|\le \eta |x|, |x|\le
\epsilon\}\subset \C^2_t$$ are disjoint for all $t\in D_\delta$, and
then shrink $\epsilon$ if necessary so $\Delta_t^{(j)}\cap
\{|x|\le\epsilon\}$ will lie completely in $V^{(j)}_t$ for all $t$.

We now describe how   a carrousel decomposition  is modified to the parametrized case.  We 
fix $j=1$ for the moment and therefore drop the superscripts, so our
tangent line $L$ has equation $y=a_1(t) x$.

We first choose a  truncation of the Puiseux series for each component of
$\Delta_t$.   Then for each pair $\kappa=(f, p_k)$ consisting of a
Puiseux polynomial $f=\sum_{i=1}^{k-1}a_i(t)x^{p_i}$ and an exponent
$p_k$ for which there is a Puiseux series
$y=\sum_{i=1}^{k}a_i(t)x^{p_i}+\dots$ describing some component of
$\Delta_t$, we consider all components of $\Delta_t$ which fit this
data. If $a_{k1}(t),\dots,a_{k{m_\kappa}}(t)$ are the coefficients of
$x^{p_k}$ which occur in these Puiseux polynomials we define
\begin{align*}
  B_{\kappa,t}:=\Bigl\{(x,y):~&\alpha_\kappa|x^{p_k}|\le
  |y-\sum_{i=1}^{k-1}a_i(t)x^{p_i}|\le
  \beta_\kappa|x^{p_k}|\,,\\
  & |y-(\sum_{i=1}^{k-1}a_i(t)x^{p_i}+a_{kj}(t)x^{p_k})|\ge
  \gamma_\kappa|x^{p_k}|\text{ for }j=1,\dots,{m_\kappa}\Bigr\}\,.
\end{align*}
Again, $\alpha_\kappa,\beta_\kappa,\gamma_\kappa$ are chosen so that
$\alpha_\kappa<|a_{kj}(t)|-\gamma_\kappa<|a_{kj}(t)|+\gamma_\kappa<\beta_\kappa$
for each $j=1,\dots,{m_\kappa}$ and all small $t$.  If $\epsilon$ is
small enough, the sets $ B_{\kappa,t}$ will be disjoint for different
$\kappa$.  The closure of the complement in $V_t$ of the union of the
$B_{\kappa,t}$'s is a union of $A$- and $D$-pieces.

 \subsection*{Foliated carrousel} We now refine our carrousel decomposition by adding a 
 piecewise smooth foliation of $V_t$ by complex curves compatible with
 the carrousel. We do this as follows:

 A piece $B_{\kappa,t}$ as above is foliated with closed leaves given
 by curves of the form
$$C_\alpha:=\{(x,y): y=\sum_{i=1}^{k-1}a_i(t)x^{p_i}+\alpha x^{p_k}\}$$
for $\alpha\in \C$ satisfying $\alpha_\kappa\le|\alpha|\le
\beta_\kappa $ and $|a_{kj}(t)-\alpha|\ge \gamma_\kappa$ for
$j=1,\dots,{m_\kappa}$ and all small $t$. We foliate $D$-pieces
similarly, including the pieces which correspond to $\Delta$-wedges
about $\Delta_t$.

An $A$-piece has the form 
$$A=\{(x,y):\beta_1|x^{p_{k+1}}|\le |y-(\sum_{i=1}^k a_i(t)x^{p_i})|\le
\beta_0|x^{p_k}|\}\,,$$ where $a_k(t)$ may be $0$, $p_{k+1}>p_k$
and $\beta_0,\beta_1$ are $>0$.  We foliate with leaves the immersed
curves of the following form 
$$C_{r,\theta}:=\{(x,y): y=\sum_{i=1}^{k}a_i(t)x^{p_i}+\beta(r) e^{i\theta}x^{r}\}$$
with $p_k\le r\le p_{k+1}$, $\theta \in \Bbb R$ and
$\beta(r)=\beta_1+\frac{p_{k+1}-r}{p_{k+1}-p_k}(\beta_0-\beta_1)$.
Note that these leaves may not be
closed; for irrational $r$ the topological closure is
homeomorphic to the cone on a torus. 

\begin{definition} We call a carrousel decomposition  equipped with
  such a foliation by curves a \emph{foliated carrousel decomposition}. 
\end{definition}

  \subsection*{Trivialization of the family of foliated carrousels} We set 
$$ V:=\bigcup_{t\in D_\delta}V_t\times\{t\}\,.$$
\begin{proposition}\label{prop:phi on V} 
  If\/ $\delta$ is sufficiently small,
  there exists a semi-analytic  bilipschitz map $\phi_V\colon V \to V_0\times
  D_\delta$ such that: 
  
  \begin{enumerate}
\item $\phi_V$ preserves the $x$ and $t$-coordinates,
\item  for each $t\in D_\delta$,  $\phi_V$ preserves the foliated carrousel decomposition of $V_t$ i.e., it maps the carrousel decomposition of $V_t$ to that of 
    $V_0$, preserving the respective foliations.
      \item $\phi_V$ maps complex lines to complex lines on
  the portion $|x|<\epsilon$ of $\partial V$.
\end{enumerate}

\end{proposition}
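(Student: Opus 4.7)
The strategy is to build $\phi_V$ piecewise using the foliated carrousel structure. By Zariski equisingularity, the topological type of $\Delta_t$ is constant in $t \in D_\delta$, so the combinatorial tree indexing the foliated carrousel decomposition of $V_t$ is isomorphic for all $t$, and the Puiseux coefficients $a_i(t)$ of the branches of $\Delta_t$ vary semi-algebraically in $t$. This gives a canonical correspondence between pieces of the carrousel of $V_t$ and pieces of the carrousel of $V_0$. The plan is to construct, on each pair of matching pieces, a semi-algebraic $(x,t)$-preserving bilipschitz map carrying the foliation of one to the foliation of the other, and then to glue them along common boundaries.

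On a $B_\kappa$-piece with $\kappa=(f_t,p_k)$, $f_t=\sum_{i<k}a_i(t)x^{p_i}$, I parametrize leaves by $\alpha=(y-f_t(x))/x^{p_k}$; the leaf constraints cut out an annulus with small disks removed around the centers $a_{kj}(t)$. The goal is to produce a semi-algebraic family $\psi_{\kappa,t}$ of bilipschitz homeomorphisms of this $\alpha$-region to the $t=0$ region, equal to the identity on the outer boundary $|\alpha|=\beta_\kappa$ and sending $a_{kj}(t)$ to $a_{kj}(0)$. An explicit construction is: translate by $a_{kj}(0)-a_{kj}(t)$ on a small disk around $a_{kj}(t)$, use the identity outside a slightly larger disk, and interpolate linearly (in the radial coordinate) across the intermediate annulus. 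For $\delta$ small, this is semi-algebraic, bilipschitz, with uniform constants. The piece map is then
\[
(x,y,t)\mapsto\bigl(x,\;f_0(x)+\psi_{\kappa,t}\!\bigl(\alpha(x,y,t)\bigr)\,x^{p_k},\;t\bigr),
\]
which preserves $x$, $t$, and sends leaves to leaves.

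On an $A(p_k,p_{k+1})$-piece I use the foliation parameters $(r,\theta)$ with $y=f_t(x)+\beta(r)e^{i\theta}x^r$; the map is essentially the identity in $(r,\theta)$, combined with the substitution $f_t\leadsto f_0$, and it agrees on the inner/outer boundary circles $|\alpha|=\beta_\kappa$ with the $B$-piece maps constructed above. Similarly, on an outermost $A(1,p_1)$-piece the map is the identity at the radial boundary $|y-a_1^{(j)}(t)x|=\eta|x|$ in the $\theta$-parameter. Since adjacent pieces share these cylindrical boundaries with compatible parametrizations, the piecewise maps glue into a continuous semi-algebraic map $\phi_V\colon V\to V_0\times D_\delta$. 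Condition (3) is automatic: on $|x|<\epsilon$ the boundary of $V$ is a union of cones swept out by the boundary leaves of outermost $A(1,p_1)$-pieces, each leaf being a complex line $y=(a_1^{(j)}(t)+\eta e^{i\theta})x$, and $\phi_V$ sends it to the corresponding complex line $y=(a_1^{(j)}(0)+\eta e^{i\theta})x$.

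The main obstacle is verifying uniform bilipschitz constants across both the parameter $t$ and the carrousel tree. Within a $B_\kappa$-piece the transverse scale is $|x|^{p_k}$: differentials in $\alpha$ translate into differentials in $y$ with the factor $x^{p_k}$, so the bilipschitz constant of $\psi_{\kappa,t}$ carries over to that of $\phi_V$ on the piece without distortion. The carrousel tree is finite and its combinatorics are $t$-independent, and by equisingularity the centers $a_{kj}(t)$ remain uniformly separated from each other and from the outer boundary as $t$ varies in $D_\delta$; hence the $\psi_{\kappa,t}$ have uniformly bounded bilipschitz constants, as does the analogous map on each $A$-piece. Combining these finitely many uniform estimates and noting that the interface maps match on boundary circles produces the uniform global semi-algebraic bilipschitz constant for $\phi_V$.
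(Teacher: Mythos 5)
Your overall strategy---trivialize the family of foliated carrousels piece by piece, glue along common boundaries, and verify a uniform bilipschitz bound using equisingularity---matches the paper's, but the local construction on the $B_\kappa$-pieces differs.  The paper works first only on the compact section $\{x=\epsilon\}$: it subdivides $B_{\kappa,t}\cap\{x=\epsilon\}$ into sub-pieces $B_{\kappa j,t}$ sorted by the moduli $|a_{kj}(t)|$, maps each by a translation followed by the similarity $\alpha\mapsto (a_{kj}(0)/a_{kj}(t))\alpha$, bridges the intermediate annuli by twist maps $(r,\theta)\mapsto(ar+b,\theta+cr+d)$, and then extends uniquely to $|x|<\epsilon$ by the requirement of preserving the foliation and the $x$-coordinate; when $|a_{kj}(t)|=|a_{kj'}(t)|$ for all $t$ it invokes the analyticity of $a_{kj}(t)/a_{kj'}(t)$ (holomorphic with values in the unit circle, hence constant) to merge those sub-pieces.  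Your translation-plus-radial-interpolation $\psi_{\kappa,t}$ in $\alpha$-coordinates avoids both the subdivision and that moduli-degeneracy case distinction, which is a mild simplification, and your formula directly defines the map on the whole piece rather than on a section.  Where you are less complete is the bilipschitz estimate: you only discuss the transverse $x^{p_k}$-scaling of $\alpha$-differentials into $y$-differentials, but differentiating $Y=f_0(x)+\psi_{\kappa,t}(\alpha)x^{p_k}$ in the $x$-direction produces the additional terms $f_0'(x)-(\partial_\alpha\psi_{\kappa,t})\,f_t'(x)$ and $p_k x^{p_k-1}\bigl(\psi_{\kappa,t}(\alpha)-\alpha\,\partial_\alpha\psi_{\kappa,t}\bigr)$, which must also be controlled; they are indeed bounded for $\delta$ small, so the construction works, but the paper's route---bilipschitzness is automatic on the compact section $\{x=\epsilon\}$ where the map is piecewise smooth, and moving inward along a leaf scales $y$-differentials by $(\epsilon'/\epsilon)^r$ and $x$-differentials by $1$, so the local bilipschitz constant is preserved to high order---sidesteps this computation cleanly.
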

\begin{proof}
  We first construct the map on the slice $V\cap \{x=\epsilon\}$.  We
  start by extending the identity map $ V_0 \cap\{x=\epsilon\}\to
  V_0\cap\{x=\epsilon\}$ to a family of piecewise smooth maps
  $V_t\cap\{x=\epsilon\}\to V_0\cap\{x=\epsilon\}$ which map carrousel
sections to carrousel sections.

For fixed $t$ we are looking at a carrousel section $V_t\cap\{x=\epsilon\}$
as exemplified in Figure \ref{fig:carrousel}. The various regions are
bounded by circles. Each disk or annulus in $V_t\cap\{x=\epsilon\}$ is
isometric to its counterpart in $V_0\cap\{x=\epsilon\}$ so we map it by a
translation. To extend over a piece of the form
$B_{\kappa,t}\cap\{x=\epsilon\}$ we will subdivide $B_{\kappa,t}$
further.  Assume first, for simplicity, that the coefficients
$a_{k1}(t),\dots, a_{km_\kappa}(t)$ in the description of $B_{\kappa,t}$
satisfy $|a_{k1}(t)|<|a_{k2}(t)<\dots<|a_{km_{\kappa}}(t)|$ for small
$t$. For each $j=1,\dots,m_{\kappa}$ we define
\begin{align*}
  B_{\kappa j,t}:=\Bigl\{(x,y):~&\alpha'_\kappa|a_{kj}(t)||x^{p_k}|\le
  |y-\sum_{i=1}^{k-1}a_i(t)x^{p_i}|\le
  \beta'_\kappa|a_{kj}(t)|x^{p_k}|\,,\\
  & |y-(\sum_{i=1}^{k-1}a_i(t)x^{p_i}+a_{kj}(t)x^{p_k})|\ge
  \gamma'_\kappa|a_{kj}(t)||x^{p_k}|\Bigr\}\,,
\end{align*}
with $\alpha'_\kappa<|1-\gamma'_\kappa|<|1+\gamma'_\kappa|$,
$\beta'_\kappa|a_{kj}(t)|<\alpha'_\kappa|a_{k,j+1}|$ for
$j=1,\dots,m_{\kappa}-1$, $\gamma'_\kappa|a_{kj}(t)\le \gamma_k$,
$\alpha_\kappa<\alpha'_\kappa|a_{k1}(t)|<\beta'_\kappa|a_{km_{\kappa}}(t)|<\beta_\kappa$.
This subdivides $B_{\kappa,t}$ into pieces $B_{\kappa j,t}$ with
$A(p_k,p_k)$-pieces between each $B_{\kappa j,t}$ and $B_{\kappa\, j+1,t}$
and between the $B_{\kappa j,t}$'s and the boundary components of
$B_{\kappa,t}$.  We can map $B_{\kappa j,t}\cap \{x=\epsilon\}$ to
$B_{\kappa j,0}\cap \{x=\epsilon\}$ by a translation by
$(\sum_{i=1}^{k-1}a_i(0)\epsilon^{p_i} -
\sum_{i=1}^{k-1}a_i(t)\epsilon^{p_i})$ followed by a similarity which
multiplies by $a_{kj}(0)/a_{kj}(t)$ centered at
$\sum_{i=1}^{k-1}a_i(0)\epsilon^{p_i}$.  We map the annuli of
$\overline {B_{\kappa,0}\setminus \bigcup B_{\kappa j,0}}\cap
\{x=\epsilon\}$ to the corresponding annuli of $\overline
{B_{\kappa,t}\setminus \bigcup B_{\kappa j,t}}\cap \{x=\epsilon\}$ by
maps which agree with the already constructed maps on their
boundaries and which   extend over each annulus by a ``twist map'' of
the form in polar coordinates: $(r,\theta)\mapsto (ar+b,\theta+cr+d)$
for some $a,b,c,d$.

We assumed above that the $|a_{kj}(t)|$ are pairwise unequal, but if
$|a_{kj}(t)|=|a_{kj'}(t)|$ as $t$ varies, then
$a_{kj}(t)/a_{kj'}(t)$ is a complex analytic function of $t$ with
values in the unit circle, so it must be constant. So in this
situation we use a single $B_{\kappa j}$ piece for all $j'$ with
$|a_{kj}(t)|=|a_{kj'}(t)|$ and the construction still works.

This defines our map $\phi_V$ on $\{x=\epsilon\}\times D_\delta$ for
$\delta$ sufficiently small, and the requirement that $\phi_V$
preserve $x$-coordinate and foliation extends it uniquely to all of
$V\times D_\delta$.

The map $\phi_V$ is constructed to be semi-analytic, and it remains
to prove that it is bilipschitz. It is bilipschitz on $(V\cap
\{|x|=\epsilon\})\times D_\delta$ since it is piecewise smooth on a
compact set. Depending what leaf of the foliation one is on, in a
section $|x|=\epsilon'$ with $0<\epsilon'\le \epsilon$, a
neighbourhood of a point $\underbar p$ scales from a neighbourhood of
a point $\underbar p'$ with $|x|=\epsilon$ by a factor of
$(\epsilon'/\epsilon)^r$ in the $y$-direction (and $1$ in the $x$
direction) as one moves in along a leaf, and the same for
$\phi_V(\underbar p)$. So to high order the local bilipschitz constant
of $\phi_V$ at $\underbar p$ is the same as for $\underbar p'$ and
hence bounded. Thus the bilipschitz constant is globally bounded on
$(V\setminus \{0\})\times D_\delta$, and hence on $V \times D_\delta$.
\end{proof}

The $V$ of the above proposition was any one of the $m$ sets
$V^{(i)}:=\bigcup_{t\in D_\delta}V^{(i)}_t$, so the proposition
extends $\phi$ to all these sets.  We extend the carrousel foliation
on the union of these sets to all of $B^4_\epsilon\times \{t\}$ by
foliating the complement of the union of $V^{(i)}_t$'s by complex
lines.  We denote $B^4_\epsilon $ with this carrousel decomposition
and foliation structure by $B_{\epsilon,t}$. We finally extend $\phi$
to the whole of $\bigcup_{t\in D_\delta}B_{\epsilon, t}$ by a
diffeomorphism which takes the complex lines of the foliation linearly
to complex lines of the foliation on $B_{\epsilon,0}\times D_\delta$,
preserving the $x$ coordinate. The resulting map remains obviously
semi-analytic and bilipschitz.

 We then have shown:
\begin{proposition}\label{prop:phi}
  There exists a  map
$\phi\colon \bigcup_{t\in D_\delta}B_{\epsilon,t} \to B_{\epsilon,0}\times D_\delta$
which is semi-analytic and bilipschitz and such that each $\phi_t\colon
B_{\epsilon,t}\to B_{\epsilon,0}\times\{t\}$ preserves the 
carrousel decompositions and foliations.\qed
\end{proposition}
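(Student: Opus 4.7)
The plan is to construct $\phi$ in two stages: first glue together the $m$ bilipschitz maps supplied by Proposition~\ref{prop:phi on V}, one for each tangent line of $\Delta_t$, and then extend over the remaining region of $B^4_\epsilon$ by a map that is affine along the complex-line foliation of the complement.

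First, for each $j=1,\dots,m$, apply Proposition~\ref{prop:phi on V} to $V^{(j)} := \bigcup_{t\in D_\delta} V^{(j)}_t$ to obtain a semi-algebraic bilipschitz map $\phi^{(j)}\colon V^{(j)} \to V^{(j)}_0 \times D_\delta$ preserving the $(x,t)$-coordinates, preserving the foliated carrousel decomposition of each slice, and sending complex lines on the lateral boundary $\{|x|<\epsilon\}\cap\partial V^{(j)}$ to complex lines. Next, extend the carrousel foliation to all of $B^4_\epsilon$ by foliating the complement $W_t := B^4_\epsilon \setminus \bigcup_j V^{(j)}_t$ of the tangent cone regions by (pieces of) complex lines; call the resulting structure $B_{\epsilon,t}$, and note that every leaf of every $B_{\epsilon,t}$ has rate $1$ on $\partial V^{(j)}_t$.

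Then extend $\phi$ over $\bigcup_{t\in D_\delta} W_t$ by a diffeomorphism that preserves the $x$- and $t$-coordinates, matches each $\phi^{(j)}$ on $\partial V^{(j)}_t$, and maps each complex-line leaf of $W_t$ affinely onto the corresponding leaf of $W_0$. Property~(3) of Proposition~\ref{prop:phi on V} is exactly what makes this extension well-defined and continuous across each interface $\partial V^{(j)}$. The construction is manifestly semi-algebraic and, by the way we assembled it leaf by leaf, each slice $\phi_t$ sends the carrousel pieces and their foliation on $B_{\epsilon,t}$ to the corresponding data on $B_{\epsilon,0}\times\{t\}$.

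The main obstacle is verifying that the global bilipschitz constant remains uniformly bounded near $0$ and across the gluing interfaces $\partial V^{(j)}$. Inside each $V^{(j)}$ the bound is provided by Proposition~\ref{prop:phi on V}. Inside $W_t$ the leaves are ordinary complex lines (rate $1$), and because the angular separation between the tangent lines $L^{(j)}_t$ is uniformly bounded below as $t$ varies in a small $D_\delta$, the boundary data prescribed by the $\phi^{(j)}$ on $\partial V^{(j)}_t$ are comparable in size to $|x|$; the unique affine interpolation along each complex line therefore has derivatives bounded independently of $x$ and $t$. This gives a uniform bilipschitz bound on each open stratum, and continuity of the differential across the interfaces (where both sides scale linearly in the $y$-direction) upgrades the piecewise bound to a global one, completing the proof.
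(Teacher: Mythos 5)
Your construction is essentially identical to the paper's own proof: apply Proposition~\ref{prop:phi on V} to each of the $m$ cones $V^{(j)}$, extend the foliation to the complement by complex lines, and then extend $\phi$ over the complement by a diffeomorphism carrying leaves linearly to leaves while preserving the $x$- and $t$-coordinates. The only thing the paper does not spell out (it calls the final bilipschitz assertion ``obvious'') is your last paragraph, which is sound in substance, though the appeal to ``continuity of the differential across the interfaces'' is stronger than what is needed (and may fail in the tangential direction): for a semi-algebraic homeomorphism that is piecewise smooth with uniformly bounded differential and bounded inverse differential on each piece, global bilipschitzness already follows without any matching of derivatives at the interfaces.
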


\section{Liftings and spreadings}\label{sec:test curves}

 Let $(X,0) \subset (\C^3,0)$ be an isolated hypersurface singularity
and let $\ell \colon \C^3 \rightarrow
\C^2$ be a generic linear projection for $(X,0)$. Consider an  irreducible plane curve $(\gamma,0)\subset (\C^2,0)$. 

\begin{definition} \label{def:projection}
    The \emph{lifting} of $\gamma$ is the curve   $$L_\gamma:=(\ell |_X)^{-1}(\gamma)\,.$$
     Let $\ell'\colon \C^3\to \C^2$ be another generic linear projection  for $(X,0)$ which is also 
generic for $L_\gamma$. We call the plane curve
      $$P_{\gamma} =
   \ell'(L_{\gamma})$$
    a \emph{projected lifting} of $\gamma$. 
          \end{definition}
     
Notice that the topological type of $(P_{\gamma},0)$ does not depend
on the choice of $\ell'$.

Let us choose the coordinates $(x,y,z)$ of $\C^3$ in such a way that
$\ell=(x,y)$ and  $\gamma$ is tangent to the $x$--axis. 
We consider a Puiseux expansion of $\gamma$:
\begin{align*}
  x(w)&=w^q\\
  y(w)&=a_1w^{q_1} + a_2 w^{q_2} + \ldots + a_{n-1} w^{q_{n-1}} + a_n
  w^{q_n} + \dots
\end{align*}
Let $F(x,y,z)=0$ be an equation for $X\subset \C^3$.
\begin{definition} We call the plane curve $(F_{\gamma},0) \subset
  (\C^2,0)$ with equation
$$F(x(w),y(w),z)=0$$ a \emph{spreading } of the  lifting $L_\gamma$. 
\end{definition}

Notice that the topological type of $(F_{\gamma},0)$ does not depend
on the choice of the parametrization.
 
\begin{lemma} \label{lem:top} The topological types of the spreading $(F_{\gamma},0)$ and
  of the projected lifting $(P_{\gamma},0)$  determine each other.
\end{lemma}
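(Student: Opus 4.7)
The plan is to use the embedded topology of the space curve germ $L_\gamma\subset\C^3$ as an intermediary: once $\gamma$ is fixed, the topological type of $L_\gamma$ is equivalent to both the topological type of $F_\gamma$ and that of $P_\gamma$, so these two plane curves have topological types that determine each other.

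First I would handle $P_\gamma$ via genericity. By hypothesis, the projection $\ell'$ is generic both for $X$ and for the curve $L_\gamma=\ell^{-1}(\gamma)$. This allows one to invoke the Pham--Teissier result \cite{PT} (cf.\ Proposition~\ref{prop:plane curve}): the restriction $\ell'|_{L_\gamma}\colon L_\gamma\to P_\gamma$ is a bilipschitz homeomorphism for the outer metric. Consequently the topological type of $P_\gamma$ determines, and is determined by, the outer Lipschitz geometry of $L_\gamma$, and hence its embedded topological type in $\C^3$.

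Second I would establish the analogous equivalence between $F_\gamma$ and $L_\gamma$. Assuming that the Puiseux parametrization $w\mapsto(x(w),y(w))$ of $\gamma$ is primitive, so that $x(w)=w^q$ with $q$ the multiplicity of $\gamma$ at $0$, the map $\Psi\colon F_\gamma\to L_\gamma$, $(w,z)\mapsto(x(w),y(w),z)$, is a homeomorphism of germs inducing a natural bijection between the branches of the two curves. A branch of $F_\gamma$ with Puiseux expansion $z=h(w)$ corresponds to the branch of $L_\gamma$ parametrized by $w\mapsto(w^q,y(w),h(w))$. Since $y(w)$ is a fixed datum of $\gamma$, the Puiseux-theoretic invariants of the plane branch $z=h(w)$ in the $(w,z)$-plane and those of the space curve branch $(w^q,y(w),h(w))\subset\C^3$ determine each other; similarly for the pairwise coincidence exponents between several branches, since the differences between the corresponding parametrizations only involve the $h_j-h_{j'}$ in the third coordinate.

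Chaining these two equivalences yields the lemma. The main obstacle I anticipate is the careful verification in the second step that the combinatorial conversion between the Puiseux data of $z=h(w)$ and the Puiseux data of $(w^q,y(w),h(w))$ genuinely operates on topological types — that is, that it depends only on the supports of the Puiseux expansions (together with the fixed data of $\gamma$), and not on specific coefficients, with generic choice of $\ell'$ preventing accidental cancellations. This is amenable by standard Puiseux theory applied to the explicit parametrizations, and a closely related computation for equisingularity under branched coverings is carried out in Casas-Alvero \cite{C}, to which Part~4 of the paper already refers.
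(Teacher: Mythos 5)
Your proposal takes a genuinely different route from the paper's and has a gap exactly where you anticipate one. The paper's proof rests on an observation you miss: choosing coordinates so that $\ell'=(x,z)$, one has $P_\gamma=\rho(F_\gamma)$, where $\rho\colon\C^2\to\C^2$ is $\rho(w,z)=(w^q,z)$, a cyclic $q$-fold cover branched along $\{w=0\}$. Indeed, $L_\gamma=\{(w^q,y(w),z):F(w^q,y(w),z)=0\}$, so $\ell'(L_\gamma)=\{(w^q,z):F(w^q,y(w),z)=0\}=\rho(F_\gamma)$. This puts $F_\gamma$ and $P_\gamma$ in a direct branched-cover relationship inside $\C^2$, bypassing the space curve $L_\gamma$ entirely. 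Then $F_\gamma$ determines $P_\gamma$ by pushing forward along $\rho$; conversely, the link of $P_\gamma\cup\{z\text{-axis}\}$ is an iterated torus link braided about an axis carrying a $\Z/q$-action fixing the axis, and such an action is unique up to isotopy, so the link of $F_\gamma$ is recovered by quotienting.

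Your detour through $L_\gamma$ concentrates all the difficulty into your step 2. Step 1 is fine: by Pham--Teissier and Proposition~\ref{prop:plane curve}, the outer Lipschitz geometry of $L_\gamma$ is equivalent to the embedded topology of $P_\gamma$. (The phrase ``and hence its embedded topological type in $\C^3$'' is not useful, though: a curve link in $S^5$ is unknotted and unlinked, so the embedded topology of $L_\gamma$ in $\C^3$ carries almost no information; the relevant invariant is the outer Lipschitz geometry, equivalently the embedded topology of a generic plane projection, which is exactly $P_\gamma$.) Step 2 is then the whole content of the lemma: you must show the embedded topology of $F_\gamma$ and the outer Lipschitz geometry of $L_\gamma$ determine each other via $\Psi(w,z)=(x(w),y(w),z)$. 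But $\Psi$ is not bilipschitz (its first coordinate is $w\mapsto w^q$), so it does not transfer Lipschitz data directly; passing from the $w$-variable to $x=w^q$ rescales all contact orders by $1/q$, and one must track carefully how characteristic exponents, denominators, and coincidence exponents transform under that rescaling. You assert that the Puiseux-theoretic invariants determine each other but give no verification, and that verification is precisely what the branched-cover identity $P_\gamma=\rho(F_\gamma)$ together with the uniqueness of the $\Z/q$-action packages cleanly at the topological level. So your route is salvageable, but as written it re-poses the problem rather than solving it; carrying it out rigorously would in effect re-derive the paper's branched-cover observation.
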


\begin{proof} Assume that the coordinates of $\C^3$ are chosen in such
  a way that $\ell'=(x,z)$. Then $P_{\gamma} = \rho (F_{\gamma})$ where
  $\rho\colon\C^2 \rightarrow \C^2$ denotes the morphism $\rho(w,z) =
  (w^q,z)$, which is a cyclic $q$-fold cover branched on the line
  $w=0$. Thus $P_\gamma$ determines $F_\gamma$. The link of
  $P_\gamma\cup\{z\text{-axis}\}$ consists of an iterated torus link
  braided around an axis and it has a $\Z/q$-action fixing the
  axis. Such a $\Z/q$-action is unique up to isotopy. Thus the link of
  $F_\gamma$ can be recovered from the link of $P_\gamma$ by
  quotienting by this $\Z/q$--action.
\end{proof}

\section{Restriction formula}\label{sec:formula}

In this section, we apply the restriction formula   to compute
the Milnor number of a spreading $F_{\gamma}$. Let us first
recall the formula.

\begin{proposition}{\rm(\cite[1.2]{T0})} \label{magicTeissier} Let
  $(Y,0) \subset (\C^{n+1},0)$ be a germ of hypersurface with isolated
  singularity, let $H$ be a hyperplane of $\C^{n+1}$ such that $H
  \cap Y$ has isolated singularity.  Assume that $(z_0,\ldots,z_n)$ is
  a system of coordinates of $\C^{n+1}$ such that $H$ is given by
  $z_0=0$. Let $\operatorname{proj} \colon (Y,0) \rightarrow (\C,0)$
  be the restriction of the function $z_0$. Then
  $$m_H = \mu(Y) + \mu(Y \cap H)\,,$$
  where $\mu (Y)$ and $\mu (Y \cap H)$ denote the Milnor numbers
  respectively of $(Y,0) \subset (\C^{n+1},0)$ and $(Y \cap H,0)
  \subset (H,0)$, and where $m_H $ is the multiplicity of the origin
  $0$ as the discriminant of the morphism $\operatorname{proj}$.\qed
\end{proposition}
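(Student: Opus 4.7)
My plan is to translate each of the three quantities into the dimension of an Artinian local $\C$-algebra and then reduce the statement to the L\^e--Greuel formula for hypersurfaces. Write $Y=\{f=0\}$ with $f\in\mathcal{O}:=\mathcal{O}_{\C^{n+1},0}$, set $\bar f:=f|_{z_0=0}$, and denote $I':=(\partial f/\partial z_1,\ldots,\partial f/\partial z_n)\subset \mathcal{O}$. The critical locus of $\operatorname{proj}=z_0|_Y$ is the germ $C:=V(I'+(f))$. Under the isolated-singularity hypotheses on $Y$ and $Y\cap H$, the restriction $\operatorname{proj}|_C$ is finite near $0$, so the multiplicity of the discriminant $\operatorname{proj}(C)$ at $0$ is the degree of this finite morphism:
$$m_H=\dim_{\C}\mathcal{O}/(I'+(f)).$$

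From the standard definitions of the Milnor number one also has
$$\mu(Y)=\dim_{\C}\mathcal{O}/(I'+(\partial f/\partial z_0))\quad\text{and}\quad \mu(Y\cap H)=\dim_{\C}\mathcal{O}/(I'+(z_0)),$$
the second because modulo $z_0$ the ideal $I'$ reduces to the Jacobian ideal of $\bar f$. The proposition therefore becomes equivalent to the purely algebraic identity
$$\dim_{\C}\mathcal{O}/(I'+(f))=\dim_{\C}\mathcal{O}/(I'+(\partial f/\partial z_0))+\dim_{\C}\mathcal{O}/(I'+(z_0)).\qquad(\ast)$$

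To prove $(\ast)$, I would observe that the ring $A:=\mathcal{O}/I'$ is one-dimensional and Cohen--Macaulay (it is the local algebra of the relative polar curve of $\operatorname{proj}$), and that each of $z_0$, $\partial f/\partial z_0$, and $f$ is a non-zero divisor on $A$. The identity then follows from a length computation along the branches of $V(I')$. Parametrising a branch by a uniformiser $t$ and using that $\partial f/\partial z_i\equiv 0$ on $V(I')$ for $i\ge 1$, the chain rule gives
$$\frac{df}{dt}=\frac{\partial f}{\partial z_0}\cdot\frac{dz_0}{dt}\pmod{I'},$$
so that on each branch one has the equality of $t$-adic orders $\operatorname{ord}_t(f)=\operatorname{ord}_t(\partial f/\partial z_0)+\operatorname{ord}_t(z_0)$. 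Summing over branches with the appropriate multiplicities --- equivalently, computing the determinant of multiplication by $f$ on $A$ viewed as a finite free module over $\C\{z_0\}$ --- yields $(\ast)$. The main obstacle is precisely this final length calculation: while the branch-by-branch order identity is essentially the chain rule, converting it into a clean equality of colengths on the possibly non-normal Cohen--Macaulay curve $A$ requires either a careful Koszul-complex argument (to absorb the conductor contributions uniformly for the three choices $f$, $\partial f/\partial z_0$, $z_0$) or a conservation-of-number argument along the Milnor fibration of $f$. This is exactly the content of the L\^e--Greuel formula for hypersurfaces, to which the proposition has now been reduced.
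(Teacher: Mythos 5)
The paper does not actually prove this proposition: it is quoted from Teissier \cite[1.2]{T0} with a \verb|\qed|, and the paper only records, after the statement, the formula $m_H=(Y,\Gamma)_0$ that is used later. So there is no in-paper proof to compare against; your argument is an independent derivation, and it is correct in outline. The three colength translations are right: $m_H=\dim_\C\mathcal O/(I'+(f))$ is exactly the paper's definition, $\mu(Y)=\dim_\C\mathcal O/J(f)$ with $J(f)=I'+(\partial f/\partial z_0)$ is the Milnor algebra, and $\mu(Y\cap H)=\dim_\C\mathcal O/(I'+(z_0))$ holds because restriction to $\{z_0=0\}$ commutes with $\partial/\partial z_i$ for $i\ge 1$. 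Your reduction of the proposition to the colength identity $(\ast)$ on $A=\mathcal O/I'$ is then precisely the L\^e--Greuel formula applied to the pair $(f,z_0)$: the $2\times2$ minors of the Jacobian of $(f,z_0)$ generate exactly $I'$, so citing L\^e--Greuel at that point legitimately closes the argument.

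If you prefer the direct computation, you have the emphasis slightly misplaced. The claims that $A$ is one-dimensional Cohen--Macaulay and that $f$, $\partial f/\partial z_0$, $z_0$ are non-zerodivisors are where the two isolated-singularity hypotheses are actually spent, and they should be justified: Krull gives $\dim V(I')\ge 1$, while $\mu(Y)<\infty$ forces $\dim V(I'+(\partial f/\partial z_0))=0$ and hence $\dim V(I')\le 1$, so $A$ is a one-dimensional complete intersection and thus Cohen--Macaulay; $\mu(Y)<\infty$ and $\mu(Y\cap H)<\infty$ make $\partial f/\partial z_0$ and $z_0$ non-zerodivisors on $A$ respectively; and the chain-rule identity $\mathrm{ord}_\gamma(f)=\mathrm{ord}_\gamma(\partial f/\partial z_0)+\mathrm{ord}_\gamma(z_0)>0$ on every branch $\gamma$ then shows $f$ is a non-zerodivisor too (in a Cohen--Macaulay ring the associated primes are minimal). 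By contrast, the step you flag as the ``main obstacle'' is routine and needs no conductor-absorbing Koszul argument: for a one-dimensional Cohen--Macaulay local ring $A$ and a non-zerodivisor $g$, $\dim_\C A/gA=e(g;A)$, and the associativity formula gives $e(g;A)=\sum_{\mathfrak p}\mathrm{length}(A_{\mathfrak p})\cdot \mathrm{ord}_{\mathfrak p}(g)$ over the minimal primes. Since the weights $\mathrm{length}(A_{\mathfrak p})$ do not depend on $g$, additivity of orders on each branch translates directly into $(\ast)$.
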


 The multiplicity $m_H$ is defined as follows (\cite{T0}). Assume
that an equation of $(Y,0)$ is   $f(z_0,z_1,\ldots,z_n)=0$  and let
$(\Gamma,0) $ be the curve in $(\C^{n+1},0)$ defined by
$\frac{\partial f}{\partial z_1} = \cdots = \frac{\partial f}{\partial
  z_n} = 0$.  Then $m_H = (Y,\Gamma)_0$, the intersection at the
origin of the two germs $(Y,0)$ and $(\Gamma,0)$.
  
\begin{remark} Applying this when $n=1$, i.e., in the case of a plane
  curve $(Y,0) \subset (\C^2,0)$, we obtain:
$$m_H = \mu(Y) + \operatorname{mult} (Y) - 1\,,$$
where $\operatorname{mult} (Y)$ denotes the multiplicity of  $(Y,0)$.
\end{remark}

\begin{proposition}\label{prop:mu-spreading} Let $(X,0) \subset (\C^3,0)$ be an isolated hypersurface singularity. Let $\ell\colon \C^3 \rightarrow
\C^2$ be a generic linear projection for  $(X,0)$ and   let $(\gamma,0) \subset
  (\C^2,0)$  be an irreducible plane curve  which is not a branch of the discriminant curve of $\ell |_X$.  The Milnor number at $0$ of the
  spreading $(F_{\gamma},0)$ can be computed in terms of the following
  data:
  \begin{enumerate}
  \item the multiplicity of the surface $(X,0)$;
  \item the topological type of the triple $(X,\Pi,L_{\gamma})$ where
    $(\Pi,0) \subset (X,0) $ denotes the polar curve of $\ell |_X$.
  \end{enumerate}
\end{proposition}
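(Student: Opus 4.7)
The plan is to apply Teissier's restriction formula (Proposition~\ref{magicTeissier}) to a surface obtained from $X$ by base-change along a Puiseux parametrization of~$\gamma$. Choose coordinates $(x,y,z)$ on $\C^3$ so that $\ell=(x,y)$ and let $F(x,y,z)=0$ define $(X,0)$. Parametrize $\gamma$ by $x=w^q$, $y=y(w)$, and consider
$$Y:=\{(w,u,z)\in(\C^3,0):\tilde G(w,u,z)=0\},\qquad \tilde G(w,u,z):=F(w^q,u+y(w),z).$$
The hyperplane $H=\{u=0\}$ cuts $Y$ along $\{F(w^q,y(w),z)=0\}$, which is precisely the spreading $F_\gamma$. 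Since $\gamma$ is not a branch of $\Delta$, both $(Y,0)$ and $(F_\gamma,0)$ are isolated hypersurface singularities, and Proposition~\ref{magicTeissier} yields
$$\mu(F_\gamma)\;=\;m_H\;-\;\mu(Y),$$
with $m_H=(Y,\Gamma_H)_0$ and $\Gamma_H=\{\tilde G=\partial_w\tilde G=\partial_z\tilde G=0\}$. It therefore suffices to express $\mu(Y)$ and $m_H$ in terms of the data (1) and (2).

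For $\mu(Y)$: the map $\Psi(w,u,z)=(w^q,u+y(w),z)$ factors as the analytic automorphism $(w,u,z)\mapsto(w,u+y(w),z)$ of $(\C^3,0)$ followed by the cyclic $q$-fold cover $(w,u,z)\mapsto(w^q,u,z)$ branched along $\{w=0\}$. Hence $Y=\Psi^{-1}(X)$ is the germ of the cyclic $q$-fold cover of~$X$ ramified over the generic hyperplane section $C:=X\cap\{x=0\}$. The link of $Y$ is then the $q$-fold cyclic branched cover of the link of $X$ along the link of~$C$, so its topological type---and therefore $\mu(Y)$, by topological invariance of the Milnor number---is determined by $q$ and by the embedded topology of $C\subset X$. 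The integer $q$ is the multiplicity of $\gamma$, which is recovered from the topology of $L_\gamma\subset\C^3$; and the embedded topology of $C\subset X$ is determined by $(X,0)$ together with $\mathrm{mult}(X,0)$ (cf.\ parts \eqref{it:resolution of pencils}--\eqref{it:hyperplane section} of Theorem~\ref{th:invariants from geometry}).

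For $m_H$: the chain rule gives $\partial_z\tilde G=F_z\circ\Psi$, so $\Psi(\Gamma_H)\subseteq\{F_z=0\}\cap X=\Pi$ and $\Gamma_H\subseteq\Psi^{-1}(\Pi)$. Using the projection formula for the finite map $\Psi|_Y\colon Y\to X$, the number $(Y,\Gamma_H)_0$ rewrites as a weighted sum, over the branches~$\Pi_i$ of~$\Pi$, of the intersection numbers $(\Pi_i,L_\gamma)_0$, the weights being the local degrees of $\Psi|_Y$ along the preimages of~$\Pi_i$---data read off from the embedded topology of $\Pi\cup L_\gamma$ in~$X$. Since each $(\Pi_i,L_\gamma)_0$ is a topological invariant of the embedded pair $\Pi_i\cup L_\gamma\subset X$ (it can be computed from any simultaneous resolution of $\Pi$ and $L_\gamma$), this expresses $m_H$ through the topology of the triple $(X,\Pi,L_\gamma)$; combined with the previous paragraph, this yields the proposition. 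The principal obstacle lies in this last step: one must use the genericity of $\ell$ (so that $\ker\ell$ avoids the tangent cone of $\Pi$) and the hypothesis $\gamma\not\subset\Delta$ to rule out spurious components of $\Gamma_H$ coming from the shear $y(w)$, and to verify that the local-degree weights genuinely depend only on the embedded topology of the triple.
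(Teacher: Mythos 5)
The approach shares the same key tool (Teissier's restriction formula, Proposition~\ref{magicTeissier}) but applies it to the wrong object, and this produces a gap that you correctly sense in your final paragraph but do not resolve. The paper applies the restriction formula in the \emph{plane-curve} case $n=1$, taking $Y$ to be $F_\gamma\subset\C^2_{(w,z)}$ itself and $z_0=w$. The auxiliary curve $\Gamma$ is then cut out by the single equation $\partial_z\bigl(F(x(w),y(w),z)\bigr)=F_z(x(w),y(w),z)=0$, i.e.\ it is exactly the $\theta$-preimage of $\Pi=X\cap\{F_z=0\}$ under the bijection $\theta(w,z)=(x(w),y(w),z)$; hence $(F_\gamma,\Gamma)_0=(L_\gamma,\Pi)_0$ directly, and together with $\operatorname{mult}(F_\gamma)=\operatorname{mult}(X,0)$ and the remark $m_H=\mu+\operatorname{mult}-1$ for plane curves, the proposition is immediate. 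No $\partial_w$-derivative ever appears.

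By lifting the problem to an auxiliary threefold $Y=\{\tilde G=0\}\subset\C^3$ with $H=\{u=0\}$ you create exactly the difficulty you flag. First, a small but consequential slip: in Teissier's formula the curve $\Gamma_H$ is cut out by $\partial_w\tilde G=\partial_z\tilde G=0$ alone, \emph{not} including $\tilde G=0$; writing $\tilde G=0$ into its definition makes it zero-dimensional and breaks the meaning of $(Y,\Gamma_H)_0$. With the correct $\Gamma_H$, the inclusion $\Gamma_H\subseteq\Psi^{-1}(\Pi)$ you use to relate $m_H$ to $\Pi$ is simply false, since points of $\Gamma_H$ need not lie on $Y=\Psi^{-1}(X)$. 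Second, and more fundamentally, $\partial_w\tilde G=qw^{q-1}(F_x\circ\Psi)+y'(w)(F_y\circ\Psi)$ depends on the Puiseux coefficients of $\gamma$ through $y'(w)$, so $\Gamma_H$ (and hence $m_H=(Y,\Gamma_H)_0$) is not visibly a topological invariant of the triple $(X,\Pi,L_\gamma)$; your projection-formula rewriting as a weighted sum of $(\Pi_i,L_\gamma)_0$ would need a separate argument showing that contribution from $\{\partial_w\tilde G=0\}$ is controlled by that topology, which is precisely the ``principal obstacle'' you leave open. The extra term $\mu(Y)$ is a further tax on the approach, and while the cyclic-cover description of $Y$ is plausible, it brings in the embedded topology of the generic hyperplane section, which is more data than the proposition actually needs. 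In short: switch to the $n=1$ form of the restriction formula with $z_0=w$; the threefold detour introduces the $w$-derivative, and that is what makes the argument fail.
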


\begin{proof} Choose coordinates in $\C^3$ such that $\ell=(x,y)$, and
  consider a Puiseux expansion of $\gamma$ as in Section \ref{sec:test
    curves}.  Applying the restriction formula \ref{magicTeissier} to
  the projection $\operatorname{proj}\colon (F_{\gamma},0) \rightarrow
  (\C,0)$ defined as the restriction of the function $w$, we
  obtain: 
  $$\mu(F_{\gamma}) =
  (F_{\gamma},\Gamma)_0-\operatorname{mult}(F_{\gamma}) +1\,,$$ where
  $(\Gamma,0)$ has equation
  $$\frac{\partial F}{\partial z}(x(w),y(w),z)=0\,.$$

  We have easily: $\operatorname{mult} (F_{\gamma}) =
  \operatorname{mult}(X,0)$.
 
  Moreover, the polar curve $\Pi$  is
  the set $\{(x,y,z)\in X:\frac{\partial F}{\partial
    z}(x,y,z)=0\}$. Since the map $\theta\colon F_\gamma\to L_\gamma$
  defined by $\theta (w,z)=(x(w),y(w),z)$ is a bijection, the
  intersection multiplicity $(F_{\gamma},\Gamma)_0$ in $\C^2$ equals
  the intersection multiplicity at $0$ on the surface $(X,0)$ of the
  two curves $L_{\gamma}$ and $\Pi$.
 \end{proof}

 \section{Constancy of  projected liftings and of polar rates}
 \label{sec:constancy}
We use again the notations of Section \ref{sec:Ztob} and we consider a Zariski equisingular family $(X_t,0)$   of $2$-dimensional complex hypersurfaces.

 \begin{proposition}\label{prop:constancy}   
   The following data is constant through the family $(X_t,0)$:
\begin{enumerate}
\item\label{it:constancy1} the multiplicity  of the surface $(X_t,0)$,
\item\label{it:constancy2} the  topological type of the pair $(X_t,\Pi_t)$.
\end{enumerate}
\end{proposition}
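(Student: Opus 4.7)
The plan is to treat the two items separately: first establish constancy of the multiplicity by classical equisingularity arguments, then exploit the restriction formula of Section~\ref{sec:formula} to extract the constancy of the embedded topology of the polar from the controlled topology of its spreadings.

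For item~\eqref{it:constancy1}, Zariski equisingularity of $(\mathfrak{X},0)$ along $Y$ implies that the pair $(\mathfrak{X}\setminus Y,Y)$ satisfies Whitney's conditions by Speder's theorem \cite{Sp}. For families of isolated hypersurface singularities in $\C^3$, Whitney equisingularity forces the multiplicity of $(X_t,0)$ to be constant (this is the constancy of the first Teissier number $\mu^{(1)}$, or equivalently the local constancy of the degree of the finite holomorphic map $\mathscr{L}|_\mathfrak{X}$ on each fibre).

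For item~\eqref{it:constancy2}, my strategy is to probe the pair $(X_t,\Pi_t)$ with liftings of well-chosen test curves on the discriminant side. For each Puiseux-type datum $\kappa$ appearing in the complete carrousel of $\Delta_0$, I will construct a family of irreducible plane curve germs $(\gamma_t,0)\subset (\C^2\times\{t\},0)$, depending analytically on $t$, such that the pair $(\Delta_t\cup\gamma_t,0)$ has constant topological type. This is possible because Zariski equisingularity of $\Delta_t$ supplies Puiseux parametrizations of its branches with $t$-analytic coefficients; one defines $\gamma_t$ by truncating such a parametrization at the exponent prescribed by $\kappa$ and adding one further term with $t$-independent exponent and generic coefficient, so that the coincidence exponents between $\gamma_t$ and every branch of $\Delta_t$ are fixed in $t$. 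The next, and crucial, step is to show that the spreading $F_{\gamma_t}$ (equivalently, by Lemma~\ref{lem:top}, the projected lifting $P_{\gamma_t}$) has constant topological type in $t$. The branches of $F_{\gamma_t}$, viewed as a plane curve in the $(w,z)$-plane, correspond to the sheets of the branched cover $\ell|_{X_t}$ pulled back along the parametrization of $\gamma_t$; their Newton--Puiseux pairs and pairwise coincidence exponents are determined by the topology of $(\Delta_t\cup\gamma_t,0)$ together with $\operatorname{mult}(X_t,0)$, both of which are constant. In particular the Milnor number $\mu(F_{\gamma_t})$ is constant.

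From this, the restriction formula (Proposition~\ref{prop:mu-spreading}) gives
\[
(L_{\gamma_t},\Pi_t)_0=\mu(F_{\gamma_t})+\operatorname{mult}(X_t,0)-1,
\]
so the intersection multiplicity on $X_t$ of the lifting $L_{\gamma_t}$ with the polar $\Pi_t$ is constant in $t$. Letting $\gamma$ range over a sufficiently rich family of test curves, these intersection numbers encode the contact exponents of the components of $\Pi_t$ with every relevant curve, and therefore determine the embedded topology of $\Pi_t\subset X_t$ uniformly in $t$. The main obstacle I expect is the topological rigidity of the spreading $F_{\gamma_t}$: the curve $F_{\gamma_t}$ depends on the full analytic data of $X_t$ and of the parametrization of $\gamma_t$, and making precise that its topology is controlled only by the Puiseux combinatorics of $(\Delta_t\cup\gamma_t,0)$ and by $\operatorname{mult}(X_t,0)$ requires a careful identification of its branches with the sheets of $\ell|_{X_t}$ above a disc traced by $\gamma_t$, together with a bookkeeping of the coincidence exponents of these sheets in terms of those between $\gamma_t$ and the branches of $\Delta_t$.
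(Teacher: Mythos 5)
Your treatment of item~\eqref{it:constancy1} is essentially the paper's: Zariski $\Rightarrow$ Whitney (Speder) $\Rightarrow$ $\mu^*$-constancy, which in particular gives constancy of the multiplicity. (One quibble: the multiplicity is controlled by $\mu^{(0)}=\operatorname{mult}-1$, not by $\mu^{(1)}$, but this does not affect the argument.)

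For item~\eqref{it:constancy2}, however, your approach has a genuine gap, and it also inverts the logical organisation of the paper. The crucial step you flag as ``the main obstacle'' --- that the Newton--Puiseux pairs and pairwise coincidence exponents of $F_{\gamma_t}$, equivalently its full embedded topological type, are determined by the topology of $(\Delta_t\cup\gamma_t,0)$ and $\operatorname{mult}(X_t,0)$ --- is false, and cannot be repaired at this stage of the argument. The topology of $(\Delta_t\cup\gamma_t,0)$ together with the covering degree controls the topology of $L_{\gamma_t}$ as an abstract cover of $\gamma_t$, hence its topological position inside $X_t$; but the coincidence exponents between branches of $F_{\gamma_t}$ encode the rate at which the sheets of $\ell|_{X_t}$ approach one another in the $z$-direction, which is governed by the polar rates of $\Pi_t$. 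These rates are Lipschitz/analytic invariants, not invariants of the discriminant's topology and the covering degree. If your Step~2 claim were true, the constancy of the polar rates (the paper's Lemma~\ref{prop:polar constancy}) would become a triviality, which it is not.

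In the paper's logic the dependence runs the other way: Proposition~\ref{prop:constancy} is established first; then Proposition~\ref{prop:mu-spreading} expresses $\mu(F_{\gamma_t})$ in terms of $\operatorname{mult}(X_t)$ and the topological type of the triple $(X_t,\Pi_t,L_{\gamma_t})$, whose constancy uses item~\eqref{it:constancy2}; only then does the L\^e--Ramanujam theorem upgrade constancy of $\mu(F_{\gamma_t})$ to constancy of the topological type of $F_{\gamma_t}$. Your route tries to reach the constancy of $\mu(F_{\gamma_t})$ before item~\eqref{it:constancy2} is available, and the only way to make Step~2 go through is, in effect, to already know item~\eqref{it:constancy2}.

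What you are missing is that item~\eqref{it:constancy2} has a very short direct proof: the maps $\ell|_{X_t}\colon X_t\to\C^2$ form a continuous family of finite branched covers whose degree is constant (by item~\eqref{it:constancy1}) and whose branch locus $\Delta_t$ has constant embedded topological type (the Zariski equisingularity hypothesis). The monodromy representation $\pi_1(\C^2\setminus\Delta_t)\to S_n$ takes values in a discrete group and hence is locally constant in $t$, so the topological type of the branched cover, and with it the pair $(X_t,\Pi_t)$ (where $\Pi_t$ is the ramification locus), is constant. No spreading, restriction formula, or L\^e--Ramanujam argument is needed at this point; those tools enter later precisely to deduce the finer, non-topological constancy (of polar rates and of the Lipschitz geometry of the liftings) from the topological constancy proved here.
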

\begin{proof}
  \eqref{it:constancy1} follows (in all codimension) from the chain of
  implications: Zariski equisingularity $\Rightarrow$ Whitney
  conditions (Speder \cite{Sp}) $\Rightarrow$ $\mu^*$-constancy (e.g.,
  \cite{BS3}; see also \cite{Va1} in codimension $2$).
  \eqref{it:constancy2} follows immediately since $(X_t,\Pi_t)$ is a
  continuous family of branched covers of $\C^2$ of constant covering
  degree and with constant topology of the branch locus.
\end{proof}

\begin{corollary} \label{Z-test-curves}  Consider a   family of irreducible  plane curves 
  $(\gamma_t,0) \subset (\C^2,0)$  such that   $\gamma_t$ is not a branch of $\Delta_t$ and  the topological type of   $(\gamma_t\cup\Delta_t,0)$ is constant as $t$ varies. Let
  $\mathscr L' \colon \C^3 \times \C^{n-3}\to\C^2\times \C^{n-3}$ be a generic linear projection for $(\mathfrak{X},0)$ which is also generic for
  the family of  liftings   $(L_{\gamma_t})_t$.
 Then the  family of  projected liftings
  $P_{\gamma_t}=\mathscr L' (L_{\gamma_t})$ has constant topological type.
\end{corollary}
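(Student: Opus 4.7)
The plan is to reduce constancy of the topological type of $P_{\gamma_t}$ to constancy of the spreading $F_{\gamma_t}$ via Lemma \ref{lem:top}, and then to deduce constancy of $F_{\gamma_t}$ from constancy of its Milnor number and multiplicity by invoking the classical theorem (L\^e--Ramanujam / Zariski, in the plane curve case) that a $\mu$-constant family of reduced plane curves with constant multiplicity is topologically trivial.

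First I would establish that the topological type of the triple $(X_t,\Pi_t,L_{\gamma_t})$ is constant in $t$. By Proposition \ref{prop:constancy} the pair $(X_t,\Pi_t)$ is topologically trivial, and the restriction $\mathscr L|_{X_t}\colon X_t\to \C^2\times\{t\}$ is a branched cover of constant degree $\mathrm{mult}(X_t,0)$ with branch locus $\Delta_t=\mathscr L(\Pi_t)$ whose embedded topology in $\C^2$ is constant. Combining this with the hypothesis that $(\gamma_t\cup\Delta_t,0)$ is topologically trivial, one can lift an ambient isotopy of $(\C^2,\gamma_t\cup\Delta_t)$ through $\mathscr L|_{X_t}$ to a trivialisation of the triple: the branched cover is topologically determined by its branch locus together with its monodromy representation, the latter being forced to be constant by topological constancy of $(X_t,\Pi_t)$. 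Hence $L_{\gamma_t}=(\mathscr L|_{X_t})^{-1}(\gamma_t)$ moves coherently inside $X_t$ relative to $\Pi_t$.

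Second, Proposition \ref{prop:mu-spreading} computes $\mu(F_{\gamma_t})$ from $\mathrm{mult}(X_t,0)$ and the topological type of $(X_t,\Pi_t,L_{\gamma_t})$, both now known to be constant, so $\mu(F_{\gamma_t})$ is constant. Also $\mathrm{mult}(F_{\gamma_t})=\mathrm{mult}(X_t,0)$ is constant, and since $\gamma_t$ is not a branch of $\Delta_t$, the spreading $F_{\gamma_t}$ is reduced; the number of its irreducible components, determined by how the fibres of $\mathscr L|_{X_t}$ over points of $\gamma_t$ collide along $\gamma_t\cap\Delta_t$, is likewise constant by the hypothesis on $(\gamma_t\cup\Delta_t,0)$. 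The plane-curve case of L\^e--Ramanujam then gives constancy of the topological type of the family $F_{\gamma_t}$, and Lemma \ref{lem:top} yields constancy of the topological type of $P_{\gamma_t}$.

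The main obstacle is the first step: assembling the pointwise topological constancies of $(X_t,\Pi_t)$ and of $(\gamma_t\cup\Delta_t,0)$ into a \emph{joint} trivialisation of the triple $(X_t,\Pi_t,L_{\gamma_t})$. Local trivialisation away from $\Pi_t$ is easy because $\mathscr L|_{X_t}$ is an unramified cover there, but near points of $\Pi_t\cap L_{\gamma_t}$ one must check that the lift of an isotopy of $(\C^2,\gamma_t\cup\Delta_t)$ can be chosen compatibly with the given trivialisation of $(X_t,\Pi_t)$; this is where the identification of the cover with its monodromy data is essential.
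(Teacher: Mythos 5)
Your argument follows essentially the same route as the paper's proof: constancy of $\mu(F_{\gamma_t})$ via Propositions \ref{prop:mu-spreading} and \ref{prop:constancy}, then L\^e--Ramanujam for plane curves, then Lemma \ref{lem:top}. You usefully spell out a step the paper leaves implicit --- that constancy of the pair $(X_t,\Pi_t)$ combined with the hypothesis on $(\gamma_t\cup\Delta_t,0)$ yields constancy of the topological type of the triple $(X_t,\Pi_t,L_{\gamma_t})$, which is what Proposition \ref{prop:mu-spreading} actually requires as input.
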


\begin{proof} According to Propositions \ref{prop:mu-spreading} and
  \ref{prop:constancy}, the Milnor number $\mu(F_{\gamma_t})$ is
  constant. The result follows by the L\^e-Ramanujan theorem 
  for a family of plane curves (\cite{LR}) and Lemma \ref{lem:top}. 
\end{proof}

 Let us now consider  a branch  $(\Delta'_{t})_t$  of the family of discriminant curves $(\Delta_t)_t$. For
 each $t$, let $s(\Delta'_{t})$ denote the  {polar rate} of
 $\Delta'_{t}$.

\begin{lemma} \label{prop:polar constancy}
 The polar rate $s(\Delta'_{t})$ does not depend on $t \in D_{\delta}$.
  \end{lemma}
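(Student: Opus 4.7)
The plan is to use Corollary \ref{Z-test-curves} together with Lemma \ref{lemma:constant} to express $s(\Delta'_t)$ as a topological invariant of a projected lifting of a well-chosen family of test curves, and then read off constancy from Zariski equisingularity. Fix $t_0 \in D_\delta$ and set $s_0 = s(\Delta'_{t_0})$. Zariski equisingularity forces the Puiseux exponents $q_i$ of $\Delta'_t$ to be independent of $t$, while the coefficients $a_i(t)$ vary continuously. Choosing a rational $r > s_0$ with $rN \in \Z$ (where $N$ is the common Puiseux denominator) and a small $\lambda \neq 0$, I would define
\[
\gamma_t:\ y = \sum_i a_i(t)\,x^{q_i} + \lambda\,x^r.
\]
The topological type of $(\gamma_t \cup \Delta_t, 0)$ is then independent of $t$ since both pieces have constant Puiseux data and the coincidence exponents are determined combinatorially.

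Next, I would pick a generic linear projection $\mathscr L'$ for $\mathfrak{X}$ that is also generic for the family $\{L_{\gamma_t}\}$; this can be done uniformly on a neighborhood of $t_0$ since genericity is an open condition. By Corollary \ref{Z-test-curves}, the projected lifting $P_{\gamma_t} = \mathscr L'_t(L_{\gamma_t})$ has constant topological type in $t$. Using Proposition \ref{prop:constancy}(2), the polar wedge $A_{0,t}$ around the component $\Pi_{0,t}$ of $\Pi_t$ with $\ell_t(\Pi_{0,t}) = \Delta'_t$ can be chosen to depend continuously on $t$, so that $L'_{\gamma_t} := L_{\gamma_t} \cap A_{0,t}$ is a continuously varying sub-collection of components of $L_{\gamma_t}$; since irreducible components of $P_{\gamma_t}$ move continuously and topological type of a plane curve is a discrete invariant, the topological type of $\mathscr L'_t(L'_{\gamma_t})$ is also constant in $t$.

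Finally I would apply Lemma \ref{lemma:constant} at each $t$ for which $r \geq s(\Delta'_t)$: the topological type of $\mathscr L'_t(L'_{\gamma_t})$ determines the rational number $q(r) = (s(\Delta'_t) + r)/2$. Combined with the previous step, $q(r)$ is independent of $t$, and since $r$ is fixed this yields $s(\Delta'_t) \equiv s_0$.

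The step I expect to be most delicate is ensuring that the hypothesis $r \geq s(\Delta'_t)$ of Lemma \ref{lemma:constant} holds on an entire neighborhood of $t_0$, not just at $t_0$ itself. Should $s(\Delta'_t)$ jump above $r$ at some nearby $t$, the curve $\gamma_t$ would leave the polar wedge $A_{0,t}$ and the structure of $L'_{\gamma_t}$ (in particular its number of components and the parity calculations driving the proof of Lemma \ref{lemma:constant}) would differ from the structure at $t_0$, contradicting the constancy of topological type of $\mathscr L'_t(L'_{\gamma_t})$ established above. Making this dichotomy precise, together with the routine check that $\mathscr L'$ remains generic uniformly in $t$, is the main bookkeeping of the proof.
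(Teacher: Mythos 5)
Your proposal is correct and follows essentially the same route as the paper: perturb the Puiseux expansion of $\Delta'_t$ by $\lambda x^r$ to get a family $\gamma_t$ with $(\gamma_t\cup\Delta_t,0)$ of constant topological type, invoke Corollary~\ref{Z-test-curves} for constancy of the projected liftings, and then apply Lemma~\ref{lemma:constant} to read off $q(r)=\frac{s(\Delta'_t)+r}{2}$. The only cosmetic difference is that you fix $t_0$, take $r>s(\Delta'_{t_0})$, and argue by contradiction to rule out $s(\Delta'_t)>r$ nearby, whereas the paper takes $r$ uniformly large and phrases the same dichotomy as ``$q_t(r)<r$ holds for all $t$ or for none.''
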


\begin{proof}
   Let $r \in \frac{1}{N}\N$ and $\lambda \in \C^*$ and let $(\gamma_t,0)\subset(\C^2,0)$ be the family of  irreducible curves defined  by 
$$  y=\sum_{i\geq N} a_i(t) x^{i/N} + \lambda x^{r}\,,$$
where $ y=\sum_{i\geq N} a_i(t) x^{i/N}$ is  a Puiseux expansion of
$\Delta'_{t}$.  Since $r \in \frac{1}{N}\N$,  the curves $(\Delta_t \cup \gamma_t,0)$ have constant topological type as $t$ varies. Then, by Corollary \ref{Z-test-curves},     the projected liftings
$P_{\gamma_t}$ have constant topological type. We take $r$ big enough
to be sure that  the   curve $\gamma_t$ is in a $\Delta$-wedge about 
  $\Delta'_t$. Let   $P'_{\gamma_t}$ be the union of components of $P_{\gamma_t}$ which are in a polar wedge about $\Pi'_t$.  Then the family $(P'_{\gamma_t})_t$ also has constant topological type. By  Lemma
\ref{lemma:constant}, this
implies the constancy of the rates $q_t(r)=  \frac{s(\Delta'_{t} )+ r}{2}$.  Obviously $q_t(s(\Delta'_t))=s(\Delta'_t)$ and $q_t(r)<r$ for each $r>s(\Delta'_t)$. Since $q_t(r)$ is constant through the family,   the inequality $q_t(r)< r$ holds for each $t$
or for none. This proves the constancy
of the polar rate $s(\Delta_t')$ as $t$ varies in $D_{\delta}$. 
\end{proof}

\section{Proof that Zariski  equisingularity implies
  Lipschitz triviality}\label{sec:proofZtoL}

 We use  the notations of Sections \ref{sec:notations} and  \ref{sec:Ztob}. For $t$ fixed in $D_{\delta}$ and  $K_0>0$ sufficiently large, recall   (Section \ref{sec:Ztob}) that ${\mathcal B}_{K_0,t} $ denotes the neighbourhood of $\Pi_t$ in $X_t$ where the local bilipschitz constant $K(\underbar{x},t)$ of $ \mathscr L |_{X_t}   $ is bigger than $K_0$.

The aim of this section is to complete the proof that Zariski equisingularity implies Lipschitz
triviality.  

  According to Lemma \ref{prop:polar constancy}, the polar rate of each branch of $(\Delta_t,0)$ is constant as $t$ varies. Then, we can consider the carrousel decomposition for $(\Delta_t,0)$ obtained by truncating the Puiseux expansion of  each branch at the first term which has exponent greater or equal to the corresponding polar rate and where truncation does not affect the topology of $(\Delta_t,0)$. In other words, we consider the complete carrousel decomposition for $(\Delta_t,0)$ as defined in Section \ref{sec:intermediate}, obtaining a carrousel decomposition depending on the parameter $t$, as introduced in Section \ref{sec:foliated carrousel}. In particular, each component of $(\Delta_t,0)$ is contained in a $D$-piece which is a  $\Delta$-wedge about it. 

We consider the corresponding foliated carrousels as described in Section \ref{sec:foliated carrousel} and we denote by $B_{\epsilon,t}$ the ball $B^4\times\{t\}$ equipped with this foliated carrousel.   Then, we consider a trivialization of this family of carrousels as in Propositions \ref{prop:phi on V} and \ref{prop:phi}. 
 The following result is a corollary of Proposition \ref{prop:phi}:

\begin{lemma}\label{cor:Phi}
There exists a commutative diagram  
$$
\begin{CD}
  (\mathfrak{X},0)\cap (\C^3\times D_\delta) @>\Phi>> (X_0,0)\times D_\delta\\
@V \mathscr L |_{\mathfrak{X}} VV @V \mathscr L |_{X_0} \times id VV\\
(\C^2,0)\times D_\delta @>\phi>>  (\C^2,0)\times D_\delta
\end{CD}
$$
such that $\phi$ is the map of the above proposition and $\Phi$ is semi-analytic, and
inner bilipschitz except possibly in the set $C_{K_0}:= \bigcup_{t \in D_{\delta}} {\mathcal B}_{K_0,t} $.
\end{lemma}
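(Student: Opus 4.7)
The plan is to obtain $\Phi$ as the unique lifting of $\phi$ through the two branched covers $\mathscr{L}|_{\mathfrak{X}}$ and $\mathscr{L}|_{X_0}\times\mathrm{id}_{D_\delta}$, and then to check that this lift is semi-algebraic and inner bilipschitz away from the polar wedges. The first step is to verify that $\phi$ carries the discriminant locus $\bigcup_t \Delta_t\times\{t\}$ bijectively onto $\Delta_0\times D_\delta$. This follows from Lemma \ref{prop:polar constancy}: the polar rate of each branch of $\Delta_t$ is independent of $t$, so the complete carrousel decompositions of $\Delta_t$ form a continuous family in which every branch of $\Delta_t$ is (up to negligible higher-order terms) a leaf of the foliation of the $\Delta$-wedge containing it. Because $\phi_t$ preserves carrousel pieces and their foliations (Proposition \ref{prop:phi}), it must send $\Delta_t$ to $\Delta_0$.

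Next, by Proposition \ref{prop:constancy}(2) the pair $(X_t,\Pi_t)$ has constant topological type, so the family of branched covers $\mathscr{L}|_{X_t}\colon X_t\to B_{\epsilon,t}$ is topologically trivial over $D_\delta$; in particular the monodromy representations of $\mathscr{L}|_{\mathfrak{X}}$ and of $\mathscr{L}|_{X_0}\times\mathrm{id}_{D_\delta}$ around the discriminant get identified by $\phi$. Fixing a basepoint in the complement of $\Delta_0$ and lifting it to a basepoint of $X_0$ then determines a unique continuous lift $\Phi\colon\mathfrak{X}\to X_0\times D_\delta$ that makes the diagram commute. Since $\phi$ and $\mathscr{L}$ are both semi-algebraic, $\Phi$ is recovered locally from them by algebraic operations on the branched cover and is therefore semi-algebraic as well.

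The final step is the inner bilipschitz estimate on $\mathfrak{X}\setminus C_{K_0}$. At any point $p$ of $X_t\setminus\mathcal{B}_{K_0,t}$ the local bilipschitz constant $K(p,t)$ of $\mathscr{L}|_{X_t}$ is at most $K_0$, so in a neighbourhood of $p$ the inner metric of $X_t$ is $K_0$-bilipschitz equivalent to the pullback by $\mathscr{L}|_{X_t}$ of the Euclidean metric on $B_{\epsilon,t}$; the same holds near $\Phi(p)\in X_0\setminus\mathcal{B}_{K_0,0}$. Writing $K_\phi$ for the bilipschitz constant of $\phi$, we get that $\Phi$ is locally $K_0^2 K_\phi$-bilipschitz for the inner metric on $\mathfrak{X}\setminus C_{K_0}$. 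Integration along rectifiable paths contained in $\mathfrak{X}\setminus C_{K_0}$ then upgrades this local estimate to a global inner bilipschitz estimate on that set.

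The main obstacle is the second step, namely constructing the lift $\Phi$ in a way that is simultaneously single-valued, continuous in $t$, compatible with the semi-algebraic structure, and matches the carrousel pieces. This relies on Proposition \ref{prop:constancy}(2) — the topological triviality of $(X_t,\Pi_t)$ — which in turn depends on Zariski equisingularity via Speder's theorem; without it, the monodromy of the family of branched covers could vary with $t$ and there would be an obstruction to trivialising the cover over $D_\delta$. Once the lift exists, the bilipschitz bound outside the polar wedges is essentially automatic from the definition of $K_0$ and the bilipschitz property of $\phi$.
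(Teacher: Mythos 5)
Your proposal follows the same route as the paper: define $\Phi$ as the lift of $\phi$ through the two branched covers, observe semi-algebraicity since all ingredients are, and bound the local Lipschitz constant by $K_0^2$ times the bilipschitz constant of $\phi$ outside $C_{K_0}$. The paper's own proof is a three-sentence assertion of exactly this; you go further in justifying that the lift exists (identification of branch loci via the foliated carrousel and matching of monodromies via constancy of $(X_t,\Pi_t)$), which is a genuine gap the paper leaves implicit, so your extra care is warranted rather than a different approach.
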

\begin{proof}
  $\Phi$ is simply the lift of $\phi$ over the branched cover $ \mathscr L |_{\mathfrak{X}}$. The
  map $ \mathscr L |_{\mathfrak{X}}$ is a local diffeomorphism with Lipschitz constant bounded
  above by $K_0$ outside $C_{K_0}$ so $\Phi$ has Lipschitz coefficient
  bounded by $K_0^2$ times the Lipschitz bound for $\phi$ outside
  $C_{K_0}$. The semi-analyticity of $\Phi$ is because $\phi$, $ \mathscr L |_{\mathfrak{X}}$
  and $ \mathscr L |_{X_0} \times id$ are semi-analytic.
\end{proof}

We will show that the map $\Phi$ of Lemma \ref{cor:Phi}
is bilipschitz with respect to the outer metric after modifying it as
necessary within the   set $C_{K_0}$.

\begin{proof}
 We first make the modification in ${\mathcal B}_{K_0,0}$. Recall  that according to Proposition \ref{prop:thinzones},  ${\mathcal B}_{K_0,t} $ can be approximated for each $t$ by a   polar wedge about $\Pi_t$, and then its image ${\mathcal N}_{K_0,t}=  \mathscr L |_{X_t} ({\mathcal B}_{K_0,t})$ can be approximated by a   $\Delta$-wedge about the polar curve $\Delta_t$, i.e., by a   union of $D$-pieces of the foliated \carrousel\ $B_{\epsilon,t}$. 

  Let ${\mathcal N}'_{K_0,0}$ be one of these $D$-pieces, and let  $q$ be its rate.   
   By     \cite[Lemma 12.1.3]{BNP},   any component ${\mathcal B}'_{K_0,0}$ of ${\mathcal B}_{K_0,0}$ over  ${\mathcal N}'_{K_0,0}$ is also a  D(q)-piece in $X_0$. 
       
   By Proposition \ref{prop:phi}, the inverse image under the map
   $\phi_t$ of ${\mathcal N}'_{K_0,0}$ is ${\mathcal N}'_{K_0,t}$.
   Then the inverse image under the lifting $\Phi_t$ of ${\mathcal
     B}'_{K_0,0} \subset X_0$ will be the   polar wedge  ${\mathcal
     B}'_{K_0,t}$ in $X_t$ with the same   polar rate. So we can adjust
   $\Phi_t$ as necessary in this zone to be a bilipschitz equivalence
   for the inner metric and to remain semi-analytic.

  We can thus assume now that $\Phi$ is semi-analytic and bilipschitz
  for the inner metric, with Lipschitz bound $K$ say. We will now show
  it is also bilipschitz for the outer metric.

  We first consider a pair of points $\underbar p_1,\underbar p_2$ of
  $X_t\cap B^6_\epsilon$ which lie over the same point $\underbar p\in
  B_{\epsilon,t}$ (we will say they are ``vertically aligned'').  We may
  assume, by moving them slightly if necessary, that $\underbar p$ is
  on a closed curve $\gamma$ of our foliation of   $B_{\epsilon,t}$ with
  Puiseux expansion
\begin{align*}
  x(w)&=w^q\\
  y(w)&=a_1w^{q_1} + a_2 w^{q_2} + \ldots + a_{n-1} w^{q_{n-1}} + a_n
  w^{q_n} + \dots
\end{align*}
Let $\mathscr L'$ be a different generic linear projection as in Corollary
\ref{Z-test-curves}.  If $w=w_0$ is the parameter of the point
$\underbar p\in \gamma$, consider the arc $C(s)=(x(sw_0),y(sw_0))$,
$s\in[0,1)$ and lift it to $X_t$ to obtain a pair of arcs $\underbar
p_1(s),\underbar p_2(s)$, $s\in[0,1)$ with $\underbar p_1(1)=\underbar
p_1$ and $\underbar p_2(1)=\underbar p_2$.  We assume that the
distance $d(\underbar p_1(s),\underbar p_2(s))$ shrinks faster than
linearly as $s \to 0$, since otherwise the pair is uninteresting from
the point of view of bilipschitz geometry.

Note that the distance $d(\underbar p_1(s),\underbar p_2(s))$ is a
multiple $kd(\mathscr L'(\underbar p_1(s)),\mathscr L'(\underbar p_2(s)))$ (with $k$
depending only on the projections $\mathscr L$ and $\mathscr L'$). Now $d(\mathscr L'(\underbar
p_1(s)),\mathscr L'(\underbar p_2(s)))$ is to high order $s^rd(\mathscr L'(\underbar
p_1(1)),\mathscr L'(\underbar p_2(1)))$ for some rational $r>1$. Moreover, by
Corollary \ref{Z-test-curves}, if we consider the corresponding
picture in $X_0$, starting with $\Phi(\underbar p_1)$ and
$\Phi(\underbar p_2)$ we get the same situation with the same exponent
$r$. So to high order we see that $d(\Phi(\underbar
p_1(s)),\Phi(\underbar p_2(s)))/d(\underbar p_1(s),\underbar p_2(s))$
is constant.

There is certainly an overall bound on the factor by which $\Phi$
scales distance between vertically aligned points $\underbar p_1$ and
$\underbar p_2$ so long as we restrict ourselves to the compact
complement in $\mathfrak{X}\cap (B^6_\epsilon\times D_\delta)$ of an open
neighbourhood of $\{0\}\times D_\delta$. The above argument then shows
that this bound continues to hold as we move towards $0$. Thus $\Phi$
distorts distance by at most a constant factor for all vertically
aligned pairs of points.

Finally, consider any two points $\underbar p_1,\underbar p_2\in X_0
\cap B^6_\epsilon $ and their images $\underbar q_1,\underbar q_2$ in
$\C^2 $. We will assume for the moment that neither $\underbar p_1$ or
$\underbar p_2$ is in the approximated polar wedge ${\mathcal
  B}_{K_0,0}$. The outer distance between $\underbar p_1$ and
$\underbar p_2$ is the length of the straight segment joining
them. Let $\gamma$ be the image of this segment, so $\gamma$ connects
$\underbar q_1$ to $\underbar q_2$. If $\gamma$ intersects the
approximated $\Delta$-wedge ${\mathcal N}_{K_0,0}$, we can modify it
to a curve which avoids ${\mathcal N}_{K_0,0}$ and which has length
less than $\pi$ times the original length of $\gamma$. Lift $\gamma$
to a curve $\gamma'$ starting at $\underbar p_1$. Then $\gamma'$ ends
at a point $\underbar p'_2$ which is vertically aligned with
$\underbar p_2$.  Let $\gamma''$ be the curve obtained by appending
the vertical segment $\underbar p'_2\underbar p_2$ to $\gamma'$. We
then have:
$$len(\underbar p_1 \underbar p_2)\leq len( \gamma'')\,,$$
where $len$ denotes length. 
On the other hand,  $len(\gamma') \leq K_0  len(\gamma)$, 
$len(\gamma)\le \pi len(\underbar q_1\underbar q_2)$, and since the segment $\underbar q_1\underbar q_2$ is the projection of $\underbar p_1\underbar p_2$, $len(\underbar q_1\underbar q_2)\le len(\underbar p_1\underbar p_2)$. Thus $len(\gamma')\le \pi K_0 len(\underbar p_1 \underbar p_2)$. If we connect the segment $\underbar p_1\underbar p_2$ to
$\gamma'$ at $\underbar p_1$ we get a curve from $\underbar p_2$ to
$\underbar p'_2$, so   $len(\underbar p'_2\underbar p_2) \leq (1+\pi K_0)len(\underbar p_1\underbar p_2)$ and as $len(\gamma'') = len(\gamma')+ len(\underbar p'_2\underbar p_2) $, we then obtain:  
$$ len( \gamma'') \leq (1+2 K_0 \pi) len(\underbar p_1 \underbar p_2)\,.$$

 We have thus shown that up to
a bounded constant the outer distance between two points can be
achieved by following a path in $X$ followed by a vertical segment.

We have proved this under the assumption that we
  do not start or end in the  approximated polar wedge ${\mathcal B}_{K_0,0}$. Now, take two other projections $\mathscr L'$ and $\mathscr L''$ such that for   $K_0$  sufficiently large, the corresponding  approximated polar wedges $\mathcal B_{K_0,0}={\mathcal B}_{K_0,0}(\mathscr L)$, ${\mathcal B}_{K_0,0}(\mathscr L')$ and ${\mathcal B}_{K_0,0}(\mathscr L'')$   are pairwise disjoint outside the origin. Then  if  $\underbar p_1$ and $\underbar p_2$  are any two points in $X_0 \cap B^6_{\epsilon}$, they are outside the   polar wedge  for at least one of $\mathscr L$, $\mathscr L'$ or $\mathscr L''$ and we conclude  as before. 
 
The same argument applies to $X_0\times D_\delta$. Now paths in $\mathfrak{X}$
are modified by at most a bounded factor by $\Phi$ since $\Phi$ is
inner bilipschitz, while vertical segments are also modified by at
most a bounded factor by the previous argument.
Thus outer metric is modified by at most a bounded factor.
\end{proof}

\end{document}